\newtheorem{lemma}{Lemma}[section]
\newtheorem{prop}[lemma]{Proposition}
\newtheorem{theo}[lemma]{Theorem}
\newtheorem{rem}[lemma]{Remark}
\newtheorem{coro}[lemma]{Corollary}
\DeclareMathOperator{\divv }{div}
\begin{document}

\title[Multicomponent compressible fluids]{Mass transport in multicomponent compressible fluids: Local and global well-posedness in classes of strong solutions for general class-one models}

\author[D. Bothe]{Dieter Bothe}
\address{Mathematische Modellierung und Analysis, Technische Universit\"at Darmstadt, Alarich-Weiss-Str. 10, 64287 Darmstadt, Germany}
\email{bothe@mma.tu-darmstadt.de}

\author[P.-E. Druet]{Pierre-Etienne Druet}
\address{Weierstrass Institute, Mohrenstr. 39, 10117 Berlin, Germany}
\email{pierre-etienne.druet@wias-berlin.de}


\date{\today}

\subjclass[2010]{35M33, 35Q30, 76N10, 35D35, 35B65, 35B35, 35K57, 35Q35, 35Q79, 76R50, 80A17, 80A32, 92E20}	
\keywords{Multicomponent flow, fluid mixture, compressible fluid, diffusion, reactive fluid, well-posedness analysis, strong solutions}

\thanks{This research is supported by the grant DR1117/1-1 of the German Science Foundation}
\maketitle

\begin{abstract} 
We consider a system of partial differential equations describing mass transport in a multicomponent isothermal compressible fluid. The diffusion fluxes obey the Fick-Onsager or Maxwell-Stefan closure approach. Mechanical forces result into one single convective mixture velocity, the barycentric one, which obeys the Navier-Stokes equations. The thermodynamic pressure is defined by the Gibbs-Duhem equation. Chemical potentials and pressure are derived from a thermodynamic potential, the Helmholtz free energy, with a bulk density allowed to be a general convex function of the mass densities of the constituents. 

The resulting PDEs are of mixed parabolic--hyperbolic type. We prove two theoretical results concerning the well-posedness of the model in classes of strong solutions: 1. The solution always exists and is unique for short--times and 2. If the initial data are sufficiently near to an equilibrium solution, the well-posedness is valid on arbitrary large, but finite time intervals. Both results rely on a contraction principle valid for systems of mixed type that behave like the compressible Navier-Stokes equations. The linearised parabolic part of the operator possesses the self map property with respect to some closed ball \emph{in the state space}, while being contractive \emph{in a lower order norm} only. In this paper, we implement these ideas by means of precise \emph{a priori} estimates in spaces of exact regularity.  
\end{abstract}

\section{Mass transport for a multicomponent compressible fluid}

This paper is devoted to the mathematical analysis of general \emph{class-one models} of mass transport in isothermal multicomponent fluids. We are interested in the theoretical issues of unique solvability and continuous dependence (in short: well-posedness) in classes of strong solutions for the underlying PDEs. To start with, we shall expose the model very briefly. An extensive derivation from thermodynamic first principles is to find in \cite{bothedreyer}, or \cite{dreyerguhlkemueller}, \cite{dreyerguhlkemueller19} for the extension to charged constituents. There are naturally alternative modelling approaches: The reader who wishes exploring the model might for instance consult the references in these papers, or the book \cite{giovan}.

\textbf{Model for the bulk.} We consider a molecular mixture of $N\geq 2$ chemical species $\ce{A_1,\ldots,A_N}$ assumed to constitute a fluid phase. 
The convective and diffusive mass transport of these species and their mechanical behaviour are described by the following balance equations:
\begin{alignat}{2}
\label{mass}\partial_t \rho_i + \divv( \rho_i \, v + J^i) & = r_i & &  \text{ for } i = 1,\ldots,N\, ,\\
\label{momentum}\partial_t (\varrho \, v) + \divv( \varrho \, v\otimes v - \mathbb{S}(\nabla v)) + \nabla p & = \sum_{i=1}^N \rho_i \, b^i(x, \, t)  & & \, .
\end{alignat}
The equations \eqref{mass} are the partial mass balances for the partial mass densities $\rho_1,\ldots,\rho_N$ of the species. We shall use the abbreviation $\varrho := \sum_{i=1}^N \rho_i$ for the total mass density. The barycentric velocity of the fluid is called $v$ and the thermodynamic pressure $p$.
In the Navier-Stokes equations \eqref{momentum}, the viscous stress tensor is denoted $\mathbb{S}(\nabla v)$. The vector fields $b^1,\ldots,b^N$ are the external body forces. The \emph{diffusions fluxes} $J^1,\ldots,J^N$, which are defined to be the non-convective part of the mass fluxes, must satisfy by definition the necessary side-condition $\sum_{i=1}^N J^i = 0$. Following the thermodynamic consistent Fick--Onsager closure approach described by \cite{bothedreyer}, \cite{dreyerguhlkemueller19} (older work in \cite{MR59, dGM63}), the diffusion fluxes $J^1,\ldots,J^N$ obey, in the isothermal case,
\begin{align}\label{DIFFUSFLUX}
J^i = - \sum_{j=1}^N M_{i,j}(\rho_1, \ldots, \rho_N) \, (\nabla \mu_j - b^j(x, \, t)) \, \text{ for } i =1,\ldots,N \, .
\end{align}
The \emph{Onsager matrix} $M(\rho_1, \ldots,\rho_N)$ is a symmetric, positive semi-definite $N\times N$ matrix for every $(\rho_1,\ldots,\rho_N) \in \mathbb{R}^N_+$. In all known linear closure approaches this matrix satisfies
\begin{align}\label{CONSTRAINT}
\sum_{i=1}^N M_{i,j}(\rho_1,\ldots,\rho_N) = 0 \quad \text{ for all } (\rho_1,\ldots,\rho_N) \in \mathbb{R}^N_+ \, .
\end{align}
One possibility to compute the special form of $M$ is for instance to invert the Maxwell--Stefan balance equations. For the mathematical treatment of this algebraic system, the reader can consult \cite{giovan}, \cite{bothe11}, \cite{justel13} or \cite{herbergpruess}. Or $M$ is constructed directly in the form $P^T \, M^{0} \, P$, where $M^{0}$ is a given matrix of full rank, and $P$ is a projector guaranteeing that \eqref{CONSTRAINT} is valid. The paper \cite{bothedruetMS} establishes equivalence relations between the Fick--Onsager and the Maxwell--Stefan constitutive approaches, so that we do not need here further specifying the structure of the tensor $M$.

The quantities $\mu_1,\ldots,\mu_N$ are the chemical potentials. The material theory which provides the definition of $\mu$ is based on the assumption that the Helmholtz free energy of the system possesses only a bulk contribution with density $\varrho\psi$. Moreover, this function possesses the special form
\begin{align*}
 \varrho\psi = h(\rho_1, \ldots, \rho_N) \, ,
\end{align*}
where $h: \, \mathcal{D} \subseteq \mathbb{R}^N_+ \rightarrow \mathbb{R}$ is convex and sufficiently smooth in the range of mass densities relevant for the model. For the sake of simplicity, we shall in fact assume that $h$ possesses a smooth convex extension to the entire range of admissible mass densities $\mathbb{R}^N_+ = \{\rho \in \mathbb{R}^N \, : \, \rho_i > 0 \text{ for } i =1,\ldots,N\}$. The \emph{chemical potentials} $\mu_1,\ldots,\mu_N$ of the species are related to the mass densities $\rho_1, \ldots,\rho_N$ via
\begin{align}\label{CHEMPOT}
\mu_i = \partial_{\rho_i}h(\rho_1, \ldots, \rho_N) \, .
\end{align}
In \eqref{momentum}, the thermodynamic pressure has to obey the isothermal Gibbs-Duhem equation
\begin{align}\label{GIBBSDUHEM}
\sum_{i=1}^N \rho_i \, d\mu_i = dp \, 
\end{align}
where $d$ is the total differential. This yields, up to a reference constant, a relationship between $p$ and the variables $\rho_1,\ldots,\rho_N$ which is often called the Euler equation:
\begin{align}\label{GIBBSDUHEMEULER}
p = -h(\rho_1,\ldots,\rho_N) + \sum_{i=1}^N \rho_i \, \mu_i = -h(\rho_1,\ldots,\rho_N) + \sum_{i=1}^N \rho_i\, \partial_{\rho_i}h(\rho_1, \ldots, \rho_N) \, .
\end{align}
For the mathematical theory of this paper, we do not need to assume a special form of the free energy density, but rather formulate general assumptions: The free energy function is asked to be a \emph{Legendre function} on $\mathbb{R}^N_+$ with surjective gradient onto $\mathbb{R}^N$. 
For illustration, let us remark that the choices
\begin{itemize}
\item $h = k_B \, \theta \, \sum_{i=1}^N n_i \, \ln \frac{n_i}{n^{\text{ref}}}$;
\item $h = K\, F\left(\sum_{i=1}^N n_i \, \bar{v}_i^{\text{ref}}\right) + k_B \, \theta \, \sum_{i=1}^N n_i \, \ln \frac{n_i}{n}$;
\item $h = \sum_{i=1}^N K_i \, n_i\, \bar{v}_i^{\text{ref}} \, ((n_i\, \bar{v}_i^{\text{ref}})^{\alpha_i-1} + \ln (n_i\, \bar{v}_i^{\text{ref}})) +  k_B \, \theta \, \sum_{i=1}^N n_i \, \ln \frac{n_i}{n}$;
\end{itemize}
are covered by the results of this paper. In these examples, $\theta > 0$ is the constant absolute temperature, $n_i := \rho_i/m_i$ is the number density ($m_i > 0$ the molecular mass of species $\ce{A_i}$), and $n = \sum_{i=1}^N n_i$ is the total number density. The first example models the free energy for a mixture of ideas gases with a reference value $n^{\text{ref}} > 0$. In the second example, the constants $\bar{v}_i^{\text{ref}}> 0$ are reference volumes introduced in \cite{dreyerguhlkelandstorfer} to explain solvatisation effects in electrolytes, $K >0$ is the compression module of the fluid, and $F$ is a general non-linear convex function related to volume extension. The third example, with constants $K_i > 0$ and $\alpha_i \geq 1$ shows that more complex state equations can as well be included in the setting. For the convenience of the reader, we briefly show in the Appendix, Section \ref{Legendre}, that the examples fit into the abstract framework of our well-posedness theorems. We also remark that stating assumptions directly on the thermodynamic potential $h$ is possible, because it is always possible to find this potential from the knowledge of the chemical potentials or of the state equation of the physical system, as shown in \cite{bothedreyer}, or in the upcoming paper \cite{bothedruetfe}.

Reaction densities $r_i = r_i(\rho_1,\ldots,\rho_N)$ or $r_i = r_i(\mu_1,\ldots,\mu_N)$ for $i = 1,\ldots,N$ will be considered in \eqref{mass} only for the sake of generality. We shall not enter the very interesting details of their modelling. We just note that these functions are likewise subject to the constraint $\sum_{i=1}^N r_i(\rho_1,\ldots,\rho_N) = 0$ for all $(\rho_1,\ldots,\rho_N) \in \mathbb{R}^N_+$, which expresses the conservation of mass by the reactions. As to the stress tensor $\mathbb{S}$, we shall restrict for simplicity to the standard Newtonian form with constant coefficients. The paper, however, provides methods which are sufficient to extend the results to the case of density and composition dependent viscosity coefficients.

\textbf{Boundary conditions.} We investigate the system \eqref{mass}, \eqref{momentum} in a cylindrical domain $Q_T := \Omega \times ]0,T[$ with a bounded domain $\Omega \subset \mathbb{R}^3$ and $T>0$ a finite time. It is possible to treat the case $\Omega \subset \mathbb{R}^d$ for general $d\geq 2$ with exactly the same methods.

We are mainly interested in results for the bulk operators. Thus, we shall not be afraid of some simplification concerning initial and boundary operators. A lot of interesting phenomena like mass transfer at active boundaries, or chemical reactions with surfactants, shall not be considered here but in further publications. Boundary conditions are also often the source of additional problems for the mathematical theory, like: Mixed boundary conditions, non-smooth boundaries, singular initial data. All this can, however, only be dealt with in the context of weak solutions, and is not our object here.

We consider the initial conditions
\begin{alignat}{2}\label{initial0rho}
\rho_i(x,\, 0) & = \rho^0_i(x) & & \text{ for } x \in \Omega, \, i = 1,\ldots, N \, ,\\
\label{initial0v} v_j(x, \, 0) & = v^0_j(x) & & \text{ for } x \in \Omega, \, j = 1,2,3\, .
\end{alignat}
For simplicity, we consider the linear homogeneous boundary conditions
\begin{alignat}{2}
\label{lateral0v} v & = 0 & &\text{ on } S_T := \partial \Omega \times ]0,T[ \, ,\\
\label{lateral0q} \nu \cdot J^i & = 0 & &\text{ on } S_T \text{ for } i = 1,\ldots,N \, .
\end{alignat}

\section{Mathematical analysis: state of the art and our results}\label{mathint}

The local or global existence and uniqueness of strong solutions to the class-one model exposed in the introduction has, from this point of view of generality, not yet been studied. More generally, there are relatively few published investigations with rigorous analysis about mass transport equations for a multicomponent system, with or without chemical reactions, being coupled to equations of Navier-Stokes type. In the theoretical study of this problem two different branches or disciplines of PDE analysis are meeting each other: diffusion--reaction systems and mathematical fluid dynamics.

The first fundamental observation in studying the system is that the differential operator generated by the mass transport equation is not parabolic. This is due to the condition \eqref{CONSTRAINT}, which implies that the second--order spatial differential operator possesses one zero eigenvalue. The total mass density satisfies the continuity equation $\partial_t \varrho + \divv(\varrho \, v) = 0$. One of the coordinates of the vector of unknowns behaves inherently hyperbolic.

One important question is how to deal with this hyperbolic component. Among the papers representing most important advances for the understanding of the field, we can mention \cite{herbergpruess} and \cite{chenjuengel}. The first paper is concerned with local-in-time well-posedness analysis for strong solutions, while the second deals with globally defined weak solutions for the Maxwell-Stefan closure of the diffusion fluxes. Both papers, however, rely on the same fundamental idea to eliminate the hyperbolic component by assuming $\varrho = \text{const.}$ (incompressibility). Under this condition, the Navier-Stokes equations reduce to their incompressible variant and decouple from the mass transport system. This system can be solved independently and re-expressed as a parabolic problem for the mass fractions $\rho_1/\varrho, \ldots,\rho_N/\varrho$ (in \cite{herbergpruess}) or for differences of chemical potentials (in \cite{chenjuengel}). In both cases there remains only $N-1$ independent variables. Let us briefly remark that the Navier-Stokes equations do not occur explicitly in \cite{herbergpruess} but are treated (in addition to other difficulties though) in \cite{bothepruess}. Note that in \cite{bothesoga}, a class of multicomponent mixtures has been introduced for which the use of the incompressible Navier-Stokes equation is more realistic: Incompressibility is assumed for the solvent only, and diffusion is considered against the solvent velocity.

In the context of compressible fluids, the global weak solution analysis of non-isothermal class-one models was initiated in \cite{feipetri08}, where a simplified diffusion law (diagonal, full-rank closure) was considered so that the problem of degenerate parabolicity is avoided. In \cite{mupoza15} and \cite{za15} the fluxes are calculated from a constitutive equation similar to \eqref{DIFFUSFLUX}, though for a special choice of the mobility matrix and of the thermodynamic potential. The global existence of weak solution is tackled by means of diverse stabilisation techniques and a tool called Bresch-Desjardins technique, which exploits a special dependence of viscosity coefficients on density to obtain estimates for the density gradient. 

The first paper dealing with the full class-one model exposed in the introduction for more general thermodynamic potentials $h = h(\rho)$ and closure relations for diffusion fluxes and reaction terms is the investigation \cite{dredrugagu16} and the subsequent \cite{dredrugagu17a,dredrugagu17b,dredrugagu17c} in the context of charged carriers (electrolytes). There ideas of \cite{chenjuengel} were generalised in order to rewrite the PDEs as a coupled system with the following structure:
\begin{enumerate}[(a)]
\item \label{paran} A doubly non-linear parabolic system for $N-1$ variables $q_1, \ldots,q_{N-1}$ called relative chemical potentials (for instance $q_i := \mu_i-\mu_N$ for $i = 1,\ldots,N-1$);
 \item \label{ns}The compressible Navier-Stokes equations with pressure $p = P(\varrho, \, q_1,\ldots, q_{N-1})$ to determine the variables $\varrho$ and $v$.
\end{enumerate}
The concrete form of this system is given by the equations 
\begin{alignat*}{2}
 \partial_t R(\varrho, \, q) + \divv( R(\varrho, \, q) \, v - \widetilde{M}(\varrho, \, q) \, (\nabla q - \tilde{b}) ) & = \tilde{r}(\varrho, \, q) \, ,& & \\
\partial_t \varrho + \divv(\varrho \, v) & = 0 \, ,& & \\
 \partial_t (\varrho \, v) + \divv( \varrho \, v\otimes v - \mathbb{S}(\nabla v)) + \nabla P(\varrho, \, q) & = R(\varrho, \, q) \cdot \tilde{b}(x, \, t) + \varrho \, \bar{b}(x, \, t)  & & 
\end{alignat*}
in which $\widetilde{M} \in \mathbb{R}^{(N-1)\times (N-1)}$ is a \emph{positive} operator, the Jacobian $R_q \in \mathbb{R}^{(N-1)\times (N-1)}$ is likewise positive definite, and $\tilde{b}, \, \bar{b}, \, \tilde{r}$ are suitable transformations of the vector of bulk forces and of the reaction term. This formulation has many advantages, the most obvious one being that it allows to handle the total mass density with Navier-Stokes techniques and eliminates the tedious positivity constraints on the partial mass densities.
Applying these ideas, we were able to prove the global existence of certain weak solutions under the restriction that the non-zero eigenvalues of $M = M(\rho)$ remain strictly positive independently of the possible vanishing of species.

In this paper, we show that the reformulation based on \eqref{paran}, \eqref{ns} is also suited to study the local and global well-posedness for strong solutions, without restriction on the particular form of the free energy (inside of the assumption $h = h(\rho)$ with $h$ \emph{of Legendre type}, a notion to be defined below), and for general $M = M(\rho)$ symmetric and positively semi--definite of rank $N-1$. From the point of view of its structure, the reformulated system of equations consists of the compressible Navier-Stokes equations, coupled to a doubly nonlinear parabolic system of dimension $N-1$ for the unknown $q$. For $N =2$, the equation for $q$ is scalar, and we would face a variant of the so-called Navier-Stokes-Fourier system with $q$ playing the role of the temperature, and a density-dependent diffusion coefficient $\widetilde{M}$. 

The general method used to study these systems in classes of strong or classical solutions is the contraction principle valid for short times or small perturbations of equilibrium solutions (a property sometimes improperly called 'small data'). We have to pay attention to the fact, though, that the parabolic contraction principle does not apply here in its pure form. There have been two types of attempts to study mixed systems like Navier-Stokes and Navier-Stokes-Fourier. The first method consists in passing to Lagrange coordinates, in terms of which there is an explicit inversion formula for the continuity equation. Then, the density is eliminated, and it is possible to study the parabolic part of the system with a nonlocal term. This is the approach exposed for instance in \cite{tani}, \cite{solocompress3} with short-time well-posedness results in the scale of H\"older spaces (see also \cite{solocompress2} for corresponding results without proofs in scale of Hilbert-spaces). The second method sticks to the Eulerian coordinates and it exploits precise estimates to control the growth of the solution. Early results for this approach are to be found in \cite{solocompress} for the Navier-Stokes operator in the Sobolev (non Hilbertian) scale of spaces, and in \cite{matsunishi,valli82,valli83,vallizaja} for the Sobolev-Hilbert scale. Further short comments on this type of literature are given after the statement of the main Theorems.

In our case, for $N > 2$ the parabolic system for $q$ is non-diagonal, but its linearised principal part in smooth points is still parabolic in the sense of Petrovki, normally elliptic in Amanns notation. This is clearly a nontrivial extension of the traditional problems of fluids mechanics. We shall study the problem in the class proposed in the paper \cite{solocompress} for Navier-Stokes: $W^{2,1}_p$ with $p$ larger than the space dimension for the components of the velocity and $W^{1,1}_{p,\infty}$ for the densities. For the new variable $q$ we also choose the parabolic setting of $W^{2,1}_p$. Within these classes we are able to prove the local existence and the semi-flow property for strong solutions. We shall also prove the global existence under the condition that the initial data are sufficiently near to an equilibrium (stationary) solution. Since this result foots on stability estimates in the state space, we however need to assume the higher regularity of the initial data in order to obtain some stability from the continuity equation. Thus, these solutions exist and are unique on arbitrary large time intervals, but they might not enjoy the extension property. 

A further feature worth to mention is that in our treatment, the question of positivity of the mass densities is reduced to obtaining a $L^{\infty}$ estimate for the relative chemical potentials $q_1, \ldots ,q_{N-1}$ and a positivity estimate for the total mass density $\varrho$. This is a consequence of the fact that we recover $\rho_1, \ldots, \rho_N$ in the form of a continuous map $\mathscr{R}(\varrho, \, q)$ with range in $\mathbb{R}^N_+$. The positivity of a solution $\varrho$ to the continuity equation depends only on the smoothness of the velocity field $v$, while the $L^{\infty}$ bound for $q$ is a natural consequence of the choice of the state space. In this way, the question of positivity is entirely reduced to the smoothness issue, and strong solutions remain by definition positive as long as they are bounded in the state space.

At last we would like to mention that, while finishing this investigation, we became aware of the recent work \cite{piashiba18}. Here the authors study the short-time well-posedness for a model similar to the one considered in \cite{mupoza15,za15}, with certain restrictions to some particular choices for the thermodynamic potentials and kinetic matrix. The paper foots on the same change of variables as in \cite{dredrugagu16}, and it uses a reformulation similar to \eqref{paran}, \eqref{ns}. The problem is studied in the $L^pL^q$ parabolic setting by means of the Lagrange coordinate transformation. This approach provides interesting complements to the methods proposed in the present paper, and vice versa.

\subsection{Main results}

We assume that $\Omega \subset \mathbb{R}^3$ is a bounded domain and $T >0$. We denote $Q= Q_T = \Omega \times ]0,T[$.

In order to formulate our results, we first recall a few notations and definitions. At first for $\ell = 1, \, 2, \ldots$ and $1 \leq p \leq +\infty$ we introduce the anisotropic/parabolic Sobolev spaces
\begin{align*}
W^{2\ell, \ell}_p(Q) := & \{ u \in L^p(Q) \, : \, D_t^{\beta}D^{\alpha}_x u \in L^p(Q) \, \forall\,\, 1 \leq 2 \, \beta + |\alpha| \leq 2 \, \ell \} \, ,\\
\|u\|_{W^{2\ell,\ell}_p(Q)} := & \sum_{0\leq 2 \, \beta + |\alpha| \leq 2 \, \ell} \|D_t^{\beta}D^{\alpha}_x u \|_{L^p(Q)}
\end{align*}
and, with a further index $1 \leq r < \infty$, the spaces
\begin{align*}
W^{\ell}_{p,r}(Q) = W^{\ell,\ell}_{p,r}(Q) := & \{u \in L^{p,r}(Q) \, : \,\sum_{0\leq \beta + |\alpha| \leq \ell} D^{\alpha}_x \, D^{\beta}_t u \in L^{p,r}(Q) \} \, ,\\
\|u\|_{W^{\ell,\ell}_{p,r}(Q)} := & \sum_{0\leq \beta + |\alpha| \leq \ell} \|D_t^{\beta}D^{\alpha}_x u \|_{L^{p,r}(Q)} \, .
\end{align*}
Let us precise that in these notations the space integration index always comes first. For $r = + \infty$, $W^{\ell,\ell}_{p,\infty}(Q)$ denotes the closure of $C^{\ell}(\overline{Q})$ with respect to the norm above, and thus
\begin{align*}
W^{\ell,\ell}_{p,\infty}(Q) := & \{u \in L^{p,\infty}(Q) \, : \,\sum_{0\leq \beta + |\alpha| \leq \ell} D^{\alpha}_x \, D^{\beta}_t u \in C([0,T]; \, L^p(\Omega)) \} \, .
\end{align*}
We moreover need the concept of \emph{essential smoothness} for a proper, convex function $f: \, \mathbb{R}^N \rightarrow \mathbb{R}$ (see \cite{rockafellar}, page 251). 
For an essentially smooth, strictly convex function with open domain, the operation of conjugation is identical with applying the classical Legendre transform. We will therefore call $h: \, \mathbb{R}^N_+ \rightarrow \mathbb{R}$ a \emph{Legendre function} if it belongs to $C^1(\mathbb{R}^N_+)$, is strictly convex, and if $|\nabla_{\rho} h(\rho)| \rightarrow +\infty$ for $\rho \rightarrow \partial \mathbb{R}^N_+$. If the function $h$ is moreover \emph{co-finite} (\cite{rockafellar}, page 116), the gradient mapping $\nabla h$ is invertible between the domain of $h$ and the entire space $\mathbb{R}^N$. Typical free energy densities $h$ are co-finite functions of Legendre type as shown in Appendix, Section \ref{Legendre}.

Due to \eqref{CONSTRAINT}, the diffusion system \eqref{mass} is not parabolic. The matrix $M(\rho) \, D^2h(\rho)$ possesses only $N-1$ positive eigenvalues that moreover might degenerate for vanishing species. There are therefore only $N-1$ 'directions of parabolicity' of the mass transport equations. In order to extract them, we shall need the following standard projector in $\mathbb{R}^N$:
\begin{align*}
\mathcal{P}: \mathbb{R}^N \rightarrow \{1^N\}^{\perp} \, , \quad  \mathcal{P} := \text{Id}_{\mathbb{R}^N} - \frac{1}{N} \, 1^N \otimes 1^N \, .
\end{align*}
Let us introduce also
\begin{align*}
 \mathbb{R}^N_+ := & \{\rho = (\rho_1, \ldots, \rho_N) \in \mathbb{R}^N \, : \, \rho_i > 0 \text{ for } i = 1,\ldots,N\} \, ,\\
 \overline{\mathbb{R}}^N_+ := & \{\rho = (\rho_1, \ldots, \rho_N) \in \mathbb{R}^N \, : \, \rho_i \geq 0 \text{ for } i = 1,\ldots,N\} \, .
\end{align*}
Our first main Theorem is devoted to the short-time existence of a strong solution.
\begin{theo}\label{MAIN}
We fix $p > 3$, and we assume that \begin{enumerate}[(a)]
             \item $\Omega \subset \mathbb{R}^3$ is a bounded domain of class $\mathcal{C}^2$;
             
             \item $M: \, \mathbb{R}^N_+ \rightarrow \mathbb{R}^{N\times N}$ is a mapping of class $C^2(\mathbb{R}^{N}_+; \, \mathbb{R}^{N\times N})$ into the positive semi-definite matrices of rank $N-1$ with constant kernel $1^N = \{1, \ldots, 1\}$;

\item $h: \, \mathbb{R}^N_+ \rightarrow \mathbb{R}$ is of class $C^3(\mathbb{R}^{N}_+)$, and is a co-finite function of Legendre type in its domain $\mathbb{R}^N_+$;

\item $r: \, \mathbb{R}^N_+ \rightarrow \mathbb{R}^{N}$ is a mapping of class $C^1(\mathbb{R}^{N}_+)$ into the orthogonal complement of $1^N$;
\item \label{force} The external forcing $b$ satisfies $\mathcal{P} \, b \in W^{1,0}_p(Q_T; \, \mathbb{R}^{N\times 3})$ and $b -  \mathcal{P} \, b \in L^p(Q_T; \, \mathbb{R}^{N\times 3})$. For simplicity, we assume that $\nu(x) \cdot \mathcal{P} \,b(x, \, t) = 0$ for $x \in \partial \Omega$ and $\lambda_1-$almost all $t \in ]0, \, T[$.
\item The initial data $\rho^0_{1}, \ldots \rho^0_{N}: \, \Omega \rightarrow \mathbb{R}_+$ are strictly positive measurable functions satisfying the following conditions:
\begin{itemize}
 \item The initial total mass density $\varrho_0 := \sum_{i=1}^N \rho_{i}^0$ is of class $W^{1,p}(\Omega)$;
 \item There is $m_0 > 0$ such that $ 0 < m_0 \leq \varrho_0(x)$ for all $x \in \Omega$;
\item The vector defined via $\mu^0 := \partial_{\rho}h(\theta, \, \rho^0_{1}, \ldots \rho^0_{N})$ (initial chemical potentials) satisfies $\mathcal{P} \,\mu^0 \in W^{2-\frac{2}{p}}_p(\Omega; \, \mathbb{R}^N)$;
\item The compatibility condition $\nu(x) \cdot \mathcal{P} \nabla\mu^0(x) = 0$ is valid in $W^{1-\frac{3}{p}}_p(\partial \Omega; \, \mathbb{R}^N)$ in the sense of traces;
\end{itemize}
\item The initial velocity $v^0$ belongs to $W^{2-\frac{2}{p}}_{p}(\Omega; \, \mathbb{R}^3)$ with $v^0 = 0$ in $W^{2-\frac{3}{p}}_{p}(\partial \Omega; \, \mathbb{R}^3)$.
\end{enumerate}
Then, there exists $0 < T^* \leq T$ such that the problem \eqref{mass}, \eqref{momentum} with closure relations \eqref{DIFFUSFLUX}, \eqref{CHEMPOT}, \eqref{GIBBSDUHEMEULER} and boundary conditions \eqref{initial0rho}, \eqref{initial0v},  \eqref{lateral0v}, \eqref{lateral0q} possesses a unique solution in the class
\begin{align*}
 \rho \in W^{1}_{p}(Q_{T^*}; \, \mathbb{R}^N_+), \quad v \in W^{2,1}_p(Q_{T^*}; \, \mathbb{R}^3) \, ,
\end{align*}
such that, moreover, $\mu := \partial_{\rho}h(\theta, \, \rho)$ satisfies $ \mathcal{P} \,\mu \in  W^{2,1}_p(Q_{T^*}; \, \mathbb{R}^N)$. The solution can be uniquely extended to a larger time interval whenever there is $\alpha > 0$ such that
\begin{align*}
\|\mathcal{P} \,\mu\|_{C^{\alpha,\frac{\alpha}{2}}(Q_{T^*})} + \|\nabla (\mathcal{P} \,\mu)\|_{L^{\infty,p}(Q_{T^*})} + \|v\|_{L^{z \, p,p}(Q_{T^*})} + \int_{0}^{T^*} [\nabla v(s)]_{C^{\alpha}(\Omega)} \, ds < + \infty \, .
\end{align*}
Here $z = z(p)$ satisfies $z = \frac{3}{p-2}$ for $3 < p < 5$, $z > 1$ arbitrary for $p = 5$, and $z = 1$ for $p > 5$.
\end{theo}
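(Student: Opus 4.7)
The plan is to convert the mixed-type system into the coupled structure \eqref{paran}--\eqref{ns} outlined in the introduction, and then apply a fixed-point argument based on maximal parabolic regularity combined with the linear theory for the compressible Navier-Stokes operator. Since $h$ is co-finite and of Legendre type, the gradient mapping $\nabla_\rho h: \mathbb{R}^N_+ \to \mathbb{R}^N$ is a $C^2$-diffeomorphism; equivalently, the reduced chemical potential $q := \mathcal{P}\mu$ together with the total density $\varrho$ determines the full density vector through a reconstruction map $\rho = \mathscr{R}(\varrho, q)$ taking values in $\mathbb{R}^N_+$. After substitution and reduction of $M$ modulo its kernel $1^N$, the unknowns $(\varrho, q, v)$ satisfy a doubly nonlinear parabolic system for $q$ of dimension $N-1$, the scalar continuity equation for $\varrho$, and the compressible Navier-Stokes equation for $v$ with pressure $p = P(\varrho, q)$.

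Next, I would set up a fixed-point iteration $\Phi$ on a closed ball $B_R$ inside the state space
\[
X_{T^*} := \{(\varrho, q, v) : \varrho \in W^1_{p,\infty}(Q_{T^*}),\ q \in W^{2,1}_p(Q_{T^*}; \{1^N\}^\perp),\ v \in W^{2,1}_p(Q_{T^*}; \mathbb{R}^3)\},
\]
centred at a lifting of the initial data. Given $(\bar\varrho, \bar q, \bar v) \in B_R$ compatible with the prescribed initial and boundary values, $\Phi$ returns $(\varrho, q, v)$ by solving three decoupled linear problems in which the nonlinear coefficients are frozen along $(\bar\varrho, \bar q, \bar v)$: the linear transport equation $\partial_t \varrho + \divv(\varrho \bar v) = 0$, solved by the method of characteristics using the embedding $W^{2,1}_p \hookrightarrow L^1(0,T; W^{1,\infty})$ for $p>3$; the linear parabolic system for $q$ with principal part $-\divv(\widetilde M \nabla q)$ and Neumann data, via maximal $L^p$-regularity of its normally elliptic symbol; and the linearised Lamé-type operator for $v$ with source $\nabla P(\bar\varrho, \bar q)$ plus the external forces, via the Solonnikov-type maximal $L^p$-regularity theory of \cite{solocompress}.

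The self-map property $\Phi(B_R) \subset B_R$ on a sufficiently short interval $[0, T^*]$ must then be extracted from precise \emph{a priori} estimates in the state-space norms. The delicate point is the pressure gradient: $\nabla P(\varrho, q) = P_\varrho(\varrho,q)\,\nabla \varrho + P_q(\varrho, q)\,\nabla q$ must be controlled in $L^p(Q_{T^*})$ uniformly on $B_R$, which combines the parabolic embedding $W^{2,1}_p \hookrightarrow C^{\alpha, \alpha/2}$ (this is where $p > 3$ is crucial) with Gagliardo-Nirenberg interpolation of $\nabla q$ between $W^{2,1}_p$ and $L^{p,\infty}$. Positivity is automatic: the reconstruction $\rho = \mathscr{R}(\varrho, q)$ takes values in $\mathbb{R}^N_+$ for every bounded $q$, while the continuity equation preserves a positive lower bound $\varrho \geq m_0/2$ on short times because $\bar v$ is Lipschitz in space by the embedding above.

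The main obstacle, and also the core novelty emphasised in the abstract, is that $\Phi$ cannot be contractive in the state-space norm of $X_{T^*}$: differencing the continuity equation with respect to the transport velocity loses one spatial derivative of $\varrho$. The remedy is to prove contraction in a strictly weaker norm, schematically
\[
\|\delta \varrho\|_{L^p(Q_{T^*})} + \|\delta q\|_{W^{1,0}_p(Q_{T^*})} + \|\delta v\|_{W^{1,0}_p(Q_{T^*})},
\]
and to combine it with the closed-ball self-map property of $\Phi$ in $X_{T^*}$ by a weak-compactness argument: the iterates stay bounded in $X_{T^*}$, converge strongly in the weaker norm, and the limit lies in $X_{T^*}$ by weak lower semicontinuity. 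Uniqueness follows from the same contraction applied to two solutions. Finally, the extension criterion is read off the linear estimates: the five quantities listed in the theorem are exactly those needed to control the coefficients in problems (i)-(iii), so whenever they remain finite the linearisation can be restarted beyond $T^*$ with uniform bounds, yielding the asserted continuation.
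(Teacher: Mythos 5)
Your proposal follows exactly the paper's strategy: reformulation in the variables $(\varrho, q, v)$ with $q$ the relative chemical potentials, a linearised fixed-point map with frozen coefficients, self-mapping on a closed ball of the state space via precise a priori estimates, the observation that contraction fails in the state-space norm because differencing the continuity equation loses one spatial derivative of $\varrho$, and the remedy of proving contraction in a weaker norm and combining this with the uniform state-space bound. The decomposition into the three linear subproblems, the role of $p>3$ for the parabolic embeddings, the automatic positivity via the reconstruction map, and the continuation criterion all match Sections~\ref{twomaps}--\ref{FixedPointIter} of the paper.

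The one substantive deviation is the choice of the weaker contraction norm, and here the specific norm you propose would not close. You suggest $\|\delta\varrho\|_{L^p} + \|\delta q\|_{W^{1,0}_p} + \|\delta v\|_{W^{1,0}_p}$; the paper (Theorem~\ref{iter}) instead proves the geometric decay in the parabolic $L^2$-energy norm, namely $\sup_\tau\bigl(\|\delta q(\tau)\|_{L^2}^2 + \|\delta\varrho(\tau)\|_{L^2}^2 + \|\delta v(\tau)\|_{L^2}^2\bigr) + \int\bigl(|\nabla\delta q|^2 + |\nabla\delta v|^2\bigr)$, on short time windows of length $t_1$. The $L^2$ choice is not cosmetic. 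Testing the momentum-difference equation with $\delta v^{n+1}$ converts the pressure contribution into $\int (P^n - P^{n-1})\,\divv\delta v^{n+1}$ by integration by parts, so only $\|P^n - P^{n-1}\|_{L^2}$, hence $\|\delta\varrho^{n+1}\|_{L^2}$ and $\|\delta q^n\|_{L^2}$, enters -- never $\nabla\delta\varrho$. Any attempt to estimate $\delta v$ in $W^{1,0}_p$ (via duality or maximal $L^p$-regularity for the difference equation) brings back $\nabla(P^n - P^{n-1})$, and therefore $\nabla\delta\varrho$, which your proposed norm does not control. Similarly, testing the $q$-difference equation with $\delta q^{n+1}$ produces the coercive term $\int\widetilde{M}^n\nabla\delta q^{n+1}\cdot\nabla\delta q^{n+1}$, which absorbs the lower-order products via Gagliardo--Nirenberg and the uniform state-space bounds on the iterates; there is no analogous coercivity for $L^p$ testing, and the right-hand sides of the difference equations contain products like $\delta\varrho^{n+1}\,\partial_t v^n$ whose $L^p$-norm is not controlled by $\|\delta\varrho^{n+1}\|_{L^p}$ alone. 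Replace the proposed $W^{1,0}_p$-type weaker norm by the parabolic $L^2$-energy norm on subintervals and the rest of your plan goes through exactly as you sketch it.
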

\begin{rem}
\begin{itemize}
\item A solution $(\rho, \ v)$ in the sense of Theorem \ref{MAIN} is strong: The equations \eqref{mass}, \eqref{momentum} are valid pointwise almost everywhere in $Q_{T^*}$. To see this one uses that \eqref{CONSTRAINT} implies the identity
\begin{align*}
 J = - M(\rho) \, \nabla \mu = - M(\rho) \, \mathcal{P} \, \nabla \mu \, .
\end{align*}
Since Theorem \ref{MAIN} establishes parabolic regularity for $ \mathcal{P} \,\mu$, the contributions $\divv J$ are well defined in $L^p(Q_{T^*})$.

\item The result carries over to the case where the potential $h$ has a smaller domain: $h: \, \mathcal{D} \subset \mathbb{R}^N_+ \rightarrow \mathbb{R}$, provided that $\mathcal{D}$ is open, and that $h$ is \emph{of Legendre type} in $\mathcal{D}$\footnote{The concept of a function of Legendre type on an open set is defined \cite{rockafellar}, Th. 26.5. This is more exact then speaking of a Legendre function, even if we shall also employ this terminology if the context is unequivocal.}.
The initial data must satisfy $\rho^0 \in \mathcal{D}$. The maximal existence time is then further restricted by the distance of $\rho^0$ to $\partial \mathcal{D}$. The case that $h$ is not co-finite, which means that the image of $\nabla_{\rho} h$ is a true subset of $\mathbb{R}^N$, corresponds to constraints affecting the chemical potentials, which we do not wish to discuss further here.  
\end{itemize}
\end{rem}
\begin{rem}
Other functional space settings are applicable:
\begin{itemize}
\item The parabolic H\"older-space scale $C^{2+\alpha, \, 1+\frac{\alpha}{2}}$ seems to be very natural. It was applied successfully to the compressible Navier-Stokes equations with energy equation: see \cite{tani} and \cite{solocompress3} for a (unfortunately very short) proof of local well-posedness;  
\item The Hilbert--space scale $W^{2\ell,\ell}_2$ with $\ell$ sufficiently large. In the latter approach one uses the conservation law structure of the system to derive \emph{a priori} bounds for higher derivatives of the solution in $L^2$. For several variants of the method in the case of Navier-Stokes or Navier-Stokes-Fourier, see \cite{matsunishi,valli82,valli83,vallizaja}, \cite{solocompress2}, or also \cite{cho,hoff,bellafei} and references. Usually, somewhat more regularity of the domain and the coefficients is demanded because it is necessary to differentiate the equations several times.
\item In \cite{feinovsun} the Navier-Stokes-Fourier system was also studied in classes of higher square--integrable derivatives. In this case the maximal existence time can be characterised by a weaker criterion. Indeed, the boundedness of the velocity gradient suffices to guarantee that the solution can be extended.  
\end{itemize}
Proving the local well-posedness for the mixture case in these classes should be possible 'under suitable modifications'. The quotation marks hint toward a substantial problem: The principal part of the parabolic system for the variables $q_1,\ldots,q_{N-1}$ is non-diagonal for $N > 2$. This might be an obstacle to simply transferring the results.
\end{rem}
Our second main result concerns global existence under suitable restrictions for the data. Here the concept of an equilibrium solution is first needed. An equilibrium solution for \eqref{mass}, \eqref{momentum} is defined as a vector $(\rho_1^{\text{eq}}, \ldots, \rho_N^{\text{eq}}, \, v_1^{\text{eq}}, \, v_2^{\text{eq}}, \, v_3^{\text{eq}})$ of functions defined in $\Omega$ with
\begin{align*}
 \rho^{\text{eq}} \in W^{1,p}(\Omega; \, \mathbb{R}^N_+), \quad v^{\text{eq}} \in W^{2,p}(\Omega; \, \mathbb{R}^3)
\end{align*}
and the vector field $\mu^{\text{eq}} = \partial_{\rho}h(\theta, \, \rho^{\text{eq}})$ satisfies $ \mathcal{P} \,\mu^{\text{eq}} \in  W^{2,p}(\Omega; \, \mathbb{R}^N)$. For these functions, the relations
\begin{alignat}{2}
\label{massstat} \divv( \rho^{\text{eq}}_i \, v^{\text{eq}} - \sum_{j=1}^N M_{i,j}(\rho^{\text{eq}}) \, (\nabla \mu_j^{\text{eq}} - b^j(x)) & = 0 & &  \text{ for } i = 1,\ldots,N \, ,\\
\label{momentumsstat} \divv( \varrho^{\text{eq}} \, v^{\text{eq}}\otimes v^{\text{eq}} - \mathbb{S}(\nabla v^{\text{eq}})) + \nabla p^{\text{eq}} & = \sum_{i=1}^N \rho_i^{\text{eq}} \, b^i(x)  & & 
\end{alignat}
are valid in $\Omega$. Here we let $p^{\text{eq}} = -h(\theta, \, \rho^{\text{eq}}) + \sum_{i=1}^N \rho_i^{\text{eq}} \, \mu_i^{\text{eq}}$. The boundary conditions are $v^{\text{eq}} = 0$ on $\partial \Omega$ and $\nu(x) \cdot M_{i,j}(\rho^{\text{eq}}) \, (\nabla \mu_j^{\text{eq}} - b^j(x)) = 0$ on $\partial \Omega$. We show that the problem \eqref{mass}, \eqref{momentum} possesses a unique strong solution on arbitrary large, but finite time intervals, given that:
\begin{enumerate}[(i)]
\item The equilibrium solution and the initial data are sufficiently smooth;
\item The distance of the initial data to an equilibrium solution is sufficiently small.
\end{enumerate}
\begin{theo}\label{MAIN3}
We adopt the assumptions of Theorem \ref{MAIN}, but also assume that $b = b(x)$ does not depend on time with $b \in W^{1,p}(\Omega; \, \mathbb{R}^{N\times 3})$ and $r \equiv 0$. In addition, we assume that an equilibrium solution $(\rho^{\text{eq}}, \, v^{\text{eq}}) \in W^{1,p}(\Omega; \, \mathbb{R}^N_+) \times  W^{2,p}(\Omega; \, \mathbb{R}^3)$ is given and that the data possess the additional regularity
\begin{align*}
 \varrho^{\text{eq}}, \, \varrho^0 \in  W^{2,p}(\Omega), \quad  v^{\text{eq}} \in W^{3,p}(\Omega; \, \mathbb{R}^3), \, v^0 \in W^{2,p}(\Omega; \, \mathbb{R}^3)   \, .
\end{align*}
Then, for every $0 < T < + \infty$, there exists $R_1 > 0$, depending on $T$ and on the respective norms of the data, such that if
\begin{align*}
 \|\mathcal{P}\, (\mu^0 - \mu^{\text{eq}})\|_{W^{2-\frac{2}{p}}_p(\Omega; \, \mathbb{R}^N)} + \|\varrho^0 - \varrho^{\text{eq}}\|_{W^{1,p}(\Omega)} +  \|v^0 - v^{\text{eq}}\|_{W^{2-\frac{2}{p}}_p(\Omega; \, \mathbb{R}^3)} \leq R_1
\end{align*}
the problem \eqref{mass}, \eqref{momentum} with closure relations \eqref{DIFFUSFLUX}, \eqref{CHEMPOT}, \eqref{GIBBSDUHEMEULER} and boundary conditions \eqref{initial0rho}, \eqref{initial0v},  \eqref{lateral0v}, \eqref{lateral0q} possesses a unique solution in $Q_T$ in the same class as in Theorem \ref{MAIN}.
\end{theo}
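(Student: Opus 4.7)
The plan is to exploit the reformulation $(q,\varrho,v)$ introduced above (with $q=\mathcal{P}\mu$ the relative chemical-potential vector), the local existence granted by Theorem \ref{MAIN}, and a perturbation argument around the given equilibrium. Setting $\tilde q := q-q^{\text{eq}}$, $\tilde\varrho := \varrho-\varrho^{\text{eq}}$, $\tilde v := v-v^{\text{eq}}$ and subtracting the stationary equations \eqref{massstat}, \eqref{momentumsstat} from their time-dependent counterparts, I obtain a quasilinear coupled system of schematic form
\begin{align*}
R_q(\varrho,q)\, \partial_t \tilde q - \divv(\widetilde M(\varrho,q)\, \nabla \tilde q) & = \mathcal{F}_1,\\
\partial_t \tilde \varrho + v\cdot\nabla \tilde \varrho + \tilde\varrho\, \divv v & = \mathcal{F}_2,\\
\varrho\, \partial_t \tilde v + \varrho\, (v\cdot\nabla)\tilde v - \divv\mathbb{S}(\nabla \tilde v) + P_\varrho\,\nabla\tilde\varrho + (\nabla_q P)\,\nabla \tilde q & = \mathcal{F}_3,
\end{align*}
with homogeneous initial and boundary data. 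Every $\mathcal{F}_j$ vanishes at the equilibrium; each of its terms is either quadratic in $(\tilde q,\tilde\varrho,\tilde v)$ or linear with coefficients built from derivatives of $(q^{\text{eq}},\varrho^{\text{eq}},v^{\text{eq}})$ or from $\tilde b$. The matrices $R_q$ and $\widetilde M$ are positive and $P_\varrho>0$ uniformly in a neighbourhood of the equilibrium, by the Legendre-type assumption on $h$ and the positive semidefiniteness of $M$.

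The central step is an a priori estimate on any interval $[0,T']$ of existence, measured in the norm
\begin{align*}
\mathcal{N}(T') := \|\tilde q\|_{W^{2,1}_p(Q_{T'})} + \|\tilde v\|_{W^{2,1}_p(Q_{T'})} + \|\tilde\varrho\|_{L^\infty(0,T';W^{2,p}(\Omega))} + \|\partial_t\tilde\varrho\|_{L^\infty(0,T';W^{1,p}(\Omega))}.
\end{align*}
The target is an inequality $\mathcal{N}(T') \leq C(T)\bigl(R_1 + \mathcal{N}(T')^2\bigr)$ with $C(T)$ monotone in $T$ and depending on the data. Three ingredients produce it: (i) maximal parabolic $L^p$-regularity for the $\tilde q$-block with coefficients frozen at equilibrium, the variable-coefficient correction being absorbed for small $\mathcal{N}$; (ii) maximal regularity for the Lamé-type principal part of the $\tilde v$-equation in the spirit of \cite{solocompress,valli82,vallizaja}, with the coupling terms $P_\varrho\nabla\tilde\varrho$ and $(\nabla_qP)\nabla\tilde q$ moved to the right-hand side and controlled via the effective-viscous-flux manipulation; (iii) $W^{2,p}$- and time-derivative estimates for $\tilde\varrho$ obtained by commuting two spatial derivatives through the transport equation and integrating a Grönwall inequality whose exponent $\|\nabla v\|_{L^1(0,T';L^\infty(\Omega))}$ is linear in $\mathcal{N}(T')$ via the embedding $W^{2,1}_p\hookrightarrow L^1(0,T;W^{1,\infty}(\Omega))$ valid for $p>3$.

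To close the argument I fix $T>0$, choose $R_1$ small enough that $4C(T)^2R_1<1$ and invoke Theorem \ref{MAIN} to obtain a strong solution on some maximal $[0,T^*]$. The a priori bound applied on $[0,T^*]$ forces $\mathcal{N}(T^*)\leq 2C(T)R_1$ uniformly in $T^*$. Because $p>3$, parabolic embedding gives $W^{2,1}_p(Q_{T^*})\hookrightarrow C^{\alpha,\alpha/2}(Q_{T^*})\cap L^\infty(0,T^*;W^{1,\infty}(\Omega))$, so each of the four quantities in the continuation criterion of Theorem \ref{MAIN} stays uniformly bounded as $T^*$ grows. Hence $T^*$ can be pushed to $T$, yielding existence on $Q_T$. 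Uniqueness follows by applying the contraction estimates already developed for Theorem \ref{MAIN} to the difference of two solutions written in the perturbation variables.

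The hard part, and the reason for the enhanced regularity hypotheses $\varrho^0,\varrho^{\text{eq}}\in W^{2,p}$ and $v^{\text{eq}}\in W^{3,p}$, is step (iii): the continuity equation provides no parabolic smoothing, so all stability of $\tilde\varrho$ in the state space norm must come from transport estimates. Commuting two derivatives through the convective term generates sources $\nabla^2 v\cdot\nabla\tilde\varrho$ and $\nabla^2\tilde v\cdot\nabla\varrho^{\text{eq}}$ which can only be controlled in $L^\infty_t L^p_x$ at the cost of one extra derivative on $\varrho^{\text{eq}}$ and $\varrho^0$; analogously, the linearized source $\nabla^2 v^{\text{eq}}\cdot\tilde v$ in the $\tilde v$-equation requires $v^{\text{eq}}\in W^{3,p}$. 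Ensuring that the corresponding Grönwall constant $C(T)$ grows at most polynomially in $T$ rather than double-exponentially, while handling the non-diagonal principal part of the $\tilde q$-block arising for $N>2$, is the main technical challenge of the proof.
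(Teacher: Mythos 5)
Your overall strategy—perturb around the equilibrium, prove an a priori estimate that closes for small $R_1$, and invoke the continuation criterion of Theorem \ref{MAIN}—is a natural and genuinely different route from the paper's. The paper does not linearise around the equilibrium directly: it linearises the operator $\widetilde{\mathscr{A}}$ around a \emph{time-dependent extension} $\hat{u}^0$ of the initial data (the second fixed-point map $\mathcal{T}^1$ of Section \ref{twomaps}), constructs a global fixed point in $\phantom{}_0\mathcal{Y}_T$ directly via Lemma \ref{selfmapTsecond}, and uses the equilibrium only at the very end to show that the ``source'' $\widetilde{\mathscr{A}}(\hat{u}^0) = \widetilde{\mathscr{A}}(\hat{u}^0)-\widetilde{\mathscr{A}}(\hat{u}^{\text{eq}})$ has small $L^p$ norm by the mean-value theorem applied to the $C^1$-Nemytskii operator (Lemma \ref{IMAGESPACE}). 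No continuation argument enters. Your continuation scheme would be a legitimate alternative, provided the a priori estimate can be proven.

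This is where there is a concrete gap. Your norm $\mathcal{N}(T')$ places $\tilde\varrho$ in $L^\infty(0,T';W^{2,p}(\Omega))$, and in step (iii) you propose to obtain this by commuting two spatial derivatives through the transport equation. But once the equilibrium continuity equation is subtracted, $\tilde\varrho$ satisfies
\begin{align*}
\partial_t\tilde\varrho + \divv(\tilde\varrho\, v) = -\divv(\varrho^{\text{eq}}\,\tilde v)\, ,
\end{align*}
so two spatial derivatives of the right-hand side produce the term $\varrho^{\text{eq}}\,\nabla^3\tilde v$ (among others). Since $\tilde v$ belongs only to $W^{2,1}_p(Q_{T'})$, the quantity $\nabla^3\tilde v$ is not controlled in any norm that closes a Gr\"onwall inequality, and the source terms you list ($\nabla^2 v\cdot\nabla\tilde\varrho$, $\nabla^2\tilde v\cdot\nabla\varrho^{\text{eq}}$) omit precisely this leading contribution. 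The extra regularity $\varrho^0,\varrho^{\text{eq}}\in W^{2,p}$ does not help here, because the obstruction sits on $\tilde v$, not on the data. The paper sidesteps this by \emph{not} requiring $W^{2,p}$ regularity of the density perturbation at all: its state space for $\sigma$ is $W^{1,1}_{p,\infty}(Q_T)$, i.e.\ only $W^{1,p}$ in space. The extra $W^{2,p}$ regularity hypothesis is exploited only to obtain $\hat{\varrho}^0\in W^{2,0}_p(Q_T)$ for the \emph{data extension} (transported by the smoother $\hat{v}^0 \in W^{3,0}_p$, cf.\ \eqref{extendbetter}), which feeds into Lemma \ref{PerturbConti} as a coefficient, never as a regularity class for the solution. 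To salvage your approach you should weaken $\mathcal{N}$ to carry $\tilde\varrho$ only in $L^\infty(0,T';W^{1,p}(\Omega))$ and mimic the paper's device of attributing the higher spatial regularity to a fixed extension field rather than to the solution itself.
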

\begin{rem}
 \begin{itemize}
  \item One particular stability issue for the compressible Navier-Stokes equations and for the Navier-Stokes-Fourier system is well studied in the Hilbert space setting in \cite{matsunishi,valli82,valli83,vallizaja}. It is proved there that some 'equilibrium solution' $v^{\text{eq}} = 0$ and $\varrho^{\text{eq}} = \text{const.}$ (and, in the case of Navier-Stokes-Fourier, $\theta^{\text{eq}} =  \text{const.}$) is globally stable. For initial data sufficiently close to this solution, there indeed exists a global strong solution ($T = + \infty$). Extensions of this result to the stability of other stationary solutions are, to the best of our knowledge, not available. 
  \item Robustness estimates on bounded time intervals are to be found in Theorem 1 of \cite{bellafei}, as well as more recent references in the stability discussion. 
  \item The additional regularity required in Theorem \ref{MAIN3} for the data is sufficient, but might be not optimal. Since we are interested in a qualitative result we do not attempt to formulate minimal assumptions in this place.
 \end{itemize}
\end{rem}

\subsection{Organisation of the paper}

Section \ref{changevariables} explains the change of variables in the transport problem, which is the core of our method. The closely related Section \ref{reformulation} reformulates the partial differential equations and the main results in these new variables.

In Section \ref{technique1} we introduce the differential operators to be investigated in the analysis and the Banach spaces in which they are defined. We state the $C^1-$property for these operators and discuss diverse technicalities such as trace properties and the extension of the boundary data.

Section \ref{twomaps} introduces two methods of linearising in order to reformulate the operator equation as a fixed-point problem. The first method freezes both the coefficients and the lower-order terms, and is applied to prove the short-time existence. The second method is somewhat more demanding and relies on linearising the entire lower-order part of the operators around a suitable extension of the initial data. This second method allows to prove stability estimates and is used for the global existence result. 

The technical part is occupied by the remaining sections. Section \ref{ESTI} states the main continuity estimates for the inverse of the principal part of the linearised operators. 

In Section \ref{contiT}, we apply these estimates to show the controlled growth of the solution and the state space estimates for the short-time existence and uniqueness. We prove the convergence of the fixed-point iteration in Section \ref{FixedPointIter}.

%

For the global existence, we prove the main estimate in the section \ref{contiT1}, and the existence of a fixed-point in Section \ref{FixedPointT1}.

\section{A change of variables to tackle the analysis}\label{changevariables}

The system \eqref{mass}, \eqref{momentum} exhibits several features that might restrain the global well-posedness: 1. The diffusion system is coupled at the highest order; 2. This system possesses mixed parabolic--hyperbolic character, with possibly degenerated parabolicity; 3. The mass densities are subject to positivity constraints. We show in the present paper that there is a reformulation of the problem allowing to eliminate the positivity constraints on $\rho$ and to handle the singularity due to $M \, 1^N = 0$. 
A first main idea is to use the chemical potentials as principal variables. With the help of the conjugate convex function $h^*$ to $h$, we invert the relation $\mu_i = \partial_{\rho_i}h(\rho)$ for $i=1,\ldots,N$, which reads
\begin{align}\label{convexanal}
\rho_i = \partial_{\mu_i}h^*(\mu_{1},\ldots,\mu_N) \text{ for } i=1,\ldots,N \, .
\end{align}
If $h$ is of Legendre type on $\mathbb{R}^N_+$ and co-finite, then $h^*$ is strictly convex and smooth on $\mathbb{R}^N$. Thus, the natural domain of $\mu$ is not subject to constraints. The idea to pass to dual variables to avoid the positivity constraints in multicomponent transport problems is not new. It was probably introduced first in the context of the weak solution analysis of semiconductor equations (see a. o. \cite{gagroe}). The method has been generalised in the context of a \emph{boundedness by entropy method}: See, among others, \cite{juengel15,juengel17} to allow the weak solution analysis of full rank parabolic systems. 

In the context of Fick-Onsager or equivalent closure equations for the diffusion fluxes, the PDE system exhibits a rank $N-1$ parabolicity. In the literature, this parabolicity could be exploited by imposing an incompressibility condition which allows to eliminate one variable: See \cite{chenjuengel}, \cite{herbergpruess}, \cite{bothepruess} for this approach. In these cases the free energy is positively homogeneous, and the thermodynamic pressure resulting from \eqref{GIBBSDUHEMEULER} is constant.

In the paper \cite{dredrugagu16}, we first proposed to combine the inversion formula \eqref{convexanal} with a linear transformation in order to eliminate the positivity constraints and to exploit the rank $N-1$ parabolicity, without imposing restriction on the pressure of the physical system. Note that already in the papers \cite{bothedreyer,dreyerguhlkemueller} devoted mainly to modelling, the diffusion problem is partly formulated in variables $\varrho$ (total mass density) and $\mu_1-\mu_N, \ldots,\mu_{N-1}-\mu_N$ (differences of chemical potentials). These models single out one particular species, introducing some asymmetry. For the theoretical investigation we shall therefore rather follow \cite{dredrugagu16} where the choice of the projector is left open. 

We choose a basis $\xi^1,\ldots,\xi^{N-1}, \, \xi^N$ of $\mathbb{R}^N$ such that $\xi^N = 1^N$, and introduce the uniquely determined $\eta^1,\ldots,\eta^N \in \mathbb{R}^N$ such that $\xi^i \cdot \eta^j = \delta^{i}_{j}$ for $i,j = 1,\ldots,N$ (dual basis). We define
\begin{align*}
q_{\ell} := \eta^{\ell} \cdot \mu := \sum_{i=1}^N \eta^{\ell}_i \, \mu_i \text{ for } \ell = 1,\ldots,N-1 \, .
\end{align*}
We call $q_1,\ldots,q_{N-1}$ the \emph{relative chemical potentials}. We can now express
\begin{align*}
\varrho = \sum_{i=1}^N \rho_i & = 1^{N} \cdot \nabla_{\mu}h^*(\mu_1,\ldots,\mu_N) = \sum_{i=1}^N \partial_{\mu_i} h^*(\mu_1,\ldots,\mu_N) \\
& = 1^N \cdot \nabla_{\mu}h^*(\sum_{\ell = 1}^{N-1} q_{\ell} \, \xi^{\ell} + (\mu \cdot \eta^N) \, 1^N) \, .
\end{align*}
This is an algebraic equation of the form $F(\mu \cdot \eta^N, \, q_1, \ldots, q_{N-1}, \, \varrho) = 0$. We notice that
\begin{align*}
\partial_{\mu \cdot \eta^N} F(\mu \cdot \eta^N, \, q_1, \ldots, q_{N-1}, \, \varrho) = D^{2}h^*(\mu) 1^N \cdot 1^N > 0 \, ,
\end{align*}
due to the strict convexity of the conjugate function. Thus, the latter algebraic equation defines the last component $\mu \cdot \eta^N$ implicitly as a differentiable function of $\varrho$ and $q_1, \ldots, q_{N-1}$. We call this function $\mathscr{M}$ and obtain the equivalent formula
\begin{align}\label{MUAVERAGE}
\mu & = \sum_{\ell=1}^{N-1} q_{\ell} \, \xi^{\ell} + \mathscr{M}(\varrho, \, q_1,\ldots,q_{N-1}) \, 1^N \, ,\\
\label{RHONEW}\rho & = \nabla_{\mu}h^*( \sum_{\ell=1}^{N-1} q_{\ell} \, \xi^{\ell} + \mathscr{M}(\varrho, \, q_1,\ldots,q_{N-1}) \, 1^N) \, ,
\end{align}
where only the total mass density $\varrho$ and the relative chemical potentials $q_1,\ldots,q_{N-1}$ occur as free variables. Since the pressure obeys the Euler equation \eqref{GIBBSDUHEMEULER}
\begin{align}\label{PRESSUREDEF}
p = h^*(\mu) = &  h^*( \sum_{\ell=1}^{N-1} q_{\ell} \, \xi^{\ell} + \mathscr{M}(\varrho, \, q_1,\ldots,q_{N-1}) \, 1^N) =: P(\varrho, \, q) \, .
 \end{align}
Certain properties of the functions $\mathscr{M}$ and $P$ for general $h = h(\rho)$ have already been studied in the Section 5 of \cite{dredrugagu16}.
Here we need only the following property:
\begin{lemma}\label{pressurelemma}
 Suppose that $h \in C^{3}(\mathbb{R}^N_+)$ is a \emph{Legendre} function in $\mathbb{R}^N_+$, and the image of the gradient map $\nabla_{\rho} h$ is the entire $\mathbb{R}^N$. Then, the formula \eqref{PRESSUREDEF} defines a function $P$ which belongs to $C^2(\mathbb{R}_{+} \times \mathbb{R}^{N-1})$.
\end{lemma}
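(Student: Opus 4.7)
The plan is to unfold the definition of $P$ and verify that each ingredient of the composition carries at least two continuous derivatives.

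First I would establish that the Legendre conjugate $h^*$ is of class $C^3$ on all of $\mathbb{R}^N$ and that $\nabla h^* : \mathbb{R}^N \to \mathbb{R}^N_+$ is a $C^2$-diffeomorphism. The Legendre-type assumption on $h$, combined with Rockafellar's Theorem 26.5, makes $\nabla h$ a homeomorphism from $\mathbb{R}^N_+$ onto its image, which coincides with $\mathbb{R}^N$ by the surjectivity/co-finiteness hypothesis. Because $h \in C^3(\mathbb{R}^N_+)$ and strict convexity makes $D^2 h(\rho)$ positive definite pointwise, the classical inverse function theorem upgrades $\nabla h$ to a $C^2$-diffeomorphism, whose inverse is $\nabla h^*$. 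Hence $h^* \in C^3(\mathbb{R}^N)$ with Hessian $D^2 h^*(\mu) = (D^2 h(\rho))^{-1}$ positive definite everywhere.

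Next I would apply the implicit function theorem to
\[
F(s, q, \varrho) := 1^N \cdot \nabla h^*\!\Big(\sum_{\ell=1}^{N-1} q_\ell \, \xi^\ell + s \, 1^N\Big) - \varrho ,
\]
which is of class $C^2$ in all its arguments thanks to the previous step. As computed in the excerpt, $\partial_s F = D^2 h^*(\mu) \, 1^N \cdot 1^N > 0$ by strict convexity of $h^*$, so any zero of $F$ lies on a locally unique $C^2$ branch $s = \mathscr{M}(\varrho, q)$. Global well-definedness of $\mathscr{M}$ on the entire half-space $\mathbb{R}_+ \times \mathbb{R}^{N-1}$ follows from the strict monotonicity of $s \mapsto F(s,q,\varrho)$ combined with the surjectivity of $\nabla h$ (this point is already addressed in \cite{dredrugagu16}, Section 5, and can be reproduced by observing that, for fixed $q$, the curve $s \mapsto \nabla h^*(\sum_\ell q_\ell \xi^\ell + s \, 1^N)$ sweeps out all values of the total mass density in $\mathbb{R}_+$).

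The conclusion is immediate: $P(\varrho, q) = h^*\!\big(\sum_\ell q_\ell \xi^\ell + \mathscr{M}(\varrho, q) \, 1^N\big)$ displays $P$ as a composition of the $C^3$ map $h^*$ with the $C^2$ map $(\varrho, q) \mapsto \sum_\ell q_\ell \xi^\ell + \mathscr{M}(\varrho, q) \, 1^N$, whence $P \in C^2(\mathbb{R}_+ \times \mathbb{R}^{N-1})$. The main substantive step in this chain is Step 1: passing from the bare Legendre-type regularity (continuous inverse of $\nabla h$) to genuine $C^2$-regularity of $\nabla h^*$, for which the pointwise invertibility of $D^2 h$ furnished by strict convexity is essential. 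Once this is secured, both the implicit function theorem application and the final composition argument are completely routine.
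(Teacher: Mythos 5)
Your proof follows essentially the same route as the paper's: Rockafellar's Theorem 26.5 plus co-finiteness to identify $\operatorname{dom}(h^*)=\mathbb{R}^N$, the inverse function theorem to upgrade $h^*$ to $C^3$, the implicit function theorem for $\mathscr{M}$ (the paper simply writes the derivative formulas out explicitly), and composition for $P$. One remark worth keeping in mind, which applies equally to the paper's own wording: strict convexity of a $C^2$ function does not by itself force $D^2h(\rho)$ to be nonsingular at every interior point (a scalar example on $\mathbb{R}_+$ with $h''(\rho)=(\rho-1)^2/\rho$ is strictly convex, essentially smooth, co-finite and $C^\infty$, yet $h''(1)=0$), so the inverse-function-theorem step tacitly uses the stronger hypothesis $D^2h>0$ on $\mathbb{R}^N_+$ --- a condition that does hold for all the free energies in the paper's Appendix and is used throughout, but is not literally implied by "Legendre function" alone.
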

\begin{proof}
 Due to the main Theorem 26.5 of \cite{rockafellar} on the Legendre transform, we know that the convex conjugate $h^*$ is differentiable and locally strictly convex on the image $\nabla_{\rho} h(\mathbb{R}^N_+)$ of the gradient mapping. In addition, $\nabla_{\rho} h(\mathbb{R}^N_+) = \text{int}(\text{dom}(h^*))$. By assumption, we thus know that $\text{dom}(h^*) = \mathbb{R}^N$. Since $\nabla h$ and $\nabla h^*$ are inverse to each other, the inverse mapping theorem allows to show that $h^* \in C^3(\mathbb{R}^N)$ if and only if $h \in C^3(\mathbb{R}^N_+)$.
 
 Consider now the function $\mathscr{M}$ introduced in \eqref{MUAVERAGE}. Since it is obtained implicitly from the algebraic relation $1^N \cdot \nabla_{\mu}h^*(\sum_{\ell = 1}^{N-1} q_{\ell} \, \xi^{\ell} + (\mu \cdot \eta^N) \, 1^N) - \varrho = 0$, we obtain for the derivatives the expressions
 \begin{align*}
  \partial_{\varrho} \mathscr{M}(\varrho, \, q) = \frac{1}{D^2h^*1^N \cdot 1^N}, \quad \partial_{q_k}\mathscr{M}(\varrho, \, q) = -\frac{D^2h^*1^N \cdot \xi^k}{D^2h^*1^N \cdot 1^N} \, ,
 \end{align*}
in which the Hessian $D^2h^*$ is evaluated at $\mu = \sum_{\ell=1}^{N-1} q_{\ell} \, \xi^{\ell} + \mathscr{M}(\varrho, \, q_1,\ldots,q_{N-1}) \, 1^N$. We thus see that $\mathscr{M}\in C^2(\mathbb{R}_+ \times \mathbb{R}^{N-1})$. Clearly, the formula \eqref{PRESSUREDEF} implies that $P \in C^2(\mathbb{R}_+ \times \mathbb{R}^{N-1})$.
\end{proof}
In order to deal with the right-hand side (external forcing), we also introduce projections for the field $b$. For $\ell = 1,\ldots,N-1$, we define $\tilde{b}^{\ell}(x, \, t) := \sum_{i=1}^N b^i(x, \, t) \, \eta^{\ell}_i$ and $\bar{b}(x, \, t) := \sum_{i=1}^N b^i(x, \, t) \, \eta^{N}_i$ in order to express $$b^i(x, \, t) := \sum_{\ell=1}^N \tilde{b}^{\ell}(x, \, t) \, \xi^{\ell}_i + \bar{b}(x, \, t) \text{ for } i =1,\ldots,N \, .$$
For the reaction term $r: \, R^N_+ \rightarrow \mathbb{R}^N$, $\rho \mapsto r(\rho)$, we define
\begin{align*}
 \tilde{r}_k(\varrho, \, q) := \sum_{i=1}^N \xi^k_i \, r_i( \sum_{k=1}^{N-1} R_k(\varrho, \, q) \, \eta^k + \varrho \, \eta^N) \, \text{ for } k = 1,\ldots,N-1 \, .
\end{align*}

\section{Reformulation of the partial differential equations and of the main theorem}\label{reformulation}

We recall \eqref{CONSTRAINT} and we see that the diffusion fluxes have the form
\begin{align*}
& J^i = - \sum_{j=1}^N M_{i,j}(\rho_1,\ldots,\rho_N) \, (\nabla \mu_j - b^j(x, \, t))\\
&=  -\sum_{j=1}^N\left[ \sum_{\ell = 1}^{N-1}  M_{i,j}(\rho_1,\ldots,\rho_N) \, \xi^{\ell}_j \, (\nabla q_{\ell} - \tilde{b}^{\ell}) - M_{i,j}(\rho_1,\ldots,\rho_N) \, (\nabla \mathscr{M}(\varrho, \, q) - \bar{b}(x, \, t))\right] \\
& \quad = - \sum_{\ell = 1}^{N-1}  \left[\sum_{j=1}^N M_{i,j}(\rho_1,\ldots,\rho_N) \, \xi^{\ell}_j\right] \, (\nabla q_{\ell}- \tilde{b}^{\ell}) \, .
 \end{align*}
If we introduce the rectangular projection matrix $\mathcal{Q}_{j,\ell} = \xi^{\ell}_j$ for $\ell = 1,\ldots,N-1$ and $j = 1,\ldots,N$, then $J = - M \, \mathcal{Q} (\nabla q-\tilde{b})$. Thus, we consider equivalently
\begin{alignat*}{2}
\partial_t \rho + \divv( \rho \, v - M \, \mathcal{Q}\, (\nabla q-\tilde{b}(x, \,t))) & = r \, , & & \\
\partial_t (\varrho \, v) + \divv( \varrho \, v\otimes v - \mathbb{S}(\nabla v)) + \nabla P(\varrho, \, q) & = \sum_{i=1}^N \rho_i \, b^i(x, \, t)  & &  \, .
\end{alignat*}
Next we define, for $k = 1,\ldots, N-1$, the maps
\begin{align}\label{RHONEWPROJ}
R_{k}(\varrho, \, q) & := \sum_{j=1}^N \xi^{k}_j \, \rho_j = \sum_{j=1}^N \xi^{k}_j \, \partial_{\mu_j}h^*( \sum_{\ell=1}^{N-1} q_{\ell} \, \xi^{\ell} + \mathscr{M}(\varrho, \, q_1,\ldots,q_{N-1}) \, 1^N  )\, .
\end{align}
Obviously we can express $\rho_i := \sum_{k=1}^{N-1} R_k(\varrho, \, q) \, \eta^k_i + \varrho \, \eta^N_i$.
We note a particular property of the vector field $R$.
\begin{lemma}\label{rhonewlemma}
 Suppose that $h \in C^{3}(\mathbb{R}^N_+)$ is a co-finite Legendre function on $\mathbb{R}^N_+$. Then, the formula \eqref{RHONEWPROJ} defines $R$ as a vector field of class $C([0, \, +\infty[ \times \mathbb{R}^{N-1}; \, \mathbb{R}^{N-1})$ and $C^2(\mathbb{R}_{+} \times \mathbb{R}^{N-1}; \, \mathbb{R}^{N-1})$. The Jacobian $\{R_{k,q_j}\}_{k,j=1,\ldots,N-1}$ is symmetric and positively definite at every $(\varrho, \, q) \in \mathbb{R}_{+} \times \mathbb{R}^{N-1}$ and
\begin{align*}
R_{q}(\varrho, \, q) = \mathcal{Q}^T \, D^{2}h^* \, \mathcal{Q} - \frac{\mathcal{Q}^T \, D^2h^* 1^N \otimes \mathcal{Q}^T \, D^2h^* 1^N}{D^2h^* 1^N \cdot 1^N} \, .
\end{align*}
In this formula, the Hessian $D^2h^*$ is evaluated at $\mu = \sum_{\ell=1}^{N-1} q_{\ell} \, \xi^{\ell} + \mathscr{M}(\varrho, \, q_1,\ldots,q_{N-1})$.
\end{lemma}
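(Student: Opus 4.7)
The plan is to rewrite \eqref{RHONEWPROJ} in compact matrix form as $R(\varrho, q) = \mathcal{Q}^{T} \nabla h^{*}(\mu(\varrho, q))$ with $\mu(\varrho, q) := \mathcal{Q} q + \mathscr{M}(\varrho, q) \, 1^N$, and then to treat the regularity, the explicit Jacobian formula, and the positive definiteness in turn, leveraging what has already been established in the proof of Lemma \ref{pressurelemma}.

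\textbf{Regularity.} Since $h$ is a co-finite Legendre function of class $C^3(\mathbb{R}^N_+)$, the conjugate $h^{*}$ is of class $C^3(\mathbb{R}^N)$ by the inverse mapping argument used in the proof of Lemma \ref{pressurelemma}. Combined with $\mathscr{M} \in C^2(\mathbb{R}_+ \times \mathbb{R}^{N-1})$ from that same lemma, the composition gives $R \in C^2(\mathbb{R}_+ \times \mathbb{R}^{N-1}; \mathbb{R}^{N-1})$. For continuity up to $\varrho = 0$ I would argue indirectly, without analysing the blow-up $\mathscr{M}(\varrho, q) \to -\infty$: from $R_k(\varrho, q) = \sum_j \xi^k_j \, \rho_j(\varrho, q)$ with $\rho_j \geq 0$ and $\sum_j \rho_j = \varrho$, any sequence $(\varrho_n, q_n) \to (0, q_0)$ forces $\rho_{n, j} \to 0$ componentwise and hence $R(\varrho_n, q_n) \to 0$, which determines the continuous extension $R(0, q) := 0$.

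\textbf{Jacobian and symmetry.} The chain rule gives $R_q = \mathcal{Q}^{T} D^2 h^{*}(\mu) \, \mu_q$ with $\mu_q = \mathcal{Q} + 1^N \otimes \nabla_q \mathscr{M}$. Substituting the formula $\partial_{q_k}\mathscr{M} = -(D^2 h^{*} 1^N \cdot \xi^k)/(D^2 h^{*} 1^N \cdot 1^N)$ obtained in Lemma \ref{pressurelemma} produces exactly the announced expression for $R_q$, from which the symmetry is immediate.

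\textbf{Positive definiteness, which is the main step.} For $y \in \mathbb{R}^{N-1} \setminus \{0\}$, setting $A := D^2 h^{*}(\mu)$ and writing $\langle \cdot, \cdot \rangle_{A}$ for the inner product it induces on $\mathbb{R}^N$, I would rewrite
\[
R_q y \cdot y \;=\; \langle \mathcal{Q} y, \mathcal{Q} y \rangle_{A} \;-\; \frac{\langle 1^N, \mathcal{Q} y \rangle_{A}^{2}}{\langle 1^N, 1^N \rangle_{A}} .
\]
This is non-negative by the Cauchy--Schwarz inequality in $\langle \cdot, \cdot \rangle_{A}$, with equality if and only if $\mathcal{Q} y$ and $1^N$ are collinear. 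The key observation closing the argument is that $\mathcal{Q} y = \sum_{\ell=1}^{N-1} y_\ell \, \xi^\ell$ lies in the linear span of $\xi^1, \ldots, \xi^{N-1}$, which by construction of the basis is complementary to $\xi^N = 1^N$; collinearity therefore forces $\mathcal{Q} y = 0$, and the linear independence of $\xi^1, \ldots, \xi^{N-1}$ then yields $y = 0$, a contradiction. The crucial use of the basis condition $\xi^N = 1^N$ at this juncture is the one non-routine ingredient in the proof.
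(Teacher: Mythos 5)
Your proof is correct, and it fills a gap the paper leaves open: the paper declares the proof ``direct, using Corollary~5.3 of \cite{dredrugagu16}'' without reproducing it, whereas you give a self-contained argument. The chain-rule computation for $R_q$ using the expression for $\partial_{q_k}\mathscr{M}$ from the proof of Lemma~\ref{pressurelemma} is exactly right, and the Cauchy--Schwarz argument in the $D^2h^*$-inner product, closed by the basis condition $\xi^N = 1^N$ forcing $\mathcal{Q}y = 0$ and then $y = 0$, is the right (and essentially the only) way to prove positive definiteness of the Schur-complement-type expression. Your indirect argument for continuity at $\varrho = 0$, bounding $0 < \rho_j < \varrho$ via the facts that $\nabla h^*$ takes values in $\mathbb{R}^N_+$ and $\sum_j \rho_j = \varrho$, is a clean shortcut that avoids having to track the degeneration of $\mathscr{M}$ near $\partial\mathbb{R}^N_+$; it is a cheaper route than a direct analysis of the implicit function. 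One small point worth making explicit: the positive-definiteness step requires $D^2h^*(\mu) > 0$, which follows because $D^2h^* = (D^2h)^{-1}$ in the Legendre-type setting -- this is already implicitly invoked in the paper's proof of Lemma~\ref{pressurelemma} when applying the inverse mapping theorem, so you are consistent with the paper's standing assumptions.
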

The proof is direct, using Corollary 5.3 of \cite{dredrugagu16}. Multiplying the mass transport equations with $\xi^{k}_i$, we obtain that
\begin{align*}
\partial_t R_k(\varrho, \, q) +\divv(R_k(\varrho, \, q) \, v - \underbrace{(\mathcal{Q}^T \, M(\rho) \, \mathcal{Q})_{k,\ell}}_{=:\widetilde{M}_{k,\ell}(\rho)}\,  (\nabla q_{\ell} - \tilde{b}^{\ell}) = (\mathcal{Q}^T \, r)_k \text{ for } k=1,\ldots,N-1 \, .
\end{align*}
It turns out that if the rank of $M(\rho)$ is $N-1$ on all states $\rho \in \mathbb{R}^{N}_+$, the matrix $\widetilde{M}(\rho)$ is symmetric and strictly positively definite on all states $\rho \in \mathbb{R}^{N}_+$. Making use of \eqref{MUAVERAGE}, \eqref{RHONEW}, we can also consider $\widetilde{M}$ as a mapping of the variables $\varrho$ and $q$. Using Lemma \ref{rhonewlemma}, we can establish the following properties of this map.
\begin{lemma}\label{Mnewlemma}
 Suppose that $h \in C^{3}(\mathbb{R}^N_+)$ is a co-finite Legendre function on $\mathbb{R}^N_+$. Suppose further that $M: \, \mathbb{R}^N_+ \rightarrow \mathbb{R}^{N\times N}$ is a mapping into the positively semi-definite matrices of rank $N-1$ with kernel $\{1^N\}$, having entries $M_{i,j}$ of class $C^{2}(\mathbb{R}^N_+) \cap C(\overline{\mathbb{R}}^N_{+})$. Then the formula $ \widetilde{M}(\varrho, \, q) := \mathcal{Q}^T \, M(\rho) \, \mathcal{Q}$ defines a map $\widetilde{M}: \, \mathbb{R}_+ \times \mathbb{R}^{N-1} \rightarrow \mathbb{R}^{(N-1)\times (N-1)}$ into the symmetric positively definite matrices. The entries $\widetilde{M}_{k,j}$ are functions of class $C^{2}(]0, \, +\infty[ \times \mathbb{R}^{N-1})$ and $C([0, \, +\infty[\times \mathbb{R}^{N-1})$. 
\end{lemma}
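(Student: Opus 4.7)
The plan is to break the statement into four assertions and handle them essentially in the order listed: well-definedness, smoothness on the open set, symmetry plus strict positive definiteness, and finally continuity up to the face $\varrho=0$.

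First I would reduce the regularity statement to the composition lemma. By the invertibility of $\nabla h$ on $\mathbb{R}^N_+$ (a consequence of $h$ being a co-finite Legendre function, exactly as invoked in the proof of Lemma \ref{pressurelemma}), the conjugate $h^*$ belongs to $C^3(\mathbb{R}^N)$, so by the implicit construction of $\mathscr{M}$ carried out in Lemma \ref{pressurelemma} the map $(\varrho,q)\mapsto \rho$ given by \eqref{MUAVERAGE}--\eqref{RHONEW} is of class $C^2$ from $\mathbb{R}_+\times \mathbb{R}^{N-1}$ into $\mathbb{R}^N_+$. Composing with $M\in C^2(\mathbb{R}^N_+;\mathbb{R}^{N\times N})$ and then with the linear map $A\mapsto \mathcal{Q}^T A \mathcal{Q}$ immediately yields $\widetilde{M}\in C^2(\mathbb{R}_+\times \mathbb{R}^{N-1};\mathbb{R}^{(N-1)\times(N-1)})$.

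The symmetry is obvious from the symmetry of $M$. For the strict positive definiteness I would use the algebraic structure of $\mathcal{Q}$: its columns are $\xi^1,\ldots,\xi^{N-1}$, which together with $\xi^N=1^N$ form a basis of $\mathbb{R}^N$. Hence for every nonzero $v\in\mathbb{R}^{N-1}$ the vector $\mathcal{Q}v=\sum_{\ell=1}^{N-1}v_\ell\xi^\ell$ is nonzero and, by linear independence, is not a scalar multiple of $1^N$; in particular $\mathcal{Q}v\notin \ker M(\rho)=\mathrm{span}\{1^N\}$. Since $M(\rho)$ is symmetric, positive semi-definite, and of rank $N-1$ with that kernel, we obtain
\begin{equation*}
\widetilde{M}(\varrho,q)v\cdot v \;=\; M(\rho)\,\mathcal{Q}v\cdot \mathcal{Q}v \;>\;0,
\end{equation*}
which is the required strict positivity.

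The one point that genuinely needs an argument is the continuity of $\widetilde{M}$ up to the boundary face $\{\varrho=0\}$. Here the map $\rho=\rho(\varrho,q)$ itself does not extend continuously, because $\mathscr{M}(\varrho,q)$ generally blows up as $\varrho\downarrow 0$ and the limit of $\rho$ depends sensitively on $q$. The key observation is that we do not need $\rho$ to extend; we only need $M(\rho)$ to. Since $\rho_i(\varrho,q)>0$ and $\sum_i \rho_i(\varrho,q)=\varrho$, any sequence $(\varrho_n,q_n)\to (0,\bar q)$ with $\varrho_n>0$ forces $\rho_n:=\rho(\varrho_n,q_n)\to 0$ componentwise. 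By the assumed continuity of $M$ on the closed orthant $\overline{\mathbb{R}}^N_+$ we get $M(\rho_n)\to M(0)$, hence $\widetilde{M}(\varrho_n,q_n)\to \mathcal{Q}^T M(0)\mathcal{Q}$, a limit independent of $\bar q$. Setting $\widetilde{M}(0,q):=\mathcal{Q}^T M(0)\mathcal{Q}$ then provides the unique continuous extension to $[0,+\infty[\times\mathbb{R}^{N-1}$, completing the proof. The mild obstacle here is precisely to recognise that the $q$-dependence of $\widetilde M$ disappears in the limit, which is what allows a well-defined continuous extension despite the breakdown of the change of variables at $\varrho=0$.
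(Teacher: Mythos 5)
Your proof is correct, and it fills a gap the paper itself leaves: the paper states Lemma \ref{Mnewlemma} with only the remark that it can be ``established using Lemma \ref{rhonewlemma}'' and does not spell out the argument. Your decomposition into (i) $C^2$ regularity on $\mathbb{R}_+\times\mathbb{R}^{N-1}$ by composing $(\varrho,q)\mapsto\rho$ (which is $C^2$ by the same implicit-function argument used in Lemma \ref{pressurelemma}) with $M\in C^2(\mathbb{R}^N_+)$ and the linear congruence $A\mapsto\mathcal{Q}^TA\mathcal{Q}$; (ii) symmetry; (iii) strict positive definiteness via $\mathcal{Q}v\notin\mathrm{span}\{1^N\}=\ker M(\rho)$ for $v\neq 0$, combined with the fact that for a symmetric PSD matrix $Mw\cdot w=0$ iff $w\in\ker M$; and (iv) boundary continuity is exactly the natural route, and all four steps are sound.

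The only part that genuinely requires a thought, the continuity up to $\{\varrho=0\}$, you identify and handle correctly: the change of variables $\rho(\varrho,q)$ does \emph{not} extend continuously (since $\mathscr{M}(\varrho,q)\to-\infty$ as $\varrho\downarrow 0$), but from $\rho_i(\varrho,q)>0$ and $\sum_i\rho_i(\varrho,q)=\varrho$ one gets $\rho(\varrho_n,q_n)\to 0$ whenever $\varrho_n\downarrow 0$, and then $C(\overline{\mathbb{R}}^N_+)$-continuity of $M$ gives $\widetilde{M}(\varrho_n,q_n)\to\mathcal{Q}^TM(0)\mathcal{Q}$ independently of the limit of $q_n$. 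That observation — the $q$-dependence washes out at the corner of $\overline{\mathbb{R}}^N_+$ — is precisely what makes the continuous extension exist, and it is worth having made explicit. No gaps.
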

Overall, we get for the variables $(\varrho, \, q_1, \ldots, q_{N-1}, \, v)$ instead of \eqref{mass}, \eqref{momentum} the equivalent equations
\begin{alignat}{2}
\label{mass2} \partial_t R(\varrho, \, q) + \divv( R(\varrho, \, q) \, v - \widetilde{M}(\varrho, \, q) \, (\nabla q - \tilde{b}(x, \, t)) ) & = \tilde{r}(\varrho, \, q)  \, ,& & \\
\label{mass2tot}\partial_t \varrho + \divv(\varrho \, v) & = 0 \, ,& & \\
\label{momentum2} \partial_t (\varrho \, v) + \divv( \varrho \, v\otimes v - \mathbb{S}(\nabla v)) + \nabla P(\varrho, \, q) & = R(\varrho, \, q) \cdot \tilde{b}(x, \, t) + \varrho \, \bar{b}(x, \, t)  & & \, .
\end{alignat}
Up to the positivity constraint on the total mass density $\varrho$, the latter problem is free of constraints! 

Our first aim is now to show that at least locally--in--time the system \eqref{mass2}, \eqref{mass2tot}, \eqref{momentum2} for the variables $(\varrho, \, q_1, \ldots, q_{N-1}, \, v)$ is well--posed. We consider initial conditions
\begin{alignat}{2}\label{initialq}
q(x, \, 0) & = q_0(x) & & \text{ for } x \in \Omega\, ,\\
\label{initialrho}\varrho(x, \, 0) & = \varrho_0(x) & & \text{ for } x \in \Omega \, ,\\
\label{initialv} v(x, \, 0) & = v_0(x) & & \text{ for } x \in \Omega \, .
\end{alignat}
Due to the preliminary considerations in Section \ref{changevariables}, prescribing these variables is completely equivalent to prescribing initial values for the mass densities $\rho_i$ and the velocity $v$. It suffices to define $\mu^0 = \partial_{\rho}h(\rho^0)$ and then $q^0_k = \mu^0\cdot \eta^k$ for $k=1,\ldots,N-1$. For simplicity, we consider the linear homogeneous boundary conditions
\begin{alignat}{2}
\label{lateralv} v & = 0 & &\text{ on } S_T \, ,\\
\label{lateralq} \nu \cdot \nabla q_{k} & = 0 & &\text{ on } S_T \text{ for } k = 1,\ldots,N-1 \, .
\end{alignat}
The conditions \eqref{lateralq} and \eqref{lateral0q} are equivalent, because we assume throughout the paper that the given forcing $b$ satisfies $\nu(x) \cdot \mathcal{P} \,b(x, \, t) = 0$ for $x \in \partial \Omega$ (see the assumption \eqref{force} in the statement of Theorem \ref{MAIN}). We can also do without this assumption, but at the price of further technical complications -- to be avoided here -- due to the need of conceptualising also surface source terms. Owing to the Lemmas \ref{pressurelemma}, \ref{rhonewlemma} and \ref{Mnewlemma}, the coefficient functions $R$, $\widetilde{M}$ and $P$ are of class $C^2$ in the domain of definitions $\mathbb{R}_+ \times \mathbb{R}^{N-1}$. The set $\Omega$ is assumed smooth likewise (further precisions in the statement of the theorem). We reformulate Theorem \ref{MAIN} for the new variables.
\begin{theo}\label{MAIN2}
Assume that the coefficient functions $R$, $\widetilde{M}$ and $P$ are of class $C^2$, while $\tilde{r}$ is of class $C^1$, in the domain of definition $\mathbb{R}_+ \times \mathbb{R}^{N-1}$. Let $\Omega$ be a bounded domain with boundary $\partial \Omega$ of class $\mathcal{C}^2$. Suppose that, for some $p > 3$, the initial data are of class
\begin{align*}
q^0 \in W^{2-\frac{2}{p}}_p(\Omega; \, \mathbb{R}^{N-1}), \, \varrho_0 \in W^{1,p}(\Omega; \, \mathbb{R}_+), \, v^0 \in W^{2-\frac{2}{p}}_p(\Omega; \, \mathbb{R}^{3}) \, ,
\end{align*}
satisfying $\varrho^0(x) \geq m_0 > 0$ in $\Omega$ and the compatibility conditions $\nu(x) \cdot \nabla q^0(x) = 0$ and $v^0(x) = 0$ on $\partial \Omega$.
Assume that $\tilde{b} \in W^{1,0}_p(Q_T; \, \mathbb{R}^{(N-1)\times 3})$ and $\bar{b} \in L^p(Q_T; \,\mathbb{R}^3)$.

Then there is $0< T^* \leq T$, depending only of these data in the norms just specified, such that the problem \eqref{mass2}, \eqref{mass2tot}, \eqref{momentum2} with boundary conditions \eqref{initialq}, \eqref{initialrho}, \eqref{initialv}, \eqref{lateralv} and \eqref{lateralq} is uniquely solvable in the class
\begin{align*}
(q, \, \varrho,\,v) \in W^{2,1}_p(Q_{T^*}; \, \mathbb{R}^{N-1}) \times W^{1,1}_{p,\infty}(Q_{T^*}; \, \mathbb{R}_+) \times W^{2,1}_p(Q_{T^*}; \, \mathbb{R}^{3})  \, .
\end{align*}
The solution can be uniquely extended in this class to a larger time interval whenever there is $\alpha > 0$ such that $$\|q\|_{C^{\alpha,\frac{\alpha}{2}}(Q_{T^*})} + \|\nabla q\|_{L^{\infty,p}(Q_{T^*})} + \|v\|_{L^{z \, p,p}(Q_{T^*})} + \int_{0}^{T^*} [\nabla v(s)]_{C^{\alpha}(\Omega)} \, ds < + \infty \, ,$$ where $z = z(p)$ is the number defined in Theorem \ref{MAIN}.
\end{theo}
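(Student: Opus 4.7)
The plan is to realise the solution as the fixed point of a linearisation map on a closed ball in the state space $X_T := W^{2,1}_p(Q_T;\mathbb{R}^{N-1}) \times W^{1,1}_{p,\infty}(Q_T;\mathbb{R}_+) \times W^{2,1}_p(Q_T;\mathbb{R}^3)$, following the first scheme sketched in Section~\ref{twomaps}. Since $p>3$, the first and third factors embed into $C^{\alpha,\alpha/2}(\overline{Q_T})$ and the middle factor into $C([0,T]; C(\overline{\Omega}))$, so by Lemmas~\ref{pressurelemma}--\ref{Mnewlemma} the coefficient maps $R$, $\widetilde{M}$, $P$, $\tilde{r}$ act on $X_T$ as locally Lipschitz Nemytskii operators. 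I would first extend the initial data to a reference triple $(q^*,\varrho^*,v^*)\in X_T$ by solving heat equations for $q^*$ and $v^*$ with the prescribed traces, and by choosing any smooth extension of $\varrho_0$ preserving the lower bound $m_0/2$. Then, given any iterate $(\bar q,\bar\varrho,\bar v)$ in a closed ball $B_R\subset X_{T^*}$ around the reference, I would define $(q,\varrho,v) := \Phi(\bar q,\bar\varrho,\bar v)$ as the solution of three decoupled linear problems: the normally elliptic parabolic system $R_q(\bar\varrho,\bar q)\,\partial_t q - \divv(\widetilde{M}(\bar\varrho,\bar q)\nabla q) = F(\bar\varrho,\bar q,\bar v)$ with zero-flux data (well-posed in $W^{2,1}_p$ by maximal $L^p$-regularity once Lemmas~\ref{rhonewlemma}--\ref{Mnewlemma} give the positive definiteness of $R_q$ and $\widetilde{M}$); the continuity equation $\partial_t\varrho + \divv(\varrho\,\bar v) = 0$ with $\varrho(0)=\varrho_0$, solved by characteristics with the pointwise bound $\varrho \ge m_0\exp(-T^*\|\divv\bar v\|_{L^1_t L^\infty_x})>0$; and the Lamé-parabolic system $\bar\varrho\,\partial_t v - \divv\,\mathbb{S}(\nabla v) = G(\bar\varrho,\bar q,\bar v)$ with no-slip boundary data.

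The self-map property $\Phi(B_R) \subset B_R$ would follow from sharp \emph{a priori} bounds: each of the three subproblems yields an estimate of the form $C(\text{data}) + C(R)(T^*)^{\eta}$ for some $\eta>0$, the $(T^*)^\eta$ factor coming from the time-Hölder control of coefficients available on $B_R$; fixing $R$ slightly above the reference norms and then shrinking $T^*$ closes the estimate. The delicate step is contraction. As emphasised in the abstract, contraction cannot hold in the full state-space norm, because the continuity equation returns $\varrho$ in $W^{1,1}_{p,\infty}$ from $\bar v \in W^{2,1}_p$ with no spare derivative. I would therefore measure $\Phi$-differences in a strictly weaker norm, for instance that of $W^{1,0}_p(Q_{T^*};\mathbb{R}^{N-1}) \times L^p(Q_{T^*}) \times W^{1,0}_p(Q_{T^*};\mathbb{R}^3)$, in which the difference of transport solutions is controlled by the difference of velocities with no loss, and the differences of parabolic solutions pick up a factor $(T^*)^\alpha$ from the Hölder-in-time control of the coefficients on $B_R$. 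Banach's fixed-point theorem in this weaker metric, combined with the weak-$*$ closedness of $B_R$ in $X_{T^*}$ (so that a Cauchy sequence in the weak norm admits a weak-$*$ limit still in $B_R$), produces a unique fixed point, which by construction solves \eqref{mass2}--\eqref{momentum2} in the strong sense in the announced class.

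The hard part will be the calibration of the weak contraction norm: it must be weak enough for the differences of the nonlinear coefficient functions $R(\varrho_1,q_1)-R(\varrho_2,q_2)$, $\widetilde{M}(\varrho_1,q_1)-\widetilde{M}(\varrho_2,q_2)$ and $\nabla P(\varrho_1,q_1)-\nabla P(\varrho_2,q_2)$ to be estimated solely in terms of weak-norm differences of their arguments using the uniform $B_R$-bounds, yet strong enough to dominate the right-hand sides $F, G$ of the linear problems after appropriate integration by parts. Uniqueness in the announced class follows from the same weak-norm estimate applied to two solutions in $X_{T^*}$. Finally, the blow-up criterion is read off the \emph{a priori} estimates: the quantities $\|q\|_{C^{\alpha,\alpha/2}}$, $\|\nabla q\|_{L^{\infty,p}}$, $\|v\|_{L^{zp,p}}$ and $\int_0^{T^*}[\nabla v(s)]_{C^{\alpha}(\Omega)}\,ds$ are precisely what one needs to bound the coefficients and sources of the three linearised problems at time $T^*$, so as long as they remain finite the local-existence argument can be restarted and the solution uniquely extended.
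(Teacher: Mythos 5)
Your overall architecture matches the paper's first linearisation scheme (Section~\ref{twomaps}): a self-map in the strong state space $\mathcal{X}_T$, obtained by freezing coefficients and lower-order terms at the previous iterate, together with a contraction estimate in a strictly weaker norm, followed by the observation that the fixed point stays in the state space because it is a norm-bounded weak-$*$ limit of iterates. The iteration variable, however, should be reduced to the pair $(q^*,v^*)\in\mathcal{Y}_T$, with $\varrho$ slaved to $v^*$ through the continuity equation; carrying $\bar\varrho$ as an independent coordinate is harmless at the fixed point but duplicates information already encoded in $\bar v$.

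The substantive divergence is the choice of contraction norm, and here there is a gap. You propose $W^{1,0}_p(Q_{T^*})\times L^p(Q_{T^*})\times W^{1,0}_p(Q_{T^*})$, whereas the paper (Theorem~\ref{iter}) contracts the energy functional built from $\sup_\tau \|\cdot\|_{L^2(\Omega)}^2$ for the functions and $\|\nabla\cdot\|^2_{L^2(Q)}$ for the gradients. This difference is not cosmetic. Subtracting two iterates of the linearised $q$-equation produces commutator terms of the type $\bigl(R_q(\varrho^{n+1},q^n)-R_q(\varrho^{n},q^{n-1})\bigr)\,\partial_t q^n$ and $\divv\bigl((\widetilde{M}^{n}-\widetilde{M}^{n-1})\nabla q^n\bigr)$. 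The coefficient differences are controlled pointwise by $|\sigma^{n+1}|+|r^n|$, i.e.\ only in the weak norm, while $\partial_t q^n$ and $D^2 q^n$ are merely $L^p$. In the $L^2$-energy framework one tests the difference equation against $r^{n+1}$, integrates by parts, and handles the resulting trilinear integrals by the triad $L^3\times L^2\times L^6$ together with the Sobolev embedding $H^1\hookrightarrow L^6$ and Young's inequality, the dissipative term $\lambda_0\int|\nabla r^{n+1}|^2$ absorbing the $L^6$ contribution (this is precisely the schema~\eqref{schema}). In the $L^p$ framework you would instead need the product $(|\sigma^{n+1}|+|r^n|)\cdot|\partial_t q^n|$ to lie in $L^p$, with the coefficient factor only controlled in $L^p$; this does not follow from H\"older without an $L^\infty$ bound on one of the two factors, which the weak norm does not supply. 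One could try to recover this by passing to $W^{-1,0}_p$ data for the difference equation, but then the term involving $\partial_t q^n$ is not of divergence form and you would have to redo the estimate from scratch. As it stands, your contraction step is not established. Switching to the $L^2$-energy functional of Theorem~\ref{iter} closes the argument cleanly.

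Finally, the self-map estimate need not have the clean additive structure $C(\text{data})+C(R)(T^*)^\eta$; the paper's bounds contain Gronwall exponentials, which is why Lemma~\ref{selfmapT} is stated for a general continuous increasing $\Psi$ with $\Psi(0,R_0,\eta)$ independent of $\eta$. The blow-up criterion is correctly identified but must be backed by the interpolation estimates of Lemma~\ref{MAXEX}; listing the quantities does not yet show that their finiteness bounds $\mathscr{V}(T^*;q)$ and $\mathscr{V}(T^*;v)$.
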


\section{Technicalities}\label{technique1}

\subsection{Operator equation}

For functions $q_1,\ldots, q_{N-1}$, $v_1, \, v_2, \, v_3$ and for non-negative $\varrho$ defined on $\overline{\Omega} \times [0, \, T]$, we define
$\mathscr{A}(q, \, \varrho, \, v) = (\mathscr{A}^1(q, \, \varrho, \, v) , \, \mathscr{A}^2(\varrho, \, v) , \, \mathscr{A}^3(q, \, \varrho, \, v) )$, where
\begin{align*}
\mathscr{A}^1(q, \, \varrho, \, v) & := \partial_t R(\varrho, \, q) + \divv( R(\varrho, \, q) \, v - \widetilde{M}(\varrho, \, q) \, (\nabla q - \tilde{b}(x, \, t)) ) - \tilde{r}(\varrho, \, q) \\
\mathscr{A}^2(\varrho, \, v) & := \partial_t \varrho + \divv(\varrho \, v)\\
\mathscr{A}^3(q, \, \varrho, \, v) & := \varrho \, (\partial_t v + (v\cdot\nabla) v) - \divv \mathbb{S}(\nabla v) + \nabla P(\varrho, \, q) - R(\varrho, \, q) \cdot \tilde{b}(x, \,t) - \varrho \, \bar{b}(x, \,t) \, .
\end{align*}
We shall moreover introduce another related operator. This trick allows to deal with the time derivative of $\varrho$ occurring in $\mathscr{A}^1$, which is a coupling in the highest order. Consider a solution $u = (q, \, \varrho, \, v)$ to $\mathscr{A}(u) = 0$. Computing time derivatives in the equation $\mathscr{A}^1(u) = 0$, we obtain that
\begin{align*}
& R_{\varrho} \, (\partial_t \varrho + v \cdot \nabla \varrho) + \sum_{j=1}^{N-1} R_{q_j} \, (\partial_t q_j + v \cdot \nabla q_j) + R \, \divv  v- \divv (\widetilde{M} \, \nabla q) \\
 & \quad = - \divv(\widetilde{M} \, \tilde{b}(x, \, t)) + \tilde{r}  \, .
\end{align*}
Here, all non-linear functions $R, \, R_{\varrho}, \, R_{q}$ and $\widetilde{M}$, $\tilde{r}$ etc. are evaluated at $(\varrho, \, q)$. We next exploit $\mathscr{A}^2(\varrho, \, v) = 0$ to see that $\partial_t \varrho + v \cdot \nabla \varrho = - \varrho \, \divv v$. Thus, under the side-condition $\mathscr{A}^2(\varrho, \, v) = 0$, the equation $\mathscr{A}^1(u) = 0$ is equivalent to 
\begin{align}\begin{split}\label{A1equiv}
&  R_{q}(\varrho, \, q) \, \partial_t q - \divv (\widetilde{M}(\varrho, \, q) \, \nabla q) \\
& \quad  = (R_{\varrho}(\varrho, \, q) \, \varrho - R(\varrho, \, q)) \, \divv v - R_q(\varrho, \, q) \, v \cdot \nabla q  - \divv(\widetilde{M} \, \tilde{b}(x, \, t))+ \tilde{r}(\varrho, \, q)   \, .
\end{split}
\end{align}
We therefore can introduce $\widetilde{\mathscr{A}}(q, \, \varrho, \, v) := (\widetilde{\mathscr{A}}^1(q, \, \varrho, \, v) , \, \mathscr{A}^2(\varrho, \, v) , \, \mathscr{A}^3(q, \, \varrho, \, v) )$, the first component being the differential operator defined by \eqref{A1equiv}. Clearly, $\mathscr{A}(u) = 0$ if and only if $\widetilde{\mathscr{A}}(u) = 0$.

\subsection{Functional setting}

We now introduce a functional setting for which the short--time well--posedness can be proved by relatively elementary means. We essentially follow the parabolic setting of the book \cite{ladu}, which relies on the former study \cite{sologeneral}. We use the standard Sobolev spaces $W^{m,p}(\Omega)$ for $m \in \mathbb{N}$ and $1\leq p\leq +\infty$, the Sobolev-Slobodecki spaces $W^s_p(\Omega)$ for $s >0$ non-integer and, with a further index $1 \leq r \leq +\infty$, the parabolic Lebesgue spaces $L^{p,r}(Q)$ (space index first; $L^p(Q) = L^{p,p}(Q)$).

First, we consider the setting for the parabolic variables $v$ and $q$. For $\ell = 1,2, \ldots$, the Banach-spaces $W^{2\ell, \ell}_p(Q)$ are defined in Section \ref{mathint}. 
For $\ell = 1$, the space $W^{2,1}_p(Q)$ denotes the usual space $ W^1_p(0,T; \, L^p(\Omega)) \cap L^p(0,T; \, W^{2,p}(\Omega))$ of maximal parabolic regularity of index $p$. Moreover, we let $W^{\ell,0}_p(Q_T) := \{u \in L^p(Q) \, : \, D_x^{\alpha} u \in L^p(Q) \, \forall\,\, |\alpha| \leq \ell   \}$. We denote $C(\overline{Q}) = C^{0,0}(\overline{Q})$ the space of continuous functions over $\overline{Q}$ and, for $\alpha, \, \beta \in [0, \, 1]$, we define the spaces of H\"older continuous functions via
\begin{align*}
 C^{\alpha, \, \beta}(\overline{Q}) := & \{ u \in C(\overline{Q}) \, : \, [ u ]_{C^{\alpha,\beta}(\overline{Q})} < + \infty\} \, ,\\
 [u]_{ C^{\alpha, \, \beta}(\overline{Q})} = & \sup_{t \in [0, \, T], \, x,y \in \Omega} \frac{|u(t, \, x) - u(t, \, y)|}{|x-y|^{\alpha}} + \sup_{x \in \Omega, \, t,s \in [0,\, T]} \frac{|u(t, \, x) - u(s, \, x)|}{|t-s|^{\beta}} \, .
\end{align*} 
\begin{rem}[\textbf{Useful properties of $W^{2,1}_p(Q)$}:]\label{parabolicspace}
\begin{itemize}
\item The spatial differentiation is continuous from $W^{2,1}_p$ into $W^{1,0}_p$, and into $C([0, \, T]; \, W^{1-\frac{2}{p}}_p(\Omega))$;
\item The spatial differentiation is continuous from $W^{2,1}_p$ into $L^{\infty,2p-3}(Q)$, into $L^{z_1 \, p,\infty}(Q)$ and into $L^s(Q)$ for $s = 2p-3 + z_1 \, p$. Here $z_1 = z_1(p) := \frac{3}{5-p}$ for $3 < p < 5$, $z_1 \in ]1, \, \infty[$ arbitrary for $p = 5$, and $z_1 := + \infty$ for $p>5$;
\item For $k \in \mathbb{N}$ and $\alpha \in [0, \, 1]$ such that $k+\alpha \leq 2-\frac{5}{p}$, the space $W^{2,1}_p$ embeds continuously into the H\"older space $C^{k+\alpha,\, 0}(\overline{Q})$, and its elements are bounded;
\item The time differentiation is continuous from $W^{2,1}_p$ into $L^p$;
\end{itemize}
\end{rem}
\begin{proof}
The embedding $W^{2,1}_p(Q_T) \subset C([0, \, T]; \, W^{2-\frac{2}{p}}_p(\Omega))$ is known from the references \cite{sologeneral}, \cite{denkhieberpruess} and several others. Thus, $\frac{d}{dx}$ is a linear continuous operator from $W^{2,1}_p(Q)$ into $C([0, \, T]; \, W^{1-\frac{2}{p}}_p(\Omega))$. With the Sobolev embedding theorem (e. g., 8.3.3 in \cite{kuf} or XI.2.1 in \cite{visi}), we know that $W^{1-\frac{2}{p}}_p(\Omega) \subset L^{\frac{3p}{(5-p)^+}}(\Omega)$. Thus $\frac{d}{dx}$ is continuous into $C([0,T]; \, L^{\frac{3p}{(5-p)^+}}(\Omega))$. For $\alpha := \frac{p}{2p-3}$, the interpolation inequality (see \cite{nirenberginterpo}, Theorem 1)
\begin{align*}
\|\nabla f\|_{L^{\infty}(\Omega)} \leq & C_1 \, \|D^2f\|_{L^p(\Omega)}^{\alpha} \, \|f\|_{L^{\infty}(\Omega)}^{1-\alpha} + C_2 \, \|f\|_{L^{\infty}(\Omega)} \, 
\end{align*}
implies that
\begin{align*}
\|\nabla u\|_{L^{\infty,2p-3}(Q_T)}^{2p-3} \leq 2^{2p-3} \, (C_1^{2p-3} \,  \|D^2u\|_{L^p(Q_T)}^{p} \, \|u\|^{p-3}_{L^{\infty}(Q_T)} + C_2^{2p-3} \, \|u\|_{L^{\infty,2p-3}(Q_T)}^{2p-3}) \, .
\end{align*}
Thus $\nabla u \in L^{\infty,2p-3}(Q)$. The continuity of $\frac{d}{dx}$ into $W^{1,0}_p$ is obvious.
For $k \in \mathbb{N}$ and $\alpha \in [0, \, 1]$ such that $k+\alpha \leq 2-\frac{5}{p}$, the space $W^{2 - \frac{2}{p}}_p(\Omega)$ embeds continuously into the H\"older space $C^{k+\alpha}(\overline{\Omega})$ (see \cite{visi}, XI.2.1). Thus $W^{2,1}_p(Q)$ embeds continuously into the H\"older space $C^{k+\alpha,\, 0}(\overline{Q})$.
\end{proof}
Next, we consider the appropriate functional space setting for the continuity equation. Since this equation has another type, some asymmetry cannot be avoided. We introduce the space
\begin{align*}
W^{1,1}_{p,\infty}(Q) & := \{u \in L^{p,\infty}(Q) \, : \, u_t, \, u_{x_i} \in C([0,T]; \, L^{p}(\Omega)) \text{ for } i = 1,2,3\} \, ,\\
\|u\|_{W^{1,1}_{p,\infty}(Q)} & := \|u\|_{L^{p,\infty}(Q)} + \|u_x\|_{L^{p,\infty}(Q)} + \|u_t\|_{L^{p,\infty}(Q)} \, .
\end{align*}
\begin{rem}[\textbf{Properties of $W^{1,1}_{p,\infty}(Q)$}]\label{contispace}
\begin{itemize}
\item The space $W^{1,1}_{p,\infty}$ embeds continuously into the isotropic H\"older space $C^{1-\frac{3}{p}}(\overline{Q})$, and its elements are bounded;
\item The spatial differentiation is continuous from $W^{1,1}_{p,\infty}$ into $L^{p,\infty}(Q)$;
\item The time differentiation is continuous from $W^{1,1}_{p,\infty}$ into $L^{p,\infty}(Q)$;
\end{itemize}
\end{rem}
\begin{proof}
While the two last properties are obvious, we can deduce the first one from the anisotropic embedding result in the appendix of \cite{krejcipanizzi}.
\end{proof}
Beside the diverse Sobolev embedding results, we shall use for $p > 3$ the interpolation inequality (see \cite{nirenberginterpo}, Theorem 1)
\begin{align}
\label{gagliardo}\|\nabla f\|_{L^{\infty}(\Omega)} \leq & C_1 \, \|D^2f\|_{L^p(\Omega)}^{\alpha} \, \|f\|_{L^p(\Omega)}^{1-\alpha} + C_2 \, \|f\|_{L^p(\Omega)}
\end{align}
valid with $\alpha := \frac{1}{2}+\frac{3}{2p}$ for any function $f$ in $W^{2,p}(\Omega)$. 

We consider the operator $(q, \, \varrho,\, v) \mapsto \mathscr{A}(q, \, \varrho,\, v)$ as acting in the product space 
\begin{align}\label{STATESPACE}
\mathcal{X}_T := W^{2,1}_p(Q_T; \, \mathbb{R}^{N-1}) \times W^{1,1}_{p,\infty}(Q_T) \times W^{2,1}_p(Q_T; \, \mathbb{R}^3) \, .
\end{align}
Since the coefficients of $\mathscr{A}$ are defined only for positive $\varrho$, the domain of the operator is contained in the subset of strictly positive second argument
\begin{align}\label{STATESPACEPOS}
 \mathcal{X}_{T,+} := W^{2,1}_p(Q_T; \, \mathbb{R}^{N-1}) \times W^{1,1}_{p,\infty}(Q_T; \, \mathbb{R}_+) \times W^{2,1}_p(Q_T; \, \mathbb{R}^3) \, .
\end{align}
Since $\mathscr{A}$ is a certain composition of differentiation, multiplication and Nemicki operators, the properties above allow to show the following statement
\begin{lemma}\label{IMAGESPACE}
It the coefficients $R$, $\widetilde{M}$ and $P$ are continuously differentiable in their domain of definition $\mathbb{R}_+ \times \mathbb{R}^{N-1}$, the operator $\mathscr{A}$ is continuous and bounded from $\mathcal{X}_{T,+}$ into
\begin{align*}
\mathcal{Z}_T = L^p(Q_T; \, \mathbb{R}^{N-1}) \times L^{p,\infty}(Q_T) \times L^p(Q_T; \, \mathbb{R}^{3}) \, .
\end{align*}
It the coefficients $R$, $\widetilde{M}$ and $P$ are twice continuously differentiable in their domain of definition $\mathbb{R}_+ \times \mathbb{R}^{N-1}$, the operator $\mathscr{A}$ is continuously differentiable at every point of $\mathcal{X}_{T,+}$.
\end{lemma}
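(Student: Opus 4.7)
The plan is to exploit the fact that $\mathscr{A}$ is assembled from only three kinds of ingredients: (i) linear differential operators acting on $(q, \varrho, v)$, (ii) superposition operators generated by $R, \widetilde{M}, P, \tilde{r}$ and their partial derivatives, and (iii) pointwise products. First I would show that every element $(q, \varrho, v) \in \mathcal{X}_{T,+}$ delivers arguments in a relatively compact subset of the open domain $\mathbb{R}_+ \times \mathbb{R}^{N-1}$. By Remark \ref{parabolicspace}, $q$ and $v$ belong to $C(\overline{Q})$; by Remark \ref{contispace}, $\varrho$ belongs to $C^{1-3/p}(\overline{Q})$. Since $\varrho(x,t) > 0$ on the compact set $\overline{Q}$, continuity delivers a lower bound $\varrho \geq m > 0$, and together with the $L^\infty$ bounds this confines $(\varrho(x,t), q(x,t))$ to a compact rectangle $K \subset \mathbb{R}_+ \times \mathbb{R}^{N-1}$. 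Hence $R, \widetilde M, P, \tilde r$ and, under the $C^2$ hypothesis, their first partial derivatives, are Lipschitz on $K$, so the corresponding superposition operators map bounded subsets of $\mathcal{X}_{T,+}$ into bounded subsets of $L^\infty(Q) \cap C(\overline{Q})$.

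Second, I would expand every differential term via the chain and product rules, so that each component of $\mathscr{A}$ appears as a finite sum of products of a bounded Nemitski term with one of the intrinsic derivatives $\partial_t \varrho, \nabla \varrho, \partial_t q, \nabla q, D^2q, \partial_t v, \nabla v, D^2 v$, or with the data $\tilde b, \bar b$. For the continuity equation $\mathscr{A}^2 = \partial_t \varrho + \varrho \divv v + v \cdot \nabla \varrho$, all terms lie in $L^{p,\infty}$, where I use Remark \ref{parabolicspace} to obtain $\nabla v \in C([0,T]; W^{1-2/p}_p(\Omega)) \hookrightarrow C([0,T]; L^p(\Omega))$ thanks to $p > 3$. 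For $\mathscr{A}^3$ every term is manifestly in $L^p(Q)$: one uses $\partial_t v, D^2v \in L^p$, boundedness of $\varrho, v, R, P_\varrho, P_q$, and $\nabla \varrho \in L^{p,\infty} \subset L^p$, $\nabla q \in L^p$. For $\mathscr{A}^1$ one applies the chain rule to $\partial_t R$ and the product rule to the $\divv$ terms; the leading second-order contribution $\widetilde M \Delta q$ is a bounded function times an $L^p$ function, while the quadratic gradient products $\widetilde M_\varrho \nabla \varrho \cdot \nabla q$ and $\widetilde M_q \nabla q \otimes \nabla q$ are handled by pairing $\nabla \varrho \in L^{p,\infty}$ with $\nabla q \in L^{\infty, 2p-3}$ (Remark \ref{parabolicspace}).

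Third, I would deal with the external forcing. The term $\divv(\widetilde M \tilde b) = \widetilde M \divv \tilde b + (\widetilde M_\varrho \nabla \varrho + \widetilde M_q \nabla q) \cdot \tilde b$ is the delicate piece, since $\tilde b \in W^{1,0}_p(Q_T)$ has no a priori time regularity. Here I would invoke the Sobolev embedding $W^{1,p}(\Omega) \hookrightarrow L^\infty(\Omega)$ (valid because $p > 3$) to get $\|\tilde b(\cdot,t)\|_{L^\infty(\Omega)} \leq C \|\tilde b(\cdot,t)\|_{W^{1,p}(\Omega)}$ with right-hand side in $L^p(0,T)$, so that the product with $\nabla \varrho \in L^{p,\infty}$ or $\nabla q \in L^p$ indeed lies in $L^p(Q)$. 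Combined with the straightforward estimate $\|\widetilde M \divv \tilde b\|_{L^p} \lesssim \|\widetilde M\|_{L^\infty} \|\tilde b\|_{W^{1,0}_p}$, this settles boundedness of $\mathscr{A}$ into $\mathcal{Z}_T$.

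For the passage from boundedness to continuity and differentiability, one observes that multiplication is a continuous bilinear map from $L^\infty \times L^p$ into $L^p$ and from $L^\infty \times L^{p,\infty}$ into $L^{p,\infty}$, hence smooth; spatial and temporal differentiation are linear and bounded between the spaces listed above, hence smooth; the only nonlinear ingredient is the superposition operator $u \mapsto f \circ u$ for $f \in C^k(K)$ acting on a bounded open subset of $L^\infty(Q)$ with values in $L^\infty(Q)$, which is known to be of class $C^{k-1}$ by the mean value theorem applied pointwise (since $L^\infty$ is a Banach algebra and $K$ is compact). Under the $C^1$ assumption on the coefficients this gives continuity, under $C^2$ it gives $C^1$-regularity of $\mathscr{A}$. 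I expect the main obstacle to be not conceptual but bookkeeping: tracking which factor has which anisotropic integrability so that each product genuinely lands in $L^p$ or $L^{p,\infty}$, with the crucial embedding $W^{1,p}(\Omega) \hookrightarrow L^\infty(\Omega)$ for $p > 3$ being what ultimately closes the estimates involving $\tilde b$.
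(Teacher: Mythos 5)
Your argument is correct and follows exactly the line the paper itself indicates: standard differential calculus on products, differentiations and Nemytskii operators, combined with the embeddings of Remarks \ref{parabolicspace} and \ref{contispace}, the crucial anisotropic pairings being $\nabla\varrho\in L^{p,\infty}$ with $\nabla q\in L^{\infty,2p-3}$ and the embedding $W^{1,p}(\Omega)\hookrightarrow L^{\infty}(\Omega)$ for the terms involving $\tilde b$ (the paper omits the proof, referring to the same estimates as they recur in Lemmas \ref{RightHandControl}, \ref{gedifferential} and in the main theorems). One small slip in your last paragraph: the superposition operator $u\mapsto f\circ u$ on bounded subsets of $L^{\infty}$ with range in a compact set is of class $C^{k}$ for $f\in C^{k}$, not $C^{k-1}$ --- the degree lost in the Lemma actually comes from the chain rule inside $\partial_t R(\varrho,q)$ and $\divv(\,\cdot\,)$, which makes the \emph{first derivatives} $R_\varrho,R_q,\widetilde M_\varrho,\widetilde M_q,P_\varrho,P_q$ the relevant Nemytskii nonlinearities, and these are only $C^{k-1}$ when the coefficients themselves are $C^{k}$; your conclusion is unaffected.
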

The same holds for the operator $\widetilde{\mathscr{A}}$. The proof of Lemma \ref{IMAGESPACE} can be carried over using standard differential calculus and the properties stated in the Remarks \ref{parabolicspace} and \ref{contispace}. In order to save room, we abstain from presenting it. The same estimates are needed in the proof of the main theorems anyway, and shall be exposed there. We shall moreover make use of a reduced state space, containing only the parabolic components $(q, \, v)$, namely
\begin{align}\label{parabolicSTATESPACE}
\mathcal{Y}_T := W^{2,1}_p(Q_T; \, \mathbb{R}^{N-1}) \times W^{2,1}_p(Q_T; \, \mathbb{R}^3) \, .
\end{align}
\textbf{Some short remarks on notation:} 1. We shall \emph{never} employ local H\"older continuous functions. For the sake of notation we identify $C^{\alpha, \, \beta}(Q)$ with $C^{\alpha, \, \beta}(\overline{Q})$; 2. Whenever confusion is impossible, we shall also employ for a function $f$ of the variables $x \in \Omega$ and $t \geq 0$ the notations $f_x = \nabla f$ for the spatial gradient, and $f_t$ for the time derivative; 3. For the coefficients $R$, $\widetilde{M}$, etc. which are functions of $\varrho$ and $q$, the derivatives are denoted $R_{\varrho}$, $\widetilde{M}_q$ etc.

\subsubsection{Boundary conditions and traces}

As before, we let $S_T = \partial \Omega \times ]0, \, T[$. As is well known, there is a well-defined trace operator $\text{tr}_{S_T} \in \mathscr{L}(W^{1,0}_p(Q), \, L^p(S_T))$ (even continuous with values in $L^p(0,T; \, W^{1-\frac{1}{p}}_p(\partial \Omega))$). Since $W^{2\ell,\ell}_p(Q) \subset W^{1,0}_p(Q)$ for $\ell \geq 1$, we can meaningfully define a Banach space
\begin{align*}
\text{Tr}_{S_T}\, W^{2\ell,\ell}_p(Q) :=&  \{ f \in L^p(S_T) \, : \, \exists \bar{f} \in W^{2\ell,\ell}_p(Q), \, \text{tr}_{S_T}(\bar{f}) = f\} \, ,\\
\|f\|_{\text{Tr}_{S_T} W^{2\ell,\ell}_p(Q)} :=&  \inf_{\bar{f} \in  W^{2\ell,\ell}_p(Q), \, \text{tr}_{S_T}(\bar{f}) = f} \, \|\bar{f}\|_{W^{2\ell,\ell}_p(Q)} \,.
\end{align*}
These spaces have been exactly characterised in terms of anisotropic fractional Sobolev spaces on the manifold $S_T$. The topic is highly technical. In particular, it is known that $\text{Tr}_{S_T}\, W^{2,1}_p(Q) = W^{2-\frac{1}{p}, \, 1-\frac{1}{2p}}(S_T)$: See \cite{denkhieberpruess}, while older references \cite{ladu}, \cite{sologeneral} seem to show only the inclusion $\text{Tr}_{S_T}\, W^{2,1}_p(Q) \subseteq W^{2-\frac{1}{p}, \, 1-\frac{1}{2p}}(S_T)$.

Next, we consider the conditions on the surface $\Omega \times \{0\}$, i. e. the initial conditions. There is a well defined trace operator $\text{tr}_{\Omega \times \{0\}} \in \mathscr{L}(C([0,T]; \, L^p(\Omega)), \, L^p(\Omega))$. Note that $W^{2\ell,\ell}_p(Q) \subset C([0,T]; \, L^p(\Omega))$ for $\ell \geq 1$. Thus, we can define similarly
\begin{align*}
\text{Tr}_{\Omega \times \{0\}} W^{2\ell,\ell}_p(Q) := & \{ f \in L^p(\Omega) \, : \, \exists \bar{f} \in W^{2\ell,\ell}_p(Q), \, \text{tr}_{\Omega \times \{0\}}(\bar{f}) = f\} \, ,\\
\|f\|_{\text{Tr}_{\Omega \times \{0\}} W^{2\ell,\ell}_p(Q)} := & \inf_{\bar{f} \in  W^{2\ell,\ell}_p(Q), \, \text{tr}_{\Omega \times \{0\}}(\bar{f}) = f} \, \|\bar{f}\|_{W^{2\ell,\ell}_p(Q)} \,.
\end{align*}
It is known that $\text{Tr}_{\Omega \times \{0\}} W^{2,1}_p(Q) = W^{2-\frac{2}{p}}_p(\Omega)$, see \cite{sologeneral}, or \cite{denkhieberpruess} and references for a complete characterisation using the Besov spaces. Here we can restrict to the Slobodecki space since $2-\frac{2}{p}$ is necessarily non-integer for $p > 3$. The spaces of zero initial conditions are defined via
\begin{align*}
\phantom{}_0W^{2,1}_p(Q_T) & := \{u \in W^{2,1}_p(Q_T) \, : \, u(0) = 0\}\, , \\ 
\phantom{}_0W^{1,1}_{p,\infty}(Q_T) & := \{u \in W^{1,1}_{p,\infty}(Q_T) \, : \, u(0) = 0\} \, ,\\
\phantom{}_0\mathcal{X}_T & := \phantom{}_0W^{2,1}_p(Q_T;\, \mathbb{R}^{N-1}) \times \phantom{}_0W^{1,1}_{p,\infty}(Q_T) \times \phantom{}_0W^{2,1}_p(Q_T; \, \mathbb{R}^3) \, ,\\
\phantom{}_0\mathcal{Y}_T & := \phantom{}_0W^{2,1}_p(Q_T;\, \mathbb{R}^{N-1}) \times \phantom{}_0W^{2,1}_p(Q_T; \, \mathbb{R}^3) \, .
\end{align*}

\subsubsection{Compatible extension of the boundary data}

The boundary operator for the problem \eqref{mass2}, \eqref{mass2tot}, \eqref{momentum2} on $S_T$ is chosen as simple as possible: linear and homogeneous (see \eqref{lateralq}, \eqref{lateralv}). Thus, we consider $\mathscr{B}(q, \, \varrho,\, v)$ given by
\begin{align*}
\mathscr{B}_1(q, \, \varrho, \, v) = \mathscr{B}_1(q) := & \nu \cdot \nabla q \, ,\\
\mathscr{B}_2 :\equiv & 0 \, ,\\
\mathscr{B}_3(q, \, \varrho, \, v) =  \mathscr{B}_3(v) := & v \, .
\end{align*}
The operator $\mathscr{B}$ is acting on the space $\mathcal{X}_T$.

 As usual for higher regularity, the choice of the initial conditions is restricted by the choice of the boundary operator. The conditions $q^0_i \in \text{Tr}_{\Omega \times \{0\}} W^{2,1}_p(Q_T)$, $v^0_i \in \text{Tr}_{\Omega \times \{0\}} W^{2,1}_p(Q_T)$ guarantee at first the existence of liftings $\hat{q}^0 \in W^{2,1}_p(Q_T; \, \mathbb{R}^{N-1})$ and $\hat{v}^0 \in W^{2,1}_p(Q_T; \, \mathbb{R}^{3})$. It is now necessary to homogenise all boundary data in such a way that these liftings are also in the kernel of the boundary operator. In order to find $\hat{q}^0 \in W^{2,1}_p$ satisfying $\hat{q}^0(0) = q^0$ and $\mathscr{B}_1(\hat{q}^0) = \nu \cdot \nabla \hat{q}^0 = 0$ on $S_T$ and $\hat{v}^0\in W^{2,1}_p$ satisfying $\hat{v}^0(0) = v^0$ and $\mathscr{B}_3(\hat{v}^0) = \hat{v}^0 = 0$ on $S_T$, we refer to the $L^p-$ theory of the Neumann/Dirichlet problem for the heat equation (see among others the monograph \cite{ladu}). There is, in both cases, one necessary compatibility condition,
\begin{align*}
\nu \cdot \nabla q^0 = 0 \text{ on } \partial \Omega, \quad v^0 = 0 \text{ on } \partial \Omega \, ,
\end{align*}
which make sense as identities in $\text{Tr}_{\partial\Omega} W^{1-\frac{2}{p}}_p(\Omega) =W^{1-\frac{3}{p}}_p(\partial \Omega) $ and in $\text{Tr}_{\partial\Omega} W^{2-\frac{2}{p}}_p(\Omega) = W^{2-\frac{3}{p}}_p(\partial \Omega)$. In order to find an extension for $\varrho_0 \in W^{1,p}(\Omega)$, we solve the problem
\begin{align}\label{Extendrho0}
 \partial_t \hat{\varrho}_0 + \divv(\hat{\varrho}_0 \, \hat{v}^0) =0, \quad \hat{\varrho}_0(0) = \varrho_0 \, .
\end{align}
For this problem, the Theorem 2 of \cite{solocompress} establishes unique solvability in $W^{1,1}_{p,\infty}(Q_T)$ and, among other, the strict positivity $\hat{\varrho}_0 \geq c_0(\Omega, \, \|\hat{v}^0\|_{W^{2,1}_p(Q_T; \, \mathbb{R}^3)}) \, \inf_{x \in \Omega} \varrho_0(x)$.

\section{Linearisation and reformulation as a fixed-point equation}\label{twomaps}

We shall present two different manners to linearise the equation $\mathscr{A}(u) = 0$ for $u \in \mathcal{X}_T$ with initial condition $u(0) = u_0$ in $\text{Tr}_{\Omega\times\{0\}} \, \mathcal{X}_T$: 
\begin{itemize}
 \item The first method is used to prove the statements on short-time existence in Theorem \ref{MAIN}, \ref{MAIN2};
\item The second technique shall be used to prove the global existence for restricted data in Theorem \ref{MAIN3};
\end{itemize}
The attentive reader will notice that the main estimate for the second linearisation techniques would also allow to prove the short-time existence. However, it has the drawback to be applicable only if the initial data possess more smoothness than generic elements of the state space $\mathcal{X}_T$. Thus, this technique does not allow to prove a semi-flow property. For this reason we think that, at the price of being lengthy, presenting the first method remains necessary.

In both cases, we start considering the problem to find $u =(q, \, \varrho ,\, v) \in \mathcal{X}_{T,+}$ such that $\widetilde{\mathscr{A}}(u) = 0$ and $u(0) = u_0$, which possesses the following structure:
\begin{align*}
\partial_t \varrho + \divv (\varrho \, v) = & 0 \, ,\\
R_{q}(\varrho, \, q) \, \partial_t q - \divv (\widetilde{M}(\varrho, \, q) \, \nabla q)  = & g(x, \, t, \, q, \, \varrho,\, v, \, \nabla q, \, \nabla \varrho, \, \nabla v) \, ,\\
 \varrho \, \partial_t v - \divv \mathbb{S}(\nabla v) = &  f(x, \, t,\, q, \, \varrho,\, v, \, \nabla q, \, \nabla \varrho, \, \nabla v) \, .
\end{align*}
For the original problem, the functions $g$ and $f$ have the following expressions
\begin{align}\label{A1right}
& g(x, \, t,\, q, \, \varrho,\, v, \, \nabla q, \, \nabla \varrho, \, \nabla v) := (R_{\varrho}(\varrho,\, q) \, \varrho - R(\varrho,\, q)) \, \divv v - R_q(\varrho,\, q) \, v \cdot \nabla q\nonumber \\
 & \qquad - \widetilde{M}_{\varrho}(\varrho, \, q) \, \nabla \varrho \cdot \tilde{b}(x, \,t) - \widetilde{M}_{q}(\varrho, \, q) \, \nabla q \cdot \tilde{b}(x, \,t)  -\widetilde{M}(\varrho, \, q) \, \divv \tilde{b}(x, \,t) - \tilde{r}(\varrho, \, q) \, , \\[0.2cm]
& \label{A3right} f(x, \, t,\, q, \, \varrho,\, v, \,  \nabla q, \, \nabla \varrho, \, \nabla v) := - P_{\varrho}(\varrho,\, q) \, \nabla \varrho -  P_{q}(\varrho,\, q) \, \nabla q 
 - \varrho \, (v\cdot \nabla)v \nonumber\\
 & \phantom{f(x, \, t,\, q, \, \varrho,\, v, \,  \nabla q, \, \nabla \varrho, \, \nabla v)  }  \qquad+ R(\varrho, \,q) \cdot \tilde{b}(x, \,t) + \varrho \, \bar{b}(x, \,t) \, .
\end{align}
In the proofs, we however consider the abstract general form of the right-hand sides. We shall also regard $g$ and $f$ as functions of $x, \,t$ and the vectors $u$ and $D_x u$ and write $g(x, \,t, \, u, \, D_xu)$ etc. 

\subsection{The first fixed-point equation}

For $u^* = (q^*, \, v^*)$ given in $ \mathcal{Y}_T$ (cf. \eqref{parabolicSTATESPACE}) and for unknowns $u = (q, \, \varrho, \, v)$, we consider the following system of equations
\begin{align}\label{linearT1}
\partial_t \varrho + \divv (\varrho \, v^*) = & 0 \, ,\\
\label{linearT2} R_{q}(\varrho, \,q^*) \, \partial_t q - \divv (\widetilde{M}(\varrho, \, q^*) \, \nabla q)  = & g(x, \, t, \, q^*, \, \varrho,\, v^*, \, \nabla q^*, \, \nabla \varrho, \, \nabla v^*) \, ,\\
\label{linearT3} \varrho \, \partial_t v - \divv \mathbb{S}(\nabla v) = & f(x, \, t,\, q^*, \, \varrho,\, v^*, \, \nabla q^*, \, \nabla \varrho, \, \nabla v^*) \, ,
\end{align}
together with the initial conditions \eqref{initialq}, \eqref{initialrho}, \eqref{initialv} and the homogeneous boundary conditions \eqref{lateralv}, \eqref{lateralq}. Note that the continuity equation can be solved independently for $\varrho$. Once $\varrho$ is given, the problem \eqref{linearT2}, \eqref{linearT3} is linear in $(q, \, v)$. 

We will show that the solution map $(q^*, \, v^*) \mapsto (q, \, v)$, denoted $\mathcal{T}$ is well defined from $\mathcal{Y}_T$ into itself. The solutions are unique in the class $\mathcal{Y}_T$. Clearly, a fixed point of $\mathcal{T}$ is a solution to $\widetilde{\mathscr{A}}(q, \, \varrho, \, v) = 0$.

\subsection{The second fixed-point equation}

We consider a given vector $\hat{u}^0 = (\hat{q}^0, \, \hat{\varrho}^0, \, \hat{v}^0) \in \mathcal{X}_T$ such that $\hat{q}^0$ and $\hat{v}^0$ satisfy the initial compatibility conditions. Moreover, we assume that $\hat{\varrho}^0$ obeys \eqref{Extendrho0}.

Consider a solution $u = (q, \,\varrho, \, v) \in \mathcal{X}_T$ to $\widetilde{\mathscr{A}}(u) = 0$. We introduce the differences $r := q - \hat{q}^0$, $w := v - \hat{v}^0$ and $\sigma := \varrho- \hat{\varrho}^0$, and the vector $\bar{u} := (r, \, \sigma, \, w)$. Clearly, $\bar{u}$ belongs to the space $\phantom{}_0\mathcal{X}_T$ of homogeneous initial conditions. The equations $\widetilde{\mathscr{A}}(u) = 0$ shall be equivalently re-expressed as a problem for the vector $\bar{u}$ via $\widetilde{\mathscr{A}}(\hat{u}^0 +\bar{u}) = 0$. The vector $\bar{u} = (r, \, \sigma, \, w)$ satisfies
\begin{align}\label{equationdiff1}
  R_q \, \partial_t r - \divv (\widetilde{M} \, \nabla r) = g^1 := & g -  R_q \, \partial_t \hat{q}^0 + \widetilde{M} \, \triangle \hat{q}^0 - \widetilde{M}_{\varrho} \, \nabla \varrho \cdot \nabla \hat{q}^0 \\
&  -  \widetilde{M}_{q} \, \nabla q \cdot \nabla \hat{q}^0\, ,\nonumber\\
\label{equationdiff2}
 \partial_t \sigma + \divv(\sigma \, v) =& - \divv(\hat{\varrho}_0 \, w) \, ,\\
\label{equationdiff3}
  \varrho \, \partial_t w - \divv \mathbb{S}(\nabla w) = & f^1 =: f - \varrho \partial_t \hat{v}^0 + \divv \mathbb{S}(\nabla \hat{v}^0) \, . 
\end{align}
Herein, the coefficients $R, \, R_q$, etc.\ are evaluated at $(\varrho, \, q)$, while $g$ and $f$ correspond to \eqref{A1right} and \eqref{A3right}.

We next want to construct a fixed-point map to solve \eqref{equationdiff1}, \eqref{equationdiff2}, \eqref{equationdiff3} by linearising the operators $g^1$ and $f^1$ defined in \eqref{equationdiff1} and \eqref{equationdiff3}. At a point $u^* = (q^*, \, \varrho^*, \, v^*) \in \mathcal{X}_{T,+}$ (cf. \eqref{STATESPACEPOS}), we can expand as follows:
\begin{align*}
 g =  g(x, \, t,\, u^*, \, D_xu^*) + \int_{0}^1 & \{ (g_{q})^{\theta} \, (q-q^*) + (g_{\varrho})^{\theta} \, (\varrho-\varrho^*) + (g_v)^{\theta} \, (v-v^*) \\
 & + (g_{q_x})^{\theta} \cdot (q_x-q^*_x) + (g_{\varrho_x})^{\theta} \, (\varrho_x- \varrho^*_x) + (g_{v_x})^{\theta} \cdot (v_x-v^*_x)\} \, d\theta\nonumber \, .
 \end{align*}
 Here the brackets $( \cdot )^{\theta}$, if applied to a function of $x, \, t$, $u$ and $D^1_x u$, stand for the evaluation at $(x, \, t,\, (1-\theta) \, u^* + \theta \, u, \, (1-\theta) \, D_xu^* + \theta \, D_xu)$. In short, in order to avoid the integral and the parameter $\theta$, we write
 \begin{align}\label{Arightlinear}
  g = &g(x, \, t,\, u^*, \, D_xu^*) + g_{q}(u, \, u^*) \, (q-q^*) + g_{\varrho}(u, \, u^*) \, (\varrho-\varrho^*)+ g_v(u, \, u^*)  \, (v-v^*)  \nonumber\\
  & + g_{q_x}(u, \, u^*) \cdot (q_x-q^*_x) + g_{\varrho_x}(u, \, u^*)  \, (\varrho_x- \varrho^*_x) + g_{v_x}(u, \, u^*) \cdot (v_x-v^*_x) \nonumber\\
 =: & g(x, \, t,\, u^*, \, D_xu^*) + g^{\prime}(u, \, u^*) \, (u - u^*) \, . 
\end{align}
We follow this scheme and write in short
\begin{align}\label{Arightlinear2}
  g^1 = & g^1(x, \, t,\, \hat{q}^0, \, \hat{\varrho}^0, \, \hat{v}^0, \, \hat{q}_x^0, \, \hat{\varrho}_x, \, \hat{v}_x^0) + g^1_{q}(u, \, \hat{u}^0) \, r + g^1_{\varrho}(u, \, \hat{u}^0) \, \sigma + g^1_v(u, \, \hat{u}^0)\, w \nonumber\\
 &  + g^1_{q_x}(u, \, \hat{u}^0) \, r_x + g^1_{\varrho_x}(u, \, \hat{u}^0) \, \sigma_x + g^1_{v_x}(u, \, \hat{u}^0) \, w_x \, \nonumber\\
  =: &  \hat{g}^0 + (g^1)^{\prime}(u, \, \hat{u}^0) \, \bar{u} \, .
\end{align}
With obvious modifications, we have the same formula for $f^1$. 
Now we construct the fixed-point map to solve \eqref{equationdiff1}, \eqref{equationdiff2}, \eqref{equationdiff3}. For a given vector $(r^*, \, w^*) \in \phantom{}_0\mathcal{Y}_T$, we define $q^* := \hat{q}^0 + r^*$ and $v^* := \hat{v}^0 + w^*$. We employ the abbreviation
\begin{align}\label{ustar}
u^* := &  (q^*, \, \mathscr{C}(v^*), \, v^*) \in \mathcal{X}_{T,+} \, ,
\end{align}
where $\mathscr{C}$ is the solution operator to the continuity equation with initial datum $\varrho_0$.
For $\bar{u} := (r, \, \sigma, \, w)$, we next consider the linear problem
\begin{alignat}{2} 
\label{linearT1second} R_q(\mathscr{C}(v^*), \, q^* ) \, \partial_t r - \divv (\widetilde{M}(\mathscr{C}(v^*), \, q^* ) \, \nabla r) =&  \hat{g}^0 + (g^{1})^{\prime}(u^*, \, \hat{u}^0) \, \bar{u} \, ,& & \\
\label{linearT2second} \partial_t \sigma + \divv(\sigma \, v^*) = &  - \divv(\hat{\varrho}_0 \, w) \, ,  & & \\
\label{linearT3second} \mathscr{C}(v^*) \, \partial_t w - \divv \mathbb{S}(\nabla w) = & \hat{f}^0 + (f^1)^{\prime}(u^*, \, \hat{u}^0) \, \bar{u} \, , & & 
\end{alignat}
with boundary conditions $\nu \cdot \nabla r = 0$ on $S_T$ and $w = 0$ on $S_T$ and with zero initial conditions. We will show that the solution map $(r^*, \, w^*) \mapsto (r, \, w)$, denoted as $\mathcal{T}^1$, is well defined from $\phantom{}_0\mathcal{Y}_T$ into itself. 
\begin{rem}
If $(r, \, w)$ is a fixed point of $\mathcal{T}^1$, then $u := \hat{u}^0 + (r, \, \sigma, \, w)$ is a solution to $\widetilde{\mathscr{A}}(u) = 0$.
\end{rem}
\begin{proof}
To see this, we note first that a fixed point satisfies $\mathcal{T}^1(r, \, w) = (r, \, w)$, hence the following equations are valid:
\begin{alignat*}{2}
R_q(q, \, \mathscr{C}(v)) \, \partial_t r - \divv (\widetilde{M}(q, \, \mathscr{C}(v)) \, \nabla r) =&  \hat{g}^0 + (g^1)^{\prime}((q, \, \mathscr{C}(v), \, v), \, \hat{u}^0) \, \bar{u} \, ,& & \\
 \partial_t \sigma + \divv(\sigma \, v) = &  - \divv(\hat{\varrho}_0 \, w) \, ,  & & \\
 \mathscr{C}(v) \, \partial_t w - \divv \mathbb{S}(\nabla w) = & \hat{f}^0 + (f^1)^{\prime}((q, \, \mathscr{C}(v), \, v), \, \hat{u}^0) \, \bar{u} \, . & & 
\end{alignat*}
Adding to the second equation the identity \eqref{Extendrho0}, valid by construction, we see that $\tilde{\varrho} := \hat{\varrho}^0 + \sigma$ is a solution to the continuity equation with velocity $v$ and initial data $\varrho^0$. Thus $\tilde{\varrho} = \mathscr{C}(v)$ (uniqueness for the continuity equation, cf. Proposition \ref{solonnikov2} below). Now, we see by the definitions of $(g^1)^{\prime}$ and $(f^1)^{\prime}$ (cf.\ \eqref{Arightlinear2}) that
\begin{align*}
  \hat{g}^0 + (g^1)^{\prime}((q, \, \mathscr{C}(v), \, v), \, \hat{u}^0) \, \bar{u} = & \hat{g}^0 + (g^1)^{\prime}((\hat{q}^0+r, \, \hat{\varrho}^0+\sigma, \, \hat{v}^0 + w), \, \hat{u}^0) \, (r, \, \sigma, \, w)\\
 =&  g^1(\hat{q}^0+r, \, \hat{\varrho}^0+\sigma, \, \hat{v}^0 + w)
 \end{align*}
 and, analogously, $\hat{f}^0 + (f^1)^{\prime}((q, \, \mathscr{C}(v), \, v), \, \hat{u}^0) \, \bar{u}  = f^1$. Thus we recover a solution to the equations \eqref{equationdiff1}, \eqref{equationdiff2} and \eqref{equationdiff3}.
\end{proof}

\subsection{The self-mapping property}

Assuming for a moment that the map $\mathcal{T}$, $(q^*, \, v^*) \mapsto (q, \, v)$ via the solution to \eqref{linearT1}, \eqref{linearT2}, \eqref{linearT3} is well defined in the state space $\mathcal{Y}_T$, then the main difficulty to prove the existence of a fixed-point is to show that $\mathcal{T}$ maps some closed bounded set of $\mathcal{Y}_T$ into itself. If $\mathcal{T}$ is well--defined and continuous, we shall rely on the continuous estimates
\begin{align}\label{CONTROLLEDGROWTH}
 \|(q, \, v)\|_{W^{2,1}_p(Q_t; \, \mathbb{R}^{N-1}) \times W^{2,1}_p(Q_t; \, \mathbb{R}^{3})} \leq \Psi(t, \, R_0, \, \|(q^*, \, v^*)\|_{W^{2,1}_p(Q_t; \, \mathbb{R}^{N-1}) \times W^{2,1}_p(Q_t; \, \mathbb{R}^{3})}) \, ,
\end{align}
valid for all $t \leq T$ with a function $\Psi$ being continuous in all arguments. Here $R_0$ is a parameter standing for the magnitude of the initial data $q^0$, $\varrho_0$ and $v^0$ and of the external forces $b$ in their respective norms. An important observation of the paper \cite{solocompress} is the following.
\begin{lemma}\label{selfmapT}
Suppose that $R_0 > 0$ is fixed. Suppose that for all $t \leq T$, the inequality \eqref{CONTROLLEDGROWTH} is valid with a continuous function $\Psi = \Psi(t, \, R_0, \, \eta) \geq 0$ defined for all $t \geq 0$ and $\eta \geq 0$ and increasing in these arguments. Assume moreover that $\Psi(0, \, R_0, \, \eta) = \Psi^0(R_0) > 0$ is independent of $\eta$. Then there are $t_0 = t_0(R_0) > 0$ and $\eta_0 = \eta_0(R_0) > 0$ such that $\mathcal{T} \, (q^*, \, v^*) := (q, \, v)$ maps the closed ball with radius $\eta_0$ in $\mathcal{Y}_{t_0}$ into itself. 
\end{lemma}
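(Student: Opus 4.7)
The plan is to exploit the decoupling at $t=0$: the hypothesis $\Psi(0, R_0, \eta) = \Psi^0(R_0)$ tells us that at time zero the growth estimate \eqref{CONTROLLEDGROWTH} is insensitive to the radius $\eta$ of the ball in which $(q^*, v^*)$ lives. This suggests choosing the radius $\eta_0$ of the self-mapped ball first (in terms of $\Psi^0(R_0)$ alone) and only afterwards shrinking the time horizon $t_0$ to absorb the dependence on $(q^*, v^*)$.

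Concretely, I would fix $\eta_0 := \Psi^0(R_0) + 1$ (any strict majorant of $\Psi^0(R_0)$ works). The function $t \mapsto \Psi(t, R_0, \eta_0)$ is continuous on $[0, T]$ and takes at $t=0$ the value $\Psi^0(R_0) < \eta_0$. By continuity, there exists $t_0 = t_0(R_0) \in (0, T]$ such that
\begin{equation*}
\Psi(t, R_0, \eta_0) \leq \eta_0 \qquad \text{for every } t \in [0, t_0].
\end{equation*}
Now let $(q^*, v^*)$ belong to the closed ball of radius $\eta_0$ in $\mathcal{Y}_{t_0}$, and set $(q, v) := \mathcal{T}(q^*, v^*)$. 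Because the norm on $Q_{t_0}$ controls the norm on $Q_t$ for any $t \leq t_0$, we have $\|(q^*, v^*)\|_{\mathcal{Y}_t} \leq \eta_0$ for all such $t$. Applying \eqref{CONTROLLEDGROWTH} at $t = t_0$ together with the monotonicity of $\Psi$ in its third argument yields
\begin{equation*}
\|(q, v)\|_{\mathcal{Y}_{t_0}} \leq \Psi\bigl(t_0, R_0, \|(q^*, v^*)\|_{\mathcal{Y}_{t_0}}\bigr) \leq \Psi(t_0, R_0, \eta_0) \leq \eta_0,
\end{equation*}
which is precisely the self-mapping property on the closed $\eta_0$-ball of $\mathcal{Y}_{t_0}$.

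There is essentially no obstacle here; the argument is a soft consequence of the structural assumptions on $\Psi$. The only point worth double-checking is that \eqref{CONTROLLEDGROWTH} has been established with $\Psi$ jointly continuous and separately monotone in $t$ and $\eta$, and that the truncation $t \leq t_0 \leq T$ is consistent with the domain on which $\mathcal{T}$ has already been shown to be well-defined. The genuine work is therefore not in this lemma but in the earlier step, namely proving \eqref{CONTROLLEDGROWTH} with a $\Psi$ whose value at $t=0$ is independent of the input norm $\eta$; once that is in hand, the present lemma is a one-line application of continuity.
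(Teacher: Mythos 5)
Your argument is correct and complete, and it takes a slightly different (and arguably cleaner) route than the paper's own proof. You fix the radius first, setting $\eta_0 := \Psi^0(R_0)+1$, and then shrink $t_0$ by continuity of $t \mapsto \Psi(t,R_0,\eta_0)$ at $t=0$; the monotonicity in the third argument then closes the loop. The paper instead works in the opposite order: it implicitly fixes a time $t_0$, defines $\eta_0 := \inf\{\eta>0 : \Psi(t_0,R_0,\eta)\le\eta\}$, and argues by contradiction (using continuity, nonnegativity, and $\Psi^0(R_0)>0$) that $\eta_0 > 0$. Your approach has the advantage of making explicit how $t_0$ is chosen and of sidestepping the question of whether the infimum defining $\eta_0$ is taken over a nonempty set — a point the paper's proof does not actually address (and the paper's displayed inequality chain appears to contain a slip, ending in $\le \|(q^*,v^*)\|_{\mathcal{Y}_{t_0}}$ where one would expect $\le \eta_0$). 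Both proofs rest on exactly the same structural assumptions (joint continuity, monotonicity in $t$ and $\eta$, and the $\eta$-independence of $\Psi$ at $t=0$), so the difference is one of organization rather than substance; still, your version is the more watertight of the two.
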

\begin{proof}
We have to show that $\eta_0 := \inf  \{\eta > 0 \, : \, \Psi(t_0, \, R_0, \, \eta) \leq \eta\} > 0$, since then \eqref{CONTROLLEDGROWTH} implies
\begin{align*}
\|\mathcal{T} \, (q^*, \, v^*)\|_{\mathcal{Y}_{t_0}} \leq \Psi(t_0, \, R_0, \, \|(q^*, \, v^*)\|_{\mathcal{Y}_{t_0}}) \leq \|(q^*, \, v^*)\|_{\mathcal{Y}_{t_0}} \, ,
\end{align*}
whenever $\|(q^*, \, v^*)\|_{\mathcal{Y}_{t_0}} \leq \eta_0$. Hence $\mathcal{T} $ maps the closed ball with radius $\eta_0$ in $\mathcal{Y}_{t_0}$ into itself. Now $\eta_0 = 0$ yields $\Psi(t_0, \, R_0, \, \eta) =0$ by the continuity of $\Psi$ (and since $\Psi$ is nonnegative by assumption), hence implies the contradiction $\Psi(0, \, R_0, \, \eta) = \Psi^0(R_0) = 0$.
%
%
%
%
%
\end{proof}
The strategy for proving Theorem \ref{MAIN3} shall be quite similar. We use here the map $\mathcal{T}^1$, $(r^*, \,  w^*) \mapsto (r, \,  w)$ defined via solution to \eqref{linearT1second}, \eqref{linearT2second} and \eqref{linearT3second}. In this case, the fixed-point we look for is in the space $\phantom{}_0\mathcal{Y}_T$ and we expect a continuity estimate of the type
\begin{align}\label{CONTROLLEDGROWTH2}
 \|(r, \,  w)\|_{\phantom{}_0\mathcal{Y}_T} \leq \Psi(T, \, R_0, \, R_1, \, \|(r^*, \,  w^*)\|_{\phantom{}_0\mathcal{Y}_T}) \, .
\end{align}
Here $R_0$ stands for magnitude of the initial data $q^0$, $\varrho_0$ and $v^0$ and of the external forces $b$, while the parameter $R_1$ expresses the distance of these initial data to a stationary/equilibrium solution.  
\begin{lemma}\label{selfmapTsecond}
 Suppose that $T > 0$ and $R_0 >0$ are arbitrary but fixed. Suppose that \eqref{CONTROLLEDGROWTH2} is valid with a continuous function $\Psi = \Psi(T, \, R_0, \, R_1, \, \eta)$ defined for all $R_1 \geq 0$ and $\eta \geq 0$, and increasing in these arguments. Assume moreover that $\Psi(T, \, R_0, \, 0, \, \eta) = 0$ and that $\Psi(T, \, R_0, \, R_1, \, 0) > 0$. Then, there is $\delta > 0$ such that if $R_1 \leq \delta$, we can find $\eta_0 > 0$ such that $\mathcal{T}^1$ maps the set $\{\bar{u} \in \phantom{}_0\mathcal{Y}_T \, : \,   \|\bar{u}\|_{\mathcal{Y}_T} \leq \eta_0\}$ into itself. 
\end{lemma}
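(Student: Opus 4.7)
The strategy is exactly parallel to Lemma \ref{selfmapT}: I would exhibit a positive $\eta_0$ satisfying $\Psi(T, R_0, R_1, \eta_0) \leq \eta_0$, and then use monotonicity of $\Psi$ in its last argument combined with \eqref{CONTROLLEDGROWTH2} to obtain
\begin{equation*}
\|\mathcal{T}^1(r^*, w^*)\|_{\phantom{}_0\mathcal{Y}_T} \leq \Psi(T, R_0, R_1, \|(r^*, w^*)\|_{\phantom{}_0\mathcal{Y}_T}) \leq \Psi(T, R_0, R_1, \eta_0) \leq \eta_0
\end{equation*}
for every $(r^*, w^*)$ in the closed ball of radius $\eta_0$ in $\phantom{}_0\mathcal{Y}_T$. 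This is precisely the required self-mapping property.

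To produce such an $\eta_0$ I would first fix a candidate $\bar\eta > 0$ arbitrarily and examine the map $R_1 \mapsto \Psi(T, R_0, R_1, \bar\eta)$. By hypothesis $\Psi(T, R_0, 0, \bar\eta) = 0 < \bar\eta$, and joint continuity of $\Psi$ therefore yields $\delta = \delta(\bar\eta, T, R_0) > 0$ such that $\Psi(T, R_0, R_1, \bar\eta) \leq \bar\eta$ for all $R_1 \in [0, \delta]$; one may then simply take $\eta_0 := \bar\eta$. Equivalently, staying closer to the infimum construction of Lemma \ref{selfmapT}, one can define $\eta_0(R_1) := \inf\{\eta > 0 \, : \, \Psi(T, R_0, R_1, \eta) \leq \eta\}$, in which case finiteness $\eta_0(R_1) \leq \bar\eta$ follows from the above continuity step and positivity $\eta_0(R_1) > 0$ follows from the auxiliary hypothesis $\Psi(T, R_0, R_1, 0) > 0$ combined with continuity of $\Psi$ in $\eta$.

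The lemma itself is purely abstract bookkeeping and presents no real obstacle: the only step to carry out is the elementary continuity argument above. The genuine difficulty — and what I expect to be the main obstacle of the global existence proof — is instead the construction, in Section \ref{contiT1}, of an estimate of the form \eqref{CONTROLLEDGROWTH2} whose controlling function $\Psi$ has the structural features used here, in particular the vanishing $\Psi(T, R_0, 0, \eta) \equiv 0$. This vanishing is the quantitative manifestation of the near-equilibrium assumption of Theorem \ref{MAIN3}: when $R_1 = 0$ the extension $\hat{u}^0$ can be chosen to coincide with the equilibrium solution itself, so that the inhomogeneities $\hat g^0, \hat f^0$ together with the linearisation coefficients $(g^1)'$, $(f^1)'$ entering \eqref{linearT1second}--\eqref{linearT3second} are expected to conspire so that the linear maximal-regularity estimates force $\mathcal{T}^1(0) = 0$. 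Once that structural estimate is available, the present lemma yields the fixed-point setup for free.
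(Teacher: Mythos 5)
Your proof is correct and is precisely the argument the paper intends: the paper itself declines to spell it out, stating only that it is ``completely similar to the one of Lemma~\ref{selfmapT},'' and your reduction to $\Psi(T,R_0,0,\bar\eta)=0<\bar\eta$ plus joint continuity in $R_1$ (replacing the role played by $t\mapsto 0$ in Lemma~\ref{selfmapT}) is exactly that analogue. One minor remark worth noticing: with your direct choice $\eta_0:=\bar\eta$ the auxiliary hypothesis $\Psi(T,R_0,R_1,0)>0$ is not needed at all — it is only required if one insists on the infimum construction so as to rule out $\eta_0(R_1)=0$; your first variant therefore establishes the self-mapping property under slightly weaker assumptions and with $\eta_0$ independent of $R_1\in[0,\delta]$, while the second variant reproduces the paper's pattern verbatim.
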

The proof can be left to the the reader as it is completely similar to the one of Lemma \ref{selfmapT}.
%
%
%
In order to prove the Theorems we shall therefore prove the continuity estimate \eqref{CONTROLLEDGROWTH}, \eqref{CONTROLLEDGROWTH2}. This is the main object of the next sections.

\section{Estimates of linearised problems}\label{ESTI}

In this section, we present the estimates on which our main results in Theorem \ref{MAIN}, \ref{MAIN2} are footing. In order to motivate the procedure, we recall that we want to prove the continuity estimate \eqref{CONTROLLEDGROWTH} for the map $\mathcal{T}$ in Section \ref{twomaps}. With this fact in mind it shall be easier for the reader to follow the technical exposition. The proof is split into several subsections. To achieve also more simplicity in the notation, we introduce indifferently for a function or vector field $f \in W^{2,1}_p(Q_T; \, \mathbb{R}^k)$ ($p > 3$ fixed, $k\in \mathbb{N}$) and $t \leq T$ the notation
\begin{align}\label{Vfunctor}
 \mathscr{V}(t; \, f) :=  \|f\|_{W^{2,1}_p(Q_t; \, \mathbb{R}^k)} + \sup_{s \leq t} \|f(\cdot, \, s)\|_{W^{2-\frac{2}{p}}_p(\Omega; \, \mathbb{R}^k)} \, .
\end{align}
Moreover, we will need H\"older half-norms. For $\alpha, \, \beta \in [0,\, 1]$ and $f$ scalar--valued, we denote
\begin{align*}
[f]_{C^{\alpha}(\Omega)} := \sup_{x \neq y \in \Omega} \frac{|f(x) - f(y)|}{|x-y|^{\alpha}}, \quad [f]_{C^{\alpha}(0,T)} := \sup_{t \neq s \in [0,T]} \frac{|f(t) - f(s)|}{|t-s|^{\alpha}}\\
[f]_{C^{\alpha,\beta}(Q_T)} := \sup_{t \in [0, \, T]} [f(\cdot, \, t)]_{C^{\alpha}(\Omega)} +  \sup_{x \in \Omega} [f(x, \cdot)]_{C^{\beta}(0,T)}  \, .
\end{align*}
The corresponding H\"older norms $\|f\|_{C^{\alpha}(\Omega)}$, $\|f\|_{C^{\alpha}(0,T)}$ and $f \in C^{\alpha,\beta}(Q_T)$ are defined adding the corresponding $L^{\infty}-$norm to the half-norm.

\subsection{Estimates for a linearised problem in the variables $q_1,\ldots, q_{N-1}$}

We commence with a statement concerning the linearisation of $\widetilde{\mathscr{A}}^1$ (cf.\ \eqref{A1equiv}).
\begin{prop}\label{A1linmain}
Assume that $R_q, \, \widetilde{M}: \, \mathbb{R}_+ \times \mathbb{R}^{N-1} \rightarrow \mathbb{R}^{(N-1)\times (N-1)}$ are maps of class $C^1$ into the set of positively definite matrices. Suppose further that $q^{*} \in W^{2,1}_p(Q_T; \, \mathbb{R}^{N-1})$ and $\varrho^* \in W^{1,1}_{p,\infty}(Q_T)$ ($p > 3$) are given, where $\varrho^*$ is strictly positive. We denote $R_q^* := R_q(\varrho^*, \, q^{*})$ and $ \widetilde{M}^* := \widetilde{M}(\varrho^*, \, q^{*})$. For $t \leq T$, we further define
\begin{align*}
m^*(t) := \inf_{(x,s) \in Q_t} \varrho^*(x, \, s) > 0 \, , \quad M^*(t) := \sup_{(x,s) \in Q_t} \varrho^*(x, \, s) \, .
\end{align*}
We assume that $g \in L^p(Q_T; \, \mathbb{R}^{N-1})$ and $q^0 \in W^{2-\frac{2}{p}}(\Omega)$ are given and that $\nu \cdot \nabla q^0(x) = 0$ in the sense of traces on $\partial \Omega$. Then there is a unique $q \in W^{2,1}_p(Q_T; \, \mathbb{R}^{N-1})$, solution to the problem
\begin{align}\label{qlinear}
R_q^* \,  q_t - \divv (\widetilde{M}^{*} \, \nabla q) = g \, \text{ in } Q_T, \quad \nu \cdot \nabla q = 0 \text{ on } S_T, \, \quad
q(x, \, 0) = q^0(x) \text{ in } \Omega \, .
\end{align}
Moreover, there is a constant $C$ independent on $T$, $q$, $\varrho^*$ and $q^*$ as well as a continuous function $\Psi_1 = \Psi_1(t, \, a_1,\ldots,a_6)$ defined for all $t \geq 0$ and all numbers $a_1, \ldots, a_6 \geq 0$ such that for all $t \leq T$ and $0 < \beta \leq 1$, it holds that
\begin{align*}
& \mathscr{V}(t; \, q) \leq C \, \Psi_{1,t}\, \left[(1+[\varrho^*]_{C^{\beta,\frac{\beta}{2}}(Q_t)})^{\tfrac{2}{\beta}} \, \|q^0\|_{W^{2-\frac{2}{p}}_p(\Omega)} + \|g\|_{L^p(Q_t)}\right] \, , \\[0.1cm]
& \Psi_{1,t} := \Psi_1(t, \, (m^*(t))^{-1}, \, M^*(t),  \, \|q^*(0)\|_{C^{\beta}(\Omega)}, \, 
\mathscr{V}(t; \, q^*), \, [\varrho^*]_{C^{\beta,\frac{\beta}{2}}(Q_t)}, \, \|\nabla \varrho^*\|_{L^{p,\infty}(Q_t)}) \, .
\end{align*}
In addition, $\Psi_1$ is increasing in all arguments and $\Psi_1(0, \, a_1,\ldots,a_6) = \Psi_1^0(a_1, \, a_2, \, a_3)$ does not depend on the last three arguments.
\end{prop}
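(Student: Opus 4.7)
The plan is to apply $L^p$ maximal regularity for linear parabolic systems with H\"older continuous coefficients. By the assumption that $R_q, \widetilde{M}$ are $C^1$ on $\mathbb{R}_+ \times \mathbb{R}^{N-1}$ and by Remarks \ref{parabolicspace} and \ref{contispace}, the composite coefficients $R_q^{*}$ and $\widetilde{M}^*$ belong to $C^{\beta, \beta/2}(\overline{Q}_t)$ for every $\beta \in (0, 1-3/p]$: indeed, $q^* \in W^{2,1}_p$ embeds into $C^{\alpha, \alpha/2}$ with $\alpha$ up to $2-5/p$, while $\varrho^* \in W^{1,1}_{p,\infty}$ embeds into the isotropic $C^{1-3/p}$, and for $p>3$ the binding exponent is $1-3/p$ (for $\beta$ larger than this value the right-hand side of the target inequality may be infinite, so the estimate is trivial). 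The chain rule gives
\begin{align*}
 [R_q^*]_{C^{\beta,\beta/2}(Q_t)} + [\widetilde{M}^*]_{C^{\beta,\beta/2}(Q_t)} \leq C_K \, \bigl([\varrho^*]_{C^{\beta,\beta/2}(Q_t)} + [q^*]_{C^{\beta,\beta/2}(Q_t)}\bigr),
\end{align*}
where $C_K$ depends on the $C^1$-norms of $R_q, \widetilde{M}$ on a compact subset $K$ of their domain containing the range of $(\varrho^*, q^*)$; this compact is controlled by $(m^*(t))^{-1}$, $M^*(t)$ and $\|q^*\|_{L^\infty(Q_t)} \leq \|q^*(0)\|_{C(\overline{\Omega})} + C\,\mathscr{V}(t;q^*)$. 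By Lemmas \ref{rhonewlemma} and \ref{Mnewlemma}, the ellipticity lower bound of $R_q^{*}, \widetilde{M}^{*}$ is likewise controlled by the same data. Existence and uniqueness of $q \in W^{2,1}_p(Q_T;\mathbb{R}^{N-1})$ then follow from the maximal $L^p$-regularity theory for normally elliptic second-order parabolic systems; see, e.g., \cite{sologeneral} or \cite{denkhieberpruess}.

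For the quantitative bound I would use a freezing-coefficients argument. Fix $(x_0, t_0) \in \overline{Q}_t$ and, on the parabolic cylinder $C_h = (B_h(x_0) \times (t_0 - h^2, t_0)) \cap Q_t$, rewrite \eqref{qlinear} as the constant-coefficient system with coefficients $R_q^*(x_0, t_0)$ and $\widetilde{M}^*(x_0, t_0)$ plus a perturbation whose $L^p(C_h)$-norm is controlled by $c\, h^\beta \bigl([R_q^*]_{C^{\beta,\beta/2}} + [\widetilde{M}^*]_{C^{\beta,\beta/2}}\bigr) \|q\|_{W^{2,1}_p(C_h)}$. Solonnikov's $L^p$ estimate for the constant-coefficient problem \cite{sologeneral} then yields a local $W^{2,1}_p$-bound provided $h$ is small enough to absorb the perturbation into the left-hand side, which amounts to $h \sim \bigl(1 + [\varrho^*]_{C^{\beta,\beta/2}} + [q^*]_{C^{\beta,\beta/2}}\bigr)^{-1/\beta}$. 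Patching over a covering of $Q_t$ by cylinders of this length scale, via a subordinated partition of unity, and using the parabolic scaling factor $\sim h^{-2}$ coming from the second-order derivatives, produces a global constant proportional to $\bigl(1 + [\varrho^*]_{C^{\beta,\beta/2}(Q_t)}\bigr)^{2/\beta}$ multiplied by a term depending continuously on $\mathscr{V}(t;q^*)$, $\|q^*(0)\|_{C^{\beta}(\overline{\Omega})}$ and on the ellipticity bounds $(m^*(t))^{-1}, M^*(t)$. The summand $\sup_{s \leq t}\|q(s)\|_{W^{2-2/p}_p}$ in $\mathscr{V}(t;q)$ is then added to the $W^{2,1}_p(Q_t)$-norm by the continuous embedding $W^{2,1}_p(Q_t) \hookrightarrow C([0,t]; W^{2-2/p}_p(\Omega))$.

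The remaining bookkeeping defines $\Psi_1$ by gathering all these dependences in a continuous and monotone way; since each step produces an increasing continuous function of the listed arguments, so does the composition. The limiting property $\Psi_1(0, a_1, \ldots, a_6) = \Psi_1^0(a_1, a_2, a_3)$ reflects the fact that as $t \to 0$ the cylinder $Q_t$ degenerates, the covering estimate collapses, and only the trace inequality $\|q(0)\|_{W^{2-2/p}_p} \leq \|q^0\|_{W^{2-2/p}_p}$ survives; the implicit constant in this inequality depends only on the initial-time ellipticity data, hence on $(m^*(0))^{-1}, M^*(0)$ and $\|q^*(0)\|_{C^\beta(\overline\Omega)}$, but not on the H\"older semi-norms over $Q_t$ nor on $\mathscr{V}(t;q^*)$. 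The main obstacle will be the careful tracking of the exponent $2/\beta$ through the partition-of-unity construction while preserving the ellipticity lower bound under perturbation: the threshold for $h$ must be chosen below the scale dictated by the ellipticity constant, and this introduces an additional continuous dependence on $a_1, a_2, a_3$, but critically no new factor involving $[\varrho^*]_{C^{\beta,\beta/2}}$, which is the sharpness needed to feed the overall estimate into the self-mapping argument of Lemma \ref{selfmapT}.
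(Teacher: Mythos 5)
Your overall strategy -- freezing coefficients at a point, applying a constant-coefficient maximal-regularity estimate on small parabolic cylinders, and patching with a partition of unity at a scale $h \sim (1+[\varrho^*]+[q^*])^{-1/\beta}$ -- is the same as the paper's (Appendix, proof of Lemma~\ref{A1linUMFORM}), and the exponent $2/\beta$ you track through the cutoff commutators matches the paper's. There are, however, two genuine gaps and one omitted ingredient.

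First, your argument does not close. After localizing and summing over the covering, the right-hand side of the resulting inequality still contains lower-order norms of the unknown itself: the commutators produced by the cutoff functions give a term $\|q\|_{W^{1,0}_p(Q_t)}$, and computing $\divv(\widetilde{M}^*\nabla q)=\widetilde{M}^*\triangle q + \nabla\widetilde{M}^*\cdot\nabla q$ throws the first-order term $\nabla\widetilde{M}^*\cdot\nabla q$ (hence $\|\nabla\varrho^*\cdot\nabla q\|_{L^p}$ and $\|\nabla q^*\cdot\nabla q\|_{L^p}$) onto the right. None of these are absorbed by choosing $h$ small; they are the same order as part of $\mathscr{V}(t;q)$. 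To close the estimate one must interpolate $\|\nabla q\|_{L^\infty(\Omega)}$ against $\|D^2 q\|_{L^p}$ and $\|q\|_{L^p}$ via Gagliardo--Nirenberg, apply Young to absorb the resulting $\|D^2q\|_{L^p}^p$ into the left, and then run a Gronwall argument in $\sup_{\tau\le s}\|q(\tau)\|_{W^{2-2/p}_p(\Omega)}$ to dispose of $\|q\|_{W^{1,0}_p(Q_t)}$. This is exactly the content of the paper's Corollary~\ref{A1linUMFORMsecond}, and it is why $\|\nabla\varrho^*\|_{L^{p,\infty}(Q_t)}$ appears as the sixth argument $a_6$ of $\Psi_1$ -- a dependence your sketch never accounts for.

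Second, the frozen-coefficient principal matrix is $A^j=[R_q^*]^{-1}\widetilde{M}^*(x^j,t_j)$, which is not symmetric. To invoke a constant-coefficient heat-type estimate with explicit, $T$-independent constants (needed so that $C$ in the Proposition does not depend on $T$), the paper first diagonalizes $A^j$ using the fact that the product of two symmetric positive definite matrices has positive real spectrum and is similar to a symmetric matrix (Lemma~\ref{ALGEBRA}), reducing the localized problem to scalar heat equations with a $T$-independent constant (Remark~\ref{Tindependence}). Quoting general normally-elliptic maximal-regularity theory does give solvability, but you still need to argue why the constant is uniform in $T$; without the diagonalization and Remark~\ref{Tindependence} (or an equivalent argument) that uniformity is not justified by your proposal.

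Finally, your heuristic explanation of $\Psi_1(0,a)=\Psi_1^0(a_1,a_2,a_3)$ (collapse to the trace inequality) points in the right direction, but the actual mechanism in the paper is that the Gronwall factor $t\,E(t)\exp(t\,E(t))$ and the time-weighted H\"older control $[q^*]_{C^{\beta,\beta/2}(Q_t)} \le \|q^*(0)\|_{C^\beta(\Omega)} + C(t)t^\gamma\mathscr{V}(t;q^*)$ both vanish as $t\to 0$, so that only the instantaneous coefficient-controlled constants survive; this uses Lemma~\ref{HOELDERlemma} and the explicit construction of $\phi_3^*,\phi_4^*$, and is not a by-product of the patching argument alone.
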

\begin{proof}
 We prove here only the unique solvability. Due to the technicality, the proof of the estimate will be given separately hereafter.
 After computation of the divergence and inversion of $R_q^*$ in \eqref{qlinear}, the vector field $q$ is equivalently asked to satisfy the relations
\begin{align}\label{petrovskisyst}
 q_t - [R_q^*]^{-1} \, \widetilde{M}^* \, \triangle q = [R_q^*]^{-1} \, g + [R_q^*]^{-1} \, \nabla \widetilde{M}^* \cdot \nabla q \, \, .
\end{align}
The matrix $A^* := [R_q^*]^{-1} \, \widetilde{M}^*$ is the product of two symmetric positive semi-definite matrices. The Lemma \ref{ALGEBRA} implies that the eigenvalues are real and strictly positive. Moreover,
\begin{align}\label{EVPROD}
 \frac{\lambda_{\min}(\widetilde{M}^*)}{\lambda_{\max}(R_q^*)} \leq \lambda_{\min}(A^*) \leq \lambda_{\max}(A^*) \leq  \frac{\lambda_{\max}(\widetilde{M}^*)}{\lambda_{\min}(R_q^*)} \, .
\end{align}
Thus, the equations \eqref{petrovskisyst} are a linear parabolic system in the sense of Petrovski (\cite{ladu}, Chapter VII, Paragraph 8, Definition 2). We apply the result of \cite{sologeneral}, Chapter V recapitulated in \cite{ladu}, Chapter VII, Theorem 10.4, enriched and refined in several contributions of the school of maximal parabolic regularity as for instance in \cite{denkhieberpruess}, and we obtain the unique solvability. Note that in the case of the equations \eqref{petrovskisyst}, this machinery does not need to be applied in its full complexity. The reason is that the differential operator of second order in space is the Laplacian in each row of the system. Using this fact, the continuity estimate for the system \eqref{petrovskisyst} can be established by elementary means, as revealed by the proof of Lemma \ref{A1linUMFORM} below (Appendix, Section \ref{technique}). From the estimate we can easily pass to solvability by linear continuation.  
\end{proof}
Due to its technicality, the proof of the estimate is split into several steps. The first step, accomplished in the following Lemma, is the principal estimate. Subsequent statements are needed to attain the bound as formulated in Proposition \ref{A1linmain}.
\begin{lemma}\label{A1linUMFORM}
We adopt the assumptions of Proposition \ref{A1linmain}. Then for $\beta \in ]0, \, 1]$ arbitrary, there is a constant $C$ independent on $T$, $q$, $\varrho^*$ and $q^*$ such that, for all $t \leq T$,
\begin{align*}
\mathscr{V}(t; \, q) \leq &  C \, \phi^*_{0,t} \, (1+ [\varrho^*]_{C^{\beta,\frac{\beta}{2}}(Q_t)} + [q^*]_{C^{\beta,\frac{\beta}{2}}(Q_t)})^{\frac{2}{\beta}} \,  (\|q^0\|_{W^{2-\frac{2}{p}}_p(\Omega)} + \|q\|_{W^{1,0}_p(Q_t)}) \\
& + C \,  \phi^*_{1,t} \,  (\|g\|_{L^p(Q_t)} + \|\nabla \varrho^* \cdot \nabla q\|_{L^p(Q_t)} + \|\nabla q^* \cdot \nabla q\|_{L^p(Q_t)} ) \, . 
\end{align*}
For $i = 0, \, 1$, there is a continuous function $\phi_{i}^* = \phi^*_i(a_1, \, a_2, \, a_3)$, defined for $a_1,\, a_2,\, a_3 \geq 0$ and increasing in each argument, such that $ \phi^*_{i,t} = \phi^*_i((m^*(t))^{-1}, \,  M^*(t), \, \|q^*\|_{L^{\infty}(Q_t; \, \mathbb{R}^{N-1})})$.
\end{lemma}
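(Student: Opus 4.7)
\textbf{Plan of proof for Lemma \ref{A1linUMFORM}.} The idea is to reduce the system \eqref{qlinear} to a diagonal heat-type system with a variable-coefficient perturbation that can be absorbed by a localization argument. First I would multiply \eqref{qlinear} from the left by $[R_q^*]^{-1}$ (which is possible and bounded by $\phi_1^*$ thanks to positive definiteness and the algebraic bound \eqref{EVPROD}), so that $q$ solves
\begin{align*}
q_t - A^*\triangle q = [R_q^*]^{-1}g + [R_q^*]^{-1}\nabla\widetilde{M}^*\cdot\nabla q =: h \, ,
\end{align*}
with $A^* = [R_q^*]^{-1}\widetilde{M}^*$. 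By Lemma \ref{ALGEBRA}, $A^*$ is similar to a symmetric positive definite matrix with eigenvalues controlled through \eqref{EVPROD}; since $R_q, \widetilde{M}$ are $C^1$ and the arguments $(\varrho^*,q^*)$ are at least continuous, $A^*$ is Hölder continuous on $\overline{Q_t}$ with
\begin{align*}
[A^*]_{C^{\beta,\beta/2}(Q_t)} \leq C_* \, ([\varrho^*]_{C^{\beta,\beta/2}(Q_t)} + [q^*]_{C^{\beta,\beta/2}(Q_t)}) \, ,
\end{align*}
where $C_*$ depends on $(m^*(t))^{-1}, M^*(t)$ and $\|q^*\|_{L^\infty(Q_t)}$, i.e.\ fits into a factor $\phi_0^*$.

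Next I would produce the constant-coefficient estimate: for $A_0$ a constant matrix with the same spectral bounds as $A^*$, the system $w_t - A_0 \triangle w = F$ with Neumann condition $\nu\cdot\nabla w =0$ and initial datum $w^0$ satisfying the compatibility condition can be diagonalized by a linear change of variables (since $A_0 R_q^{-1/2}\cdot R_q^{1/2}$ is similar to a symmetric matrix), so each component decouples into a scalar Neumann heat problem with a diffusion coefficient $\lambda_i \in [\lambda_{\min}(A^*),\lambda_{\max}(A^*)]$. Standard $L^p$ maximal parabolic regularity (as recalled from \cite{sologeneral,ladu,denkhieberpruess}) then yields
\begin{align*}
\mathscr{V}(t;w) \leq C\,\phi_1^* \, (\|w^0\|_{W^{2-2/p}_p(\Omega)} + \|F\|_{L^p(Q_t)}) \, .
\end{align*}

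The core step is the localization-and-freezing argument to pass from constant to variable coefficient $A^*$. I would cover $\overline{Q_t}$ by parabolic cylinders $Q_j = B(x_j,r)\cap\Omega \times (t_j, t_j+r^2)\cap(0,t)$ of parabolic radius $r>0$ with bounded overlap, and a subordinated partition of unity $(\chi_j)$ with $|\nabla^k \chi_j| \lesssim r^{-k}$, $|\partial_t \chi_j|\lesssim r^{-2}$. On each cylinder the oscillation of $A^*$ is bounded by $[A^*]_{C^{\beta,\beta/2}(Q_t)}\,r^\beta$. Writing, for a frozen value $A_j^*$ of $A^*$ at some point of $Q_j$,
\begin{align*}
(q\chi_j)_t - A_j^* \triangle (q\chi_j) = \chi_j h + (A^* - A_j^*)\triangle (q\chi_j) + \text{comm}(\chi_j, q) \, ,
\end{align*}
where the commutator contains only derivatives of $\chi_j$ multiplying $\nabla q$ and $q$, I apply the constant-coefficient estimate to $q\chi_j$ (after reflecting across $\partial\Omega$ in the boundary cylinders to preserve the Neumann condition). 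The term $(A^*-A_j^*)\triangle(q\chi_j)$ is bounded in $L^p$ by $C[A^*]_{C^{\beta,\beta/2}}\,r^\beta \|\triangle(q\chi_j)\|_{L^p}$; fixing $r$ so that $C\,C_*(1+[\varrho^*]_{C^{\beta,\beta/2}} + [q^*]_{C^{\beta,\beta/2}})\,r^\beta \leq 1/2$ allows to absorb this term into the left-hand side. The commutator contributes $\lesssim r^{-2}(\|q\|_{L^p(Q_t)} + r\|\nabla q\|_{L^p(Q_t)})$, which upon summation over $j$ and using the choice of $r$ gives the factor $(1+[\varrho^*]_{C^{\beta,\beta/2}}+[q^*]_{C^{\beta,\beta/2}})^{2/\beta}$ in front of $\|q\|_{W^{1,0}_p(Q_t)}$. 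Summing the localized estimates using bounded overlap yields the desired bound on $\|q\|_{W^{2,1}_p(Q_t)}$; the $\sup_s\|q(s)\|_{W^{2-2/p}_p}$ piece of $\mathscr{V}$ follows from the embedding $W^{2,1}_p(Q_t)\hookrightarrow C([0,t];W^{2-2/p}_p(\Omega))$ recalled in Remark \ref{parabolicspace}.

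The main obstacle, and what occupies most of the technical work, is the localization step: one has to carefully handle cylinders touching $\partial\Omega$ so that the Neumann condition is respected after multiplication by $\chi_j$ (standard reflection or straightening of the boundary, using $\partial\Omega \in \mathcal{C}^2$), and one has to track how the numerous constants that arise depend on $m^*(t), M^*(t), \|q^*\|_{L^\infty(Q_t)}$ so that they aggregate into the continuous, monotonically increasing functions $\phi_0^*, \phi_1^*$ of the statement; in particular the appearance of $(1+[\varrho^*]_{C^{\beta,\beta/2}}+[q^*]_{C^{\beta,\beta/2}})^{2/\beta}$ is exactly the trade-off between the Hölder control on $A^*$ and the inverse powers of $r$ coming from derivatives of $\chi_j$. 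The terms $\|g\|_{L^p}$, $\|\nabla\varrho^*\cdot\nabla q\|_{L^p}$ and $\|\nabla q^*\cdot\nabla q\|_{L^p}$ enter linearly through $h$ and through $\nabla \widetilde{M}^* = \widetilde{M}_\varrho^*\nabla\varrho^* + \widetilde{M}_q^*\nabla q^*$ with coefficients absorbed in $\phi_1^*$, which completes the bookkeeping.
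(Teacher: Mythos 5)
Your proposal follows essentially the same route as the paper's proof in Appendix \ref{technique}: pass to $q_t - A^*\triangle q = \tilde g$ after inverting $R_q^*$, diagonalize the frozen coefficient $A^j$ using Lemma \ref{ALGEBRA} to reduce to scalar Neumann heat problems, localize with a cutoff family whose derivative bounds produce the inverse powers of $r$, absorb the freezing error $\|A^*-A^j\|_{L^\infty(Q^j)}\|D^2q\|_{L^p}$ by choosing $r$ small in terms of $[A^*]_{C^{\beta,\beta/2}}$, and trade this against the commutator to obtain the exponent $2/\beta$. The only notable (and minor) deviation is at the boundary: you propose reflecting across $\partial\Omega$, whereas the paper sidesteps reflection by building the cutoffs $\eta^j$ to satisfy $\nu\cdot\nabla\eta^j=0$ on $\partial\Omega$, so $\eta^jq$ automatically inherits the homogeneous Neumann condition; your alternative would also work but is slightly less clean.
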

\begin{rem}\label{eigenvaluesandlipschitzconstants}
 The proof shall moreover show that the growth of $\phi^*_{0,t}, \, \phi^*_{1,t}$ can be estimated by a function of the minimal/maximal eigenvalues of the matrices $R_q^*$ and $ \widetilde{M}^*$, and of their local Lipschitz constants over the range of $(\varrho^*, \, q^*)$. To extract this point more easily, we define for a Lipschitz continuous matrix valued mapping $A: \, \mathbb{R}^+\times \mathbb{R}^{N-1} \rightarrow \mathbb{R}^{N-1} \times \mathbb{R}^{N-1}$, taking values in the positive definite matrices (for instance $A = R_q$ or $A = \widetilde{M}$), and for $t \geq 0$ functions
 \begin{align*}
  \lambda_0(t, \, A^*) := & \inf_{(x, \, s) \in Q_t} \lambda_{\min}[A(\varrho^*(x, \,s), \, q^*(x, \, s))]\, ,\\
  \lambda_1(t, \, A^*) := & \sup_{(x, \, s) \in Q_t} \lambda_{\max}[A(\varrho^*(x, \,s), \, q^*(x, \, s))]\, ,\\
  L(t, \, A^*) := & \sup_{(x, \, s) \in Q_t} |\partial_{\varrho}A(\varrho^*(x, \,s), \, q^*(x, \, s))| + \sup_{(x, \, s) \in Q_t} |\partial_{q}A(\varrho^*(x, \,s), \, q^*(x, \, s))|\, .
 \end{align*}
It is possible to reinterpret these expressions as increasing functions of $ (m^*(t))^{-1}$, of $M^*(t)$, and of $\|q^*\|_{L^{\infty}(Q_t; \, \mathbb{R}^{N-1})}$.
In the statement of Lemma \ref{A1linUMFORM}, we then can choose
 \begin{align}\label{LinftyFACTORS}
\phi^*_{0,t} := & \frac{\lambda_{0}(t, \, R_q^*)+\lambda_{1}(t, \, \widetilde{M}^*)}{\lambda_{0}^{\tfrac{3}{2}}(t, \, R_q^*)} \, \frac{\max\{1, \, \frac{\lambda_{1}(t, \, \widetilde{M}^*)}{\lambda_{0}(t, \, R_q^*)}\}}{\min\{1, \, \frac{\lambda_{0}(t, \, \widetilde{M}^*)}{\lambda_{1}(t, \, R_q^*)}\}} \times \nonumber\\
 & \times \left(1+ \frac{ \max\{1, \, \frac{\lambda_{1}(t, \, \widetilde{M}^*)}{\lambda_{0}(t, \, R_q^*)}\}}{ \min\{1, \, \frac{\lambda_{0}(t, \, \widetilde{M}^*)}{\lambda_{1}(t, \, R_q^*)}\}} \,  \frac{(\lambda_{1}(t, \, \widetilde{M}^*) + \lambda_{0}(t, \, R_q^*)) \, (L(t, \, \widetilde{M}^*) + L(t, \, R_q^*) ) }{\lambda_{0}^{\frac{5}{2}}(t, \, R_q^*)}\right) \, ,\nonumber\\
\phi^*_{1,t} := & \frac{ (1+L(t, \, \widetilde{M}^*)) \, \max\{1, \, \frac{\lambda_{1}(t, \, \widetilde{M}^*)}{\lambda_{0}(t, \, R_q^*)}\}}{\lambda_{0}^{\tfrac{3}{2}}(t, \, R_q^*) \, \min\{1, \, \frac{\lambda_{0}(t, \, \widetilde{M}^*)}{\lambda_{1}(t, \, R_q^*)}\}} \,  \, . 
\end{align}
\end{rem}
The proof is interesting, but lengthy. Since the use of H\"older norms to control the dependence on the coefficients is a classical tool, we prove these statements in the Appendix, Section \ref{technique}.

In order to prove Proposition \ref{A1linmain}, we need some reformulation of the estimate of Lemma \ref{A1linUMFORM}.
\begin{coro}\label{A1linUMFORMsecond}
We adopt the situation of Proposition \ref{A1linmain}, and for $\beta \in ]0, \, 1]$, we denote
\begin{align*}
 \phi_{2,t}^*  & := \phi^*_{0,t} \, (1+ [\varrho^*]_{C^{\beta,\frac{\beta}{2}}(Q_t)} + [q^*]_{C^{\beta,\frac{\beta}{2}}(Q_t)})^{\frac{2}{\beta}} \, ,\\
 B^*_t & := \phi^*_{2,t} + 1 +(\phi^*_{1,t})^{\tfrac{2p}{p-3}} \, (\sup_{s\leq t} \|q^*(s)\|_{W^{2-\frac{2}{p}}_p(\Omega)} + \|\nabla \varrho^*\|_{L^{p,\infty}(Q_t)})^{\tfrac{2p}{p-3}} \, .
\end{align*}
Then there are constants $c_1, \, c_2$, independent on $T$, $q$, $\varrho^*$ and $q^*$, such that
\begin{align*}
\mathscr{V}(t; \, q) \leq c_1 \, \big(1+ t^{\tfrac{1}{p}}\, B^*_t \, \exp(c_2 \, t\, [B^*_t]^p)\big) \,  (\phi^*_{2,t} \, \|q^0\|_{W^{2-\frac{2}{p}}_p(\Omega)} + \phi^*_{1,t} \, \|g\|_{L^p(Q_t)})  \, .
\end{align*}
\end{coro}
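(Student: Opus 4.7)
The plan is to start from the estimate of Lemma \ref{A1linUMFORM} and massage its three ``bad'' terms $\|q\|_{W^{1,0}_p(Q_t)}$, $\|\nabla\varrho^*\cdot\nabla q\|_{L^p(Q_t)}$ and $\|\nabla q^*\cdot\nabla q\|_{L^p(Q_t)}$ into a form that (i) reveals only a copy of $\mathscr{V}(t;q)$ with a small prefactor (to be absorbed on the left) and (ii) leaves a remainder controlled by $\|q\|_{L^p(Q_t)}$ with a coefficient bounded by $B^*_t$. An integral Gronwall argument applied to $h(t):=\|q\|_{W^{1,0}_p(Q_t)}^p$ then produces the exponential and the $t^{1/p}$ factor advertised in the claim.

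First I would bound the product terms by H\"older in the mixed Lebesgue scale
\[
\|\nabla\varrho^*\cdot\nabla q\|_{L^p(Q_t)}+\|\nabla q^*\cdot\nabla q\|_{L^p(Q_t)}\le C\tilde A^*_t\,\|\nabla q\|_{L^{\infty,p}(Q_t)},
\]
where $\tilde A^*_t:=\|\nabla\varrho^*\|_{L^{p,\infty}(Q_t)}+\sup_{s\le t}\|q^*(s)\|_{W^{2-2/p}_p}$ (using the embedding $W^{2-2/p}_p\hookrightarrow W^{1,p}$ valid for $p>3$). Next, the Gagliardo--Nirenberg inequality \eqref{gagliardo} applied pointwise in $s$ with $\alpha=\tfrac12+\tfrac{3}{2p}\in(0,1)$, followed by raising to the $p$-th power and by H\"older in time with exponents $1/\alpha,\,1/(1-\alpha)$, gives
\[
\|\nabla q\|_{L^{\infty,p}(Q_t)}\le C\,\mathscr{V}(t;q)^{\alpha}\,\|q\|_{L^p(Q_t)}^{1-\alpha}+C\|q\|_{L^p(Q_t)}.
\]
A weighted Young inequality $a^{\alpha}b^{1-\alpha}\le\epsilon a+c\epsilon^{-\alpha/(1-\alpha)}b$ with the choice $\epsilon\phi^*_{1,t}\tilde A^*_t\le 1/4$ absorbs the $\mathscr{V}$-contribution on the left, and the algebraic identity $1+\tfrac{\alpha}{1-\alpha}=1+\tfrac{p+3}{p-3}=\tfrac{2p}{p-3}$ produces precisely the exponent $2p/(p-3)$ attached to $\phi^*_{1,t}\tilde A^*_t$ in the definition of $B^*_t$.

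For the last troublemaker $\phi^*_{2,t}\|q\|_{W^{1,0}_p(Q_t)}$ I would deliberately \emph{not} interpolate it between $\mathscr{V}(t;q)$ and $\|q\|_{L^p(Q_t)}$, because that would introduce a spurious $(\phi^*_{2,t})^2$ in the exponent of the exponential. Instead, set $h(t):=\|q\|_{W^{1,0}_p(Q_t)}^p$. The previous step yields
\[
\mathscr{V}(t;q)\le D_t+C\,B^*_t\,h(t)^{1/p},\qquad D_t:=C\bigl(\phi^*_{2,t}\|q^0\|_{W^{2-2/p}_p}+\phi^*_{1,t}\|g\|_{L^p(Q_t)}\bigr).
\]
Observing that $h'(t)=\|q(\cdot,t)\|_{W^{1,p}}^p\le C\mathscr{V}(t;q)^p$ (by $W^{2-2/p}_p\hookrightarrow W^{1,p}$ and the definition of $\mathscr{V}$), and that $\|q\|_{L^p(Q_t)}^p\le h(t)$, raising to the $p$-th power one obtains the scalar differential inequality $h'(t)\le C(D_t^p+(B^*_t)^p h(t))$. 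Since $D_t$ and $B^*_t$ are nondecreasing in $t$, the integral Gronwall lemma yields $h(t)\le C\,D_t^p\,t\,\exp(C(B^*_t)^p t)$.

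Plugging $h(t)^{1/p}\le CD_t\,t^{1/p}\exp(c_2 t(B^*_t)^p)$ back into the inequality of the preceding paragraph gives the announced bound $\mathscr{V}(t;q)\le c_1 D_t\,(1+t^{1/p}B^*_t\exp(c_2 t(B^*_t)^p))$. The main obstacle of the argument is the precise bookkeeping in the Young step: the exponent $\alpha/(1-\alpha)=(p+3)/(p-3)$ has to combine with the prefactor $\phi^*_{1,t}\tilde A^*_t$ to produce exactly $2p/(p-3)$, which dictates the way $B^*_t$ has been constructed. A secondary pitfall is the temptation of absorbing $\|q\|_{W^{1,0}_p(Q_t)}$ via space-interpolation, which, as noted, destroys the shape $(B^*_t)^p$ in the exponent; routing this term through the Gronwall variable $h$ is the essential trick that keeps the bound tight.
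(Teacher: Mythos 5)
Your argument is correct and follows essentially the same strategy as the paper's proof: Gagliardo--Nirenberg on $\|\nabla q(s)\|_{L^\infty}$ with $\alpha=\tfrac12+\tfrac{3}{2p}$, a weighted Young inequality to absorb $\mathscr{V}(t;q)$ and produce the exponent $2p/(p-3)$, followed by Gronwall. The only (harmless) bookkeeping difference is the Gronwall variable: you track $h(t)=\|q\|_{W^{1,0}_p(Q_t)}^p$ via the differential inequality $h'\le C(D_t^p+(B^*_t)^p h)$, while the paper tracks $f(s)=\sup_{\tau\le s}\|q(\tau)\|^p_{W^{2-2/p}_p(\Omega)}$ via the integral inequality $f(t)\le A(t)+E(t)\int_0^t f\,ds$; both produce the same bound, and your ODE formulation is arguably the cleaner of the two.
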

\begin{proof}
We start from the main inequality of Lemma \ref{A1linUMFORM}. Raising it to the $p-$th power, we obtain
\begin{align}\label{ControlProducts0}
 \mathscr{V}^p(t, \, q) \leq & C_p \, (\phi^*_{2,t})^p \, (\|q^0\|^p_{W^{2-\frac{2}{p}}_p(\Omega)} + \|q\|^p_{W^{1,0}_p(Q_t)}) \nonumber\\
 & + C_p \,(\phi^*_{1,t})^p  (\|g\|_{L^p(Q_t)}^p + \|\nabla \varrho^* \cdot \nabla q\|^p_{L^p(Q_t)} + \|\nabla q^* \cdot \nabla q\|^p_{L^p(Q_t)} )  \, . 
\end{align}
Since $\|q\|_{W^{1,0}_p(Q_t)}^p = \int_{0}^t \|q(s)\|_{W^{1,p}(\Omega)}^p \, ds$ and $\|q(s)\|_{W^{1,p}(\Omega)} \leq \sup_{\tau\leq s} \|q(\tau)\|_{W^{2-\frac{2}{p}}_p(\Omega)}$, we see that $\|q\|_{W^{1,0}_p(Q_t)}^p \leq \int_{0}^t \sup_{\tau\leq s} \|q(\tau)\|_{W^{2-\frac{2}{p}}_p(\Omega)}^p \, ds$.

Owing to the G.~N. inequality \eqref{gagliardo}, $$\|\nabla q(s)\|_{L^{\infty}(\Omega)} \leq C_1 \, \|D^2q(s)\|_{L^p(\Omega)}^{\alpha} \, \|q(s)\|_{L^p(\Omega)}^{1-\alpha} + C_2 \, \|q(s)\|_{L^p(\Omega)}$$ for $\alpha := \tfrac{1}{2}+\tfrac{3}{2p}$. Employing also Young's inequality, $a \, b \leq \epsilon \, a^{\tfrac{1}{\alpha}} + c_{\alpha} \, \epsilon^{-\tfrac{\alpha}{1-\alpha}} \, b^{\tfrac{1}{1-\alpha}}$, valid for all $\epsilon > 0$ and $a, \, b > 0$ it follows that
\begin{align}\label{ControlProducts1}
&  \|\nabla \varrho^* \cdot \nabla q\|_{L^p(Q_t)}^p \leq \int_{0}^t |\nabla \varrho^*(s)|_{p}^p \, |\nabla q(s)|_{\infty}^p \, ds\nonumber\\
&  \leq C_1 \, \int_{0}^t |\nabla \varrho^*(s)|_{p}^p \, |D^2q(s)|_{p}^{p\alpha} \, |q(s)|_{p}^{p(1-\alpha)} \, ds + C_2 \,\int_{0}^t |\nabla \varrho^*(s)|_{p}^p \, |q(s)|_{p}^p \, ds\nonumber\\
& \leq  \epsilon \, \int_{0}^t  |D^2q(s)|_{p}^{p} \, ds + c_{\alpha} \, \epsilon^{-\tfrac{\alpha}{1-\alpha}} \, \int_{0}^t |\nabla \varrho^*(s)|_{p}^{\frac{p}{1-\alpha}} \, |q(s)|_{p}^{p} \, ds + C_2 \,\int_{0}^t |\nabla \varrho^*(s)|_{p}^p \, |q(s)|_{p}^p \, ds\nonumber\\
&  \leq \epsilon \, \int_{0}^t  |D^2q(s)|_{p}^{p} \, ds + \int_{0}^t |q(s)|_{L^p}^{p} \, (c_{\alpha} \, \epsilon^{-\frac{\alpha}{1-\alpha}}\, |\nabla \varrho^*(s)|_{p}^{\frac{p}{1-\alpha}} + C_2\,  |\nabla \varrho^*(s)|_{p}^p) \, ds \, . 
\end{align}
Here we have denoted by $| \cdot |_{r}$ the norm in $L^r(\Omega)$ in order to save room. Similarly,
\begin{align}\label{ControlProducts2}
 \|\nabla q^* \cdot \nabla q\|_{L^p(Q_t)}^p \leq & \epsilon \, \int_{0}^t  |D^2q(s)|_{p}^{p} \, ds  + \int_{0}^t |q(s)|_{p}^{p} \, (c_{\alpha} \, \epsilon^{-\frac{\alpha}{1-\alpha}} \,  |\nabla q^*(s)|_{p}^{\frac{p}{1-\alpha}} + C_2\,  |\nabla q^*(s)|_{p}^p) \, ds\, .
\end{align}
In \eqref{ControlProducts1} and \eqref{ControlProducts2} we estimate roughly $ \|q(s)\|_{L^p(\Omega)} \leq \sup_{\tau \leq s} \|q(\tau)\|_{W^{2-\frac{2}{p}}_p(\Omega)}$. Using the abbreviation $F^*(s) := \|\nabla q^*(s)\|_{L^p(\Omega)}^p+  \|\nabla \varrho^*(s)\|_{L^p(\Omega)}^p $, it follows that
 \begin{align}\label{ControlProducts3}
&  \|\nabla \varrho^* \cdot \nabla q\|^p_{L^p(Q_t)} + \|\nabla q^* \cdot \nabla q\|^p_{L^p(Q_t)} \leq  2 \, \epsilon \, \int_{0}^t  |D^2q(s)|_{p}^{p} \, ds \\
& \qquad  + \int_{0}^t \sup_{\tau\leq s} \|q(s)\|_{W^{2-\frac{2}{p}}_p(\Omega))}^p\, [ c_{\alpha} \, \epsilon^{-\frac{\alpha}{1-\alpha}} \,  (F^*(s))^{\frac{1}{1-\alpha}} + C_2\,  F^*(s)] \, ds \, . \nonumber
 \end{align}
Recalling \eqref{ControlProducts0}, we choose $\epsilon = \frac{1}{4\,C_p\, (\phi^*_{1,t})^p}$. This, in connection with \eqref{ControlProducts0}, \eqref{ControlProducts3}, now yields
\begin{align}\label{ControlProducts4}
 \frac{1}{2} \,  \mathscr{V}^p(t; \, q) & \leq c_p \, ((\phi^*_{2,t})^p \, \|q^0\|^p_{W^{2-\frac{2}{p}}_p(\Omega)} + (\phi^*_{1,t})^p \, \|g\|_{L^p(Q_t)}^p + E(t) \, \int_{0}^t f(s) \, ds) \, ,\\ 
f(s) & := \sup_{\tau \leq s} \|q(s)\|^p_{W^{2-\frac{2}{p}}_p(\Omega)}\, ,\nonumber\\
E(t)&  := (\phi^*_{2,t})^p  + \tilde{c}_{p,\alpha} \,  (\phi^*_{1,t})^{\frac{p}{1-\alpha}} \,  \sup_{s \leq t} (F^*(s))^{\frac{1}{1-\alpha}}
+ C_2 \, (\phi^*_{1,t})^p \,  \sup_{s \leq t} F^*(s) \, .\nonumber
\end{align}
The latter inequality implies that $f(t) \leq A(t) + E(t) \, \int_0^t f(s)  \, ds$. By Gronwall's Lemma, we obtain that $f(t) \leq A(t) \, \exp(t \, E(t))$, which means that
\begin{align}\label{supi}
  \sup_{s \leq t} \|q(s)\|^p_{W^{2-\frac{2}{p}}_p(\Omega)} \leq c_p \, ((\phi^*_{2,t})^p \, \|q^0\|^p_{W^{2-\frac{2}{p}}_p(\Omega)} + (\phi^*_{1,t})^p \, \|g\|_{L^p(Q_t)}^p) \,\exp\left(t \, E(t)\right) \, .
\end{align}
Combining \eqref{supi} and \eqref{ControlProducts4} we obtain that
\begin{align}\label{supii}
  \frac{1}{2} \,  \mathscr{V}^p(t; \, q) \leq c_p \, [(\phi^*_{2,t})^p \, \|q^0\|^p_{W^{2-\frac{2}{p}}_p(\Omega)} + (\phi^*_{1,t})^p \, \|g\|_{L^p(Q_t)}^p] \, [1+ t\, E(t) \, \exp(t\, E(t))] \, .
\end{align}
Estimating $F^*(s) \leq (\sup_{s \leq t} \|q^*(s)\|_{W^{2-\frac{2}{p}}_p(\Omega)} + \|\nabla \varrho^*\|_{L^{p,\infty}(Q_t)})^p$, we obtain after another application of Young's inequality with power $1/\alpha$ that
\begin{align*}
E(t) & \leq  c_{p,\alpha} \, [(\phi^*_{2,t})^p + 1 +(\phi^*_{1,t})^{\tfrac{p}{1-\alpha}} \, (\sup_{s\leq t} \|q^*(s)\|_{W^{2-\frac{2}{p}}_p(\Omega)} + \|\nabla \varrho^*\|_{L^{p,\infty}(Q_t)})^{\frac{p}{1-\alpha}}] \, .
 \end{align*}
We raise both sides of \eqref{supii} to the power $1/p$. Recall also that $\alpha =\frac{1}{2}+\frac{3}{2p}$ and the claim follows.
\end{proof}
To conclude the section, we show how to obtain the estimate of Proposition \ref{A1linmain} as stated. We start from the Lemma \ref{A1linUMFORMsecond}, in which $\phi_{2,t}^* := \phi_{0,t}^* \, (1+ [\varrho^*]_{C^{\beta,\frac{\beta}{2}}(Q_t)} + [q^*]_{C^{\beta,\frac{\beta}{2}}(Q_t)})^{\frac{2}{\beta}}$. Moreover, Lemma \ref{A1linUMFORM} shows that $\phi^*_{0,t} = \phi^*_0((m^*(t))^{-1}, \, M^*(t), \, \|q^*\|_{L^{\infty}(Q_t)})$ is an increasing function of its arguments. 
First, we roughly estimate
\begin{align}\label{rough}
 \phi_{2,t}^*  \leq & \phi^*_{0,t} \, [1+ [q^*]_{C^{\beta,\frac{\beta}{2}}(Q_t)}]^{\frac{2}{\beta}}\,  [1+ [\varrho^*]_{C^{\beta,\frac{\beta}{2}}(Q_t)}]^{\frac{2}{\beta}} \, .
 \end{align}
Making use of the Lemma \ref{HOELDERlemma}, we can further estimate the quantities $[q^*]_{C^{\beta,\frac{\beta}{2}}(Q_t)}$ and $\|q^*\|_{L^{\infty}(Q_t)}$ in the latter expression. Define $\gamma$ as in Lemma \ref{HOELDERlemma}. For $0< \beta < \min\{1, \, 2-\frac{5}{p}\}$ it follows that
\begin{align}\label{hoeldercontrol}
 \|q^*\|_{C^{\beta,\frac{\beta}{2}}(Q_t)} \leq  C(t) \, (\|q^*(0)\|_{C^{\beta}(\Omega)} + t^{\gamma} \, \mathscr{V}(t; \, q^*)) \, ,
\end{align}
where we assume for simplicity $C(t) \geq 1$ in Lemma \ref{HOELDERlemma}. Using that $\phi^*$ is increasing in its arguments, we obtain with the help of \eqref{hoeldercontrol}
\begin{align*}
\phi^*_{0,t} \, [1+ [q^*]_{C^{\beta,\frac{\beta}{2}}(Q_t)}]^{\frac{2}{\beta}} \leq & \phi^*_0((m^*(t))^{-1}, \, M^*(t), \, C(t) \, ( \|q^*(0)\|_{C^{\beta}(\Omega)} + t^{\gamma} \, \mathscr{V}(t; \, q^*))) \times\\
& \times [1+ C(t) \, ( \|q^*(0)\|_{C^{\beta}(\Omega)} + t^{\gamma} \, \mathscr{V}(t; \, q^*) ) ]^{\frac{2}{\beta}}\, .
\end{align*}
The latter expression is next reinterpreted as a function $\phi^*_3$ of the arguments $t$, $(m^*(t))^{-1}$, $M^*(t)$, $\|q^*(0)\|_{C^{\beta}(\Omega)}$ and $\mathscr{V}(t; \, q^*)$, in that order. This means that for $t \geq 0$ and $a_1, \ldots, a_4 \geq 0$, we define
\begin{align*}
 \phi^*_3(t,\, a_1, \, \ldots, a_4)  = \phi^*_0(a_1, \, a_2,\,  \,  C(t) \, (a_3 + t^{\gamma} \, a_4)) \, [1+ C(t)\, ( a_3 + t^{\gamma} \, a_4 ) ]^{\frac{2}{\beta}} \, .
\end{align*}
This definition shows in particular that $\phi^*_3(0,\, a_1, \, \ldots, a_4) = \phi^*_0(a_1, \, a_2, \, C_0 \, a_3)\, (1+ C_0 \, a_3)^{\frac{2}{\beta}} $ is independent on $a_4$. Moreover, in view of \eqref{rough},
\begin{align}\label{pii}
  \phi_{2,t}^*  \leq  \phi^*_3(t,\, (m^*(t))^{-1}, \, M^*(t), \,  \|q^*(0)\|_{C^{\beta}(\Omega)}, \,  \mathscr{V}(t; \, q^*)) \, [1+ [\varrho^*]_{C^{\beta,\frac{\beta}{2}}(Q_t)}]^{\frac{2}{\beta}} \, .
\end{align}
We next invoke Lemma \ref{A1linUMFORMsecond}, where we have shown that
\begin{align}\label{piii}
\mathscr{V}(t; \, q) \leq c_1 \, [\phi^*_{2,t} \, \|q^0\|_{W^{2-\frac{2}{p}}_p(\Omega)} + \phi^*_{1,t} \, \|g\|_{L^p(Q_t)}] \, [1+ t^{\tfrac{1}{p}}\, B^*_t \, \exp(c_2 \, t\, [B^*_t]^p)] \, ,
\end{align}
and that $B^*_t = \phi^*_{2,t} + 1 +(\phi^*_{1,t})^{\tfrac{2p}{p-3}} \, (\sup_{s\leq t} \|q^*(s)\|_{W^{2-\frac{2}{p}}_p(\Omega)} + \|\nabla \varrho^*\|_{L^{p,\infty}(Q_t)})^{\tfrac{2p}{p-3}}$. Due to \eqref{pii}, we can bound $\phi_2^*$ and see that $t^{\frac{1}{p}} \, B^*_t$ is estimated by a function $\phi_4^*$ of the quantities $t$, $(m^*(t))^{-1}$, $M^*(t)$, $\|q^*(0)\|_{C^{\beta}(\Omega)}$, $\mathscr{V}(t; \, q^*)$, and $ [\varrho^*]_{C^{\beta,\frac{\beta}{2}}(Q_t)}$, $\|\nabla \varrho^*\|_{L^{p,\infty}(Q_t)}$, in that order. The function $\phi_4^*$ is defined via
\begin{align*}
\phi_4^*(t, \, a_1,\ldots,a_6) = t^{\frac{1}{p}} \,   [1+\phi_3^*(t, \, a_1,\ldots,a_4) \, (1+a_5)^{\frac{\beta}{2}} + \phi_{1}^*(a_1,a_2,a_3) \, (a_4 + a_6)^{\frac{2p}{p-3}}] \, .
\end{align*}
In particular, $\phi_4^*(0, \, a) = 0$. Moreover we obtain with the help of \eqref{piii} that
\begin{align*}
\mathscr{V}(t; \, q) \leq & c_1 \, (1+ \phi^*_{4,t} \, \exp(c_2 \, [\phi^*_{4,t}]^p)) \,  [\phi^*_{3,t} \, (1+[\varrho^*]_{C^{\beta,\frac{\beta}{2}}(Q_t)})^{\tfrac{2}{\beta}}  \, \|q^0\|_{W^{2-\frac{2}{p}}_p(\Omega)} + \phi^*_{1,t} \, \|g\|_{L^p(Q_t)}] \,  \, .
\end{align*}
We define $\Psi_{1,t} :=  (1+ \phi^*_{4,t} \, \exp(c_2 \, [\phi^*_{4,t}]^p)) \, \max\{\phi^*_{3,t}, \, \phi^*_{1,t}\}$. More precisely, for all non-negative numbers $a_1,\ldots,a_6$, we define
\begin{align*}
 \Psi_1(t, \, a_1,\ldots,a_6) := & (1+ \phi^*_{4}(t, \, a_1,\ldots,a_6) \, \exp(c_2 \, [\phi^*_{4}(t, \, a_1,\ldots,a_6)]^p)) \times\\
&  \times \max\{\phi^*_{3}(t,\, a_1, \ldots, a_4) , \, \phi^*_{1}(a_1, \, a_2, \, a_3) \} \, .
 \end{align*}
By means of the properties of $\phi_3^*$ and $\phi_4^*$, we verify that
 \begin{align}\label{psi0formula}
  \Psi_1(0, \, a_1,\ldots,a_6) & =  \max\{\phi^*_0(a_1, \, a_2, \, C_0 \, a_3)\, (1+ C_0 \, a_3)^{\frac{2}{\beta}}, \, \phi^*_{1}(a_1, \, a_2, \, a_3) \}\nonumber\\
  &=: \Psi^0_1(a_1, \, a_2, \, a_3) \, ,
 \end{align}
 which is independent of the arguments $(a_4, \, a_5, \, a_6)$. This is the claim of Proposition \ref{A1linmain}. We could further specify the function $\Psi^0_1$ by means of \eqref{LinftyFACTORS}.

\subsection{Estimates for linearised problems for the variables $v$ and $\varrho$}

After these technicalities we can rely on estimates already available. We first quote \cite{solocompress}, Theorem 1.
\begin{prop}\label{solonnikov1}
Suppose that $\varrho^* \in C^{\alpha,0}(Q_T)$, $0< \alpha \leq 1$, is strictly positive, and denote $M^*(t) := \max_{Q_t} \varrho^*$, $m^*(t) := \min_{Q_t} \varrho^*$.
Let $f \in L^p(Q_T; \, \mathbb{R}^3)$ and $v^0 \in W^{2-\frac{2}{p}}_p(\Omega; \,  \mathbb{R}^3)$ with $v^0 = 0$ on $\partial \Omega$.

Then there is a unique solution $v \in W^{2,1}(Q_T; \, \mathbb{R}^3)$ to $\varrho^* \, \partial_t v - \divv \mathbb{S}(\nabla v) = f$ in $Q_T$ with the boundary conditions $v = 0$ on $S_T$ and $v(x, \, 0) = v^0(x)$ in $\Omega$. Moreover, for all $t \leq T$,
\begin{align*}
\mathscr{V}(t; \, v) \leq & C \, \phi_{5,t}^* \, (1+\sup_{s\leq t} [\varrho(s)]_{C^{\alpha}(\Omega)})^{\frac{2}{\alpha}} \, (\|f\|_{L^p(Q_t)} + \|v^0\|_{W^{2-\frac{2}{p}}_p(\Omega)} + \|v\|_{L^p(Q_t)}) \, , \\
\phi_{5,t}^* := & \left(\frac{1}{\min\{1, \, m^*(t)\}}\right)^{\frac{2}{\alpha}} \, \left(\frac{M^*(t)}{m^*(t)} \right)^{\frac{p+1}{p}} \,  \, .\nonumber
\end{align*}
\end{prop}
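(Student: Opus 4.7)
The plan is to reduce the variable-coefficient linearised Lamé-type parabolic problem to the constant-coefficient one by a freezing-of-coefficients argument. First, when $\varrho^* \equiv \varrho_0$ is a positive constant, the operator $v \mapsto \varrho_0 \partial_t v - \divv\mathbb{S}(\nabla v) = \varrho_0 \partial_t v - \mu \triangle v - (\mu+\lambda)\nabla\divv v$ is a constant-coefficient parabolic system in the sense of Petrovski. The classical $L^p$ theory of maximal parabolic regularity for the Dirichlet problem (see \cite{sologeneral}, \cite{ladu}, or \cite{denkhieberpruess} for a modern operator-theoretic account) then yields $\|v\|_{W^{2,1}_p(Q_T)} \leq C(\varrho_0)\,(\|f\|_{L^p(Q_T)} + \|v^0\|_{W^{2-\frac{2}{p}}_p(\Omega)})$, with the $\varrho_0$-dependence arising by a time rescaling $t \mapsto \varrho_0^{-1} t$ and accounting for the corresponding change in the $L^p$ norms on $Q_T$; this is where the ratio $(M^*/m^*)^{(p+1)/p}$ ultimately originates.

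The second step is the freezing argument. One covers $\overline{Q_T}$ by finitely many parabolic cylinders $Q_i$ of spatial radius $r$ and length $r^2$ together with a subordinate partition of unity $\{\chi_i\}$. For each $i$ choose $(x_i,t_i)\in Q_i$ and rewrite the equation as
\[
\varrho^*(x_i,t_i)\,\partial_t v - \divv\mathbb{S}(\nabla v) = f + (\varrho^*(x_i,t_i) - \varrho^*)\,\partial_t v.
\]
Applying the constant-coefficient estimate to $\chi_i v$ on $Q_i$, the perturbation on the right is bounded by
\[
\|(\varrho^*(x_i,t_i)-\varrho^*)\,\partial_t v\|_{L^p(Q_i)} \leq [\varrho^*]_{C^{\alpha,0}(Q_t)}\, r^{\alpha}\, \|\partial_t v\|_{L^p(Q_i)}.
\]
Choosing $r^{\alpha} \sim (1+[\varrho^*]_{C^{\alpha,0}})^{-1}$ (scaled by $(m^*)^{-1}$ to compensate for the lower bound on $\varrho^*(x_i,t_i)$) makes this term absorbable into the left-hand side after summing over $i$. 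The number of patches scales like $r^{-(d+2)}$, which after combination with the absorption and the factor $(m^*)^{-1}$ inside $r$ yields exactly the announced exponent $2/\alpha$ in $(1+\sup_s [\varrho^*(s)]_{C^{\alpha}(\Omega)})^{\frac{2}{\alpha}}$ and in $\min\{1,m^*\}^{-2/\alpha}$. Commutators $[\partial_t,\chi_i]$ and $[\divv\mathbb{S}(\nabla\,\cdot\,),\chi_i]$ produce lower-order contributions in $v,\,\nabla v$, which via the Gagliardo--Nirenberg inequality \eqref{gagliardo} and Young's inequality are absorbed into the principal part up to a residual $\|v\|_{L^p(Q_t)}$ term — this is precisely the $\|v\|_{L^p(Q_t)}$ summand appearing on the right-hand side of the claimed estimate.

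Uniqueness then follows at once by applying the inequality to the difference of two solutions (both with $f=0$, $v^0=0$): the residual $\|v\|_{L^p(Q_t)}$ is of lower order and a straightforward Gronwall argument on $t\mapsto \sup_{s\le t}\|v(s)\|_{L^p(\Omega)}^p$ closes the estimate. Existence can then be produced by continuation in the coefficient $\varrho^*$ (the method of continuity from $\varrho^* \equiv \text{const}$) or, equivalently, via a Galerkin approximation using the a priori bound. The main technical obstacle is the bookkeeping of constants in the freezing step: one has to track simultaneously how the $\varrho_0$-rescaling in the constant-coefficient bound, the choice of radius $r$, the number of patches, and the commutator losses combine to produce precisely the stated exponents $\tfrac{2}{\alpha}$ and $\tfrac{p+1}{p}$. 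This careful bookkeeping is exactly the content of \cite{solocompress}, Theorem 1, from which the proposition is quoted.
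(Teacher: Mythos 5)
The paper does not actually prove Proposition~\ref{solonnikov1}: it is cited verbatim from Theorem~1 of \cite{solocompress}, and you correctly acknowledge this at the end. Your sketch of the freezing-of-coefficients argument underlying that reference is essentially the right picture, and it is the same machinery the paper \emph{does} spell out for the analogous diffusion system in the proof of Lemma~\ref{A1linUMFORM} in Appendix~\ref{technique}: partition $\overline{Q_t}$ into parabolic cylinders, freeze $\varrho^*$ at a reference point in each cylinder, appeal to the constant-coefficient maximal $L^p$-regularity estimate (with the $\varrho_0$-dependence tracked via time rescaling exactly as in Remark~\ref{Tindependence}), absorb the Hölder-sized perturbation $\lvert\varrho^*(x_i,t_i)-\varrho^*\rvert\,\lvert\partial_t v\rvert$ by taking the cylinder radius small, and control the cutoff commutators up to a residual $\lVert v\rVert_{L^p(Q_t)}$. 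Uniqueness by Gronwall and existence by continuation are also handled the way you describe.

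One inaccuracy in the bookkeeping: the exponent $\tfrac{2}{\alpha}$ does \emph{not} originate from the number of patches scaling like $r^{-(d+2)}$. In an $L^p$ localisation with a finite-overlap covering, the patch count is irrelevant---$\sum_i\lVert\cdot\rVert_{L^p(Q_i)}^p\lesssim m_0\lVert\cdot\rVert_{L^p(Q)}^p$ with $m_0$ the overlap constant, independently of $r$. What actually produces $\tfrac{2}{\alpha}$ is the commutator cost: the cutoff functions on parabolic cylinders of spatial radius $r$ (time length $r^2$) have $\lVert\chi_t\rVert_{L^\infty}+\lVert\chi_{xx}\rVert_{L^\infty}\sim r^{-2}$, and the absorption of the frozen-coefficient error requires $r^\alpha\sim\big(m^*\,(1+[\varrho^*]_{C^{\alpha,0}})\big)^{-1}$; combining $r^{-2}$ with this choice gives precisely the factor $\big(\min\{1,m^*\}^{-1}(1+[\varrho^*]_{C^{\alpha,0}})\big)^{2/\alpha}$. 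This is exactly the mechanism used in the paper's own proof of Lemma~\ref{A1linUMFORM} (see the role of the $r^{-1}$ cutoff cost and the choice of $r^{\beta/2}$ there, in the parametrisation where the time side has length $r$ and the space side $\sqrt{r}$). Your commutator paragraph has the right ingredients, so this is a mislabelling rather than a gap in the argument, but it is worth getting the attribution right since the exponent is precisely what the rest of the paper keeps track of.
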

With a Gronwall argument like in Lemma \ref{A1linUMFORMsecond}, we can get rid of $\|v\|_{L^p(Q_t)}$. 
\begin{coro}\label{normvmain}
Adopt the assumptions of Proposition \ref{solonnikov1}. Then there is $C$ independent on $t$, $\varrho^*$, $v^0$, $f$ and $v$, and a continuous function $\Psi_2 = \Psi_2(t, \, a_1, \, a_2, \, a_3)$, defined for all $t \geq 0$ and all $a_1, \, a_2, \, a_3 \geq 0$, such that
 \begin{align*}
\mathscr{V}(t; \, v) \leq & C \, \Psi_{2,t} \, (1+ \sup_{s\leq t} [\varrho^*(s)]_{C^{\alpha}(\Omega)})^{\frac{2}{\alpha}} \, (\|f\|_{L^p(Q_t)} + \|v^0\|_{W^{2-\frac{2}{p}}_p(\Omega)})\, , \nonumber\\
\Psi_{2,t} := & \Psi_2(t, \, (m^*(t)), \, M^*(t), \, \sup_{s\leq t} [\varrho^*(s)]_{C^{\alpha}(\Omega)}) \, .
\end{align*}
The function $\Psi_2$ is increasing in all its arguments and the value of $\Psi_2(0, \, a_1, \, a_2, \, a_3)  = \phi_{5}^*(a_1, \,a_2)$ is independent on $a_3$.
\end{coro}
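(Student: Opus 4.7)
The plan is to mimic, in a simpler setting, the argument developed for Corollary \ref{A1linUMFORMsecond}: raise the inequality of Proposition \ref{solonnikov1} to the $p$--th power, absorb the $\|v\|_{L^p(Q_t)}$ term by a Gronwall argument applied to the nondecreasing function $F(t) := \mathscr{V}(t;v)^p$, and finally repackage the resulting multiplicative constant as $\Psi_2$.

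First I would control the extra $\|v\|_{L^p(Q_t)}$ via the continuous embedding $W^{2-\frac{2}{p}}_p(\Omega)\hookrightarrow L^p(\Omega)$, valid for $p>3$. Since $\|v(s)\|_{L^p(\Omega)}\leq C_e \sup_{\tau\leq s}\|v(\tau)\|_{W^{2-2/p}_p(\Omega)}\leq C_e\, \mathscr{V}(s;v)$, one has
\begin{align*}
\|v\|_{L^p(Q_t)}^p = \int_0^t \|v(s)\|_{L^p(\Omega)}^p\, ds \leq C_e^p \int_0^t \mathscr{V}(s;v)^p\, ds .
\end{align*}
Raising the estimate from Proposition \ref{solonnikov1} to the $p$--th power and using $(a+b+c)^p \leq 3^{p-1}(a^p+b^p+c^p)$ gives, with $K(t):= c_p\, [\phi^*_{5,t}]^p\, (1+\sup_{s\leq t}[\varrho^*(s)]_{C^\alpha(\Omega)})^{2p/\alpha}$,
\begin{align*}
\mathscr{V}(t;v)^p \leq K(t)\bigl(\|f\|_{L^p(Q_t)}^p + \|v^0\|_{W^{2-2/p}_p(\Omega)}^p\bigr) + K(t)\, C_e^p \int_0^t \mathscr{V}(s;v)^p\, ds .
\end{align*}

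Since $K$ is nondecreasing in $t$, the integral form of Gronwall's inequality applied to $F(t):=\mathscr{V}(t;v)^p$ yields $F(t) \leq K(t)\bigl(\|f\|_{L^p(Q_t)}^p + \|v^0\|_{W^{2-2/p}_p(\Omega)}^p\bigr)\exp(C_e^p\, t\, K(t))$. Taking the $p$--th root and using subadditivity of $x\mapsto x^{1/p}$, one obtains
\begin{align*}
\mathscr{V}(t;v) \leq C\, \phi^*_{5,t}\,(1+\sup_{s\leq t}[\varrho^*(s)]_{C^\alpha(\Omega)})^{2/\alpha}\,\exp\bigl(\tfrac{C_e^p}{p}\, t\, K(t)\bigr)\,\bigl(\|f\|_{L^p(Q_t)} + \|v^0\|_{W^{2-2/p}_p(\Omega)}\bigr).
\end{align*}

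It remains to repackage the prefactor so that the Hölder seminorm is isolated in the factor $(1+\sup[\varrho^*]_{C^\alpha})^{2/\alpha}$, as required by the statement. For non-negative $a_1,a_2,a_3$ and $t\geq 0$, define
\begin{align*}
\Psi_2(t,a_1,a_2,a_3) := \phi_5^*(a_1,a_2)\, \exp\!\Bigl(\tfrac{C_e^p\, c_p}{p}\, t\, [\phi_5^*(a_1,a_2)]^p\, (1+a_3)^{2p/\alpha}\Bigr).
\end{align*}
Since $\phi_5^*$ is increasing (as an explicit rational expression in $a_1,a_2$), $\Psi_2$ is continuous and increasing in each of its four arguments, and at $t=0$ it reduces to $\Psi_2(0,a_1,a_2,a_3) = \phi_5^*(a_1,a_2)$, which is indeed independent of $a_3$. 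Evaluating at $(a_1,a_2,a_3)=(m^*(t),M^*(t),\sup_{s\leq t}[\varrho^*(s)]_{C^\alpha(\Omega)})$ gives the claim.

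There is no substantive obstacle here: the computation is a direct replica of the Gronwall/homogenisation scheme of Corollary \ref{A1linUMFORMsecond}, made easier by the fact that the right-hand side of Proposition \ref{solonnikov1} does not involve gradient products requiring a Gagliardo--Nirenberg step. The only point that needs attention is keeping the Hölder factor out of $\Psi_2$ so that the required independence $\Psi_2(0,\cdot,\cdot,a_3)=\phi_5^*(\cdot,\cdot)$ is preserved; this is achieved simply by never absorbing the $(1+\sup[\varrho^*]_{C^\alpha})^{2/\alpha}$ prefactor into the exponential multiplier.
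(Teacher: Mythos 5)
Your argument is correct and follows essentially the same route as the paper: raise the estimate of Proposition \ref{solonnikov1} to the $p$-th power, control the $\|v\|_{L^p(Q_t)}$ term via the pointwise bound $\|v(s)\|_{L^p(\Omega)}\le\mathscr{V}(s;v)$, apply Gronwall, and repackage the constant so that the H\"older factor stays outside $\Psi_2$ and $\Psi_2(0,\cdot)$ is independent of $a_3$. The paper's chosen form $\Psi_{2,t}=\phi^*_{5,t}(1+t^{1/p}\tilde\phi^*_{5,t}e^{C(\tilde\phi^*_{5,t})^pt})$ differs only cosmetically from your purely exponential expression; both are increasing in each argument and reduce to $\phi_5^*(a_1,a_2)$ at $t=0$.
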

\begin{proof}
Introduce first the abbreviation $\tilde{\phi}_{5,t}^* := \phi_{5,t}^* \, (1+\sup_{s\leq t} [\varrho(s)]_{C^{\alpha}(\Omega)})^{\frac{2}{\alpha}}$. We raise the estimate in Prop. \ref{solonnikov1} to the $p-$power and obtain that
\begin{align*}
 \mathscr{V}^p(t; \, v) \leq & c_p \,  (\tilde{\phi}_{5,t}^*)^p \, (\|f\|_{L^p(Q_t)}^p + \|v^0\|_{W^{2-\frac{2}{p}}_p(\Omega)}^p + \int_{0}^t \|v(s)\|^p_{L^p(\Omega)} \, ds) \, .
\end{align*}
We then argue as in Corollary \ref{A1linUMFORMsecond}, using Gronwall. We let $\Psi_{2,t} := \phi_{5,t}^* \, (1 + t^{\frac{1}{p}} \, \tilde{\phi}_{5,t}^* \, e^{C \, (\tilde{\phi}_{5,t}^*)^p \, t})$, and the claim follows.
\end{proof}
We recall one further Theorem of \cite{solocompress} concerning the linearised continuity equation.
\begin{prop}\label{solonnikov2}
 Suppose that $v^* \in W^{2,1}_p(Q_T; \, \mathbb{R}^3)$. Suppose that $\varrho_0 \in W^{1,p}(\Omega)$ satisfies $0 < m_0 \leq \varrho_0(x) \leq M_0 < + \infty$ in $\Omega$.
 Then the problem $\partial_t \varrho + \divv(\varrho \, v^*) = 0$ in $Q_T$ with $\varrho(x, \, 0) = \varrho_0(x)$ in $\Omega$ possesses a unique solution of class $W^{1,1}_{p,\infty}(Q_T)$ for which
 \begin{align*}
  m_0 \, [\phi_{6,t}^*]^{-1} \leq \varrho(x, \, t) \leq M_0 \, \phi_{6,t}^*  \text{ for all } (x, \, t) \in Q_T \, ,
 \end{align*}
with $\phi_{6,t}^* := e^{\sqrt{3} \, \|v^*_x\|_{L^{\infty,1}(Q_t)}}$. Moreover, for all $t < T$ and $0 < \alpha < 1$,
\begin{align*}
& \|\nabla \varrho(t)\|_{L^p(\Omega)} \leq \sqrt{3} \, [\phi_{6,t}^*]^{(2+\tfrac{1}{p})\sqrt{3}} \, (\|\nabla \varrho_0\|_{L^p(\Omega)} +\sqrt{3} \,  \|\varrho_0\|_{L^{\infty}(\Omega)} \, \|v_{x,x}\|_{L^{p,1}(Q_t)}) \, ,\\
& [\varrho(t)]_{C^{\alpha}(\Omega)} \leq 3^{\tfrac{\alpha}{2}}\, [\phi_{6,t}^*]^{1+\alpha} \, ( [\varrho_0]_{C^{\alpha}(\Omega)} + \sqrt{3} \,  \|\varrho_0\|_{L^{\infty}(\Omega)} \, \int_0^t [v^*_x(s)]_{C^{\alpha}(\Omega)} \, ds)  \, .
 \end{align*}
 For $\alpha < 1$, there is $c = c_{\alpha}$ such that for all $(x, \, t) \in Q_T$,
 \begin{align*}
  [\varrho(x)]_{C^{\alpha}(0,t)} \leq & c \, \|\varrho_0\|_{C^{\alpha}(\Omega)} \, \phi_{6,t}^*\, \Big(\|v_x^*\|_{L^{\infty,\frac{1}{1-\alpha}}(Q_t)}
  + (\|v^*\|_{L^{\infty}(Q_t)} \, \phi_{6,t}^*)^{\alpha} \, (1 + \int_0^t [v_x^*(\tau)]_{C^{\alpha}(\Omega)}\, d\tau)\Big)  \, .
 \end{align*}
\end{prop}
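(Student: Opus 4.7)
My plan is to solve the linearised continuity equation by the classical method of characteristics and to read off all four estimates from the resulting representation formula; since the statement is quoted from \cite{solocompress}, Theorem~2, I would ultimately invoke that reference for the rigorous calculus, but the sketch goes as follows. Because $v^{*}\in W^{2,1}_{p}(Q_{T};\mathbb{R}^{3})$ with $p>3$, Remark \ref{parabolicspace} provides $v^{*}(\cdot,t)\in C^{1}(\overline{\Omega})$ for almost every $t$ with an integrable-in-$t$ Lipschitz constant and (using $v^{*}|_{\partial\Omega}=0$ inherited from the ambient functional setting of the paper) the Cauchy problem $\dot X(s;t,x)=v^{*}(X(s;t,x),s)$, $X(t;t,x)=x$ admits a unique absolutely continuous flow preserving $\overline{\Omega}$. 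Its Jacobian obeys the Liouville formula
\[
J(s;t,x):=\det\nabla_{x}X(s;t,x)=\exp\Big(\int_{t}^{s}(\divv v^{*})(X(\tau;t,x),\tau)\,d\tau\Big).
\]

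The candidate $\varrho(x,t):=\varrho_{0}(X(0;t,x))/J(0;t,x)$ is then a pointwise solution of $\partial_{t}\varrho+\divv(\varrho v^{*})=0$ with the prescribed initial datum. Since $|\divv v^{*}|\le\sqrt{3}\,|\nabla v^{*}|$, one obtains $[\phi^{*}_{6,t}]^{-1}\le J(0;t,x)\le\phi^{*}_{6,t}$, whence the pointwise two-sided bound on $\varrho$; uniqueness in $W^{1,1}_{p,\infty}(Q_{T})$ follows from the same transport argument applied to a difference of two solutions. Differentiating the representation in $x$ gives
\[
\nabla_{x}\varrho(x,t)=\frac{\nabla\varrho_{0}(X_{0})\,\nabla_{x}X_{0}}{J_{0}}-\varrho_{0}(X_{0})\,\frac{\nabla_{x}J_{0}}{J_{0}^{2}},
\]
with $X_{0}:=X(0;t,\cdot)$ and $J_{0}:=J(0;t,\cdot)$. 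The linear matrix ODE for $\nabla_{x}X$ and a Gronwall argument yield $|\nabla_{x}X_{0}|\le[\phi^{*}_{6,t}]^{\sqrt{3}}$, while the factor $\nabla_{x}J_{0}$ contains a time-integral of $\nabla(\divv v^{*})(X(\tau;t,x),\tau)\,\nabla_{x}X(\tau;t,x)$ which, after the change of variables $y=X_{0}(x)$ (whose Jacobian is itself controlled by $\phi^{*}_{6,t}$), produces precisely the $\|v^{*}_{xx}\|_{L^{p,1}(Q_{t})}$-type norm appearing in the statement. The H\"older-in-space bound is obtained by running the same Gronwall argument on the increment $X_{0}(x)-X_{0}(y)$, and the H\"older-in-time bound by using $|X(0;t,x)-X(0;s,x)|\lesssim \phi^{*}_{6,t}\|v^{*}\|_{L^{\infty}(Q_{t})}|t-s|$ and interpolating this Lipschitz-in-$t$ behaviour against the $C^{\alpha}$ modulus of $\varrho_{0}$, which yields exactly the mixed exponent $\frac{1}{1-\alpha}$ appearing in the final estimate.

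The principal obstacle is the rigorous execution of this formal calculus, since $v^{*}$ is only $W^{2,1}_{p}$ and the spatial derivatives of the flow do not exist in a classical sense. The standard strategy, which I would follow, is to work with a mollification $v^{*}_{\varepsilon}\in C^{\infty}$, derive all estimates for the corresponding smooth solutions with constants depending only on norms that are stable under mollification, and then pass to the limit in $W^{1,1}_{p,\infty}(Q_{T})$. The delicate point is that the \emph{precise} anisotropic norm $\|v^{*}_{xx}\|_{L^{p,1}(Q_{t})}$ (space-$L^{p}$, time-$L^{1}$) is what the change of variables along the flow produces; this is also the structural reason why $\varrho$ naturally lives in the asymmetric space $W^{1,1}_{p,\infty}$ rather than in a symmetric parabolic setting.
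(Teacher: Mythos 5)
Your route is essentially the paper's: represent $\varrho$ along the Lagrangian flow, cite Solonnikov's \cite{solocompress} for the quantitative estimates, and handle the regularity issue by mollification. The paper does exactly this for the two-sided bound, the $\nabla\varrho$ bound, and the spatial H\"older bound, and only the time-H\"older bound is proved from scratch using the representation $\varrho(x,t)=\varrho_0(y(0;t,x))\,e^{F(t)}$ with $F(t):=-\int_0^t\divv v^*(y(\tau;t,x),\tau)\,d\tau$.

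One clarification on the last estimate, where your sketch is slightly off. Writing $\varrho(x,t)-\varrho(x,s)=\bigl(\varrho_0(y(0;t,x))-\varrho_0(y(0;s,x))\bigr)e^{F(t)}+\varrho_0(y(0;s,x))\bigl(e^{F(t)}-e^{F(s)}\bigr)$, the interpolation of the Lipschitz-in-$t$ bound $|y(0;t,x)-y(0;s,x)|\lesssim\phi^*_{6,t}\|v^*\|_{L^\infty}|t-s|$ against $[\varrho_0]_{C^\alpha}$ only controls the first term and produces $(\|v^*\|_{L^\infty}\phi^*_{6,t})^\alpha|t-s|^\alpha$. The exponent $\tfrac{1}{1-\alpha}$ does \emph{not} come from there; it comes from the second term via
\begin{align*}
|F(t)-F(s)|\;\le\;\sqrt{3}\int_s^t\|v^*_x(\tau)\|_{L^\infty}\,d\tau
+\int_0^s\big|\divv v^*(y(\tau;t,x),\tau)-\divv v^*(y(\tau;s,x),\tau)\big|\,d\tau\,,
\end{align*}
where H\"older's inequality in time on the first integral gives $(t-s)^\alpha\|v^*_x\|_{L^{\infty,\frac{1}{1-\alpha}}(Q_t)}$, and the second integral uses $[v^*_x(\tau)]_{C^\alpha(\Omega)}$ together with the time-Lipschitz bound on $\tau\mapsto y(\tau;\sigma,x)$ in $\sigma$. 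So the mixed Lebesgue exponent is a pure artefact of that time-H\"older splitting of $|F(t)-F(s)|$, not of the $\varrho_0$-increment. Your proposal would be correct once this is made precise; otherwise the decomposition and the cited source match the paper.
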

\begin{proof}
 The three first estimates are stated and proved explicitly in \cite{solocompress}. In order to estimate the time H\"older norm, we invoke the formula
 \begin{align*}
  \varrho(x, \, t) = \varrho_0(y(0; \, t,x)) \, \exp\left(-\int_{0}^t \divv v(y(\tau; \, t,x), \, \tau) \, d\tau \right) \, .
 \end{align*}
 We shall use the abbreviation $F(t) :=- \int_{0}^t \divv v(y(\tau; \, t,x), \, \tau) \, d\tau$. The map $\tau \mapsto y(\tau; \, t,x)$ is the unique characteristics through $(x, \, t)$ with speed $v^*$. Recall also the formula given between numbers (15) and (16) in \cite{solocompress} via
  \begin{align}\label{charactimederiv}
   |\partial_t y(\tau; \, t,x)| \leq \sqrt{3} \, \|v^*\|_{L^{\infty}(Q_t)} \, \phi_{6,t}^* \, .
   \end{align}
Making use of the latter, we show for $s < t$ that
 \begin{align*}
  |F(t) - F(s)| \leq & 3^{\frac{1}{2}} \, \int_s^t \|v_x(\tau)\|_{L^{\infty}(\Omega)} \, d\tau + \int_{0}^s |\divv v(y(\tau; \, t,x), \, \tau) - \divv v(y(\tau; \, s,x), \, \tau)| \, d\tau\\
  \leq & 3^{\frac{1}{2}} \, \int_s^t \|v_x(\tau)\|_{L^{\infty}(\Omega)} \, d\tau +  3 \,  \int_0^s [v_x(\tau)]_{C^{\alpha}(\Omega)} \, (\sup_{\sigma \in [s,t]} |y_t(\tau; \, \sigma, \, x)|)^{\alpha} \, d\tau \, (t-s)^{\alpha} \, \\
  \leq & 3^{\frac{1}{2}} \, (t-s)^{\alpha} \, (\|v_x\|_{L^{\infty,\frac{1}{1-\alpha}}(Q_t)} \, + 3^{\frac{1}{2}+\frac{\alpha}{2}} \, (\|v^*\|_{L^{\infty}(Q_t)} \, \phi_{6,t}^*)^{\alpha} \,   \int_0^t [v_x(\tau)]_{C^{\alpha}(\Omega)}\, d\tau) \, .
 \end{align*}
For $s <t$, we have
\begin{align*}
 \varrho(x, \, t) - \varrho(x, \, s) = (\varrho_0(y(0; \, t,x))  -\varrho_0(y(0; \, s,x))  \, e^{F(t)} +  \varrho_0(y(0; \, s,x)) \, (e^{F(t)} - e^{F(s)}) \,
\end{align*}
By means of \eqref{charactimederiv}, it follows that 
\begin{align*}
& |  \varrho(x, \, t) - \varrho(x, \, s)| \leq \\
& [\varrho_0]_{C^{\alpha}(\Omega)} \, (\sqrt{3} \, \|v^*\|_{L^{\infty}(Q_t)} \, \phi_{6,t}^*)^{\alpha} \, (t-s)^{\alpha} \, e^{F(t)}  + \|\varrho_0\|_{L^{\infty}(\Omega)} \, e^{\max\{|F(t)|, |F(s)|\}} \, |F(t)-F(s)|\\
& \leq  c \, (t-s)^{\alpha} \, \phi_{6,t}^* \, \|\varrho_0\|_{C^{\alpha}(\Omega)} \times \\
& \qquad \times (\|v_x\|_{L^{\infty,\frac{1}{1-\alpha}}(Q_t)} + (\|v^*\|_{L^{\infty}(Q_t)} \, \phi_{6,t}^*)^{\alpha} \,  (1+ \int_0^t [v_x(\tau)]_{C^{\alpha}(\Omega)}\, d\tau )) \, .
\end{align*}
\end{proof}
We also need to restate these estimates as to be later able to quote them more conveniently.
\begin{coro}\label{normrhomain}
We adopt the assumptions of Proposition \ref{solonnikov2}. Define $m(t) := \inf_{Q_t} \varrho$ and $M(t) := \sup_{Q_t} \varrho$. We choose $\beta = 1-\tfrac{3}{p}$. Then there are functions $\Psi_3, \, \Psi_4,\Psi_5$ of the variables $t$, $\frac{1}{m_0}$, $M_0$, $\|\nabla \varrho_0\|_{L^{p}(\Omega)}$ and $\mathscr{V}(t; \, v^*)$, in that order, such that
\begin{align*}
 \frac{1}{m(t)} \leq & \Psi_3(t,  \, m_0^{-1}, \, \mathscr{V}(t; \, v^*)), \quad  M(t) \leq \Psi_3(t,\,  M_0, \, \mathscr{V}(t; \, v^*)) \, ,\\
 \|\nabla \varrho\|_{L^{p,\infty}(Q_t)} \leq & \Psi_4(t,  \, m_0^{-1}, \,M_0,\,  \|\nabla \varrho_0\|_{L^{p}(\Omega)}, \, \mathscr{V}(t; \, v^*)) \, ,\\
 [\varrho]_{C^{\beta,\frac{\beta}{2}}(Q_t)} \leq & \Psi_5(t,  \, m_0^{-1}, \,M_0,\,  \|\nabla \varrho_0\|_{L^{p}(\Omega)}, \, \mathscr{V}(t; \, v^*)) \, .
\end{align*}
%
%
%
For $i=3,4,5$, the function $\Psi_i$ is continuous and increasing in all variables. Moreover the expression $\Psi_i(0, \, a_1, \ldots, a_4) = \Psi^0_i(a_1, \, a_2, \,a_3)$ is independent on the last variable $a_4 = \mathscr{V}(t; \, v^*)$. (The function $\Psi_3$ depends only on $t$, $a_1$ or $a_2$ and $a_4$).
\end{coro}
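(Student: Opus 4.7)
The plan is to apply Proposition \ref{solonnikov2} directly and repackage the various norms of $v^*$ appearing on its right-hand side into functions of $\mathscr{V}(t;v^*)$ that carry a positive power of $t$ in front of every $v^*$-dependent factor; this vanishing-at-zero structure is exactly what will produce the independence of each $\Psi_i^0$ from the last variable $a_4$.

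For $\Psi_3$, I would start from $1/m(t) \leq m_0^{-1}\phi_{6,t}^*$ and control $\|v_x^*\|_{L^{\infty,1}(Q_t)}$ inside $\phi_{6,t}^*$ by H\"older in time combined with the embedding $\nabla v^* \in L^{\infty, 2p-3}(Q_t)$ from Remark \ref{parabolicspace}, yielding $\|v_x^*\|_{L^{\infty,1}(Q_t)} \leq C\, t^{(2p-4)/(2p-3)}\, \mathscr{V}(t;v^*)$. Setting $\Psi_3(t,a_1,a_4) := a_1 \exp(\sqrt{3}\, C\, t^{(2p-4)/(2p-3)} a_4)$ is then continuous, increasing in both nontrivial arguments and reduces to $a_1$ at $t=0$; the bound on $M(t)$ is identical with $a_1$ replaced by $a_2$. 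For $\Psi_4$, Proposition \ref{solonnikov2} additionally requires $\|v_{xx}^*\|_{L^{p,1}(Q_t)}$, which H\"older in time bounds by $t^{(p-1)/p} \mathscr{V}(t;v^*)$, and the construction proceeds analogously.

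For $\Psi_5$ I would handle the two halves of $[\varrho]_{C^{\beta,\beta/2}(Q_t)}$ separately. The spatial half $\sup_{s\leq t}[\varrho(s)]_{C^\beta(\Omega)}$ is controlled by Proposition \ref{solonnikov2} applied with $\alpha = \beta = 1-3/p$; here $[\varrho_0]_{C^\beta(\Omega)} \leq C(M_0 + \|\nabla\varrho_0\|_{L^p(\Omega)})$ thanks to the Sobolev embedding $W^{1,p}(\Omega) \hookrightarrow C^{0,\beta}(\overline{\Omega})$ valid for $p>3$, and $\int_0^t [v_x^*(\tau)]_{C^\beta(\Omega)}\, d\tau \leq C\, t^{(p-1)/p}\, \mathscr{V}(t;v^*)$ follows by applying the same embedding pointwise in time to $v^*(\tau) \in W^{2,p}(\Omega)$ together with H\"older in time.

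The hard part will be the time-H\"older half $\sup_x [\varrho(x)]_{C^{\beta/2}(0,t)}$. A direct use of Proposition \ref{solonnikov2} with $\alpha = \beta/2$ produces the factor $(\|v^*\|_{L^\infty(Q_t)} \phi_{6,t}^*)^{\beta/2}$, which does \emph{not} vanish at $t=0$ and would spoil the independence of $\Psi_5^0$ from $a_4$. The device I would use is to apply Proposition \ref{solonnikov2} instead with $\alpha = \beta$, permissible since $\beta<1$, obtaining a bound on $[\varrho(x)]_{C^\beta(0,t)}$, and then exploit the elementary inequality
\begin{equation*}
[f]_{C^{\beta/2}(0,t)} \leq t^{\beta/2}\, [f]_{C^\beta(0,t)},
\end{equation*}
which introduces a vanishing factor $t^{\beta/2}$ in front of the entire time-H\"older contribution. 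Combined with the spatial estimate this yields a bound of the form
\begin{equation*}
[\varrho]_{C^{\beta,\beta/2}(Q_t)} \leq C(M_0 + \|\nabla\varrho_0\|_{L^p})\, G_1(t,\mathscr{V}(t;v^*)) + t^{\beta/2}\, G_2(m_0^{-1}, M_0, \|\nabla\varrho_0\|_{L^p}, \mathscr{V}(t;v^*)),
\end{equation*}
with continuous increasing $G_1,G_2$ and $G_1(0,\cdot) = 1$, from which a continuous increasing majorant $\Psi_5$ with $\Psi_5^0(a_1,a_2,a_3) = C(a_2+a_3)$ can be extracted by routine monotonisation. The only genuinely non-trivial ingredient is this trade of H\"older exponent against a positive power of $t$; everything else is bookkeeping of the estimates already encoded in Proposition \ref{solonnikov2} and Remark \ref{parabolicspace}.
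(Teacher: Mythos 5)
Your proposal matches the paper's own proof in all essentials, including the key device for the time--Hölder half of $\Psi_5$: the paper likewise applies Proposition~\ref{solonnikov2} with the larger exponent $\alpha=\beta=1-\tfrac{3}{p}$ and then multiplies by $t^{\beta-\alpha'}$ (with $\alpha'=\beta/2$) to extract a vanishing $t$-power and kill the $a_4$-dependence of $\Psi_5^0$. The only cosmetic difference is in $\Psi_3$/$\Psi_4$, where the paper controls $\|v_x^*\|_{L^{\infty,1}(Q_t)}$ via the spatial embedding $W^{2,p}(\Omega)\hookrightarrow W^{1,\infty}(\Omega)$ and Hölder in time (giving the factor $t^{1-1/p}$) rather than through the parabolic embedding $\nabla v^*\in L^{\infty,2p-3}(Q_t)$ from Remark~\ref{parabolicspace}; both routes produce a positive power of $t$ in front of $\mathscr{V}(t;v^*)$ and hence the same structure for $\Psi_3$.
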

\begin{proof}
The Sobolev embedding theorems imply that $\|v^*_x\|_{L^{\infty}(\Omega)} \leq C \, \|v^*\|_{W^{2,p}(\Omega)}$. It therefore follows from H\"older's inequality that
$\|v^*_x\|_{L^{\infty,1}(Q_t)} \leq C \, t^{1-\frac{1}{p}} \, \|v^*\|_{W^{2,1}_p(Q_t)}$. 

Thus $\phi_{6,t}^* \leq \exp(C \, t^{1-\frac{1}{p}} \, \|v^*\|_{W^{2,1}_p(Q_t)})$. Invoking the Proposition \ref{solonnikov2},
\begin{align*}
 \frac{1}{m(t)} \leq \frac{1}{m_0} \,  \exp(\sqrt{3} \, \|v^*_x\|_{L^{\infty,1}(Q_t)}) \leq &  \frac{1}{m_0} \,  \exp(C \, t^{1-\frac{1}{p}} \, \|v^*\|_{W^{2,1}_p(Q_t)}) \\
 := & \Psi_3(t, \,  m_0^{-1}, \, \mathscr{V}(t; \, v^*))  \, .
\end{align*}
Thus, choosing $\Psi_3(t, \, a_1,\, a_4) := a_1 \,  \exp(C \, t^{1-\frac{1}{p}} \,a_4)$, we have $ \frac{1}{m(t)} \leq \Psi_3$ and $\Psi_3(0, \, a_1, \,  a_4) = a_1$ is independent on $a_4$. Similarly $M(t) \leq M_0 \, \exp(C \, t^{1-\frac{1}{p}} \, \|v^*\|_{W^{2,1}_p(Q_t)})$. Moreover, again due to the Proposition \ref{solonnikov2},
\begin{align*}
&  \|\nabla \varrho(t)\|_{L^p(\Omega)} \leq \sqrt{3} \, [\phi_{6,t}^*]^{(2+\frac{1}{p})\sqrt{3}} \, (\|\nabla \varrho_0\|_{L^p(\Omega)} +\sqrt{3} \,  \|\varrho_0\|_{L^{\infty}(\Omega)} \, \|v_{x,x}\|_{L^{p,1}(Q_t)})\\
& \leq \sqrt{3} \, [\phi_{6,t}^*]^{(2+\frac{1}{p})\sqrt{3}} \, (\|\nabla \varrho_0\|_{L^p(\Omega)} + \|\varrho_0\|_{L^{\infty}(\Omega)} \, \sqrt{3} \, t^{1-\frac{1}{p}} \, \|v_{x,x}\|_{L^{p}(Q_t)})\\
& \leq \sqrt{3} \, \exp(C \, (2+\frac{1}{p})\sqrt{3} \, t^{1-\frac{1}{p}} \, \|v^*\|_{W^{2,1}_p(Q_t)}) \, (\|\nabla \varrho_0\|_{L^p(\Omega)} + \|\varrho_0\|_{L^{\infty}(\Omega)} \, \sqrt{3} \, t^{1-\frac{1}{p}} \, \|v_{x,x}\|_{L^{p}(Q_t)}) \, .
\end{align*}
We define $\Psi_4(t, \, a_1, \ldots, a_4) :=  \exp(C \, (2+\tfrac{1}{p})\sqrt{3} \, t^{1-\tfrac{1}{p}} \, a_4) \, (a_3 + a_2 \, \sqrt{3} \, t^{1-\tfrac{1}{p}} \, a_4)$. Then we clearly can show that $ \|\nabla \varrho(t)\|_{L^p(\Omega)} \leq\sqrt{3} \,  \Psi_4(t, \, m_0^{-1}, \, M_0, \, \|\nabla \varrho_0\|_{L^p}, \, \mathscr{V}(t; \, v^*))$. As required, the value of $\Psi_4(0, \, a_1, \ldots,a_4) = a_3$ is independent on $a_4$.

For $\alpha = 1-\tfrac{3}{p}$, the Sobolev embedding yields $\|v^*_x(t)\|_{C^{\alpha}(\Omega)} \leq C \,  \|v^*(t)\|_{W^{2,p}(\Omega)}$. Thus
\begin{align*}
&  [\varrho(t)]_{C^{\alpha}(\Omega)} \leq 3^{\frac{\alpha}{2}}\, [\phi_{6,t}^*]^{1+\alpha} \, ( [\varrho_0]_{C^{\alpha}(\Omega)} + \sqrt{3} \,  \|\varrho_0\|_{L^{\infty}(\Omega)} \, \int_0^t [v^*_x(s)]_{C^{\alpha}(\Omega)} \, ds)  \,\\
&  \leq 3^{\frac{\alpha}{2}}\, \exp((1+\alpha) \, C \, t^{1-\frac{1}{p}} \, \|v^*\|_{W^{2,1}_p(Q_t)}) \, ( [\varrho_0]_{C^{\alpha}(\Omega)} + \sqrt{3} \,  \|\varrho_0\|_{L^{\infty}(\Omega)} \,  C \, t^{1-\frac{1}{p}} \, \|v^*\|_{W^{2,1}_{p}(Q_t)}) \, .
 \end{align*}
 Moreover $ [\varrho_0]_{C^{\alpha}(\Omega)} \leq C \, \|\nabla \varrho_0\|_{L^p(\Omega)}$. Thus for $\Psi^{(1)}_5(t, \, a_1, \ldots,a_4) = \exp((1+\alpha)\, C \, t^{1-\frac{1}{p}} \, a_4) \, (a_3 + a_2 \, a_4\, t^{1-\frac{1}{p}})$, we find $[\varrho(t)]_{C^{\alpha}(\Omega)} \leq C \,\Psi_5^{(1)}(t, \, m_0^{-1}, \, M_0, \, \|\nabla \varrho_0\|_{L^p}, \, \mathscr{V}(t; \, v^*))$. Note that $\Psi^{(1)}_5(0) = a_3$. For $x \in \Omega$ and $\alpha < 1$, we can invoke Proposition \ref{solonnikov2} to estimate $[\varrho(x)]_{C^{\alpha}(0,t)}$.
 If $\alpha < 1-\frac{1}{p}$, H\"older's inequality implies for $r = \frac{(1-\alpha) \, p-1}{(1-\alpha) \, p} > 0$
 \begin{align*}
  \|v_x^*\|_{L^{\infty,\frac{1}{1-\alpha}}(Q_t)} \leq C \, \|v^*\|_{L^{\frac{1}{1-\alpha}}(0,t; \, W^{2,p}(\Omega))} \leq C \, t^r \, \|v^*\|_{W^{2,1}_p(Q_t)} \, .
 \end{align*}
Moreover, for all $\alpha \leq 1-\frac{3}{p}$, we have $\int_0^t [v_x^*(\tau)]_{C^{\alpha}(\Omega)}\, d\tau  \leq C \, t^{1-\frac{1}{p}} \,  \|v^*\|_{W^{2,1}_p(Q_t)}$. For $\alpha = 1-\frac{3}{p}$ we define $r = \frac{2}{3}$ and we see that
\begin{align*}
&  [\varrho(x)]_{C^{\alpha}(0,t)} \leq c \,  \|\varrho_0\|_{C^{\alpha}(\Omega)} \, \exp(C \,  t^{1-\frac{1}{p}} \,\|v^*\|_{W^{2,1}_p(Q_t)}) \, \Big(C \, t^{\frac{2}{3}} \, \|v^*\|_{W^{2,1}_p(Q_t)}\\
& \phantom{[\varrho(x)]_{C^{\alpha}(0,t)}  } \quad  + \|v^*\|_{L^{\infty}(Q_t)}^{\alpha} \, \exp(C \, \alpha\, t^{1-\frac{1}{p}} \,\|v^*\|_{W^{2,1}_p(Q_t)}) \, (1+ C \, t^{1-\frac{1}{p}} \,  \|v^*\|_{W^{2,1}_p(Q_t)}) \Big) \, .
\end{align*}
%
We can estimate $\|v^*\|_{L^{\infty}(Q_t)} \leq \sup_{s \leq t} \|v^*\|_{W^{2-\frac{2}{p}}_p(\Omega)}$ and $\|\varrho_0\|_{C^{\alpha}(\Omega)} \leq M_0 + C \, \|\nabla \varrho_0\|_{L^p(\Omega)}$. Then we define
\begin{align*}
  & \Psi_5^{(2)}(t, \, a_1, \ldots, a_4) \\
  & \quad = (a_3+ C \, a_2) \, \exp(C\, t^{1-\tfrac{1}{p}} \, a_4) \, (t^{\frac{2}{3}} \, a_4 + a_4^\alpha \, \exp( C \, \alpha \, t^{1-\tfrac{1}{p}} \,a_4) \, (1 +  C \, t^{1-\tfrac{1}{p}} \,  a_4))  \, ,
\end{align*}
and find that $ [\varrho(x)]_{C^{\alpha}(0,t)} \leq  \Psi^{(2)}_5$. The value $\Psi^{(2)}_5(0, \, a_1,\ldots,a_4) = (a_3 + C \, a_2) \, a_4^{\alpha}$ is not yet independent of $a_4$. But for arbitrary $0< \alpha^{\prime} < 1-\tfrac{3}{p}$, it also follows that $[\varrho(x)]_{C^{\alpha^{\prime}}(0,t)} \leq C \,  t^{1-\tfrac{3}{p}-\alpha^{\prime}} \, \Psi_5^{(2)}$.

The function $\Psi^{(3)}_5(t, \, a_1, \ldots, a_4) := t^{1-\frac{3}{p}-\alpha^{\prime}} \, \Psi_5^{(2)}(t, \, a_1, \ldots,a_4)$ now satisfies $\Psi_5^{(3)}(0, \, a) = 0$ independently on $a_4$. Moreover $[\varrho(x)]_{C^{\alpha^{\prime}}(0,t)} \leq \Psi_5^{(3)}$.
Thus for $\beta = 1-\tfrac{3}{p}$, we have $[\varrho]_{C^{\beta,\frac{\beta}{2}}(Q_T)} \leq  C  \, (\Psi_5^{(1)} + \Psi_5^{(3)}) =: C \, \Psi_5$. Clearly $\Psi_5(0, \, a_1,\ldots, a_4) = \Psi_5^{(1)}(0, \, a_1, \ldots,a_4) = a_3$. We are done.
\end{proof}

\section{The continuity estimate for $\mathcal{T}$}\label{contiT}

We now want to combine the Propositions \ref{A1linmain} and \ref{solonnikov1} with the linearisation of the continuity equation in Proposition \ref{solonnikov2} to study the fixed point map $\mathcal{T}$ described at the beginning of Section \ref{twomaps} and defined by the equations \eqref{linearT1}, \eqref{linearT2}, \eqref{linearT3}. For a given $v^* \in W^{2,1}_p(Q_T; \, \mathbb{R}^3)$ and $q^* \in W^{2,1}_p(Q_T; \, \mathbb{R}^{N-1})$, we introduce the notation $\mathscr{V}^*(t) := \mathscr{V}(t; \, q^*) +  \mathscr{V}(t; \, v^*)$. To begin with, we need to control the growth of the lower order terms in \eqref{linearT1}, \eqref{linearT2}, \eqref{linearT3}.
\begin{lemma}\label{RightHandControl}
Consider the compound $\Gamma := Q \times \mathbb{R}^{N-1} \times \mathbb{R}_+ \times \mathbb{R}^3 \times \mathbb{R}^{N-1\times 3} \times \mathbb{R}^{3\times 3} \times \mathbb{R}^{3}$, and a function $G$ defined on $\Gamma$. For $u^* = (q^*, \, \varrho^*, \,  v^*) \in \mathcal{X}_{T,+}$ we define $G^* := G(x, \, t, \, u^*,\, D^1_xu^*)$. Assume that the function $G$ is satisfying for all $t \leq T$ and $x \in \Omega$ the growth conditions
\begin{align*}
& | G(x, \, t, \, u^*,\, D^1_xu^*)| \\
& \quad \leq c_1((m^*(t))^{-1}, \, |u^*|) \, \Big(|\bar{G}(x, \, t)| + |q^*_x|^{r_1} + |v^*_x|^{r_1} + |\bar{H}(x ,\, t)| \,  (|\varrho^*_x| + |q^*_x|)\Big)   \, ,
\end{align*}
in which $1\leq r_1 < 2-\frac{3}{p} + \frac{3}{(5-p)^+}$ and $\bar{G} \in L^p(Q_T)$, $\bar{H} \in L^{\infty,p}(Q_T)$ are arbitrary and $c_1$ is a continuous, increasing function of two positive arguments. Then, there is a continuous function $\Psi_{G} = \Psi_G(t, \, a_1, \ldots, a_5)$ defined for all non-negative arguments such that
\begin{align*}
 \|G^*\|_{L^p(Q_t)} \leq \Psi_{G}(t, \, (m^*(t))^{-1}, \, M^*(t), \, \|(q^*(0), \, v^*(0))\|_{W^{2-\frac{2}{p}}_p(\Omega)}, \, \|\nabla \varrho^*\|_{L^{p,\infty}(Q_t)}, \, \mathscr{V}^*(t)) \, .
\end{align*}
The function $\Psi_{G}$ is increasing in all arguments and $\Psi_{G}(0, \, a_1,\ldots,a_5) = 0$ for all $a \in [\mathbb{R}_+]^5$.
\end{lemma}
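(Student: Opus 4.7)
I would raise the hypothesis on $G$ to the $p$-th power, integrate over $Q_t$, and bound each of the four resulting terms separately. Since $p>3$, the embedding $W^{2,1}_p(Q_t)\hookrightarrow C(\overline{Q_t})$ recalled in Remark \ref{parabolicspace} gives $\|q^*\|_{L^\infty(Q_t)}+\|v^*\|_{L^\infty(Q_t)}\leq c\,\mathscr{V}^*(t)$; combined with $|\varrho^*|\leq M^*(t)$ this bounds $|u^*|$ pointwise, and monotonicity of $c_1$ in its second argument yields $c_1((m^*(t))^{-1},|u^*|)\leq c_1((m^*(t))^{-1},c(M^*(t)+\mathscr{V}^*(t)))$, which will absorb into the prefactor of $\Psi_G$.

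The four integrated contributions are then handled as follows. For $\bar G\in L^p(Q_T)$, the quantity $\omega_1(t):=\|\bar G\|_{L^p(Q_t)}$ is continuous, nondecreasing, with $\omega_1(0)=0$ by absolute continuity of the Lebesgue integral. For $\bar H\in L^{\infty,p}(Q_T)$, H\"older's inequality in space gives $\|\bar H(|\varrho^*_x|+|q^*_x|)\|_{L^p(Q_t)}\leq\omega_2(t)\,(\|\nabla\varrho^*\|_{L^{p,\infty}(Q_t)}+\|\nabla q^*\|_{L^{p,\infty}(Q_t)})$ with $\omega_2(t):=\|\bar H\|_{L^{\infty,p}(Q_t)}$ again continuous and vanishing at $0$; the $\nabla q^*$ factor is bounded by $c\,\mathscr{V}(t;q^*)$ via the continuous embedding $W^{2,1}_p\hookrightarrow C([0,t];W^{1,p}(\Omega))$. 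The crucial terms are $\||q^*_x|^{r_1}\|_{L^p(Q_t)}$ and its $v^*$-analogue: set $s:=2p-3+3p/(5-p)^+=p\,(2-3/p+3/(5-p)^+)$, so that the second bullet of Remark \ref{parabolicspace} provides $\nabla q^*\in L^s(Q_t)$ with $\|\nabla q^*\|_{L^s(Q_t)}\leq c\,\mathscr{V}(t;q^*)$, and the hypothesis $r_1<s/p$ makes $r_1 p<s$. H\"older's inequality on $Q_t$ with exponents $s/(r_1 p)$ and its conjugate gives
\begin{align*}
\||q^*_x|^{r_1}\|_{L^p(Q_t)}^{p}\leq (|\Omega|\,t)^{1-r_1 p/s}\,\|\nabla q^*\|_{L^s(Q_t)}^{r_1 p},
\end{align*}
hence $\||q^*_x|^{r_1}\|_{L^p(Q_t)}\leq c\,t^{\alpha}\,\mathscr{V}(t;q^*)^{r_1}$ with $\alpha:=(s-r_1 p)/(sp)>0$, and analogously for $v^*_x$.

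Collecting the four estimates yields
\begin{align*}
\|G^*\|_{L^p(Q_t)}\leq c_1\bigl((m^*(t))^{-1},c(M^*(t)+\mathscr{V}^*(t))\bigr)\,\Bigl[\omega_1(t)+c\,t^{\alpha}\,\mathscr{V}^*(t)^{r_1}+\omega_2(t)\,\bigl(\|\nabla\varrho^*\|_{L^{p,\infty}(Q_t)}+c\,\mathscr{V}^*(t)\bigr)\Bigr],
\end{align*}
which is exactly the $\Psi_G$ announced: reading the right-hand side as a function of $t$ and of the five scalar arguments listed in the statement, it is continuous, increasing in each of them (as the product of an increasing prefactor and an increasing bracket; the argument $a_3$ enters only as a trivial monotone dependence), and vanishes at $t=0$ uniformly in $(a_1,\ldots,a_5)$ because $\omega_1(0)=\omega_2(0)=0$ and $t^\alpha\to 0$. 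The only genuinely delicate point is the extraction of the positive power $t^\alpha$ from $\||q^*_x|^{r_1}\|_{L^p(Q_t)}$: this works exactly because the bound on $r_1$ is strict, so the H\"older exponent $1-r_1p/s$ is strictly positive. The remaining ingredients are routine consequences of absolute continuity of the integral and the embedding properties collected in Remark \ref{parabolicspace}.
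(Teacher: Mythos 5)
Your proof is correct and follows essentially the same route as the paper's: both treat the $\bar G$ and $\bar H$ terms by absolute continuity of the integral and H\"older in space, and both extract a positive power of $t$ from the $|q^*_x|^{r_1}$, $|v^*_x|^{r_1}$ terms by combining the $L^s$-embedding of Remark \ref{parabolicspace} (your $s$ is the paper's $r$) with H\"older on $Q_t$ (the paper's stated exponent $1-pr_1/r$ is a minor typo; the clean computation you give yields $\alpha=1/p-r_1/s$, which is what actually enters). The only deviation is that you bound $\|q^*\|_{L^\infty(Q_t)}$ directly by $c\,\mathscr{V}^*(t)$ via the Sobolev embedding, where the paper instead invokes Lemma \ref{HOELDERlemma} to split off $\|q^*(0)\|_{L^\infty}$; both are legitimate here since the prefactor $c_1$ need not vanish at $t=0$, and your variant simply leaves $a_3$ as a vacuous argument of $\Psi_G$.
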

\begin{proof}
With the abbreviation $c_1^* := c_1((m^*(t))^{-1}, \, \|q^*\|_{L^{\infty}(Q_t)} + \|v^*\|_{L^{\infty}(Q_t)} + M^*(t))$, we have by assumption
\begin{align*}
 \|G^*\|_{L^p(Q_t)} \leq & c_1^* \, (\|\bar{G}\|_{L^p(Q_t)} + \||q^*_x| + |v^*_x|\|_{L^{pr_1}(Q_t)}^{r_1} + \|(|\varrho^*_x| + |q^*_x|) \, \bar{H}\|_{L^p(Q_t)})\\
  \leq & c_1^* \, (\|\bar{G}\|_{L^p(Q_t)} + \||q^*_x| + |v^*_x|\|_{L^{pr_1}(Q_t)}^{r_1} + \||q^*_x| + |\varrho^*_x|\|_{L^{p,\infty}(Q_t)} \, \|\bar{H}\|_{L^{\infty,p}(Q_t)}) \, .
\end{align*}
Thanks to the Remark \ref{parabolicspace}, the gradients $q^*_x, \, v^*_x$ belong to $L^r(Q_T)$ for $r = 2p-3+ \frac{3p}{(5-p)^+}$, and for all $t \leq T$ the inequality $\|q^*_x\|_{L^r(Q_t)} \leq \|q^*_x\|_{L^{\infty,2p-3}(Q_t)}^{\frac{2p-3}{r}} \, \|q^*_x\|_{L^{\frac{3p}{(5-p)^+},\infty}(Q_t)}^{\frac{3p}{r(5-p)^+}}$ is valid.

Thus, if $r_1 \, p < r$, we obtain that $\|q^*_x\|_{L^{pr_1}(Q_t)} \leq C \, t^{1-\frac{pr_1}{r}} \, |\Omega|^{1-\frac{pr_1}{r}} \, \mathscr{V}(t; \, q)$.
Moreover, since $p < \frac{3p}{(5-p)^+}$, we have $\|q^*\|_{L^{p, \infty}(Q_t)} \leq C \, \sup_{s \leq t} \|q^*(s)\|_{W^{2-\frac{2}{p}}(\Omega)}$ with $C$ depending only on $\Omega$. The terms containing $v^*_x$ are estimated the same way.
Overall, we obtain for $G^*$
\begin{align*}
 \|G^*\|_{L^p(Q_t)} \leq C_1^* \, (&\|\bar{G}\|_{L^p(Q_t)} + t^{r_1\, (1-\frac{pr_1}{r})} \, [\mathscr{V}^*(t)]^{r_1}+\|\bar{H}\|_{L^{\infty,p}(Q_t)} \, (\|\varrho^*_x\|_{L^{p,\infty}(Q_t)} + \mathscr{V}^*(t)) ) \, ,
\end{align*}
in which $C_1^* = C_1((m^*(t))^{-1}, \, \|q^*\|_{L^{\infty}(Q_t)} + \|v^*\|_{L^{\infty}(Q_t)} + M^*(t))$. Invoking the Lemma \ref{HOELDERlemma} to estimate $\|q^*\|_{L^{\infty}(Q_t)} \leq \|q^0\|_{L^{\infty}(\Omega)} + t^{\gamma} \, \mathscr{V}^*(t)$ and the same for $v^*$, we see that this estimate possesses the structure claimed by the Lemma.
\end{proof}
We are now ready to establish the a final estimate that allows to obtain the self-mapping property.
\begin{prop}\label{estimateself}
There is a continuous function $\Psi_8 = \Psi_8(t, \, a_1,\ldots, a_5)$ on $[0, \, + \infty[ \times \mathbb{R}^5_+$, increasing in all arguments, such that for the pair $(q, \, v) = \mathcal{T}(q^*, \, v^*)$ the following estimate is valid:
\begin{align*}
 \mathscr{V}(t) \leq \Psi_8(t, \, m_0^{-1}, \,  M_0, \, \|\nabla \varrho_0\|_{L^p(\Omega)}, \,  \|(q^0, \, v^0)\|_{W^{2-\frac{2}{p}}_p(\Omega)}, \, \mathscr{V}^*(t)) \, .
\end{align*}
Moreover, for all $\eta \geq 0$
\begin{align*}
& \Psi_8\Big(0, \,  m_0^{-1}, \,  M_0, \, \|\nabla \varrho_0\|_{L^p(\Omega)}, \,  \|q^0\|_{W^{2-\frac{2}{p}}_p(\Omega)},\, \|v^0\|_{W^{2-\frac{2}{p}}_p(\Omega)}, \, \eta\Big) \\
& =
 \Psi^0_1(m_0^{-1}, \, M_0, \, \|q^0\|_{C^{1-\frac{3}{p}}(\Omega)}) \, (1+\|\nabla \varrho_0\|_{L^p(\Omega)})^{\tfrac{2p}{p-3}} \, \|q^0\|_{W^{2-\frac{2}{p}}_p(\Omega)}\\
& +
\left( \frac{1}{\min\{1, \, m_0\}} \right)^{\tfrac{2p}{p-3}}\, \left( \frac{M_0}{m_0} \right)^{\tfrac{p+1}{p}} \, (1+\|\nabla \varrho_0\|_{L^p(\Omega)})^{\tfrac{2p}{p-3}} \, \|v^0\|_{W^{2-\frac{2}{p}}_p(\Omega)} \, .
\end{align*}
\end{prop}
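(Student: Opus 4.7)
The strategy is to exploit the triangular structure of the linearised system \eqref{linearT1}--\eqref{linearT3}: the continuity equation decouples and delivers $\varrho$ as a function of $v^*$ alone, after which the two parabolic equations for $q$ and $v$ can be estimated independently by invoking Proposition \ref{A1linmain} and Corollary \ref{normvmain} respectively. First, I would apply Corollary \ref{normrhomain} to the unique $\varrho \in W^{1,1}_{p,\infty}$ solving $\partial_t \varrho + \divv(\varrho\, v^*) = 0$ with $\varrho(0) = \varrho_0$. This produces increasing functions $\Psi_3, \Psi_4, \Psi_5$ that bound $m(t)^{-1}$, $M(t)$, $\|\nabla \varrho\|_{L^{p,\infty}(Q_t)}$ and $[\varrho]_{C^{\beta, \beta/2}(Q_t)}$ (with $\beta = 1 - 3/p$) in terms of $t, m_0^{-1}, M_0, \|\nabla \varrho_0\|_{L^p}$ and $\mathscr{V}(t; v^*)$. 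By construction these four bounds reduce at $t = 0$ to $m_0^{-1}, M_0, \|\nabla \varrho_0\|_{L^p}$ and (via $[\varrho_0]_{C^{\beta}} \leq C \|\nabla\varrho_0\|_{L^p}$) to $C\|\nabla\varrho_0\|_{L^p}$, independently of $\mathscr{V}(t; v^*)$.

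Next I would bound the right-hand sides $g$ and $f$ from \eqref{A1right}--\eqref{A3right}, evaluated at $(q^*, \varrho, v^*)$ with $\varrho$ from Step 1. Each term fits the growth format of Lemma \ref{RightHandControl}: the inhomogeneous contributions $\widetilde M \divv \tilde b$, $R \cdot \tilde b$, $\varrho\, \bar b$ and $\tilde r$ are absorbed into $\bar G \in L^p(Q_T)$; the coefficients $\widetilde M_{\varrho} \tilde b$, $\widetilde M_q \tilde b$, $P_\varrho$ and $P_q$ multiplied against $|\varrho_x|$ or $|q^*_x|$ form $\bar H$ contributions that lie in $L^{\infty,p}(Q_T)$ because $\tilde b \in W^{1,0}_p$ and $p > 3$ yield $\tilde b(\cdot, t) \in L^{\infty}(\Omega)$ uniformly in $L^p$ in time; finally the convective products $v^* \cdot \nabla q^*$ and $\varrho\, (v^* \cdot \nabla) v^*$ are absorbed into the terms $c_1(|u^*|, (m^*)^{-1}) \cdot |q^*_x|$ respectively $c_1 \cdot |v^*_x|$, using that the sup-norm of $v^*$ is controlled by $|u^*|$ and hence by $c_1$. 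Invoking Lemma \ref{RightHandControl} then yields $\|g\|_{L^p(Q_t)} + \|f\|_{L^p(Q_t)} \leq \Psi_G(t, m_0^{-1}, M_0, \|\nabla\varrho_0\|_{L^p}, \|(q^0,v^0)\|_{W^{2-\frac{2}{p}}_p(\Omega)}, \mathscr{V}^*(t))$ with $\Psi_G(0, \cdot) = 0$.

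Finally, I would apply Proposition \ref{A1linmain} to \eqref{linearT2}, taking $(\varrho, q^*)$ as the pair $(\varrho^*, q^*)$ in its statement, and Corollary \ref{normvmain} to \eqref{linearT3}, with $\varrho$ as $\varrho^*$. Substituting the bounds from Step 1 for $m^*(t)^{-1}$, $M^*(t)$, $[\varrho^*]_{C^{\beta, \beta/2}}$ and $\|\nabla \varrho^*\|_{L^{p,\infty}}$, together with the bound of Step 2 for $g$ and $f$, and using the Sobolev embedding $W^{2-\frac{2}{p}}_p(\Omega) \hookrightarrow C^{1-\frac{3}{p}}(\Omega)$ valid for $p > 3$ to control $\|q^*(0)\|_{C^{\beta}} \leq C \|q^0\|_{W^{2-\frac{2}{p}}_p(\Omega)}$ (since the iteration can be constrained so that $q^*(0) = q^0$, $v^*(0) = v^0$), the two inequalities sum to a bound $\mathscr{V}(t; q) + \mathscr{V}(t; v) \leq \Psi_8$ with the required monotonicity. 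The main obstacle is bookkeeping at $t = 0$: the key is that $\Psi_1(0, \cdot) = \Psi_1^0$ depends only on its first three arguments, $\Psi_2(0, \cdot) = \phi_5^*(m_0^{-1}, M_0)$ depends only on its first two, $\Psi_G(0, \cdot) = 0$, and $\Psi_3, \Psi_4, \Psi_5$ at $t = 0$ are independent of $\mathscr{V}(t; v^*)$. Therefore the only surviving contributions at $t = 0$ come from the factors $(1 + [\varrho]_{C^{\beta, \beta/2}})^{2/\beta} \|q^0\|_{W^{2-\frac{2}{p}}_p}$ and $(1 + \sup [\varrho]_{C^{\alpha}})^{2/\alpha} \|v^0\|_{W^{2-\frac{2}{p}}_p}$ with $2/\beta = 2/\alpha = 2p/(p-3)$, producing exactly the explicit expression for $\Psi_8(0, \cdot)$ asserted in the statement, and confirming independence of $\eta = \mathscr{V}^*(t)$ at $t = 0$, which is the property needed to later apply Lemma \ref{selfmapT}.
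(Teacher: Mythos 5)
Your proposal reproduces the paper's argument faithfully: apply Corollary~\ref{normrhomain} with $\varrho^*=\varrho$ to bound $m(t)^{-1}$, $M(t)$, $\|\nabla\varrho\|_{L^{p,\infty}}$, and the parabolic H\"older seminorm of $\varrho$ by functions $\Psi_3,\Psi_4,\Psi_5$ whose values at $t=0$ are independent of $\mathscr{V}(t;v^*)$; apply Lemma~\ref{RightHandControl} (with $r_1 = 1$ and the appropriate choices of $\bar G$, $\bar H$) to obtain $\|g^*\|_{L^p(Q_t)}+\|f^*\|_{L^p(Q_t)}\le \Psi_G$ with $\Psi_G(0,\cdot)=0$; then plug these into Proposition~\ref{A1linmain} and Corollary~\ref{normvmain}, sum, and read off $\Psi_8(0,\cdot)$ using $\Psi_1(0,\cdot)=\Psi_1^0$ (first three arguments only), $\Psi_2(0,\cdot)=\phi_5^*$ (first two arguments only), $\Psi_5(0,\cdot)=\|\nabla\varrho_0\|_{L^p}$, and $2/\beta = 2/\alpha = 2p/(p-3)$. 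This is precisely the chain of lemmas and the $t=0$ bookkeeping in the paper's proof.

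One observation you make that the paper passes over silently deserves to be called out: Proposition~\ref{A1linmain} yields a factor $\|q^*(0)\|_{C^{\beta}(\Omega)}$, and the passage to $\|q^0\|_{C^{1-3/p}(\Omega)}$ in the computation of $\Psi_8(0,\cdot)$ requires $q^*(0)=q^0$, which the paper does not state but tacitly assumes (it holds for all iterates $n\ge 2$ of the fixed-point scheme, and $\Psi_3,\Psi_5$ evaluated at $t=0$ already presuppose $\varrho^*(0)=\varrho_0$). Your remark that the iteration can be constrained to the affine subset $\{(q^*,v^*): q^*(0)=q^0,\ v^*(0)=v^0\}$ is the cleaner way to justify this and closes a small gap in the exposition.
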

\begin{proof}
We first apply the Proposition \ref{A1linmain} with $\varrho^* = \varrho$. It follows that 
\begin{align*}
  \mathscr{V}(t; \, q) \leq & C \, \Psi_1(t, \, m(t)^{-1}, \, M(t),  \, \|q^*(0)\|_{ C^{\beta}(\Omega)}, \, 
\mathscr{V}(t; \, q^*), \, [\varrho]_{C^{\beta,\frac{\beta}{2}}(Q_t)}, \, \|\nabla \varrho\|_{L^{p,\infty}(Q_t)}) \,  \times\\ 
& \times ((1+[\varrho]_{C^{\beta,\frac{\beta}{2}}(Q_t)})^{\tfrac{2}{\beta}} \,   \|q^0\|_{W^{2-\frac{2}{p}}_p(\Omega)} + \|g^*\|_{L^p(Q_t)}) \, . 
\end{align*}
Due to the Corollary \ref{normrhomain} and the choice $\varrho^* = \varrho$, we have for $\beta := 1-\frac{3}{p}$
\begin{align*}
\max\{ \frac{1}{m(t)}, \, M(t)\} \leq & \Psi_3(t, \, \max\{m_0^{-1}, \, M_0\}, \, \mathscr{V}(t; \, v^*)) =: \Psi_3(t, \ldots) \, , \\
  \|\nabla \varrho^*\|_{L^{p,\infty}(Q_t)} \leq & \Psi_4(t, \, m_0^{-1}, \, M_0, \, \, \|\nabla \varrho_0\|_{L^p(\Omega)}, \, \mathscr{V}(t; \, v^*)) =: \Psi_4(t, \ldots)\\
 [\varrho^*]_{C^{\beta,\frac{\beta}{2}}(Q_t)}  \leq & \Psi_5(t, \, m_0^{-1}, \, M_0, \, \, \|\nabla \varrho_0\|_{L^p(\Omega)}, \, \mathscr{V}(t; \, v^*)) =: \Psi_5(t, \ldots) \, .
 \end{align*}
Moreover, we can apply the Lemma \ref{RightHandControl} to the right-hand defined in \eqref{A1right}. (Choose $G = g$, $r_1 = 1$, $\bar{H}(x, \, t) := |\tilde{b}(x, \, t)|$ and $\bar{G}(x, \, t) = |\tilde{b}_x(x, \, t)|$.) It follows that
\begin{align*}
 \|g^*\|_{L^p(Q_t)} \leq & \Psi_g( t, \,\Psi_3(t, \, \ldots), \, \Psi_3(t, \, \ldots), \, \|(q^0, \, v^0)\|_{W^{2-\frac{2}{p}}_p(\Omega)},\, \Psi_4(t, \, \ldots), \, \mathscr{V}^*(t))\,\\
  =: & \Psi_g(t, \ldots) \, .
\end{align*}
Combining all these estimates we can bound the quantity $ \mathscr{V}(t; \, q) $ by the function
\begin{align*}
\Psi_8^{(1)} := & \Psi_1( t, \,  \Psi_3(t, \, \ldots)   , \,\Psi_3(t, \, \ldots),  \, \|q^0\|_{C^{\beta}(\Omega)}, \, 
\mathscr{V}(t; \, q^*), \,  \Psi_5(t, \,\ldots), \, \Psi_4(t, \, \ldots)) \, \times \\
& \times \Big( (1+\Psi_5(t, \, \ldots))^{\tfrac{2}{\beta}} \, \|q^0\|_{W^{2-\frac{2}{p}}_p(\Omega)} + \Psi_g(t, \ldots) \Big) \, .
\end{align*}
Since we can apply the inequalities $\mathscr{V}(t; \, v^*), \, \mathscr{V}(t; \, q^*) \leq \mathscr{V}(t)$, we reinterpret the latter expression as a function $\Psi^{(1)}_{8} := \Psi^{(1)}_8(t, \, m_0^{-1}, \,  M_0, \, \|\nabla \varrho_0\|_{L^p(\Omega)}, \,  \|(q^0, \, v^0)\|_{W^{2-\frac{2}{p}}_p(\Omega)}, \, \mathscr{V}(t))$.

Moreover, it $t = 0$, we can use the estimates proved in the Proposition \ref{A1linmain} and the Corollaries \ref{normvmain} and \ref{normrhomain}. Recall in particular that $\Psi_1(0, \, a_1, \ldots, a_6) = \Psi_1^0(a_1, \, a_2, \, a_3)$. Moreover, $\Psi_3(t, \, M_0, \, \eta) = M_0$. Thus, since $\Psi_5(0, \, a_1, \, a_2, \, a_3) = a_3$, and $\Psi_g(0, \ldots) = 0$ (see Lemma \ref{RightHandControl}) we can compute that
\begin{align}\label{formulatpsi6null}
& \Psi^{(1)}_8(0, \, m_0^{-1}, \,  M_0, \, \|\nabla \varrho_0\|_{L^p(\Omega)}, \,  \|(q^0, \, v^0)\|_{W^{2-\frac{2}{p}}_p(\Omega)}, \, \mathscr{V}(t)) \\
& \quad= \Psi^0_1(m_0^{-1}, \, M_0, \, \|q^0\|_{C^{1-\frac{3}{p}}(\Omega)}) \, (1+\|\nabla \varrho_0\|_{L^p(\Omega)})^{\tfrac{2p}{p-3}} \, \|q^0\|_{W^{2-\frac{2}{p}}_p(\Omega)} \nonumber\, .
\end{align}
We next apply the Corollary \ref{normvmain} with $\varrho^* = \varrho$ and $f = f^*$, to obtain that
\begin{align*}
   \mathscr{V}(t; \, v) \leq & C \, \Psi_2(t, \, m(t)^{-1}, \, M(t), \, \sup_{s \leq t} [\varrho(s)]_{C^{\alpha}(\Omega)}) \, (1+\sup_{s \leq t} [\varrho(s)]_{C^{\alpha}(\Omega)})^{\frac{2}{\alpha}}\, \times\\
   & \times ( \|v^0\|_{W^{2-\frac{2}{p}}_p(\Omega)}+ \|f^*\|_{L^p(Q_t)})   \, .
\end{align*}
We apply the Lemma \ref{RightHandControl} to $G = f$ (recall \eqref{A3right}, and choose $r_1 = 1$, $|\bar{H}(x, \, t)| = 1$ and $\bar{G}(x, \, t) := |\tilde{b}(x, \, t)| + |\bar{b}(x, \, t)|$ in the statement of Lemma \ref{RightHandControl}). For $\alpha = 1-\frac{3}{p}$, we estimate $\mathscr{V}(t; \, v)$ above by
\begin{align*}
\Psi_2(& t, \, \Psi_3(t, \, \ldots) \,, \Psi_3(t, \, \ldots), \, \Psi_5(t, \, \ldots)) \, (1+ \Psi_5(t, \, \ldots))^{\frac{2}{\alpha}}\, (\|v^0\|_{W^{2-\frac{2}{p}}_p(\Omega)}+ \Psi_f(t,\ldots) ) \, .  
\end{align*}
We reinterpret this function as a $\Psi^{(2)}_{8,t}$ of the same arguments, and we note that
\begin{align*}
 & \Psi^{(2)}_8(0, \, m_0^{-1}, \,  M_0, \, \|\nabla \varrho_0\|_{L^p(\Omega)}, \,  \|(q^0, \, v^0)\|_{W^{2-\frac{2}{p}}_p(\Omega)}, \, \eta) \\
 & = \Psi^0_2(m_0^{-1}, \, M_0, \, \|\nabla \varrho_0\|_{L^p(\Omega)}) \, (1+\|\nabla \varrho_0\|_{L^p(\Omega)})^{\tfrac{2p}{p-3}} \, \|v^0\|_{W^{2-\frac{2}{p}}_p(\Omega)} \\
 & = \left( \frac{1}{\min\{1, \, m_0\}} \right)^{\tfrac{2p}{p-3}}\, \left( \frac{M_0}{m_0} \right)^{\tfrac{p+1}{p}} \, (1+\|\nabla \varrho_0\|_{L^p(\Omega)})^{\frac{2p}{p-3}} \, \|v^0\|_{W^{2-\frac{2}{p}}_p(\Omega)}
\end{align*}
The claim follows.
\end{proof}
\begin{prop}\label{ContiEststate}
We adopt the assumptions of the Theorem \ref{MAIN2}.
For a given pair $(q^*, \, v^*) \in \mathcal{Y}_T$, we define a map $\mathcal{T}(q^*, \, v^*) = (q, \, v)$ via solution to the equations \eqref{linearT1}, \eqref{linearT2}, \eqref{linearT3} with homogeneous boundary conditions \eqref{lateralq}, \eqref{lateralv} and initial conditions $(q^0, \, \varrho_0, \, v^0)$. Then, there are $0 < T_0 \leq T$ and $\eta_0 > 0$ depending on the data via the vector $R_0 := (m_0^{-1},\, M_0, \, \|\nabla \varrho_0\|_{L^p(\Omega)}, \, \|(q^0, \, v^0)\|_{W^{2-\frac{2}{p}}_p(\Omega)})$ such that $\mathcal{T}$ maps the ball with radius $\eta_0$ in $\mathcal{Y}_{T_0}$ into itself.
\end{prop}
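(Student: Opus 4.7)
The plan is to combine the growth estimate of Proposition \ref{estimateself} with the abstract self-mapping criterion formulated in Lemma \ref{selfmapT}.

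First I would verify that the map $\mathcal{T}$ is actually well-defined from $\mathcal{Y}_T$ into itself. Given $(q^*,v^*) \in \mathcal{Y}_T$, I would solve the three equations in sequence. Since $v^* \in W^{2,1}_p(Q_T;\,\mathbb{R}^3)$ and $\varrho_0 \in W^{1,p}(\Omega)$ with $\varrho_0 \geq m_0 > 0$, Proposition \ref{solonnikov2} produces a unique $\varrho \in W^{1,1}_{p,\infty}(Q_T)$ solving \eqref{linearT1} that satisfies a strictly positive lower bound on $Q_T$. With this $\varrho$ in hand, the coefficients $R_q(\varrho,q^*)$, $\widetilde{M}(\varrho,q^*)$ in \eqref{linearT2} fall into the hypothesis class of Proposition \ref{A1linmain}, producing a unique $q \in W^{2,1}_p(Q_T;\,\mathbb{R}^{N-1})$; similarly the linear parabolic equation \eqref{linearT3} with coefficient $\varrho \in C^{\alpha,0}(Q_T)$ falls under Proposition \ref{solonnikov1} and yields a unique $v \in W^{2,1}_p(Q_T;\,\mathbb{R}^3)$. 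In both cases the right-hand sides $g$ and $f$ from \eqref{A1right}, \eqref{A3right} lie in $L^p$ by Lemma \ref{RightHandControl}.

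Next, setting $\mathscr{V}(t) := \mathscr{V}(t;\,q) + \mathscr{V}(t;\,v)$ and $\mathscr{V}^*(t) := \mathscr{V}(t;\,q^*) + \mathscr{V}(t;\,v^*)$, I would directly invoke Proposition \ref{estimateself}, which yields
\begin{equation*}
\mathscr{V}(t) \leq \Psi_8\bigl(t,\,m_0^{-1},\,M_0,\,\|\nabla \varrho_0\|_{L^p(\Omega)},\,\|(q^0,v^0)\|_{W^{2-\frac{2}{p}}_p(\Omega)},\,\mathscr{V}^*(t)\bigr) \, ,
\end{equation*}
where $\Psi_8$ is continuous, nonnegative, and increasing in every argument. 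Writing the first five data arguments collectively as $R_0$, this is exactly the continuity estimate \eqref{CONTROLLEDGROWTH} required to apply Lemma \ref{selfmapT}, with $\Psi(t,R_0,\eta) := \Psi_8(t,R_0,\eta)$.

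The decisive point, which is the only nontrivial thing left to check, is the hypothesis that $\Psi(0,R_0,\eta) = \Psi^0(R_0)$ is independent of $\eta$ and strictly positive. Independence of $\eta$ is precisely the content of the explicit formula for $\Psi_8(0,\cdot)$ stated at the end of Proposition \ref{estimateself}: at $t=0$, every occurrence of $\mathscr{V}^*(t)$ is absorbed into constants depending only on $R_0$. Positivity is immediate since $\Psi^0(R_0)$ contains the factors $\|q^0\|_{W^{2-\frac{2}{p}}_p(\Omega)}$ and $\|v^0\|_{W^{2-\frac{2}{p}}_p(\Omega)}$ multiplied by strictly positive coefficients (should these vanish, i.e.\ $(q^0,v^0)=0$, the claim becomes trivial since then $\mathscr{V}(t) \to 0$ as $t\to 0$ and the ball can be chosen of any small radius). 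Lemma \ref{selfmapT} then supplies $T_0 = T_0(R_0) \in (0,T]$ and $\eta_0 = \eta_0(R_0) > 0$ such that $\mathcal{T}$ maps the closed ball of radius $\eta_0$ in $\mathcal{Y}_{T_0}$ into itself, which is the conclusion of the proposition. I do not expect any genuine obstacle here; the whole work has been done in establishing Proposition \ref{estimateself} together with the structural statement of Lemma \ref{selfmapT}, and the present proposition is the payoff of that preparation.
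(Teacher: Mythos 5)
Your proof is correct and matches the paper's own one-line argument, which is simply to invoke Lemma \ref{selfmapT} with $\Psi := \Psi_8$ from Proposition \ref{estimateself}. The additional care you take over well-posedness of the three linear subproblems and over the degenerate case $(q^0, v^0) = 0$, where $\Psi^0(R_0)$ vanishes, is welcome; for the latter a cleaner repair than shrinking the ball is to observe that $\Psi_8 + \varepsilon$ still satisfies \eqref{CONTROLLEDGROWTH} and has $\Psi(0, R_0, \eta) = \varepsilon > 0$, so Lemma \ref{selfmapT} applies verbatim.
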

\begin{proof}
We apply the Lemma \ref{selfmapT} with $\Psi(t, \, R_0, \, \eta) := \Psi_8(t, \, R_0, \, \eta)$ from Lemma \ref{estimateself}, and the claim follows. 
\end{proof}

\section{Fixed point argument and proof of the theorem on short-time well-posedness}\label{FixedPointIter}

Starting from $(q^1, \, v^1) = 0$, we consider a fixed point iteration $(q^{n+1}, \, v^{n+1}) := \mathcal{T} \, (q^{n}, \, v^{n})$ for $ n \in \mathbb{N}$. 

Recall that this means first considering $\varrho^{n+1} \in W^{1,1}_{p,\infty}(Q_T)$ solution to
\begin{align*}
\partial_t \varrho^{n+1} + \divv(\varrho^{n+1} \, v^n) = 0 \text{ in } Q_T\, , \quad \varrho^{n+1}(x, \, 0) = \varrho_0(x) \text{ in } \Omega \, .
\end{align*}
Then we introduce $(q^{n+1}, \, v^{n+1}) \in W^{2,1}_p(Q_T; \, \mathbb{R}^{N-1}) \times W^{2,1}_p(Q_T; \, \mathbb{R}^{3})$ via solution in $Q_{T}$ to
\begin{align*}
& R_q(\varrho^{n+1}, \, q^n) \, \partial_t q^{n+1} - \divv (\widetilde{M}(\varrho^{n+1}, \, q^n) \, \nabla q^{n+1}) = - \divv (\widetilde{M}(\varrho^{n+1}, \, q^n) \, \tilde{b}(x, \,t) ) \\
& \quad +(R_{\varrho}(\varrho^{n+1}, \, q^n) \, \varrho^{n+1} - R(\varrho^{n+1}, \, q^n)) \, \divv v^n - R_q(\varrho^{n+1}, \, q^n) \, v^n \cdot \nabla q^n + \tilde{r}(\varrho^{n+1}, \, q^n)\, , \nonumber\\[0.2cm]
& \varrho^{n+1} \, \partial_t v^{n+1}  - \divv \mathbb{S}(\nabla v^{n+1}) = - \nabla P(\varrho^{n+1}, \, q^n) - \varrho^{n+1} \, (v^n \cdot \nabla) v^n \nonumber\\
& \quad \phantom{\varrho^{n+1} \, \partial_t v^{n+1}  - \divv \mathbb{S}(\nabla v^{n+1}) = \, } + \tilde{b}(x, \,t) \cdot R(\varrho^{n+1}, \, q^n) + \varrho^{n+1}\, \bar{b}(x, \,t) \, .
\end{align*}
with boundary conditions $\nu \cdot \nabla q^{n+1} = 0$, $v^{n+1} = 0$ on $S_T$ and initial data $q^{n+1}(x, \, 0) = q^0(x)$ and $v^{n+1}(x, \, 0) = v^0(x)$ in $\Omega$.
Recalling \eqref{Vfunctor}, we define $\mathscr{V}^{n+1}(t) := \mathscr{V}(t; \, q^{n+1}) + \mathscr{V}(t; \, v^{n+1})$. Since obviously $\mathscr{V}^1(t) \equiv 0$, the Prop. 
\ref{ContiEststate} implies the existence of parameters $T_0, \, \eta_0 > 0$ such that there holds uniform estimates
\begin{align}\label{uniform}
\sup_{n \in \mathbb{N}} \mathscr{V}^{n}(T_0) \leq \eta_0 \, , \quad  \sup_{n \in \mathbb{N}} \|\varrho_{n}\|_{W^{1,1}_{p,\infty}(Q_{T_0})} \leq C_0 \, .
\end{align}
In the Theorem \ref{iter} below, we obtain that the fixed-point iteration yields strongly convergence subsequences in $L^2(Q_{t,t+t_1})$ for the components of $q^n$, $\varrho_n$ and $v^n$ and the gradients $q^n_x$ and $v^n_x$. Here $0 < t_1 \leq T_0$ is a fixed number and $t \in [0, \, T_0-t_1]$ is arbitrary. Thus, we obtain the convergence in $L^2(Q_{T_0})$ of these functions. The passage to the limit in the approximation scheme is then a straightforward exercise, since we can rely on a uniform bound in $\mathcal{X}_{T_0}$. This step shall therefore be spared.

We next prove sufficient convergence properties of the sequence $\{(q^{n}, \, \varrho^n, \, v^n)\}_{n\in \mathbb{N}}$ by means of contractivity estimates in a lower--order space. This estimate also guarantees the uniqueness. The proof is unfortunately lengthy due to the complex form of the PDE system, but it is elementary in essence and might be skipped.
\begin{theo}\label{iter}
For $n \in \mathbb{N}$, we define
\begin{align*}
 r^{n+1} & :=  q^{n+1} - q^n, \quad \sigma^{n+1} := \varrho^{n+1} - \varrho^{n}, \quad w^{n+1} := v^{n+1} - v^n\\
 e^{n+1} & := |r^{n+1}| + |w^{n+1}| \, .
 \end{align*}
 Then there are $k_0, \, p_0 > 0$ and $0 < t_1 \leq T_0$ such that for all $t \in [0, \, T_0 -t_1]$, the quantity
\begin{align*} 
 E^{n+1}(t) := & k_0 \, \sup_{\tau \in [t, \, t+t_1]}  ( \|e^{n+1}(\tau)\|_{L^2(\Omega)}^2 + \|\sigma^{n+1}(\tau)\|_{L^2(\Omega)}^2) \\
 & + p_0 \, \int_{Q_{t,t+t_1}}  (|\nabla r^{n+1}|^2 + |\nabla w^{n+1}|^2)  \, dxd\tau
 \end{align*}
satisfies $ E^{n+1}(t) \leq \frac{1}{2} \, E^{n}(t)$ for all $n \in \mathbb{N}$.
\end{theo}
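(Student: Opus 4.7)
\textbf{Proof plan for Theorem \ref{iter}.}

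First I would derive the PDE system satisfied by the differences. Subtracting the $n$-th from the $(n{+}1)$-th continuity equation gives
\begin{align*}
\partial_t \sigma^{n+1} + \divv(\sigma^{n+1}\, v^n) = - \divv(\varrho^n\, w^n).
\end{align*}
For the parabolic components, writing $R_q^{n+1}:=R_q(\varrho^{n+1},q^n)$ and $\widetilde M^{n+1}:=\widetilde M(\varrho^{n+1},q^n)$, one obtains
\begin{align*}
R_q^{n+1}\partial_t r^{n+1} - \divv(\widetilde M^{n+1}\nabla r^{n+1}) = G^n,\qquad \varrho^{n+1}\partial_t w^{n+1}-\divv\mathbb{S}(\nabla w^{n+1}) = F^n,
\end{align*}
where $G^n$ and $F^n$ collect the differences of coefficients and the lower-order terms at levels $n,n{-}1$. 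Each of these forcings can, via convex-combination identities (Taylor remainder in integral form), be written as a linear combination of $\sigma^{n+1},r^n,\sigma^n,w^n$ (and possibly first derivatives thereof) with bounded coefficients that are continuous functions of $(\varrho^{n+1},\varrho^n,q^n,q^{n-1})$.

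Next, I would test the three equations against $\sigma^{n+1}$, $r^{n+1}$ and $w^{n+1}$ and integrate over $\Omega$. Thanks to the uniform bound \eqref{uniform} and the continuity of the coefficients, the state $(\varrho^n,q^n)$ stays in a fixed compact subset of $\mathbb{R}_+\times\mathbb{R}^{N-1}$ across all iterates, so $R_q^{n+1}\geq\lambda_0 I$, $\widetilde M^{n+1}\geq\mu_0 I$, $\varrho^{n+1}\geq m_*>0$ with $\lambda_0,\mu_0,m_*$ independent of $n$. Combining Korn's inequality for the viscous stress, one extracts the coercive structure
\begin{align*}
\frac{d}{dt}\left(\|\sigma^{n+1}\|_{L^2}^2 + \lambda_0\|r^{n+1}\|_{L^2}^2 + m_*\|w^{n+1}\|_{L^2}^2\right) + \mu_0\|\nabla r^{n+1}\|_{L^2}^2 + c_K\|\nabla w^{n+1}\|_{L^2}^2 \le \text{(cross terms)}.
\end{align*}

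The key step is the estimate of the cross terms. Terms from $G^n$, $F^n$ that are quadratic in previous differences, e.g. $(R_q^{n+1}-R_q^n)\partial_t q^n$, are bounded pointwise by $(|\sigma^{n+1}|+|r^n|)\,|\partial_t q^n|$ times a constant depending on the Lipschitz norm of $R_q$ on the compact state range. Using H\"older's inequality (with exponents $p,p',2$), the embedding $W^{2,1}_p\hookrightarrow L^\infty$ for $p>3$ (applied to $q^n,v^n$), the embedding $W^{1,p}\hookrightarrow L^\infty$ in space (applied to $\varrho^n$), and the Sobolev embedding $H^1(\Omega)\hookrightarrow L^{2p/(p-2)}(\Omega)$, each cross term is of the form $c\,\|\cdot\|_{L^2}\|\cdot\|_{L^p}\|\cdot\|_{L^{2p/(p-2)}}$. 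The $L^p$-factor is uniformly bounded by $\eta_0$, so Young's inequality with $\epsilon$ allows to split every contribution into a part proportional to $\mu_0\|\nabla r^{n+1}\|_{L^2}^2+c_K\|\nabla w^{n+1}\|_{L^2}^2$ (absorbable into the coercive LHS) and a part of type $C(\eta_0)(\|r^{n+1}\|_{L^2}^2+\|\sigma^{n+1}\|_{L^2}^2+\|w^{n+1}\|_{L^2}^2) + C(\eta_0)(\|\nabla r^n\|_{L^2}^2+\|\nabla w^n\|_{L^2}^2+\|r^n\|_{L^2}^2+\|\sigma^n\|_{L^2}^2+\|w^n\|_{L^2}^2)$. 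The hyperbolic coupling $\divv(\varrho^n w^n)$ is handled after distributing the divergence, the $\nabla\varrho^n$-factor being absorbed by $\|\nabla\varrho^n\|_{L^{p,\infty}}\le C_0$.

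Finally I would integrate in $\tau\in[t,t+t_1]$ and take the supremum over $[t,t+t_1]$ in the first bracket. Choosing $k_0:=\min\{1,\lambda_0,m_*\}$ and $p_0$ small enough so that the absorbed gradient terms do not exceed $\tfrac12(\mu_0\|\nabla r^{n+1}\|^2+c_K\|\nabla w^{n+1}\|^2)$, one arrives at an inequality of the form
\begin{align*}
E^{n+1}(t) \le K_1 \, t_1 \, E^{n+1}(t) + K_2 \, t_1^{\gamma}\, E^n(t),
\end{align*}
for some $\gamma>0$, where $K_1,K_2$ depend only on $\eta_0,C_0,m_0,T_0$ and the $C^2$-norms of the coefficients on the compact state range. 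Shrinking $t_1$ so that $K_1 t_1\le \tfrac14$ and $K_2 t_1^{\gamma}\le \tfrac14$ yields $E^{n+1}(t)\le\tfrac12 E^n(t)$, uniformly in $t\in[0,T_0-t_1]$. The main obstacle will be the careful bookkeeping of the very many cross terms coming from the non-diagonal form of the parabolic system for $q$ and from the coupling with the continuity equation; in particular, the $\sigma^{n+1}$-contributions appearing in the coefficient differences $R_q^{n+1}-R_q^n$ and $\widetilde M^{n+1}-\widetilde M^n$ must be grouped with the coercive contribution from the $\sigma^{n+1}$-equation before Gronwall/absorption is applied, so that the nominally critical terms in $\sigma^{n+1}$ do not spoil the contraction.
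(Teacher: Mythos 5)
Your proposal is correct and follows essentially the same path as the paper's proof: derive the PDE system for the differences $(r^{n+1},\sigma^{n+1},w^{n+1})$, test with the differences themselves, use the uniform bounds \eqref{uniform} to guarantee coercivity of $R_q$, $\widetilde M$, $\varrho$ on a fixed compact state range, absorb the cross terms via a H\"older--Sobolev--Young splitting (your exponent choice $(2,p,\tfrac{2p}{p-2})$ is a cosmetic variant of the paper's $(3,6,2)$ schema \eqref{schema}), integrate over $[t,t+t_1]$, take the supremum, and shrink $t_1$. You have also correctly spotted the one genuine subtlety, namely that $\sigma^{n+1}$ enters the coefficient differences $R_q^{n+1}-R_q^n$ and $\widetilde M^{n+1}-\widetilde M^n$ at the current level and must be absorbed into the coercive left-hand side rather than estimated like the index-$n$ quantities.
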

\begin{proof}
To be shorter, denote $R^n := R(\varrho^{n+1}, \, q^n)$, $\widetilde{M}^n := \widetilde{M}(\varrho^{n+1}, \, q^n)$, $P^n := P(\varrho^{n+1}, \, q^n)$. For simplicity, we also define $g^n :=  (R_{\varrho}^n \, \varrho^{n+1} - R^n) \, \divv v^n - R_q^n \, v^n  \cdot \nabla q^n + \tilde{r}(\varrho^{n+1}, \, q^n)$. 
The differences $r^{n+1}$, $\sigma^{n+1}$ and $w^{n+1}$ solve
\begin{align}\label{difference}
 & R_q^n \partial_t r^{n+1} - \divv (\widetilde{M}^n \, \nabla r^{n+1}) = \\
& \quad  + g^n - g^{n-1} + (R_q^{n-1}-R_q^n) \, \partial_t q^n-\divv((\widetilde{M}^{n-1} -\widetilde{M}^n) \, (\nabla q^n - \tilde{b}(x, \, t)) \, ,\nonumber\\
& \label{difference2} \partial_t \sigma^{n+1} + \divv (\sigma^{n+1} \, v^n + \varrho_n \, w^n) = 0 \, ,\\
\label{difference3}
 & \varrho^{n+1} \, \partial_t w^{n+1} - \divv \mathbb{S}(\nabla w^{n+1}) = (R^n - R^{n-1}) \cdot \tilde{b}(x,t)- \nabla (P^n-P^{n-1}) \\
&  - \sigma^{n+1} \, [\partial_t v^n + (v^{n} \cdot \nabla) v^n - \bar{b}(x, \, t)] - \varrho_{n} \, [(w^{n} \cdot \nabla )v^n +(v^{n-1} \cdot \nabla )w^n] \nonumber\, .
\end{align}
together with the boundary conditions $\nu \cdot \nabla r^{n+1} = 0$ and $w^{n+1} = 0$ on $S_{T_0}$ and homogeneous initial conditions. We multiply in \eqref{difference} with $r^{n+1}$ and make use of the formula
\begin{align*}
\frac{1}{2} \,  \partial_t  (R_q^n \, r^{n+1} \cdot r^{n+1}) =  R_q^n \partial_t r^{n+1} \cdot r^{n+1}+ \frac{1}{2} \, \partial_t R^n_q \, r^{n+1} \cdot r^{n+1} \, .
\end{align*}
We introduce the abbreviation $a^n(r^{n+1}, \, r^{n+1}) := \frac{1}{2} \,  R_q^n \, r^{n+1} \cdot r^{n+1}$. After integration over $\Omega$, and using the Gauss divergence theorem, we obtain that
\begin{align*}
& \frac{d}{dt}\, \int_{\Omega} a^n(r^{n+1}, \, r^{n+1}) \, dx + \int_{\Omega} \widetilde{M}^n \nabla r^{n+1} \cdot \nabla r^{n+1} \, dx \\
& \quad = \int_{\Omega} [g^n - g^{n-1} + (R_q^{n-1}-R_q^n) \, \partial_t q^n] \cdot r^{n+1} \, dx\\
& \qquad + \int_{\Omega} (\widetilde{M}^{n-1} -\widetilde{M}^n) \, (\nabla q^n-\tilde{b}) \cdot \nabla r^{n+1} \, dx + \int_{\Omega}  \frac{1}{2} \, \partial_t R^n_q \, r^{n+1} \cdot r^{n+1} \, dx \, .
\end{align*}
On the interval $[0, \, T_0]$, the \emph{a priori} bounds \eqref{uniform} ensure that $\widetilde{M}^n = \widetilde{M}(\varrho^{n+1}, \, q^n)$ has a smallest eigenvalue strictly bounded away from zero. Thus $\widetilde{M}^n \nabla r^{n+1} \cdot \nabla r^{n+1} \geq \lambda_0 \, |\nabla r^{n+1}|^2$. Invoking the Young inequality and standard steps
 \begin{align}\label{energy}
& \frac{d}{dt}\, \int_{\Omega} a^n(r^{n+1}, \, r^{n+1}) \, dx + \frac{\lambda_0}{2} \, \int_{\Omega} |\nabla r^{n+1}|^2 \, dx \nonumber\\
& \leq  \int_{\Omega} [|g^n - g^{n-1}| + |R_q^{n-1}-R_q^n|\, |\partial_t q^n|] \, |r^{n+1}| \, dx\nonumber\\
& +\frac{1}{2\lambda_0} \, \int_{\Omega} |\widetilde{M}^{n-1} -\widetilde{M}^n|^2 \, (|\nabla q^n|^2+|\tilde{b}|^2) \, dx + \int_{\Omega} \frac{1}{2} \, |\partial_t R^n_q|\,  |r^{n+1}|^2 \, dx \, .
\end{align}
We want to estimate the differences $g^n - g^{n-1}$. To do it shorter, we shall denote $K_0$ a generic number depending possibly on $\inf_{n\in \mathbb{N}, \, (x,t) \in Q_{T_0}} \varrho_n(x,t)$ and on $\sup_{n\in\mathbb{N}} \|(q^n, \, \varrho_n, \, v^n)\|_{L^{\infty}(Q_{T_0})}$. These quantities are bounded independently on $n$ due to the choice of $T_0$; $K_0$ might moreover depend on the $C^2-$norm of the maps $R$ and $\widetilde{M}$ over the range of $(\varrho_n, \, q^n)$ on $Q_{T_0}$. This range is contained in a compact $\mathcal{K}$ of $\mathbb{R}_+ \times \mathbb{R}^{N-1}$. Thus $|R(\varrho^{n+1}, \, q^n) -R(\varrho_{n}, \, q^{n-1})| \leq \|R\|_{C^2(\mathcal{K})} \, (|\sigma^{n+1}| + |r^n|) \leq K_0 \,  (|\sigma^{n+1}| + |r^n|)$. By means of these reasoning, we readily show that 
\begin{align*}
|g^n - g^{n-1}| \leq K_0 \, \Big[(1+|v^n_x| + |v^n| \, |q^n_x|) \, (|\sigma^{n+1}| + |r^n|) + |w^n_x| +  |q^n_x| \, |w^n| +  |v^n| \, | r_x^n|\Big] \, .
\end{align*}
Similarly we estimate $| \widetilde{M}^{n-1} -\widetilde{M}^n|^2 \leq K_0 \,  (|\sigma^{n+1}|^2 + |r^n|^2)$ and $|\partial_t R^n_q| \leq K_0 \, (|\varrho^{n+1}_t| + |q^n_t|)$.

We rearrange terms, and we recall that $e^n := |r^n|+ |w^n|$. From \eqref{energy}, we obtain the estimate
\begin{align}\label{energy2}
& \frac{d}{dt}\, \int_{\Omega} a^n(r^{n+1}, \, r^{n+1}) \, dx + \frac{\lambda_0}{2} \, \int_{\Omega} |\nabla r^{n+1}|^2 \, dx \nonumber\\
& \leq K_0 \int_{\Omega} |r^{n+1}| \, (e^n+|\sigma^{n+1}|) \, (1+|v^n_x| + |q^n_x| + |q^n_t|) \, dx + K_0 \, \int_{\Omega} (|\varrho^{n+1}_t| + |q^n_t|) \,  |r^{n+1}|^2 \, dx  \nonumber\\
& \quad + K_0 \, \int_{\Omega} |r^{n+1}| \, (|w^n_x| + |r^n_x|) \, dx  + K_0 \, \int_{\Omega} (e^n + |\sigma^{n+1}|)^2 \, (|q^n_x|^2+|\tilde{b}|^2) \, dx \, .
\end{align}
To transform the right-hand we apply H\"older's inequality, the Sobolev embedding theorem and Young's inequality according to the schema
\begin{align}\begin{split}\label{schema}
\int_{\Omega} a \, b \, c \, dx \leq & \|a\|_{L^3} \, \|b\|_{L^6} \, \|c\|_{L^2}\\
\leq & C \, \|a\|_{L^3} \, (\|\nabla b\|_{L^2} + \|b\|_{L^2}) \, \|c\|_{L^2}\\
\leq & \frac{\lambda_0}{4} \, \|\nabla b\|_{L^2}^2 + C^2 \, (\frac{1}{\lambda_0}+\frac{1}{4}) \, \|a\|_{L^3}^2 \, \|c\|_{L^2}^2 + \|b\|_{L^2}^2 \, .
\end{split}
\end{align}
We apply this first with $a = 1+ |v^n_x| + |q^n_x| + |q^n_t|$ and $b = r^{n+1}$ and $c = e^n + |\sigma^{n+1}|$. Thus,
\begin{align*}
& \int_{\Omega} |r^{n+1}| \, (e^n + |\sigma^{n+1}|) \, (1+|v^n_x| + |q^n_x| + |q^n_t|) \, dx  \leq  \frac{\lambda_0}{4} \, \|\nabla r^{n+1}\|_{L^2}^2 \\
& \quad  + C^2 \, (\frac{1}{\lambda_0} + \frac{1}{4}) \, \|1+|v^n_x| + |q^n_x| + |q^n_t|\|_{L^3}^2 \, (\|e^n\|_{L^2}^2+\|\sigma^{n+1}\|^2_{L^2}) + \|r^{n+1}\|_{L^2}^2 \, .
\end{align*}
We choose next $a = |\varrho^{n+1}_t| + |q^n_t|$ and $b = r^{n+1} = c$, to get
\begin{align*}
  \int_{\Omega} (|\varrho^{n+1}_t| + |q^n_t|) \,  |r^{n+1}|^2 \, dx \leq & \frac{\lambda_0}{4} \, \|\nabla r^{n+1}\|_{L^2}^2 \\
& + [C^2 \,  (\frac{1}{\lambda_0}+\frac{1}{4}) \, \||\varrho^{n+1}_t| + |q^n_t|\|_{L^3}^2 + 1] \, \|r^{n+1}\|_{L^2}^2   \, .
 \end{align*}
 Employing Young's inequality we find for $\delta > 0$ arbitrary that
 \begin{align*}
 \int_{\Omega} (e^n + |\sigma^{n+1}|)^2 \, (|q^n_x|^2 + |\tilde{b}|^2)\, dx & \leq (\|q^n_x\|_{L^{\infty}(\Omega)}^2 + \|\tilde{b}\|_{L^{\infty}(\Omega)}^2) \, (\|e^n\|_{L^2}^2 +\|\sigma^{n+1}\|_{L^2}^2) \, , \\
 K_0 \, \int_{\Omega} |r^{n+1}| \, (|w^n_x| + |r^n_x|) \, dx & \leq \delta \, \int_{\Omega} (|w^n_x|^2 + |r^n_x|^2) \,  dx + \frac{K^2_0}{4\delta} \, \int_{\Omega} |r^{n+1}|^2 \, dx \, .
 \end{align*}
From \eqref{energy2} we deduce the inequality
  \begin{align}\label{energy3}
& \frac{d}{dt}\, \int_{\Omega} a^n(r^{n+1}, \, r^{n+1}) \, dx + \frac{\lambda_0}{4} \, \int_{\Omega} |\nabla r^{n+1}|^2 \, dx \\
& \leq D(t) \,  (\|e^n\|_{L^2}^2 + \|\sigma^{n+1}\|_{L^2}^2)+ D^{(1)}_{\delta}(t) \, \|r^{n+1}\|_{L^2}^2 + \delta \, \int_{\Omega} (|w^n_x|^2 + |r^n_x|^2) \,  dx \, , \nonumber
\end{align}
in which the coefficients $D$ and $D^{(1)}_{\delta}$ satisfy
\begin{align*}
D(t) & \leq K_0 \, (|\Omega|^{\frac{2}{3}} +\|v^n_x(t)\|_{L^3}^2 + \|q^n_x(t)\|_{L^3}^2 + \|q^n_t(t)\|_{L^3}^2 + \|q^n_x(t)\|_{L^{\infty}}^2 + \|\tilde{b}(t)\|_{L^{\infty}}^2 ) \, ,\\
D^{(1)}_{\delta}(t) & \leq K_0 \, (\|\varrho^{n+1}_t(t)\|_{L^3}^2 + \|q^n_t(t)\|_{L^3}^2 + \delta^{-1}) \, .
\end{align*}
Next we multiply \eqref{difference2} with $\sigma^{n+1}$, integrate over $\Omega$, and this yields
\begin{align*}
\frac{1}{2} \, \frac{d}{dt} \int_{\Omega} |\sigma^{n+1}|^2 \, dx = - \frac{1}{2} \,  \int_{\Omega} \divv v^n \, (\sigma^{n+1})^2 \, dx - \int_{\Omega} \divv( \varrho_n \, w^n) \, \sigma^{n+1} \, dx\, ,\\
 \frac{1}{2} \, \frac{d}{dt} \int_{\Omega} |\sigma^{n+1}|^2 \, dx \leq \int_{\Omega} [\frac{1}{2} \, |v^n_x| \, |\sigma^{n+1}|^2 + K_0 \, |w^n_x| \, |\sigma^{n+1}| + |\varrho^{n}_x| \, |w^n| \,  |\sigma^{n+1}|] \, dx \, .
\end{align*}
We note that $\int_{\Omega} \frac{1}{2} \, |v^n_x| \, (\sigma^{n+1})^2 \, dx\leq \frac{1}{2} \, \|v^n_x\|_{L^{\infty}(\Omega)} \, \|\sigma^{n+1}\|_{L^2}^2$, and
employing Young's inequality we see that $\int_{\Omega}  K_0 \, |w^n_x| \, |\sigma^{n+1}|\, dx \leq \delta \, \int_{\Omega} |w^n_x|^2 \, dx + \frac{K^2_0}{4\delta} \,  \|\sigma^{n+1}\|_{L^2}^2$. As already seen 
 \begin{align*}
 \int_{\Omega} |\varrho^{n}_x| \, |w^n| \,  |\sigma^{n+1}| \, dx  \leq \delta \, \|\nabla w^n\|_{L^2}^2 + \frac{C^2}{4} \, \left(\frac{1}{\delta}+1\right) \, \|\varrho^{n}_x\|_{L^3}^2 \, \| \sigma^{n+1}\|_{L^2}^2 + \|w^n\|_{L^2}^2 \, ,
\end{align*}
allowing us to conclude that
\begin{align}\label{energy4}
  \frac{1}{2} \, \frac{d}{dt} \int_{\Omega} |\sigma^{n+1}|^2 \, dx \leq 2\delta \, \int_{\Omega} |w^n_x|^2 \, dx + D^{(2)}_{\delta}(t) \, \|\sigma^{n+1}\|_{L^2}^2  + \|e^n\|_{L^2}^2 \, ,
\end{align}
in which $ D^{(2)}_{\delta}(t) \leq K_0 \, \delta^{-1} \, (1 + \|\varrho^{n}_x(t)\|_{L^3(\Omega)}^2+\|v^{n}_x(t)\|_{L^{\infty}(\Omega)}^2 )$. Finally, we multiply \eqref{difference3} with $w^{n+1}$ and obtain that
\begin{align*}
& \frac{\varrho^{n+1}}{2}\, \partial_t |w^{n+1}|^2 - \divv \mathbb{S}(\nabla w^{n+1}) \cdot w^{n+1} = - \nabla (P^n-P^{n-1})\cdot w^{n+1} + (R^n -R^{n-1}) \tilde{b} \cdot w^{n+1}\\
& - \sigma^{n+1} \, [\partial_t v^n+(v^{n} \cdot \nabla) v^n) - \bar{b}] \cdot w^{n+1}  - \varrho_{n} \, [(w^{n} \cdot \nabla )v^n+(v^{n-1} \cdot \nabla )w^n]\cdot w^{n+1}  \, .
\end{align*}
After integration over $\Omega$,
\begin{align*}
& \frac{1}{2} \, \frac{d}{dt} \int_{\Omega} \varrho^{n+1} \, |w^{n+1}|^2 \, dx + \int_{\Omega}  \mathbb{S}(\nabla w^{n+1}) \cdot \nabla w^{n+1} \, dx \\
& =   \frac{1}{2} \, \int_{\Omega} \partial_t \varrho^{n+1} \, |w^{n+1}|^2 \, dx + \int_{\Omega} (P^n-P^{n-1}) \, \divv w^{n+1} \, dx + \int_{\Omega}(R^n -R^{n-1}) \tilde{b}\cdot w^{n+1} \, dx \\
& - \int_{\Omega} \{\sigma^{n+1} \, [\partial_t v^n + (v^{n} \cdot \nabla) v^n - \bar{b}] - \varrho_{n} \, [(w^{n} \cdot \nabla )v^n+(v^{n-1} \cdot \nabla )w^n]\}\cdot w^{n+1} \, dx \, .
 \end{align*}
We use $ \int_{\Omega}  \mathbb{S}(\nabla w^{n+1}) \cdot \nabla w^{n+1} \, dx  \geq \nu_0 \, \int_{\Omega}  |\nabla w^{n+1}|^2 \, dx$. We estimate
\begin{align*}
\left| \int_{\Omega} (P^n-P^{n-1}) \, \divv w^{n+1} \, dx \right | & \leq \frac{\nu_0}{2} \, \int_{\Omega} |\nabla w^{n+1}|^2 \, dx + \frac{1}{2\nu_0} \, \int_{\Omega} |P^n-P^{n-1}|^2 \, dx\\
& \leq \frac{\nu_0}{2} \, \int_{\Omega} |\nabla w^{n+1}|^2 \, dx + \frac{K^2_0}{2\nu_0} \, \int_{\Omega} (|\sigma^{n+1}|^2+|r^n|^2) \, dx \, .
\end{align*}
Further,
\begin{align*}
|(R^n -R^{n-1}) \tilde{b}\cdot w^{n+1}| & \leq K_0 \, (|\sigma^{n+1}| + |r^n|) \, |\tilde{b}| \,  |w^{n+1}| \, ,\\
 |\sigma^{n+1} \, (\partial_t v^n + (v^{n} \cdot \nabla) v^n - \bar{b}) \cdot w^{n+1}| & \leq K_0 \, |\sigma^{n+1}| \, |w^{n+1}| \, (|v^n_t| + |v^n_x| + |\bar{b}|) \, ,\\
 |\varrho_{n} \, [(w^{n} \cdot \nabla )v^n+(v^{n-1} \cdot \nabla )w^n]\cdot w^{n+1}| & \leq K_0 \, |w^{n+1}| \, (|w^n| \,|v^n_x| + |w^n_x|) \, .
\end{align*}
Thus,
\begin{align*}
 & \frac{1}{2} \, \frac{d}{dt} \int_{\Omega} \varrho^{n+1} \, |w^{n+1}|^2 \, dx + \frac{\nu_0}{2} \, \int_{\Omega} |\nabla w^{n+1}|^2  \, dx\\
 & \leq  \frac{1}{2} \, \int_{\Omega} |\partial_t \varrho^{n+1}| \, |w^{n+1}|^2 \, dx
 +  \frac{K^2_0}{2\nu_0} \, \int_{\Omega} (|\sigma^{n+1}|^2+|r^n|^2) \, dx + K_0 \, \int_{\Omega}|r^n| \, |\tilde{b}| \,  |w^{n+1}| \, dx  \\
&  + K_0 \,  \int_{\Omega} [|\sigma^{n+1}| \, |w^{n+1}| \, (|v^n_t| + |v^n_x| +|\tilde{b}|+ |\bar{b}|) + (|v^n_x| \, |w^n| + |w^n_x|) \, |w^{n+1}|] \, dx\, .
 \end{align*}
 By means of \eqref{schema} and Young's inequality, we can also show that
 \begin{align*}
& K_0\,   \int_{\Omega} [\sigma^{n+1} \, |w^{n+1}| \, (|v^n_t| + |v^n_x|+|\tilde{b}|+ |\bar{b}|) \, dx \leq \frac{\nu_0}{8} \, \|\nabla w^{n+1}\|_{L^2}^2 \\
& \quad + C^2K^2_0  \, (\frac{2}{\nu_0}+\frac{1}{4}) \, \||v^n_t| + |v^n_x|+|\tilde{b}|+ |\bar{b}|\|_{L^3}^2 \, \|\sigma^{n+1}\|_{L^2}^2 +  \|w^{n+1}\|_{L^2}^2 \, ,\\
& K_0 \, \int_{\Omega}|r^n| \, |\tilde{b}| \,  |w^{n+1}| \, dx \leq \frac{\nu_0}{8} \, \|\nabla w^{n+1}\|_{L^2}^2 +C^2K^2_0 \, (\frac{2}{\nu_0}+\frac{1}{4}) \, \|\tilde{b}\|_{L^3}^2 \, \|r^{n}\|_{L^2}^2 +  \|w^{n+1}\|_{L^2}^2 \, ,\\
&  K_0 \,  \int_{\Omega}  |v^n_x| \, |w^n| \, |w^{n+1}| \, dx \leq \frac{\nu_0}{8} \, \|\nabla w^{n+1}\|_{L^2}^2 +C^2K^2_0 \, (\frac{2}{\nu_0}+\frac{1}{4}) \, \|v^n_x\|_{L^3}^2 \, \|w^n\|_{L^2}^2 +  \|w^{n+1}\|_{L^2}^2 \, , \\
& \int_{\Omega} |\partial_t \varrho^{n+1}| \, |w^{n+1}|^2 \, dx \leq \frac{\nu_0}{8} \, \|\nabla w^{n+1}\|_{L^2}^2 + \left(\frac{2C^2}{\nu_0} \, \|\varrho^{n+1}_t\|_{L^3}^2 + 1\right)  \|w^{n+1}\|_{L^2}^2 \, ,\\
& K_0 \,  \int_{\Omega} |w^n_x| \, |w^{n+1}| \, dx \leq \delta \, \int_{\Omega} |w^n_x|^2 \, dx + \frac{K^2_0}{4\delta} \, \int_{\Omega} |w^{n+1}|^2 \, dx \, .
\end{align*}
Overall, we obtain for the estimation of \eqref{difference3} that
\begin{align}\label{energy5}
& \frac{1}{2} \, \frac{d}{dt} \int_{\Omega} \varrho^{n+1} \, |w^{n+1}|^2 \, dx + \frac{\nu_0}{2} \, \int_{\Omega} |\nabla w^{n+1}|^2  \, dx \leq \delta \, \int_{\Omega} |w^n_x|^2 \, dx\nonumber\\
& \quad \qquad \quad +  D^{(3)}(t) \, (\|e^n\|_{L^2}^2 + \|\sigma^{n+1}\|_{L^2}^2) + D^{(4)}_{\delta}(t) \, \|w^{n+1}\|_{L^2}^2 \, ,
 \end{align}
 in which $D^{(3)}(t) \leq K_0 \, (\|v^n_t\|_{L^3}^2 + \|v^n_x|\|_{L^3}^2+\|\tilde{b}\|_{L^3}^2+ \|\bar{b}\|_{L^3}^2)$ and $D^{(4)}_{\delta}(t) \leq K_0 \, (\|\varrho^{n+1}_t\|_{L^3}^2 + \delta^{-1})$.
 
We add the three inequalities \eqref{energy3}, \eqref{energy4} and \eqref{energy5} and get
\begin{align}\label{energy6}
& \frac{d}{dt}\, \int_{\Omega} \{a^n(r^{n+1}, \, r^{n+1}) +\tfrac{1}{2} \, |\sigma^{n+1}|^2 + \tfrac{1}{2} \, \varrho_n \, |w^{n+1}|^2\} \, dx \nonumber \\
& + \frac{\lambda_0}{2} \, \int_{\Omega} |\nabla r^{n+1}|^2 \, dx +  \frac{\nu_0}{2} \, \int_{\Omega} |\nabla w^{n+1}|^2  \, dx \nonumber\\
& \leq 4 \, \delta \, \int_{\Omega} (|\nabla r^{n}|^2 + |\nabla w^{n}|^2)  \, dx + F_{\delta}(t) \, (\|e^n\|_{L^2}^2 +\|\sigma^{n+1}\|_{L^2}^2)  + F^{(1)}_{\delta}(t) \, \|e^{n+1}\|_{L^2}^2 \, .
\end{align}
In this inequality we have introduced $F_{\delta}(t) := 1 + D(t) + D^{(2)}_{\delta}(t) + D^{(3)}(t)$, and $F_{\delta}^{(1)}(t) :=  D^{(1)}_{\delta}(t) +  D^{(4)}_{\delta}(t)$. These definitions and the inequalities above show that
\begin{align*}
 F_{\delta}(t) \leq  K_0 \, \Big[ & (\|q^n_x\|_{L^3} + \|q^n_t\|_{L^3} +\|q^n_x\|_{L^{\infty}} +\|v^n_x\|_{L^3} + \|v^n_t\|_{L^3}+\|\varrho^n_x\|_{L^3})^2\\
& + \|\bar{b}\|_{L^3}^2+ \|\tilde{b}\|_{L^{\infty}}^2 +  \|\tilde{b}\|_{L^{3}}^2 + \delta^{-1}\Big] \, .
\end{align*}
Consequently, due to embedding properties of $W^{2,p}(\Omega)$, it follows for $s \in [0, \, T_0]$ arbitrary and for $0< t_1 \leq T_0$ and $t \leq T_0-t_1$ that
\begin{align}\label{energy7}
& |F_{\delta}(s)| \leq C \, K_0 \, [\|q^n(s)\|_{W^{2,p}}^2 + \|v^n(s)\|_{W^{2,p}}^2 + \|\varrho^n_x(s)\|_{L^{p}}^2  + \|\tilde{b}(s)\|_{W^{1,p}}^2 + \|\bar{b}(s)\|^2_{L^p} + \delta^{-1}] \, , \nonumber\\
& \int_{t}^{t+t_1} F_{\delta}(s) \, ds  \leq \tilde{K}_0 \, \{ t_1^{1-\frac{2}{p}} \, [\|q^n\|_{W^{2,1}_p(Q_{T_0})}^2 + \|v^n\|_{W^{2,1}_p(Q_{T_0})}^2 +\|\tilde{b}\|_{W^{1,0}_p(Q_{T_0})}^2 + \|\bar{b}\|^2_{L^p(Q_{T_0})} ] \nonumber \\
&\phantom{\int_{t}^{t+t_1} F_{\delta}(s) \, ds  \leq  C \, K_0 \,}  + t_1 \, [\|\varrho^n_x\|_{L^{p,\infty}(Q_{T_0})}^2 + \, \delta^{-1}]\}\nonumber\\
 & \phantom{\int_{t}^{t+t_1} F_{\delta}(s) \, ds } \leq C_0 \, (1+\delta^{-1}) \, t_1^{1-\frac{2}{p}} \, .
\end{align}
Here we use the uniform bounds \eqref{uniform}. 
Similarly we show that $F^{(1)}_{\delta}(s) \leq K_0 \, [\|q^n_t\|_{L^{3}(\Omega)} + \|\varrho^{n+1}_t\|_{L^{3}(\Omega)} + \delta^{-1}]$ to show that
\begin{align}\label{energy8}
 \int_{t}^{t+t_1} F^{(1)}_{\delta}(s) \, ds & \leq \tilde{K}_0 \,\{ t_1^{1-\frac{2}{p}} \, \|q^n_t\|_{L^p(Q_{T_0})}^2 + t_1 \, [\|\varrho^{n+1}_t\|_{L^{p,\infty}(Q_{T_0})}^2+ \delta^{-1}]\}\nonumber\\
 & \leq C_1 \, (1+\delta^{-1}) \, t_1^{1-\frac{2}{p}}  \, .
\end{align}
We integrate \eqref{energy6} over $[t, \, \tau]$ for $t_1 \leq T_0$, $t \leq T_0 - t_1$ and $t \leq \tau \leq t+t_1$ arbitrary. Note that
\begin{align*}
 & \int_{\Omega} \{a^n(r^{n+1}, \, r^{n+1}) +\tfrac{1}{2} \, |\sigma^{n+1}|^2 + \tfrac{1}{2} \, \varrho_n \, |w^{n+1}|^2\}(\tau) \, dx\\
 & \quad \geq \frac{1}{2} \, \int_{\Omega} \{\lambda_{\inf}(R_q^n) \, |r^{n+1}|^2 + |\sigma^{n+1}|^2 + \inf_{Q_{T_0}} \varrho_n \, |w^{n+1}|^2\}(\tau)\} \, dx\\
 & \quad \geq \frac{1}{2} \, \min\{1, \, \lambda_{\inf}(R_q^n), \,  \inf_{Q_{T_0}} \varrho_n\} \, (\|e^{n+1}(\tau)\|_{L^2}^2 + \|\sigma^{n+1}(\tau)\|_{L^2}^2) \, .
\end{align*}
Invoking \eqref{uniform}, there is a uniform $k_0>0$ such that $\frac{1}{2} \, \min\{1, \, \lambda_{\inf}(R_q^n), \,  \inf_{Q_{T_0}} \varrho_n\} \geq k_0 > 0$. We also define $p_0 := \min\{\lambda_0, \, \nu_0\}$. This shows the inequality
\begin{align}\label{energy9}
& k_0 \, ( \|e^{n+1}(\tau)\|_{L^2}^2 + \|\sigma^{n+1}(\tau)\|_{L^2}^2) + \frac{p_0}{2} \int_{Q_{t,\tau}}  (|\nabla r^{n+1}|^2 + |\nabla w^{n+1}|^2) \nonumber\\
& \leq  \delta \,  \int_{Q_{t,\tau}} (|\nabla r^{n}|^2 + |\nabla w^{n}|^2)\nonumber\\
&  + \int_{t}^{\tau} \, F_{\delta}(s) \, (\|e^n(s)\|_{L^2}^2 +\|\sigma^{n+1}(s)\|_{L^2}^2)  \, ds + \int_{t}^{\tau} \, F^{(1)}_{\delta}(s) \, \|e^{n+1}(s)\|_{L^2}^2 \, ds \, .
\end{align}
Thus, taking the supremum over all $\tau \in [t, \, t+t_1]$ yields
\begin{align*}
& k_0 \, \sup_{t \leq \tau \leq t + t_1}  ( \|e^{n+1}(\tau)\|_{L^2}^2+ \|\sigma^{n+1}(\tau)\|_{L^2}^2) \leq \delta \,  \int_{Q_{t,t+t_1}} (|\nabla r^{n}|^2 + |\nabla w^{n}|^2) \\
& +  \int_{t}^{t+t_1} \, F_{\delta}(s) \, ds \,  \sup_{t \leq \tau \leq t_1} (\|e^n(\tau)\|_{L^2}^2 +\|\sigma^{n+1}(\tau)\|_{L^2}^2)  + \int_{t}^{t+t_1} \, F^{(1)}_{\delta}(s) \, ds \, \sup_{t \leq \tau \leq t+t_1} \|e^{n+1}(\tau)\|_{L^2}^2 \, .
\end{align*}
On the other hand, choosing $\tau = t+t_1$ in \eqref{energy9} shows that also $\frac{\min\{\lambda_0, \, \nu_0\}}{2} \int_{Q_{t,t+t_1}}  (|\nabla r^{n+1}|^2 + |\nabla w^{n+1}|^2)  \, dx$ is estimated above by the same right-hand. Thus
\begin{align*}
& k_0 \, \sup_{t \leq \tau \leq t + t_1}   (\|e^{n+1}(\tau)\|_{L^2}^2 +\|\sigma^{n+1}(\tau)\|_{L^2}^2   )+ \frac{p_0}{2}\,  \int_{Q_{t,t+t_1}}  (|\nabla r^{n+1}|^2 + |\nabla w^{n+1}|^2) \\
& \leq 2  \, \delta \,  \int_{Q_{t,t+t_1}} (|\nabla r^{n}|^2 + |\nabla w^{n}|^2)  + 2 \, \int_{t}^{t+t_1} \, F^{(1)}_{\delta}(s) \, ds \, \sup_{t \leq \tau \leq t+ t_1} \|e^{n+1}(\tau)\|_{L^2}^2 \\
& \qquad + 2 \, \int_{t}^{t+t_1} \, F_{\delta}(s) \, ds \,  \sup_{t \leq \tau \leq t+ t_1} (\|e^n(\tau)\|_{L^2}^2 ++\|\sigma^{n+1}(\tau)\|_{L^2}^2   )  \, .
\end{align*}
We choose $\delta_0 = \frac{p_0}{8}$ and $0 < t_1 < T_0-t$ such that $ 2 \, \int_{t}^{t+t_1} \, F^{(1)}_{\delta_0}(t) \, dt \leq \frac{k_0}{2}$. In view of \eqref{energy8}, it is sufficient to satisfy the condition $C_1 \, \left(1+ \frac{8}{p_0}\right) \, t_1^{1-\frac{2}{p}} \leq \frac{k_0}{4}$. Then
\begin{align*}
& \frac{k_0}{2} \,  \sup_{t \leq \tau \leq t + t_1}  ( \|e^{n+1}(\tau)\|_{L^2}^2 +\|\sigma^{n+1}(\tau)\|_{L^2}^2   ) + \frac{p_0}{2} \int_{Q_{t,t+t_1}}  (|\nabla r^{n+1}|^2 + |\nabla w^{n+1}|^2) \\
& \leq \frac{p_0}{4} \, \int_{Q_{t,t+t_1}} (|\nabla r^{n}|^2 + |\nabla w^{n}|^2) + 2 \, \int_{t}^{t+t_1} \, F_{\delta_0}(s) \, ds \,  \sup_{t \leq \tau \leq t + t_1} ( \|e^n(\tau)\|_{L^2}^2 +\|\sigma^{n+1}(\tau)\|_{L^2}^2   )\, .
\end{align*}
By requiring that $C_0 \, \left(1+ \frac{8}{p_0}\right) \, t_1^{1-\frac{2}{p}} \leq \frac{k_0}{8}$, we choose $t_1$ such that $ \int_{t}^{t+t_1} \, F_{\delta_0}(s) \, ds \leq \frac{k_0}{8}$ (use \eqref{energy7}). It follows that
\begin{align*}
 & \frac{k_0}{4} \,  \sup_{t \leq \tau \leq t + t_1}  ( \|e^{n+1}(\tau)\|_{L^2}^2 +\|\sigma^{n+1}(\tau)\|_{L^2}^2   ) + \frac{p_0}{2} \, \int_{Q_{t,t+t_1}}  (|\nabla r^{n+1}|^2 + |\nabla w^{n+1}|^2)\\
 & \leq \frac{k_0}{4} \,  \sup_{t \leq \tau \leq t+ t_1}   \|e^{n}(\tau)\|_{L^2}^2 + \frac{p_0}{4} \,  \int_{Q_{t,t+t_1}}  (|\nabla r^{n}|^2 + |\nabla w^{n}|^2)\, . 
\end{align*}
The claim follows.
\end{proof}
In order to complete the proof of the Theorems \ref{MAIN}, \ref{MAIN2} it remains to investigate the characterisation of the maximal existence time $T^*$. 
\begin{lemma}\label{MAXEX}
 Suppose that $u = (q, \, \varrho, \, v) \in \mathcal{X}_{t}$ is a solution to $\widetilde{\mathscr{A}}(u) = 0$ and $u(0) = u_0$ for all $t < T^*$. If for some $\alpha > 0$ the quantity $\mathscr{N}(t) := \|q\|_{C^{\alpha,\frac{\alpha}{2}}(Q_{t})} + \|\nabla q\|_{L^{\infty,p}(Q_{t})} + \|v\|_{L^{z \, p,p}(Q_{t})} + \int_{0}^{t} [\nabla v(s)]_{C^{\alpha}(\Omega)} \, ds$ is finite for $t \nearrow T^*$, then it is possible to extend the solution to a larger time interval.
\end{lemma}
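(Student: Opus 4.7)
The strategy is the standard extension argument: I will show that the finiteness of $\mathscr{N}(t)$ as $t \nearrow T^*$ upgrades to a uniform bound of $u = (q,\varrho,v)$ in the state space $\mathcal{X}_t$ for $t < T^*$. This yields a well-defined trace $u(T^*) \in \mathrm{Tr}_{\Omega\times\{0\}}\mathcal{X}_{T^*}$, and Theorem \ref{MAIN2} applied with $u(T^*)$ as new initial data produces an extension beyond $T^*$.

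Suppose $\mathscr{N}(T^*-) \leq K < +\infty$. I first control the continuity-equation variable. The bound on $\int_0^{T^*}[\nabla v(s)]_{C^{\alpha}(\Omega)}\,ds$, combined with the interpolation $\|\nabla v(s)\|_{L^\infty(\Omega)} \leq C\big([\nabla v(s)]_{C^{\alpha}(\Omega)} + \|\nabla v(s)\|_{L^p(\Omega)}\big)$ (via Morrey/Sobolev, using that $v$ vanishes on $\partial \Omega$ so $\|\nabla v(s)\|_{L^p(\Omega)}$ can be estimated by $\|v(s)\|_{L^{zp}(\Omega)}$ through the bound $\|\nabla v\|_{L^{\infty,1}(Q_t)}$ arising in Proposition \ref{solonnikov2}), gives control of $\|\nabla v\|_{L^{\infty,1}(Q_t)}$ uniformly in $t < T^*$. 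Hence Proposition \ref{solonnikov2} delivers uniform bounds on $m(t)^{-1}$, $M(t)$, $\|\nabla\varrho\|_{L^{p,\infty}(Q_t)}$ and $[\varrho]_{C^{\beta,\beta/2}(Q_t)}$ (with $\beta = 1-3/p$) for all $t < T^*$.

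Next I bootstrap the parabolic variables. Using the bounds just obtained on $\varrho$, Proposition \ref{A1linmain} applied with $\varrho^* = \varrho$, $q^* = q$ yields $\mathscr{V}(t;q) \leq C(1 + \|q^0\|_{W^{2-2/p}_p(\Omega)} + \|g\|_{L^p(Q_t)})$, where the multiplicative constant depends only on $K$ through the Hölder norm of $q$ and $\varrho$ and the bounds from the previous paragraph. The right-hand side $g$ defined in \eqref{A1right} consists of products of low-order factors; its $L^p$ norm is controlled by invoking Lemma \ref{RightHandControl}, where the crucial terms $v\cdot\nabla q$ and $\widetilde{M}_\varrho\nabla\varrho\cdot \tilde b$, $\widetilde{M}_q \nabla q \cdot \tilde b$ are handled using exactly the combination of $\|\nabla q\|_{L^{\infty,p}(Q_t)}$, $\|v\|_{L^{zp,p}(Q_t)}$ and the $L^{p,\infty}$ bound on $\nabla\varrho$ that $\mathscr{N}$ and the previous step provide. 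Analogously, Corollary \ref{normvmain} applied to the velocity equation gives $\mathscr{V}(t;v) \leq C(1 + \|v^0\|_{W^{2-2/p}_p(\Omega)} + \|f\|_{L^p(Q_t)})$; the convective term $\varrho(v\cdot\nabla)v$ in $f$ (see \eqref{A3right}) is bounded by $\|v\|_{L^{zp,p}(Q_t)}\|\nabla v\|_{L^{\infty,2p-3}(Q_t)}$, the first factor being in $\mathscr{N}$ and the second arising from an interpolation of $\mathscr{V}(t;v)$ by Remark \ref{parabolicspace}; after a Gronwall absorption we obtain a uniform bound on $\mathscr{V}(t;v)$ for $t<T^*$.

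Putting the three bounds together yields $\sup_{t<T^*}\|u\|_{\mathcal{X}_t} < +\infty$. Since the embedding $W^{2,1}_p(Q_{T^*}) \hookrightarrow C([0,T^*]; W^{2-2/p}_p(\Omega))$ and the analogous statement for $W^{1,1}_{p,\infty}$ (via Remark \ref{contispace}) hold, the pointwise limit $u(T^*) \in \mathrm{Tr}_{\Omega\times\{0\}}\mathcal{X}_{T^*}$ exists; the compatibility conditions $\nu\cdot\nabla q(T^*) = 0$ and $v(T^*) = 0$ on $\partial\Omega$ and the strict positivity $\varrho(T^*) \geq m(T^*) > 0$ pass to the limit from the bounds of step~2. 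Applying Theorem \ref{MAIN2} with these data as new initial values supplies a solution on $[T^*, T^* + \delta]$ for some $\delta > 0$, which glues with the original one by uniqueness, contradicting the maximality of $T^*$ unless the solution was already defined beyond. The main obstacle in this plan is the accurate tracking through the chain of Lemmas \ref{RightHandControl}, \ref{A1linUMFORMsecond} and Corollary \ref{normvmain} that every quantity genuinely needed is captured by the four summands in $\mathscr{N}$; in particular, that the $L^{\infty,p}$ bound on $\nabla q$ — rather than an $L^p$ bound — is the minimal ingredient permitting Gronwall closure without reintroducing full $W^{2,1}_p$ norms on the right-hand side.
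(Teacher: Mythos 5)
Your overall strategy matches the paper's: use $\mathscr{N}(t)$ to upgrade to a uniform bound of $u$ in the state space as $t\nearrow T^*$, then apply the short-time result at $T^*$. But there are two concrete gaps in the bootstrap.

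First, your control of $\|\nabla v\|_{L^{\infty,1}(Q_t)}$ is circular. You invoke an interpolation $\|\nabla v(s)\|_{L^\infty}\leq C([\nabla v(s)]_{C^\alpha}+\|\nabla v(s)\|_{L^p})$ and then claim the $L^p$ term "can be estimated by $\|v(s)\|_{L^{zp}}$ through the bound $\|\nabla v\|_{L^{\infty,1}(Q_t)}$ arising in Proposition \ref{solonnikov2}," which assumes what you set out to prove (and also, an $L^p$ bound on a derivative cannot be obtained from an $L^{zp}$ bound on the function). The paper's argument here is cleaner and does not need the $L^p$ remainder at all: since $v=0$ on $\partial\Omega$, every component of $\nabla v$ has zero spatial mean, so $\|\nabla v(s)\|_{L^\infty(\Omega)}\leq c_\Omega[\nabla v(s)]_{C^\alpha(\Omega)}$ directly, whence $\|v_x\|_{L^{\infty,1}(Q_t)}\leq c_\Omega\int_0^t[\nabla v(s)]_{C^\alpha}\,ds\leq c_\Omega\,\mathscr{N}(t)$.

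Second, you cannot apply Proposition \ref{A1linmain} with $q^*=q$, $\varrho^*=\varrho$ and conclude that the constant depends only on the H\"older norms of $q$ and $\varrho$. As stated, $\Psi_{1,t}$ there depends explicitly on $\mathscr{V}(t;q^*)=\mathscr{V}(t;q)$, which is precisely the quantity you are trying to bound; that dependence was built in by the Gronwall step in Corollary \ref{A1linUMFORMsecond}, and it makes a direct self-application circular. The paper avoids this by reverting to the intermediate inequality \eqref{ControlProducts0} (equivalently, Lemma \ref{A1linUMFORM}), whose coefficients $\phi^*_{1,t},\phi^*_{2,t}$ depend only on $m(t)^{-1}$, $M(t)$, $\|q\|_{L^\infty}$ and the H\"older seminorms of $q$ and $\varrho$ --- all controlled by $\mathscr{N}(t)$ and the $\varrho$-bounds from Proposition \ref{solonnikov2}. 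The cross terms $\|\nabla\varrho\cdot\nabla q\|_{L^p}$ and $\|\nabla q\cdot\nabla q\|_{L^p}$ are then closed using the $L^{\infty,p}$ bound on $\nabla q$ from $\mathscr{N}$ rather than the interpolation-by-$D^2q$ route used inside Corollary \ref{A1linUMFORMsecond}, so Gronwall closes without reintroducing $\mathscr{V}(t;q)$ on the right. Your closing remark that the $L^{\infty,p}$ bound on $\nabla q$ is the minimal ingredient is the right intuition; but to turn it into a proof you must bypass Proposition \ref{A1linmain} and work from the cruder estimate.
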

\begin{proof}
To show this claim we first note that the components of $v_x$ have all spatial mean-value zero over $\Omega$ due to the boundary condition \eqref{lateralv}. Thus, the inequalities $\|v_x(s)\|_{L^{\infty}(\Omega)} \leq c_{\Omega} \, [v_x(s)]_{C^{\alpha}(\Omega)}$ and $\|v_x\|_{L^{\infty,1}(Q_t)} \leq c_{\Omega} \, \int_0^t [ v_x(s)]_{C^{\alpha}(\Omega)} \, ds$ are valid. Invoking the Proposition \ref{solonnikov2}, we thus see that $(m(t))^{-1}, \, M(t)$ and $\sup_{s \leq t} [\varrho(s)]_{C^{\alpha}(\Omega)}$ are all bounded by a function of $\int_0^t [ v_x(s)]_{C^{\alpha}(\Omega)} \, ds$, thus also by a function of $\mathcal{N}(t)$.
Invoking further the estimates of Proposition \ref{solonnikov2}, we also see that 
\begin{align*}
\|\varrho_x(s)\|_{L^p(\Omega)} \leq & \phi(R_0, \, \|v_x\|_{L^{\infty,1}(Q_s)}) \, (1+ \int_{0}^s\|v_{x,x}(\tau)\|_{L^{p}(\Omega)} \, d\tau) \\
\leq & \phi(R_0, \, \mathcal{N}(s)) \, (1+ \mathscr{V}(s; \, v)) \, ,
\end{align*}
for all $s \geq 0$, with a function $\phi$ increasing in its arguments. Next we apply the Corollary \ref{normvmain}. Due to the fact that $(m(t))^{-1}, \, M(t)$ and $\sup_{s \leq t} [\varrho(s)]_{C^{\alpha}(\Omega)}$ are bounded by a function of $\mathcal{N}(t)$, this yields $\mathscr{V}(t; \, v) \leq  \phi(t, \, \mathcal{N}(t)) \, (\|f\|_{L^p(Q_t)} + \|v^0\|_{W^{2-\frac{2}{p}}_p(\Omega)})$. We recall the form \eqref{A3right} of the function $f$, and estimate
\begin{align*}
 |f(x, \, t)| \leq & |\nabla \varrho| \, \sup_{Q_t} |R_{\varrho}(\varrho, \, q)| +|\nabla q| \,  \sup_{Q_t} |R_{q}(\varrho, \, q)| \\
 & + c \, (|v(x, \, t)| \, |v_x(x, \, t)| + |\bar{b}(x, \, t)| + |\tilde{b}(x, \, t)|) \,  \sup_{Q_t}  \varrho \, .
\end{align*}
We can bound the coefficients via $\sup_{Q_t} |R_{\varrho}(\varrho, \, q)| \leq \phi(M(t), \, \|q\|_{L^{\infty}(Q_t)}) \leq \phi(\mathcal{N}(t))$, etc.
Therefore, we can show that
\begin{align*}
 \|f\|_{L^p(Q_t)}^p \leq \phi(\mathcal{N}(t)) \, (\|\nabla \varrho\|_{L^p(Q_t)}^p + \|\nabla q\|_{L^p(Q_t)}^p + \|v \, \nabla v\|_{L^p(Q_t)}^p + \|\tilde{b}\|_{L^p(Q_t)}^p + \|\bar{b}\|_{L^p(Q_t)}^p) \, . 
\end{align*}
Using the abbreviation $A_0(t) :=  \|\tilde{b}\|_{L^p(Q_t)}^p + \|\bar{b}\|_{L^p(Q_t)}^p   + \|v^0\|_{W^{2-\frac{2}{p}}_p(\Omega)}$ we obtain, after straightforward computations
\begin{align*}
 \mathscr{V}^p(t; \, v) \leq  \phi(t, \, \mathcal{N}(t)) \, ( & \|\nabla \varrho\|_{L^p(Q_t)}^p +  \|v \, \nabla v\|_{L^p(Q_t)}^p +   \|\nabla q\|_{L^p(Q_t)}^p + A_0(t))  \, .
\end{align*}
As shown, we have $\|\nabla \varrho\|_{L^p(Q_t)}^p \leq \phi(R_0, \, \|v_x\|_{L^{\infty,1}(Q_s)})  \, \int_0^t (1+ \mathscr{V}(s; \, v))^p \, ds$. Recall the continuity of $W^{2-\frac{2}{p}}_p \subset L^{\frac{3p}{(5-p)^+}}$ (cf. Rem. \ref{parabolicspace}). Choosing $z = \frac{3}{p-2}$ if $3 < p < 5$, $z > 1$ arbitrary if $p = 5$ and $z = 1$ if $p > 5$, we are thus able to also show by means of H\"older's inequality that $\|v \,  v_x\|_{L^p(Q_t)}^p \leq \int_0^t \|v(s)\|^p_{L^{z\, p}} \, \mathscr{V}^p(s; \, v) \, ds$. 

Invoking the Gronwall Lemma yields $\mathscr{V}^p(t; \, v) \leq \phi(t, \, R_0, \,\mathcal{N}(t)) \,  (\|\nabla q\|_{L^p(Q_t)}^p + A_0(t))$. Since $\|\nabla q\|_{L^p(Q_t)}$ is also controlled by $t$ and $\mathcal{N}(t)$, we obtain that $\mathscr{V}^p(t; \, v) \leq \phi(t, \, R_0, \,\mathcal{N}(t))$. Obviously we now have also $\|\nabla \varrho\|_{L^{p,\infty}(Q_t)}^p \leq \phi(t, \, R_0, \,\mathcal{N}(t)) $. The Corollary \ref{normrhomain} yields that $\|\varrho\|_{C^{\beta, \frac{\beta}{2}}(Q_t)} \leq \phi(t, \, R_0, \,\mathcal{N}(t))$ for $\beta = 1-\frac{3}{p}$.

To show the final claim, we reconsider the inequality \eqref{ControlProducts0} in the proof of Corollary \ref{A1linUMFORMsecond}. Note that a solution $(q, \,\varrho, \, v)$ is a fixed-point of $\mathcal{T}$, so that this inequality is valid with $q^* = q$, $\varrho^* = \varrho$ and $v^* =v$. The factors $\phi^*_{1,t}, \, \phi^*_{2,t}$ are increasing functions of $(m(t))^{-1}$, $M(t)$, $\|q\|_{L^{\infty}(Q_t)}$ and $[q]_{C^{\beta,\frac{\beta}{2}}(Q_t)}$, $[\varrho]_{C^{\beta,\frac{\beta}{2}}(Q_t)}$.
With the preliminary considerations in this proof, we thus can state that
\begin{align*}
 \mathscr{V}^p(t, \, q) \leq   \phi(t, \, \mathcal{N}(t)) \, (& \|q^0\|^p_{W^{2-\frac{2}{p}}_p(\Omega)} + \|q\|^p_{W^{1,0}_p(Q_t)} + \|g\|_{L^p(Q_t)}^p \nonumber\\
 & + \|\nabla \varrho \cdot \nabla q\|^p_{L^p(Q_t)} + \|\nabla q \cdot \nabla q\|^p_{L^p(Q_t)})  \, , 
\end{align*}
Invoking \eqref{A1right} and the fact that $\|v\|_{W^{2,1}_p(Q_t)} + \|\nabla \varrho\|_{L^{p,\infty}(Q_t)}^p$ and, by definition $\|\nabla q\|_{L^p(Q_t)}$ are all bounded by $\mathcal{N}(t)$, we can obtain the inequality
\begin{align*}
 \mathscr{V}^p(t; \, q) \leq \phi(t, \, D_0, \, \mathcal{N}(t)) \, (1 + \int_{0}^t \|\nabla q(s)\|^p_{L^{\infty}(\Omega)} \, (\|\nabla \rho(s)\|_{L^p}^p  +\|\nabla q(s)\|_{L^p}^p   ) \, ds \, .
\end{align*}
Thus, if $ \|\nabla q(s)\|^p_{L^{\infty}(\Omega)}$ is integrable in time, we obtain by means of Grownall an independent estimate in terms of $\mathcal{N}(t)$. The claim follows.
\end{proof}

\section{Estimates for the solutions to the second linearisation}\label{contiT1}

We now consider the equations \eqref{linearT1second}, \eqref{linearT2second}, \eqref{linearT3second} underlying the definition of the map $\mathcal{T}^1$. Here the data is a pair $(r^*, \, w^*) \in \phantom{}_0\mathcal{Y}_T$, and we want to find the image $(r, \, w)$ in the same space as well as $\sigma \in \phantom{}_0W^{1,1}_{p,\infty}(Q_T)$ by solving these equations. The solvability will not be discussed, since it can be easily obtained by linear continuation using the estimates. We shall therefore go directly for the estimates.

The first point consists in obtaining estimates for solutions to a perturbed continuity equation. Precisely for this point, we need to assume more regularity of the function $\hat{\varrho}_0$.
\begin{lemma}\label{PerturbConti}
Assume that $\hat{\varrho}^0 \in W^{2,0}_p(Q_T)$, that $v, \, w \in W^{2,1}_p(Q_T; \, \mathbb{R}^3)$ and that $\sigma \in W^{1,1}_{p,\infty}(Q_T)$ solves $\partial_t \sigma +\divv(\sigma \, v + \hat{\varrho}^0 \, w) = 0$ in $Q_T$ with $\sigma(x, \, 0) = 0$ in $\Omega$. Then there are constants $c, \, C > 0$ depending only on $\Omega$, such that for all $s \leq T$ we have
\begin{align*}
 \|\sigma(s)\|_{W^{1,p}(\Omega)}^p \leq & C \, \exp(c \, \int_0^s [\|v_x\|_{L^{\infty}(\Omega)} + \|v_{x,x}\|_{L^p(\Omega)} + 1]  \, ds) \, \times \\
 & \times  (\|\hat{\varrho}^0\|_{W^{2,0}_p(Q_s)}^p \, \|w\|_{L^{\infty}(Q_s)}^p + \|\hat{\varrho}^0\|_{W^{1,1}_{p,\infty}(Q_s)}^p \,  \|w\|_{W^{2,0}_p(Q_s)}^p) \, .
\end{align*}
\end{lemma}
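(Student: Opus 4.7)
The plan is to combine $L^p$ energy estimates for $\sigma$ and for its spatial gradient with a single Gronwall argument at the level of $\|\sigma(t)\|_{W^{1,p}(\Omega)}^p$. The source term $-\divv(\hat{\varrho}^0 w)$ will be treated as a forcing in $L^p$, and the regularity $\hat{\varrho}^0 \in W^{2,0}_p(Q_T)$ is needed precisely because one spatial derivative has to be pushed onto it when estimating $\nabla\sigma$. Throughout, the boundary condition $v = 0$ on $\partial\Omega$ guarantees that integration by parts of the transport term $v\cdot\nabla(\,\cdot\,)$ produces no surface contributions.

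For the $L^p$ step I multiply $\partial_t \sigma + v\cdot\nabla\sigma + \sigma\,\divv v = -\divv(\hat{\varrho}^0 w)$ by $|\sigma|^{p-2}\sigma$, integrate in space, and use $v\cdot\nabla\sigma\cdot|\sigma|^{p-2}\sigma = \tfrac{1}{p}v\cdot\nabla|\sigma|^p$. After integration by parts and a Young inequality, this yields
\begin{align*}
\tfrac{d}{dt}\|\sigma(t)\|_{L^p(\Omega)}^p \leq C\,(1 + \|\divv v(t)\|_{L^{\infty}(\Omega)})\,\|\sigma(t)\|_{L^p(\Omega)}^p + \|\divv(\hat{\varrho}^0 w)(t)\|_{L^p(\Omega)}^p.
\end{align*}
For the gradient, I differentiate the equation in $x_j$, setting $\sigma_{,j} := \partial_j\sigma$, to obtain the perturbed transport equation
\begin{align*}
\partial_t \sigma_{,j} + v\cdot\nabla\sigma_{,j} + \sigma_{,j}\divv v = -(\partial_j v_k)\sigma_{,k} - \sigma\,\partial_j\divv v - \partial_j\divv(\hat{\varrho}^0 w),
\end{align*}
which I test with $|\sigma_{,j}|^{p-2}\sigma_{,j}$ and sum over $j$.

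The critical point, and the only real obstacle, is the term $\sigma\,\partial_j\divv v$: bounding it naively by $\|\sigma\|_{L^p}\|D^2v\|_{L^p}$ would fail by Hölder, and applying Young's inequality to $\|\sigma\|_{L^{\infty}}\|D^2 v\|_{L^p}\|\sigma_{,j}\|_{L^p}^{p-1}$ separately would produce $\|D^2 v\|_{L^p}^p$ in the Gronwall coefficient, which does not match the claimed exponential. The trick is to use the Sobolev embedding $W^{1,p}(\Omega) \hookrightarrow L^{\infty}(\Omega)$ (valid since $p > 3$) directly on the $L^{\infty}$-factor while keeping $\|\sigma_{,j}\|_{L^p}^{p-1}$ intact, giving
\begin{align*}
\int_{\Omega} |\sigma\,\partial_j\divv v|\,|\sigma_{,j}|^{p-1}\,dx \leq \|\sigma\|_{L^{\infty}}\|\nabla\divv v\|_{L^p}\|\sigma_{,j}\|_{L^p}^{p-1} \leq C\,\|\nabla\divv v\|_{L^p}\,\|\sigma\|_{W^{1,p}(\Omega)}^p.
\end{align*}
The remaining first-order terms $(\partial_j v_k)\sigma_{,k}\cdot|\sigma_{,j}|^{p-2}\sigma_{,j}$ and $\sigma_{,j}\divv v\cdot|\sigma_{,j}|^p$ are controlled by $\|\nabla v\|_{L^{\infty}}\|\nabla\sigma\|_{L^p}^p$, and the forcing $\partial_j\divv(\hat{\varrho}^0 w)$ by Young's inequality. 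Adding up, I obtain
\begin{align*}
\tfrac{d}{dt}\|\sigma(t)\|_{W^{1,p}(\Omega)}^p \leq c\,\bigl(1 + \|v_x(t)\|_{L^{\infty}} + \|v_{xx}(t)\|_{L^p}\bigr)\|\sigma(t)\|_{W^{1,p}}^p + C\,G(t),
\end{align*}
where $G(t) := \|\divv(\hat{\varrho}^0 w)(t)\|_{L^p}^p + \|\nabla\divv(\hat{\varrho}^0 w)(t)\|_{L^p}^p$.

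To conclude, I expand the forcing via the Leibniz rule,
\begin{align*}
|\nabla\divv(\hat{\varrho}^0 w)| \leq |D^2\hat{\varrho}^0|\,|w| + 2\,|\nabla\hat{\varrho}^0|\,|\nabla w| + |\hat{\varrho}^0|\,|D^2 w|,
\end{align*}
and integrate in time, using $\|\nabla w(\tau)\|_{L^{\infty}} \leq C\|w(\tau)\|_{W^{2,p}(\Omega)}$ (Sobolev, $p>3$) and $\|\hat{\varrho}^0\|_{L^{\infty}(Q_s)} \leq C\|\hat{\varrho}^0\|_{W^{1,1}_{p,\infty}(Q_s)}$ (Remark \ref{contispace}). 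The first term yields $\|\hat{\varrho}^0\|_{W^{2,0}_p(Q_s)}^p\|w\|_{L^{\infty}(Q_s)}^p$; the middle and last terms both yield contributions of the form $\|\hat{\varrho}^0\|_{W^{1,1}_{p,\infty}(Q_s)}^p\|w\|_{W^{2,0}_p(Q_s)}^p$. The lower-order $\|\divv(\hat{\varrho}^0 w)\|_{L^p}^p$ piece is handled identically and contributes only to the second product. Since $\sigma(0)=0$, Gronwall's lemma applied to the combined differential inequality then gives the stated exponential bound.
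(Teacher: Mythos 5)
Your proof is correct and follows essentially the same strategy as the paper: derive an $L^p$ identity for $\nabla\sigma$ from the transport equation, handle the critical term $\sigma\,\partial_j\divv v$ by pairing the Sobolev $L^\infty$ bound on $\sigma$ with a H\"older estimate so that only the first power of $\|v_{xx}\|_{L^p}$ enters the Gronwall coefficient, and push one spatial derivative onto $\hat{\varrho}^0$ in the source to land on the stated products. The only distinctions are technical: the paper justifies the formal ``differentiate and test'' step via the renormalisation property of the transport equation (which your computation implicitly requires, since $\sigma$ has only one spatial derivative), and it exploits $\int_\Omega\sigma\,dx = 0$ with Poincar\'e to control $\|\sigma\|_{L^\infty}$ by $\|\nabla\sigma\|_{L^p}$ alone, Gronwalling only the gradient, whereas you Gronwall the full $W^{1,p}$ norm --- both yield the claimed bound.
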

\begin{proof}
 After some obvious technical steps, we can show that the components $z_i := \sigma_{x_i}$ ($i=1,2,3$) of the gradient of $\sigma$ satisfy, in the sense of distributions,
 \begin{align*}
  \partial_t z_i + \divv(z_i \, v) = -\divv(\sigma \, v_{x_i}) - \divv(\hat{\varrho}^0_{x_i} \, w + \hat{\varrho}^0 \, w_{x_i}) =: -\divv(\sigma \, v_{x_i})  + R_i \, .
 \end{align*}
The right-hand side is bounded in $L^p(Q_t)$, and the velocity $v$ belongs to $W^{2,1}_p(Q_t)$. Thus, $z_i$ is also a renormalised solution to the latter equation. Without entering the details of this notion, the following identity is valid in the sense of distributions:
\begin{align*}
 \partial_t f(z) + \divv(f(z) \, v) + (z \cdot f_z(z) - f(z)) \, \divv v = \sum_{i=1}^3 f_{z_i}(z) \, (-\divv(\sigma \, v_{x_i}) + R_i)
 \end{align*}
for every globally Lipschitz continuous function $f \in C^1(\mathbb{R}^3)$. We integrate the latter identity over $Q_t$. Recall that $\sigma(x, \, 0) = 0$ in $\Omega$ by assumption. If $f(0) = 0$, we then obtain that
\begin{align*}
 \int_{\Omega} f(z(t)) \, dx + \int_{Q_t} (z \cdot f_z(z) - f(z)) \, \divv v \, dxds = \int_{Q_t} f_{z}(z) \cdot (-\divv(\sigma \, v_{x}) + R)  \, dxds \, .
\end{align*}
By means of a standard procedure, we approximate the function $f(z) = |z|^p$ by means of a sequence of smooth Lipschitz continuous functions $\{f_m\}$. This yields
\begin{align*}
  \int_{\Omega} |z(t)|^p \, dx + (p-1) \, \int_{Q_t} |z|^p \, \divv v \, dxds = p \, \int_{Q_t} |z|^{p-2} \, z \cdot (-\divv(\sigma \, v_{x}) + R)  \, dxds \, .
\end{align*}
The estimates below will establish that all members in the latter identity are finite. We first use H\"older's inequality and note that
\begin{align*}
\left| \int_{Q_t} |z|^{p-2} \, z \cdot \divv(\sigma \, v_{x})\right| \leq &  \int_{Q_t} |z|^{p-1} \, (|z| \, |v_x| + |\sigma| \, |v_{x,x}|)\\
 \leq & \int_{Q_t} |z|^{p} \, |v_x| \, dxds + \int_{0}^t \|v_{x,x}\|_{L^p(\Omega)} \, \|z\|_{L^p(\Omega)}^{p-1} \, \|\sigma\|_{L^{\infty}(\Omega)} \, ds \, .
\end{align*}
Next, we recall that for a solution to $ \partial_t \sigma + \divv(\sigma \, v + \hat{\varrho}_0 \, w) = 0$, the integral $\int_{\Omega} \sigma(t, \, x) \, dx$ is conserved and equal to zero. Due to the Poincar\'e inequality, we therefore have $\|\sigma(t)\|_{L^p(\Omega)} \leq c_0 \, \|z(t)\|_{L^p(\Omega)}$ and, by the Sobolev embedding, also that $\|\sigma(t)\|_{L^{\infty}(\Omega)}  \leq \tilde{c}_0 \, \|z(t)\|_{L^p(\Omega)}$. Thus,
\begin{align*}
\left| \int_{Q_t} |z|^{p-2} \, z \cdot \divv(\sigma \, v_{x})\right| \leq \int_0^t (\|v_x\|_{L^{\infty}(\Omega)} + \tilde{c}_0 \, \|v_{x,x}\|_{L^p(\Omega)}) \, \|z\|_{L^p(\Omega)}^{p} \, ds \, .
\end{align*}
Moreover, by Young's inequality,
\begin{align*}
& \int_{Q_t} |z|^{p-2} \, z \cdot R \, dxds  \leq \int_{0}^t \|z\|_{L^p(\Omega)}^{p} \, ds + c_p \, \int_{0}^t \|R\|_{L^p(\Omega)}^p \, ds \leq \int_{0}^t \|z\|_{L^p(\Omega)}^{p} \, ds \\
& \qquad + c_p \, \int_{0}^t [\|w\|_{L^{\infty}(\Omega)}^p \, \|\hat{\varrho}^0_{x,x}\|_{L^p(\Omega)}^p + 2 \, \|\hat{\varrho}^0_{x} \, w_x\|^p_{L^p(\Omega)} + \|\hat{\varrho}^0\|_{L^{\infty}(\Omega)}^p \, \|w_{x,x}\|_{L^p(\Omega)}^p] \, ds \, .
\end{align*}
Further,
\begin{align*}
 \int_{0}^t \|w\|_{L^{\infty}(\Omega)}^p \, \|\hat{\varrho}^0_{x,x}\|_{L^p(\Omega)}^p \, ds &\leq \|w\|_{L^{\infty}(Q_t)}^p \, \|\hat{\varrho}^0\|_{W^{2,0}_p(Q_t)}^p\, ,\\
 \int_{0}^t \|\hat{\varrho}^0_{x} \, w_x\|^p_{L^p(\Omega)} & \leq \|w_x\|_{L^{\infty,p}(Q_t)}^p \, \|\hat{\varrho}^0_x\|_{L^{p,\infty}(Q_t)}^p
 \leq C \, \|\hat{\varrho}^0_x\|_{L^{p,\infty}(Q_t)}^p \, \|w\|_{W^{2,0}_p(Q_t)}^p \, ,\\
 \int_{0}^t  \|\hat{\varrho}^0\|_{L^{\infty}(\Omega)}^p \, \|w_{x,x}\|_{L^p(\Omega)}^p \, ds &  \leq \|\hat{\varrho}^0\|_{L^{\infty}(Q_t)}^p \, \|w\|_{W^{2,0}_p(Q_t)}^p  \, .
\end{align*}
Thus,
\begin{align*}
  \int_{\Omega} |z(t)|^p \, dx \leq & (p-1) \, \int_0^t [\|v_x\|_{L^{\infty}(\Omega)} + \tilde{c}_0 \, \|v_{x,x}\|_{L^p(\Omega)} + p^{\prime}] \, \|z(s)\|_{L^p(\Omega)}^{p} \, ds\\
  & + p \, c_p \, [\|\hat{\varrho}^0\|_{W^{2,0}_p(Q_t)}^p \, \|w\|_{L^{\infty}(Q_t)}^p + \|\hat{\varrho}^0\|_{W^{1,1}_{p,\infty}(Q_t)}^p \,  \|w\|_{W^{2,0}_p(Q_t)}^p] \, .
\end{align*}
The claim follows by means of the Gronwall Lemma.
\end{proof}
We need next an estimate for the operators $(g^1)^{\prime}$ and $(f^1)^{\prime}$ from the right-hand side of \eqref{equationdiff1}, \eqref{equationdiff3}.
\begin{lemma}\label{RightHandControlsecond}
Let $\hat{u}_0 := (\hat{q}^0, \, \hat{\varrho}^0, \, \hat{v}^0) \in \mathcal{X}_{T,+}$ with $\hat{\varrho}^0 \in W^{2,0}_p(Q_T)$. Let $(r^*, \, w^*) \in \phantom{}_0\mathcal{Y}_T$, and $u^* := (\hat{q}^0 + r^*, \mathscr{C}(\hat{v}^0+w^*), \, \hat{v}^0+w^*) \in \mathcal{X}_{T,+}$ (cf. \eqref{ustar}). Let $(r, \, w) \in \phantom{}_0\mathcal{Y}_T$, and denote $\sigma $ the function obtained via solution of \eqref{linearT2second} with $v^* = \hat{v}^0+w^*$. We define $\bar{u} := (r, \, \sigma, \, w) \in \phantom{}_0\mathcal{X}_T$. Then the operators $(g^1)^{\prime}$ and $(f^1)^{\prime}$ in the right-hand of \eqref{equationdiff1}, \eqref{equationdiff3} satisfy
\begin{align*}
& \|(g^1)^{\prime}(u^*,\, \hat{u}^0) \, \bar{u}\|_{L^p(Q_t)}^p + \|(f^1)^{\prime}(u^*, \, \hat{u}^0) \, \bar{u}\|_{L^p(Q_t)}^p  \leq K_2^*(t) \,  \int_{0}^t \mathscr{V}^p(s) \, K^*_1(s) \, ds \, ,
\end{align*}
with functions $K^*_1 \in L^1(0,T)$ and $K_2^* \in L^{\infty}(0,T)$. There is a function $\Phi^* = \Phi^*(t, \, a_1,\ldots,a_5)$ defined for all $t, \, a_1, \ldots, a_5 \geq 0$, continuous and increasing in all arguments, such that for all $t \leq T$
\begin{align*}
 \|K^*\|_{L^1(0,t)} +  \|K^*_2\|_{L^{\infty}(0,t)} \leq \Phi^*(t, \, \mathscr{V}^*(t),\,  \|\hat{u}^0\|_{\mathcal{X}_t}, \, \|\hat{\varrho}^0\|_{W^{2,0}_p(Q_t)}, \, \|\tilde{b}\|_{W^{1,0}_p(Q_t)}, \, \|\bar{b}\|_{L^p(Q_t)}) \, .
\end{align*}
Here we used the abbreviations $\mathscr{V}(t) := \mathscr{V}(t; \, r) + \mathscr{V}(t; \, w)$ and $\mathscr{V}^*(t) := \mathscr{V}(t; \, r^*) + \mathscr{V}(t; \, w^*)$.
\end{lemma}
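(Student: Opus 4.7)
The strategy is to expand $(g^{1})'(u^{*},\hat u^{0})\,\bar u$ and $(f^{1})'(u^{*},\hat u^{0})\,\bar u$ through the integral mean-value formula \eqref{Arightlinear2} as a finite sum of terms $A^{\theta}_{\varphi}\,\varphi$, where $\varphi$ runs over the six components $r,\sigma,w,r_{x},\sigma_{x},w_{x}$ of $\bar u$ and its first spatial gradient, and each $A^{\theta}_{\varphi}$ is the $\theta$-average of a partial derivative of $g^{1}$ (resp.\ $f^{1}$) at the interpolated state $\hat u^{0}+\theta(u^{*}-\hat u^{0})$. By Lemmas \ref{pressurelemma}, \ref{rhonewlemma} and \ref{Mnewlemma}, the coefficients $R,\widetilde M,P,\tilde r$ and their derivatives are $C^{1}$ on $\mathbb R_{+}\times\mathbb R^{N-1}$; Proposition \ref{solonnikov2} applied to $v^{*}=\hat v^{0}+w^{*}$ and to $\hat\varrho^{0}$ provides uniform upper and strictly positive lower bounds on the $\varrho$-slot of the interpolated state, depending monotonically on $\mathscr V^{*}(t)$ and $\|\hat u^{0}\|_{\mathcal X_{t}}$. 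Hence every $A^{\theta}_{\varphi}$ is a product of an $L^{\infty}(Q_{t})$-bounded state factor with at most one first derivative of $\hat u^{0}$ or $u^{*}$ and possibly one factor from $\{\tilde b,\tilde b_{x},\bar b,\hat q^{0}_{t},\Delta\hat q^{0},\hat v^{0}_{t},\divv\mathbb S(\nabla\hat v^{0})\}$, all of which lie in $L^{p}(Q_{t})$ or $L^{\infty,p}(Q_{t})$ by assumption.

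\textbf{Parabolic components.} For products involving $r,w,r_{x},w_{x}$ I employ a Hölder pairing as in Lemma \ref{RightHandControl}: the coefficient is placed in $L^{\infty}(\Omega)$ (using Remark \ref{parabolicspace} and the embedding $W^{2-2/p}_{p}(\Omega)\subset L^{\infty}(\Omega)$ available for $p>3$) and the $\bar u$-factor in $L^{p}(\Omega)$, giving
\begin{align*}
\int_{0}^{t}\|A^{\theta}_{\varphi}\,\varphi\|_{L^{p}(\Omega)}^{p}\,ds\le\int_{0}^{t}\|A^{\theta}_{\varphi}(s)\|_{L^{\infty}(\Omega)}^{p}\,C\,\mathscr V^{p}(s)\,ds,
\end{align*}
where the $L^{\infty}$-norm of the coefficient is time-integrable in view of the previous paragraph. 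The convective products $v\cdot\nabla q$ and $\varrho(v\cdot\nabla)v$ from \eqref{A1right}, \eqref{A3right} generate, after linearisation, terms like $w\cdot\nabla q^{\theta}$ or $v^{\theta}\cdot\nabla r$; these I estimate by putting $v^{\theta}$ or $w$ in $L^{\infty}(\Omega)$ (via $W^{2-2/p}_{p}(\Omega)\subset L^{\infty}(\Omega)$) and the derivative factor in $L^{p}(\Omega)$, again producing an integrand of the form $K_{1}^{*}(s)\mathscr V^{p}(s)$ with $K_{1}^{*}\in L^{1}(0,t)$ controlled by a monotone function of the listed data norms.

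\textbf{The $\sigma$-component and synthesis.} The terms multiplying $\sigma$ or $\sigma_{x}$ are the delicate ones since $\sigma$ is not parabolic but is determined by the auxiliary transport equation \eqref{linearT2second} with drift $v^{*}$ and forcing $-\divv(\hat\varrho^{0}w)$. Here I apply Lemma \ref{PerturbConti} with these data, obtaining
\begin{align*}
\|\sigma(s)\|_{W^{1,p}(\Omega)}^{p}\le E^{*}(t)\bigl(\|\hat\varrho^{0}\|_{W^{2,0}_{p}(Q_{t})}^{p}+\|\hat\varrho^{0}\|_{W^{1,1}_{p,\infty}(Q_{t})}^{p}\bigr)\,\mathscr V^{p}(s;w),
\end{align*}
where $E^{*}(t)$ is an exponential Gronwall prefactor depending monotonically on $\mathscr V^{*}(t)$ and $\|\hat v^{0}\|_{W^{2,1}_{p}(Q_{t})}$; pairing with $\|A^{\theta}_{\sigma}(s)\|_{L^{\infty}(\Omega)}^{p}$ yields the same $K_{2}^{*}(t)\int_{0}^{t}\mathscr V^{p}(s)K_{1}^{*}(s)\,ds$-structure, the $E^{*}(t)$ factor and the $\hat\varrho^{0}$-norms being absorbed into $K_{2}^{*}\in L^{\infty}(0,T)$. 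Summing all contributions and defining $\Phi^{*}$ as the continuous non-decreasing composition of the prefactors produced by each term gives the claim. The main technical obstacle is precisely the $\sigma$-estimate: the forcing $\divv(\hat\varrho^{0}w)$ carries second derivatives of $\hat\varrho^{0}$, so Lemma \ref{PerturbConti} cannot be dispensed with, and it is what forces the additional assumption $\hat\varrho^{0}\in W^{2,0}_{p}(Q_{T})$ and, through it, the higher regularity of $\varrho_{0},\varrho^{\mathrm{eq}}$ demanded in Theorem \ref{MAIN3}.
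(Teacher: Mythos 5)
Your overall route --- the integral mean-value expansion of $(g^1)'$ and $(f^1)'$, term-by-term H\"older pairing, and recourse to Lemma \ref{PerturbConti} for the $\sigma$-contribution (correctly pinpointing this as the reason for the extra hypothesis $\hat\varrho^0 \in W^{2,0}_p(Q_T)$) --- is exactly the paper's. However, the H\"older pairing you write down for the zeroth-order factors is reversed, and as written it does not close. In your display you place the coefficient $A^\theta_\varphi$ in $L^\infty(\Omega)$ and the $\bar u$-component in $L^p(\Omega)$, invoking $W^{2-\frac{2}{p}}_p(\Omega)\subset L^\infty(\Omega)$. But that embedding applies to $r(s)$, $w(s)$, not to the coefficients. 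Indeed, $g^1_q = g_q - R_{q,q}\,\partial_t\hat q^0 + \widetilde M_q\,\triangle\hat q^0 - \cdots$, and likewise $g^1_\varrho$, $f^1_\varrho$ contain $\partial_t\hat q^0,\ \triangle\hat q^0,\ \partial_t\hat v^0$; these factors lie only in $L^p(Q_t)$, so $\|A^\theta_q(s)\|_{L^\infty(\Omega)}$ and $\|A^\theta_\sigma(s)\|_{L^\infty(\Omega)}$ are infinite a.e.\ in time (for $3<p\le 5$), and your claim that ``the $L^\infty$-norm of the coefficient is time-integrable'' fails precisely on the terms that motivate the lemma.

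The fix is to invert the roles: put $g^1_q,\ g^1_\varrho,\ g^1_v,\ f^1_q,\ f^1_\varrho,\ \ldots$ in $L^p(\Omega)$ (their $L^p$-in-time norms are controlled by Lemma \ref{gedifferentialun}) and pair them with $\|r(s)\|_{L^\infty(\Omega)}$, $\|w(s)\|_{L^\infty(\Omega)}$, $\|\sigma(s)\|_{L^\infty(\Omega)}$, using $W^{2-\frac{2}{p}}_p(\Omega)\subset L^\infty(\Omega)$ for $r, w$ and $W^{1,p}(\Omega)\subset L^\infty(\Omega)$ together with Lemma \ref{PerturbConti} and the zero-mean property for $\sigma$. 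Each factor is then dominated by $\mathscr V(s)$ and the coefficient norm yields $K^*_1(s)\in L^1$. For the gradient factors $r_x,w_x$ your pairing happens to be salvageable, since $g^1_{q_x},g^1_{v_x}$ involve only $v^*, \tilde b, \nabla\hat q^0\in L^{\infty,p}$; the paper nevertheless uses the slightly sharper pairing $g^1_{q_x}\in L^z(\Omega)$ against $r_x\in L^{\frac{3p}{(5-p)^+}}(\Omega)$ with $z=\frac{3p}{3-(5-p)^+}$, which is cleaner across the whole range $p>3$. Once the zeroth-order pairings are reversed, your argument, including the $\sigma$-treatment, matches the paper's.
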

\begin{proof}
 At first we estimate $(g^1)^{\prime}$. Starting from \eqref{Arightlinear2}, we obtain by elementary means that
\begin{align*}
|(g^1)^{\prime}(u^*, \, \hat{u}^0) \, \bar{u}| \leq & |g^1_q(u^*, \, \hat{u}^0)| \, |r| +  |g^1_{\varrho}(u^*, \, \hat{u}^0)| \, |\sigma| + |g^1_{v}(u^*, \, \hat{u}^0)| \,|w|\\
& + |g^1_q(u^*, \, \hat{u}^0)| \, |r_x| +  |g^1_{\varrho}(u^*, \, \hat{u}^0)| \, |\sigma_x| + |g^1_{v}(u^*, \, \hat{u}^0)| \,|w_x| \, .
\end{align*}
We define $z = \frac{3p}{3-(5-p)^+}$, and by means of H\"older's inequality we obtain first that
\begin{align*}
 \|(g^1)^{\prime}(u^*, \, \hat{u}^0) \, \bar{u}\|_{L^p(Q_t)}^p \leq & \int_{0}^t \{\|g^1_q\|_{L^p(\Omega)}^p \, \|r\|_{L^{\infty}(\Omega)}^p + \|g^1_{q_x}\|_{L^z(\Omega)}^p \, \|r_x\|_{L^{\frac{3p}{(5-p)^+}}(\Omega)}^p\} \, ds \\
 & +\int_{0}^t \{\|g^1_v\|_{L^p(\Omega)}^p \, \|w\|_{L^{\infty}(\Omega)}^p + \|g^1_{v_x}\|_{L^z(\Omega)}^p \, \|w_x\|_{L^{\frac{3p}{(5-p)^+}}(\Omega)}^p\} \, ds\\
 & +  \int_{0}^t \{\|g^1_{\varrho}\|_{L^p(\Omega)}^p \, \|\sigma\|_{L^{\infty}(\Omega)}^p + \|g^1_{\varrho_x}\|_{L^{\infty}(\Omega)}^p \, \|\sigma_x\|_{L^p(\Omega)}^p\} \, ds \, .
 \end{align*}
Making use of the embeddings $W^{2-\frac{2}{p}}_p \subset L^{\frac{3p}{(5-p)^+}}$ and of $W^{1,p} \subset L^{\infty}(\Omega)$ (recall also that the means of $\sigma$ over $\Omega$ is zero at every time!), we show that
\begin{align*}
 \|(g^1)^{\prime}(u^*, \, \hat{u}^0) \, \bar{u}\|_{L^p(Q_t)}^p \leq & \int_{0}^t \sup_{\tau \leq s} \{\|r(\tau)\|_{W^{2-\frac{2}{p}}_p(\Omega)}^p + \|w(\tau)\|_{W^{2-\frac{2}{p}}_p(\Omega)}^p \} \, K_1(s) \, ds \\
& + \int_{0}^t K_2(s)\, \|\sigma_x(s)\|_{L^p(\Omega)}^p \, ds \, , \\
K_1(s) := & \|g^1_q(s)\|_{L^p(\Omega)}^p + C \,  \|g^1_{q_x}(s)\|_{L^z(\Omega)}^p  + \|g^1_v(s)\|_{L^p(\Omega)}^p +C \,  \|g^1_{v_x}(s)\|_{L^z(\Omega)}^p \, ,\\
K_2(s) :=  & C\, \|g^1_{\varrho}(s)\|_{L^p(\Omega)}^p + \|g^1_{\varrho_x}(s)\|_{L^{\infty}(\Omega)}^p \, .
\end{align*}
We invoke the Lemmas \ref{gedifferential}, \ref{gedifferentialun} to see that $K_1$ and $K_2$ are integrable functions and their norm are controlled by the data. Recall also that the minimum and the maximum of the function $\varrho^* := \mathscr{C}(\hat{v}^0+w^*)$, which enter the estimates via the coefficients, are controlled by a function of $\mathscr{V}(t; \, \hat{v}^0+w^*)$.

For the terms containing $\sigma_x$, we use the result of Lemma \ref{PerturbConti}. It yields for $s \leq t$ in particular that 
\begin{align*}
  \|\sigma(s)\|_{W^{1,p}(\Omega)} \leq &  K_3(s) \, \|w\|_{L^{\infty}(Q_s)}^p + K_4(s) \,  \|w\|_{W^{2,0}_p(Q_s)}^p \, ,\\
 K_3(s) := & C \, \exp(c \, \int_0^s (\|v^*_x\|_{L^{\infty}(\Omega)} + \|v^*_{x,x}\|_{L^p(\Omega)} + 1) \, d\tau) \,  \|\hat{\varrho}^0\|_{W^{2,0}_p(Q_s)}^p \, ,\\
 K_4(s) := & C \, \exp(c \, \int_0^s (\|v^*_x\|_{L^{\infty}(\Omega)} + \|v^*_{x,x}\|_{L^p(\Omega)} + 1) \, d\tau) \,  \|\hat{\varrho}^0\|_{W^{1,1}_{p,\infty}(Q_s)}^p \, .
\end{align*}
 We obtain that
\begin{align*}
\int_{0}^t K_2(s) \, \|\sigma_x(s)\|_{L^p(\Omega)}^p \, ds \leq  \max\{K_3(t), \,K_4(t)\} \, \int_{0}^t K_2(s) \, [\|w\|_{L^{\infty}(Q_s)}^p + \|w\|_{W^{2,0}_p(Q_s)}^p] \, ds \, .
\end{align*}
Overall, since $\|w\|_{L^{\infty}(Q_s)} \leq c \, \sup_{\tau \leq s} \|w(\tau)\|_{W^{2-\frac{2}{p}}_p(\Omega)}$, we obtain that
\begin{align*}
  \|(g^1)^{\prime}(u^*, \, \hat{u}^0) \, \bar{u}\|_{L^p(Q_t)}^p \leq & \int_{0}^t \sup_{\tau \leq s} \{\|r(\tau)\|_{W^{2-\frac{2}{p}}_p(\Omega)}^p + \|w(\tau)\|_{W^{2-\frac{2}{p}}_p(\Omega)}^p \} \, K_1(s) \, ds\\
& + \max\{K_3(t), \,K_4(t)\} \, \int_{0}^t K_2(s) \, [\|w\|_{L^{\infty}(Q_s)}^p + \|w\|_{W^{2,0}_p(Q_s)}^p] \, ds\\
\leq & c \, \max\{1, \, K_3(t), \,K_4(t)\} \, \int_{0}^t \mathscr{V}^p(s; \, w) \, (K_1(s)+K_2(s)) \, ds  \, .
\end{align*}
We can prove a similar result for $f_1^{\prime}$. This finishes to prove the estimate.
 \end{proof}

\section{Existence of a unique fixed-point of $\mathcal{T}^1$}\label{FixedPointT1}

We are now in the position to prove the continuity estimate for $\mathcal{T}^1$. We assume that $(r, \, \sigma, \, w)$ satisfy the equations \eqref{linearT1second},\eqref{linearT2second}, \eqref{linearT3second} with data $(r^*, \, w^*)$. We apply the Proposition \ref{A1linmain} to \eqref{linearT1second}, and making use of the fact that $r(0, \, x) =0$ in $\Omega$, we get an estimate
\begin{align}\label{difference4}
 \mathscr{V}(t; \, r) \leq & C \, \Psi_{1,t} \, \|g^1\|_{L^p(Q_t)} \leq C \, \Psi_{1,t} \, (\|\hat{g}^0\|_{L^p(Q_t)} + \|(g^1)^{\prime}(u^*,\hat{u}^0) \, \bar{u}\|_{L^p(Q_t)})  \, .
\end{align}
Here $\Psi_{1,t} = \Psi_1(t, \, (m^*(t))^{-1}, \, M^*(t),  \, \|q^0\|_{ W^{2-\frac{2}{p}}_p(\Omega)}, \, 
\mathscr{V}(t; \, q^*), \, [\varrho^*]_{C^{\beta,\frac{\beta}{2}}(Q_t)}, \, \|\nabla \varrho^*\|_{L^{p,\infty}(Q_t)})$, and $\bar{u} := (r, \, \sigma, \, w)$. We then apply the Proposition \ref{normvmain} to \eqref{linearT3second}, and we obtain that
\begin{align}\label{difference5}
 \mathscr{V}(t; \, w) \leq & C \, \tilde{\Psi}_{2,t} \, \|f^1\|_{L^p(Q_t)} \leq C \, \tilde{\Psi}_{2,t} \, (\|\hat{f}^0\|_{L^p(Q_t)} + \|(f^1)^{\prime}(u^*,\hat{u}^0) \, \bar{u}\|_{L^p(Q_t)}) \, .
\end{align}
Here $\tilde{\Psi}_{2,t} = \Psi_2(t, \, (m^*(t))^{-1}, \, M^*(t), \, \sup_{s\leq t} [\varrho^*(s)]_{C^{\alpha}(\Omega)}) \, (1+ \sup_{s\leq t} [\varrho^*(s)]_{C^{\alpha}(\Omega)})^{\frac{2}{\alpha}}$.

We next raise both \eqref{difference4} and \eqref{difference5} to the $p-$ power, add both inequalities, and get for the function $\mathscr{V}(t) := \mathscr{V}(t; \, r) + \mathscr{V}(t; \, w)$ an inequality 
\begin{align*}
 \mathscr{V}^p(t) \leq  C \, (\Psi_{1,t}^p + \tilde{\Psi}_{2,t}^p) \, (& \|\hat{g}^0\|^p_{L^p(Q_t)} + \|\hat{f}^0\|^p_{L^p(Q_t)}\\
 & +  \|(g^1)^{\prime}(u^*, \, \hat{u}^0) \, \bar{u}\|_{L^p(Q_t)}^p + \|(f^1)^{\prime}(u^*, \, \hat{u}^0) \, \bar{u}\|_{L^p(Q_t)}^p)  \, .
\end{align*}
Then we apply Lemma \ref{RightHandControlsecond} and find
\begin{align*}
  \mathscr{V}^p(t) \leq & C \, (\Psi_{1,t}^p + \tilde{\Psi}_{2,t}^p)  \, (\|\hat{g}^0\|^p_{L^p(Q_t)} + \|\hat{f}^0\|^p_{L^p(Q_t)} + K_2^*(t) \, \int_0^t K^*_1(s) \,   \mathscr{V}^p(s) \, ds) \, .
\end{align*}
The Gronwall Lemma implies that
\begin{align*}
\mathscr{V}^p(t) \leq  C \, (\Psi_{1,t}^p + \tilde{\Psi}_{2,t}^p) \, \exp(C \, (\Psi_{1,t}^p + \tilde{\Psi}_{2,t}^p) \, K^*_{2}(t) \, \int_0^t K^*_1(s)\, ds) \, (\|\hat{g}^0\|^p_{L^p(Q_t)} + \|\hat{f}^0\|^p_{L^p(Q_t)}) \, .
\end{align*}
We thus have proved the following continuity estimate:
\begin{prop}\label{estimateselfsecond}
 Suppose that $(r^*, \, w^*), \, (r, \, w) \in \phantom{}_0\mathcal{Y}_T$ are solutions to $(r, \, w) = \mathcal{T}^1(r^*, \, w^*)$. Then there is a continuous function $\Psi^9$ increasing in its arguments such that, for all $t \leq T$,
 \begin{align*}
  \mathscr{V}(t) \leq & \Psi_9(t, \, \|\hat{u}^0\|_{\mathcal{X}_t}+ \|\hat{\varrho}^0\|_{W^{2,0}_p(Q_t)} + \|\tilde{b}\|_{W^{1,0}_p(Q_t)}+ \|\bar{b}\|_{L^p(Q_t)}, \, \mathscr{V}^*(t)) \times \\
&  \times (\|\hat{g}^0\|_{L^p(Q_t)} + \|\hat{f}^0\|_{L^p(Q_t)}) \, .
 \end{align*}
\end{prop}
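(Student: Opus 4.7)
The plan is to apply the two parabolic estimates from Section \ref{ESTI} directly to the equations \eqref{linearT1second} and \eqref{linearT3second}, then to use the key Lemma \ref{RightHandControlsecond} to bound the lower-order linearisation error by an integral expression involving $\mathscr{V}^p$, and finally to close the argument with a Gronwall inequality. Concretely, since $r(\cdot,0) = 0$ and $w(\cdot,0) = 0$, Proposition \ref{A1linmain} applied to \eqref{linearT1second} with coefficients frozen at $(\mathscr{C}(v^*), q^*)$ yields $\mathscr{V}(t; r) \leq C\, \Psi_{1,t}\, \|g^1\|_{L^p(Q_t)}$, and Corollary \ref{normvmain} applied to \eqref{linearT3second} yields $\mathscr{V}(t; w) \leq C\, \tilde{\Psi}_{2,t}\, \|f^1\|_{L^p(Q_t)}$.

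The next step is to absorb the dependence of $\Psi_{1,t}$ and $\tilde{\Psi}_{2,t}$ on the coefficient $\varrho^* = \mathscr{C}(\hat{v}^0 + w^*)$ into a single function of the data. By Corollary \ref{normrhomain} (with the initial density $\varrho_0$ from the statement of $\mathcal{T}^1$), the quantities $(m^*(t))^{-1}$, $M^*(t)$, $[\varrho^*]_{C^{\beta,\beta/2}(Q_t)}$ and $\|\nabla \varrho^*\|_{L^{p,\infty}(Q_t)}$ are controlled by an increasing continuous function of $(t, \|\hat{u}^0\|_{\mathcal{X}_t}, \mathscr{V}(t; \hat{v}^0+w^*))$, hence of the arguments of the sought $\Psi_9$. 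The initial-norm arguments of $\Psi_{1,t}$ and $\tilde{\Psi}_{2,t}$ (involving $q^*(0) = \hat{q}^0(0)$ and $\|\nabla\varrho^*\|_{L^{p,\infty}}$) depend only on $\hat{u}^0$ and on $w^*$, so they likewise can be absorbed.

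The third step is to split the right-hand sides using the linearisation formula \eqref{Arightlinear2}: $g^1 = \hat{g}^0 + (g^1)'(u^*, \hat{u}^0)\,\bar{u}$ and analogously for $f^1$, where $\bar{u} = (r, \sigma, w)$ and $\sigma$ is obtained via \eqref{linearT2second} with velocity $v^* = \hat{v}^0+ w^*$. Lemma \ref{RightHandControlsecond} delivers exactly the needed bound
\begin{equation*}
\|(g^1)'(u^*, \hat{u}^0)\,\bar{u}\|_{L^p(Q_t)}^p + \|(f^1)'(u^*, \hat{u}^0)\,\bar{u}\|_{L^p(Q_t)}^p \leq K_2^*(t)\,\int_0^t K_1^*(s)\,\mathscr{V}^p(s)\,ds,
\end{equation*}
with $\|K_1^*\|_{L^1(0,t)} + \|K_2^*\|_{L^\infty(0,t)}$ controlled by an increasing function of the same data. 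Raising the two parabolic estimates to the $p$-th power and adding gives
\begin{equation*}
\mathscr{V}^p(t) \leq C\,(\Psi_{1,t}^p + \tilde{\Psi}_{2,t}^p)\Big(\|\hat{g}^0\|^p_{L^p(Q_t)} + \|\hat{f}^0\|^p_{L^p(Q_t)} + K_2^*(t)\int_0^t K_1^*(s)\,\mathscr{V}^p(s)\,ds\Big).
\end{equation*}
The Gronwall lemma then produces the claimed bound, and $\Psi_9$ is built as the resulting composition of the continuous, increasing functions $\Psi_1$, $\Psi_2$, $\Psi_3$--$\Psi_5$ and $\Phi^*$ from Lemma \ref{RightHandControlsecond}.

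The main obstacle — already resolved in Lemma \ref{RightHandControlsecond} and Lemma \ref{PerturbConti} — is that the perturbed continuity equation for $\sigma$ contains the source term $-\divv(\hat{\varrho}^0\,w)$, whose $L^p$ control of $\nabla \sigma$ demands one extra spatial derivative on $\hat{\varrho}^0$. This is precisely why the hypothesis $\hat{\varrho}^0 \in W^{2,0}_p(Q_T)$ enters the bound (and why Theorem \ref{MAIN3} requires $\varrho^0 \in W^{2,p}(\Omega)$). Without it, the term $K_2(s)\|\sigma_x(s)\|_{L^p}^p$ appearing after distributing $(g^1)'\bar{u}$ could not be reinserted into the Gronwall scheme as a linear factor of $\mathscr{V}^p(s;w)$, and the fixed-point argument for $\mathcal{T}^1$ would fail.
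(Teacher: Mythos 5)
Your proposal is correct and follows essentially the same route as the paper's own proof: apply Proposition \ref{A1linmain} and Corollary \ref{normvmain} to the two parabolic equations with zero initial data, split $g^1, f^1$ via the linearisation \eqref{Arightlinear2}, invoke Lemma \ref{RightHandControlsecond} for the perturbation, raise to the $p$-th power, add, and close with Gronwall. Your remark identifying why $\hat\varrho^0 \in W^{2,0}_p(Q_T)$ is indispensable — namely the control of $\nabla\sigma$ in the perturbed continuity equation through Lemma \ref{PerturbConti} — correctly locates the crucial ingredient that lets the $\sigma_x$-terms be reinserted into the Gronwall scheme.
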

We are now in the position to prove a self-mapping property for sufficiently 'small data' applying the Lemma \ref{selfmapTsecond}.
\begin{lemma}
There is $R_1 > 0$ such that if $\|\hat{g}^0\|_{L^p(Q_T)} + \|\hat{f}^0\|_{L^p(Q_T)} \leq R_1$, the map $\mathcal{T}^1$ is well defined and possesses a unique fixed-point.
\end{lemma}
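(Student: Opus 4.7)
The plan is to combine the self-mapping property provided by Proposition \ref{estimateselfsecond} with Lemma \ref{selfmapTsecond}, and then to establish a contraction estimate in a lower-order norm along the lines of Theorem \ref{iter}.

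First, I would verify that $\mathcal{T}^1$ is well defined from $\phantom{}_0\mathcal{Y}_T$ to itself. Given $(r^*, w^*) \in \phantom{}_0\mathcal{Y}_T$, the function $\varrho^* := \mathscr{C}(\hat{v}^0 + w^*)$ is strictly positive and of class $W^{1,1}_{p,\infty}(Q_T)$ thanks to Proposition \ref{solonnikov2}, so that $u^* = (\hat{q}^0 + r^*, \varrho^*, \hat{v}^0 + w^*) \in \mathcal{X}_{T,+}$. The $\sigma$-equation \eqref{linearT2second} admits a unique solution $\sigma \in \phantom{}_0W^{1,1}_{p,\infty}(Q_T)$ by the estimates of Lemma \ref{PerturbConti} combined with linear continuation; equation \eqref{linearT1second} is a Petrovski-parabolic system whose unique solvability in $\phantom{}_0W^{2,1}_p$ follows from Proposition \ref{A1linmain} (absorbing the lower-order terms $(g^1)'(u^*,\hat{u}^0)\bar{u}$ into the right-hand side via the bound of Lemma \ref{RightHandControlsecond}); equation \eqref{linearT3second} is uniquely solvable in $\phantom{}_0W^{2,1}_p$ by Corollary \ref{normvmain}. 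The homogeneity of the initial conditions guarantees that the image lies in $\phantom{}_0\mathcal{Y}_T$.

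Next, I would apply Lemma \ref{selfmapTsecond} to get the self-mapping property. Setting $R_0 := \|\hat{u}^0\|_{\mathcal{X}_T} + \|\hat{\varrho}^0\|_{W^{2,0}_p(Q_T)} + \|\tilde{b}\|_{W^{1,0}_p(Q_T)} + \|\bar{b}\|_{L^p(Q_T)}$ and defining
\begin{equation*}
\Psi(T, R_0, R_1, \eta) := \Psi_9(T, R_0 + \eta, \eta) \cdot R_1,
\end{equation*}
with $\Psi_9$ as in Proposition \ref{estimateselfsecond}, one checks that $\Psi$ is continuous and increasing, $\Psi(T, R_0, 0, \eta) = 0$, and $\Psi(T, R_0, R_1, 0) > 0$. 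Hence there exists $\delta > 0$ such that whenever $\|\hat{g}^0\|_{L^p(Q_T)} + \|\hat{f}^0\|_{L^p(Q_T)} \leq \delta$, one finds $\eta_0 > 0$ with $\mathcal{T}^1$ mapping the closed ball $B_{\eta_0} \subset \phantom{}_0\mathcal{Y}_T$ into itself.

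For the existence and uniqueness of the fixed point I would not try to get contraction in the state space norm directly, but rather mimic the argument of Theorem \ref{iter}. Given two pairs $(r_i^*, w_i^*) \in B_{\eta_0}$, $i = 1, 2$, and their images $(r_i, \sigma_i, w_i) = \mathcal{T}^1(r_i^*, w_i^*)$, the differences $\delta r := r_1 - r_2$, $\delta \sigma := \sigma_1 - \sigma_2$, $\delta w := w_1 - w_2$ satisfy a linear system obtained by subtracting \eqref{linearT1second}--\eqref{linearT3second}. The right-hand sides split into genuinely linear contributions in $(\delta r, \delta \sigma, \delta w)$ and contributions involving $(\delta r^*, \delta w^*)$ coming from the variations of the coefficients $R_q(\varrho^*_i, q^*_i)$, $\widetilde{M}(\varrho^*_i, q^*_i)$, etc. Testing the parabolic equations with $\delta r$ and $\delta w$ in $L^2(\Omega)$, the continuity equation with $\delta \sigma$, and using Poincar\'e, Sobolev embeddings, and the parabolic Gagliardo--Nirenberg inequality \eqref{gagliardo} exactly as in the proof of Theorem \ref{iter}, one obtains an inequality
\begin{equation*}
\mathscr{E}(t) \leq \omega(\eta_0, R_1, T) \, \mathscr{E}^*(t), \qquad t \in [0, t_1],
\end{equation*}
on a short slab $[0, t_1] \subset [0, T]$, where $\mathscr{E}, \mathscr{E}^*$ are the analogue of $E^n, E^{n-1}$ in Theorem \ref{iter} for $(\delta r, \delta \sigma, \delta w)$ and $(\delta r^*, \delta w^*)$ respectively, and $\omega \to 0$ as $(\eta_0, R_1) \to 0$. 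Choosing $R_1$ smaller if necessary makes $\omega < \tfrac12$, yielding a contraction. Iterating over consecutive slabs and combining with the uniform a priori bound in $\phantom{}_0\mathcal{Y}_T$ delivers a unique fixed point $(r, w)$ of $\mathcal{T}^1$ on $[0, T]$.

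The main obstacle is the contraction step. Unlike the situation of Theorem \ref{iter}, here the coefficients of the parabolic operators depend nonlinearly on $u^* = (q^*, \mathscr{C}(v^*), v^*)$, so the difference equations carry terms like $[R_q(\varrho_1^*, q_1^*) - R_q(\varrho_2^*, q_2^*)] \partial_t r_2$ that must be absorbed by smallness of $\eta_0$. The $\sigma$-component is particularly delicate because $\delta \sigma$ solves a perturbed continuity equation with source $\divv(\hat{\varrho}^0 \, \delta w)$ and lower-order driving by $\divv(\delta \sigma \, v^*_2 + \sigma_1 \, \delta v^*)$; controlling this in $L^2$ requires the additional regularity $\hat{\varrho}^0 \in W^{2,0}_p(Q_T)$ assumed in Theorem \ref{MAIN3} and an estimate of the Lemma \ref{PerturbConti} type adapted to the difference. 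Once this bookkeeping is carried out, the Gronwall argument of Theorem \ref{iter} applies almost verbatim on a sufficiently short slab, and patching yields the global contraction on $[0, T]$.
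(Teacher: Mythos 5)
Your proposal follows essentially the same route as the paper: combine the controlled-growth estimate of Proposition \ref{estimateselfsecond} with Lemma \ref{selfmapTsecond} to obtain the self-mapping property on a small ball, then establish convergence of the fixed-point iteration via a contraction in the lower-order $L^2$-type norm of Theorem \ref{iter}. Two small remarks: (i) the paper simply takes $\Psi(T,R_0,R_1,\eta):=\Psi_9(T,R_0,\eta)\,R_1$; inserting $R_0+\eta$ into the second slot, as you do, is harmless (since $\Psi_9$ is increasing) but unnecessary; and (ii) the contraction on a slab $[t,t+t_1]$ is obtained, as in Theorem \ref{iter}, by making $t_1$ small while using the uniform $\mathcal{X}_T$ bound from the self-map property -- it does not require $R_1$ to be further decreased, so the phrase ``choosing $R_1$ smaller if necessary makes $\omega<\tfrac12$'' attributes the contraction to the wrong parameter, although the slab mechanism you also invoke is the one that actually closes the argument.
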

\begin{proof}
We apply the Lemma \ref{selfmapTsecond} with $\Psi(T, \, R_0, \, R_1, \, \eta) := \Psi_9(T, \, R_0, \, \eta) \, R_1$. Here $R_0 = \|\hat{u}^0\|_{\mathcal{X}_T}+ \|\hat{\varrho}^0\|_{W^{2,0}_p(Q_T)} + \|\tilde{b}\|_{W^{1,0}_p(Q_T)} + \|\bar{b}\|_{L^p(Q_T)}$. 

Thus, there is $R_1 > 0$ such that if $\|\hat{g}^0\|_{L^p(Q_T)} + \|\hat{f}^0\|_{L^p(Q_T)} \leq R_1$, we can find $\eta_0 > 0$ such that $\mathcal{T}^1$ maps the set $\{\bar{u} \in \phantom{}_0\mathcal{Y}_T \, : \,   \|\bar{u}\|_{\mathcal{Y}_T} \leq \eta_0\}$ into itself. 

Consider the iteration $\bar{u}^{n+1} := \mathcal{T}^1(\bar{u}^n)$ starting at $\bar{u}^n = 0$. The sequences $(r^{n}, \, \sigma^n, \, w^n)$, and thus also $(\hat{q}^0+r^n, \, \mathscr{C}(\hat{v}^0 + w^n ), \, \hat{v}^0 + w^n)$, are uniformly bounded in $\mathcal{X}_T$. We show the contraction property with respect to the same lower-order norm than in Theorem \ref{iter}. There are $k_0, \, p_0 > 0$ such that the quantities 
 \begin{align*}
 &  E^n(t) :=   p_0 \, \int_{t}^{t+t_1} \{|\nabla (r^{n}-r^{n-1})|^2 + |\nabla (w^{n}-w^{n-1})|^2\} \, dxds\\
  &+  k_0 \, \sup_{\tau \in [t, \, t + t_1]} \{\|(r^n-r^{n-1})(\tau)\|_{L^2(\Omega)}^2 + \|(\sigma^n-\sigma^{n-1})(\tau)\|_{L^2(\Omega)}^2 +\|(w^n-w^{n-1})(\tau)\|_{L^2(\Omega)}^2\}
 \end{align*}
satisfy $E^{n+1}(t) \leq\tfrac{1}{2} \, E^{n}(t)$ for some fixed $t_1 > 0$ and every $t \in [0, \, T-t_1]$. 
\end{proof}
In order to finish the proof of Theorem \ref{MAIN3}, we want to show how to make $\|\hat{g}^0\|_{L^p(Q_T)} + \|\hat{f}^0\|_{L^p(Q_T)}$ small. We observe that $\hat{g}^0 = \widetilde{ \mathscr{A}}^1(\hat{u}^0)$ and that $\hat{f}^0 = \mathscr{A}^3(\hat{u}^0)$. Thus, if an equilibrium solution to $\mathscr{A}(u^{\text{eq}}) = 0$ is at hand, we can expect that $\mathscr{A}(\hat{u}^0) = \mathscr{A}(\hat{u}^0) - \mathscr{A}(u^{\text{eq}})$ will remain small if the initial data are near to the equilibrium solution.

We thus consider $u^{\text{eq}} = (q^{\text{eq}}, \, \varrho^{\text{eq}}, \, v^{\text{eq}}) \in W^{2,p}(\Omega; \, \mathbb{R}^{N-1}) \times W^{1,p}(\Omega) \times W^{2,p}(\Omega; \, \mathbb{R}^{3})$ an equilibrium solution. This means that the equations \eqref{massstat}, \eqref{momentumsstat} are valid with the vector $\rho^{\text{eq}}$ of partial mass densities obtained from $q^{\text{eq}}$ and $\varrho^{\text{eq}}$ by means of the transformation of Section \ref{changevariables}.
\begin{lemma}
Suppose that $u^{\text{eq}} \in W^{2,p}(\Omega; \, \mathbb{R}^{N-1}) \times W^{1,p}(\Omega) \times W^{2,p}(\Omega; \, \mathbb{R}^{3})$ is an equilibrium solution. Moreover, we assume that the initial data $u^0$ belongs to $\text{Tr}_{\Omega \times\{0\}} \, \mathcal{X}_T$. We assume that the components $\varrho^{\text{eq}}, \, \varrho^0$ and $v^0$ of $u^{\text{eq}}$ and $u^0$ possess the additional regularity 
\begin{align}\label{moreregu}
 \varrho^{\text{eq}}, \, \varrho^0 \in  W^{2,p}(\Omega), \quad  v^{\text{eq}} \in W^{3,p}(\Omega; \, \mathbb{R}^3), \, v^0 \in W^{2,p}(\Omega; \, \mathbb{R}^3)   \, .
\end{align}
Then, there exists $R_1 > 0$ such that if $\|u^{\text{eq}}-u^0\|_{\text{Tr}_{\Omega \times\{0\}} \, \mathcal{X}_T} \leq R_1$, then there is a unique global solution $u \in \mathcal{X}_T$ to $\mathscr{A}(u) = 0$ and $u(0) = u_0$.
\end{lemma}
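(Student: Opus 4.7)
The plan is to reduce everything to the preceding lemma by constructing an extension $\hat{u}^0$ of the initial data such that the residuals $\hat{g}^0 = \widetilde{\mathscr{A}}^1(\hat{u}^0)$ and $\hat{f}^0 = \mathscr{A}^3(\hat{u}^0)$ are as small as desired in $L^p(Q_T)$. The pivotal observation is that, summing \eqref{massstat} over $i$, the stationary solution satisfies $\divv(\varrho^{\text{eq}} v^{\text{eq}}) = 0$; hence $\mathscr{A}^2(u^{\text{eq}}) = 0$, and consequently, regarding $u^{\text{eq}}$ as an element of $\mathcal{X}_{T,+}$ constant in time, we have $\widetilde{\mathscr{A}}^1(u^{\text{eq}}) = \mathscr{A}^1(u^{\text{eq}}) = 0$ and $\mathscr{A}^3(u^{\text{eq}}) = 0$ thanks to \eqref{massstat}, \eqref{momentumsstat}. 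Therefore
\begin{align*}
\hat{g}^0 = \widetilde{\mathscr{A}}^1(\hat{u}^0) - \widetilde{\mathscr{A}}^1(u^{\text{eq}}), \qquad \hat{f}^0 = \mathscr{A}^3(\hat{u}^0) - \mathscr{A}^3(u^{\text{eq}}),
\end{align*}
and by the $C^1$ regularity of these operators (Lemma \ref{IMAGESPACE}) applied on any bounded neighbourhood of $u^{\text{eq}}$ in $\mathcal{X}_{T,+}$, the residuals are controlled by the $\mathcal{X}_T$-distance of $\hat{u}^0$ to $u^{\text{eq}}$.

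The extension $\hat{u}^0$ is constructed componentwise. For $\hat{q}^0$ and $\hat{v}^0$, I solve the heat equation with, respectively, Neumann data $\nu\cdot\nabla \hat{q}^0 = 0$ and Dirichlet data $\hat{v}^0 = 0$ on $S_T$, and initial values $q^0, v^0$; the $L^p$-maximal regularity theory yields $\hat{q}^0, \hat{v}^0 \in W^{2,1}_p(Q_T)$ with continuous dependence on the initial data in $W^{2-2/p}_p(\Omega)$. The additional regularity $v^0 \in W^{2,p}(\Omega)$ and $v^{\text{eq}}\in W^{3,p}(\Omega)$ allows this construction to be carried out at one degree of smoothness higher, so that in particular $\nabla \hat{v}^0 \in L^\infty(0,T;W^{1,p})$. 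Then $\hat{\varrho}^0$ is defined as the solution of \eqref{Extendrho0}; differentiating the characteristic formula in Proposition \ref{solonnikov2} and using $\varrho^0 \in W^{2,p}(\Omega)$ together with the improved regularity of $\hat{v}^0$, we obtain $\hat{\varrho}^0 \in W^{2,0}_p(Q_T)$, which is exactly the regularity required by Lemma \ref{PerturbConti} (and hence by the hypotheses of the preceding lemma).

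Writing $\delta q := \hat{q}^0 - q^{\text{eq}}$, $\delta v := \hat{v}^0 - v^{\text{eq}}$, $\delta\varrho := \hat{\varrho}^0 - \varrho^{\text{eq}}$, linear parabolic estimates yield $\|\delta q\|_{W^{2,1}_p(Q_T)} + \|\delta v\|_{W^{2,1}_p(Q_T)} \leq C_T \, \|u^0 - u^{\text{eq}}\|_{\text{Tr}_{\Omega\times\{0\}}\mathcal{X}_T}$. For $\delta\varrho$, one notes that it satisfies $\partial_t \delta\varrho + \divv(\delta\varrho\, \hat{v}^0) = -\divv(\varrho^{\text{eq}} \, \delta v)$ with initial datum $\varrho^0 - \varrho^{\text{eq}}$; applying Lemma \ref{PerturbConti} (in differentiated form, using $\varrho^{\text{eq}} \in W^{2,p}$) produces $\|\delta\varrho\|_{W^{1,1}_{p,\infty}(Q_T)} \leq C_T (\|\varrho^0 - \varrho^{\text{eq}}\|_{W^{1,p}} + \|\delta v\|_{W^{2,1}_p})$. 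Combining, $\|\hat{u}^0 - u^{\text{eq}}\|_{\mathcal{X}_T}$ is dominated by $C_T \, \|u^0 - u^{\text{eq}}\|_{\text{Tr}}$, and hence so is $\|\hat{g}^0\|_{L^p(Q_T)} + \|\hat{f}^0\|_{L^p(Q_T)}$. Choosing $R_1$ small enough that this final bound is below the threshold in the preceding lemma, we invoke that lemma to obtain a unique fixed point $(r,w)$ of $\mathcal{T}^1$, and the resulting $u = \hat{u}^0 + (r,\sigma,w) \in \mathcal{X}_T$ solves $\widetilde{\mathscr{A}}(u) = 0$ with $u(0) = u^0$. Since the continuity equation component is satisfied, $\widetilde{\mathscr{A}}(u) = 0$ is equivalent to $\mathscr{A}(u) = 0$, and uniqueness transfers from the fixed point.

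The main obstacle is carrying out the construction of $\hat{\varrho}^0$ simultaneously within the class $W^{2,0}_p(Q_T)$ and with quantitative continuous dependence on the distance $\|u^0 - u^{\text{eq}}\|_{\text{Tr}}$ in the norm of $W^{1,1}_{p,\infty}(Q_T)$; this is exactly what forces the additional regularity assumption $\varrho^0,\varrho^{\text{eq}} \in W^{2,p}(\Omega)$ and $v^{\text{eq}} \in W^{3,p}(\Omega)$, and is the reason the global existence result does not enjoy the semi-flow property established in Theorem \ref{MAIN}.
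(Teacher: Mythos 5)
Your high-level plan is correct and coincides with the paper's strategy: interpret the stationary solution as a constant-in-time element $\hat{u}^{\text{eq}}$ of $\mathcal{X}_{T,+}$, observe that $\widetilde{\mathscr{A}}(\hat{u}^{\text{eq}}) = 0$, use the $C^1$-property of $\widetilde{\mathscr{A}}$ (Lemma \ref{IMAGESPACE}) to dominate $\|\hat{g}^0\|_{L^p(Q_T)} + \|\hat{f}^0\|_{L^p(Q_T)}$ by $\|\hat{u}^0 - \hat{u}^{\text{eq}}\|_{\mathcal{X}_T}$, and invoke Lemma \ref{PerturbConti} for the hyperbolic component. The identification of $W^{2,0}_p$ regularity for $\hat{\varrho}^0$ and its origin in the stronger hypotheses on $v^0, v^{\text{eq}}, \varrho^0, \varrho^{\text{eq}}$ is also right.

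However, there is a genuine gap in the construction of the extension $\hat{u}^0$. You define $\hat{q}^0$ and $\hat{v}^0$ by solving the heat equation with initial data $q^0$, $v^0$, and then claim
$\|\delta q\|_{W^{2,1}_p(Q_T)} + \|\delta v\|_{W^{2,1}_p(Q_T)} \leq C_T \, \|u^0 - u^{\text{eq}}\|_{\text{Tr}_{\Omega\times\{0\}}\mathcal{X}_T}$,
where $\delta q := \hat{q}^0 - q^{\text{eq}}$. This is false. The constant-in-time function $q^{\text{eq}}$ does \emph{not} solve the homogeneous heat equation (generically $\triangle q^{\text{eq}} \neq 0$), so $\delta q$ satisfies $\partial_t \delta q - \triangle \delta q = \triangle q^{\text{eq}}$, $\delta q(0) = q^0 - q^{\text{eq}}$, and hence
$\|\delta q\|_{W^{2,1}_p(Q_T)} \lesssim \|q^0 - q^{\text{eq}}\|_{W^{2-2/p}_p(\Omega)} + T^{1/p}\|\triangle q^{\text{eq}}\|_{L^p(\Omega)}$.
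The second contribution is not controlled by the initial distance $R_1$; it is fixed by the equilibrium and the time horizon $T$, and is not small. Consequently $\|\hat{u}^0 - \hat{u}^{\text{eq}}\|_{\mathcal{X}_T}$ is not made small by shrinking $R_1$, and the appeal to the preceding lemma fails.

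The paper avoids this by extending the \emph{difference} $u^1 := u^0 - u^{\text{eq}}$ rather than $u^0$ itself: one builds $\hat{q}^1, \hat{v}^1 \in W^{2,1}_p(Q_T)$ as liftings of $q^0 - q^{\text{eq}}$, $v^0 - v^{\text{eq}}$ (these are small because the traces are small), and then sets $\hat{q}^0 := q^{\text{eq}} + \hat{q}^1$, $\hat{v}^0 := v^{\text{eq}} + \hat{v}^1$. With this definition $\hat{q}^0 - \hat{q}^{\text{eq}} = \hat{q}^1$ is genuinely of size $R_1$ in $W^{2,1}_p(Q_T)$, and the rest of your argument (the perturbed continuity equation for $\sigma := \hat{\varrho}^0 - \varrho^{\text{eq}}$, the $W^{2,0}_p$ regularity, the mean-value identity for $\widetilde{\mathscr{A}}$) goes through as you describe. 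So the fix is local: replace the direct heat extension of $q^0, v^0$ by the shifted extension $q^{\text{eq}} + (\text{lifting of } q^0 - q^{\text{eq}})$.

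One further minor point: in the estimate for $\sigma = \hat{\varrho}^0 - \varrho^{\text{eq}}$ you invoke Lemma \ref{PerturbConti}, which is stated for $\sigma(0) = 0$; here $\sigma(0) = \varrho^0 - \varrho^{\text{eq}}$ is only small, not zero, so the lemma must be supplemented by an initial-data term (or one writes $\sigma$ as a sum of a zero-initial part and a small transported part). You flag this as being done "in differentiated form", but it deserves to be spelled out, since the Poincar\'e step in the proof of Lemma \ref{PerturbConti} uses $\int_\Omega \sigma(t) \, dx = 0$, which requires $\int_\Omega(\varrho^0 - \varrho^{\text{eq}}) \, dx = 0$.
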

\begin{proof}
We denote $u^1 :=  u^{\text{eq}}-u^0 \in \text{Tr}_{\Omega \times\{0\}} \, \mathcal{X}_T$. We find extensions $\hat{q}^1 \in W^{2,1}_p(Q_T; \, \mathbb{R}^{N-1})$ and $\hat{v}^1 \in W^{2,1}_p(Q_T; \, \mathbb{R}^{3})$ with continuity estimates. For instance, we can extend the components of $q^1, \, v^1$ to elements of $W^{2-2/p}_{p}(\mathbb{R}^3)$, and then solve Cauchy-problems for the heat equation to extend the functions. Since the assumption \eqref{moreregu} moreover guarantees that $v^1 \in W^{2,p}(\Omega)$, this procedure yields even $\hat{v}^1 \in W^{4,2}_p(Q_T; \, \mathbb{R}^3)$ at least (cf. \cite{ladu}, Chapter 4, Paragraph 3, inequality (3.3)).

The definitions $\hat{q}^{\text{eq}}(x, \, t) := q^{\text{eq}}(x)$ and $\hat{v}^{\text{eq}}(x, \, t) := v^{\text{eq}}(x)$ provide extensions of $\hat{q}^{\text{eq}} \in W^{2,\infty}_{p,\infty}$ and $\hat{v}^{\text{eq}}$ in $W^{3,\infty}_{p,\infty}$. We define
\begin{align*}
 \hat{q}^0 := \hat{q}^{\text{eq}} + \hat{q}^1 \in W^{2,1}_p(Q_T; \, \mathbb{R}^{N-1}), \quad \hat{v}^0 := \hat{v}^{\text{eq}} + \hat{v}^1 \in W^{2,1}_p(Q_T; \, \mathbb{R}^{N-1}) \cap W^{3,0}_p(Q_T; \, \mathbb{R}^3) \, ,
\end{align*}
satisfying
\begin{align}\label{distance}
&  \|\hat{q}^0 - \hat{q}^{\text{eq}}\|_{W^{2,1}_p(Q_T)} + \|\hat{v}^0 - \hat{v}^{\text{eq}}\|_{W^{2,1}_p(Q_T)} \leq C \, (\|q^1\|_{W^{2-\frac{2}{p}}_p(\Omega)} +  \|v^1\|_{W^{2-\frac{2}{p}}_p(\Omega)}) = C \, R_1 \, ,\\
& \label{extendbetter}
 \|\hat{v}^0\|_{W^{3,0}_p(Q_T; \, \mathbb{R}^3)} \leq C \, (\|v^{\text{eq}}\|_{W^{3,p}(\Omega)} + \|v^0\|_{W^{2,p}(\Omega)}) \, .
\end{align}
In order to extend $\varrho^0$, we solve $\partial_t \hat{\varrho}^0 + \divv( \hat{\varrho}^0 \, \hat{v}^0) = 0$ with initial condition $\hat{\varrho}^0 = \varrho^0$. We clearly obtain by these means an extension of class $W^{1,1}_{p,\infty}(Q_T)$. Moreover, due to \eqref{extendbetter}, we can show that $\hat{\varrho}^0 \in W^{2,0}_p(Q_T)$ (use the representation formula at the beginning of the proof of Prop. \ref{solonnikov2}).
If we next choose the extension $ \hat{\varrho}^{\text{eq}}(x, \, t) := \varrho^{\text{eq}}(x) \in W^{2,\infty}_{p,\infty}(Q_T)$, then by definition of the equilibrium solution we have $\divv (\hat{\varrho}^{\text{eq}} \, \hat{v}^{\text{eq}}) = 0$ in $Q_T$ and $\partial_t \hat{\varrho}^{\text{eq}} = 0$.

Thus, the difference $\hat{\varrho}^1 := \hat{\varrho}^0 - \hat{\varrho}^{\text{eq}}$ is a solution to $ \partial_t \hat{\varrho}^1 + \divv (\hat{\varrho}^1 \, \hat{v}^0) = - \divv(\hat{\varrho}^0 \, \hat{v}^{1})$. Since $\hat{\varrho}^0 \in W^{1,1}_{p,\infty}(Q_T) \cap W^{2,0}_p(Q_T)$ by construction, the estimate of Lemma \ref{PerturbConti} applies, and invoking also \eqref{distance} this gives
\begin{align*}
 \|\hat{\varrho}^1\|_{W^{1,1}_{p,\infty}(Q_T)} \leq & C \, \exp(c \, \int_0^T [\|\hat{v}^0_x\|_{L^{\infty}(\Omega)} + \|\hat{v}^0_{x,x}\|_{L^p(\Omega)} + 1]  ds) \times\\
 & \times
  (\|\hat{\varrho}^0\|_{W^{2,0}_p(Q_T)}^p \, \|\hat{v}^1\|_{L^{\infty}(Q_T)}^p + \|\hat{\varrho}^0\|_{W^{1,1}_{p,\infty}(Q_T)}^p \,  \|\hat{v}^1\|_{W^{2,0}_p(Q_T)}^p)\\
   \leq & C_T \, \|\hat{v}^1\|_{W^{2,1}_p(Q_T)} \leq C_T \, \, R_1 \, .
\end{align*}
The latter and \eqref{distance} now entail that
\begin{align*}
 \|\hat{q}^0 - \hat{q}^{\text{eq}}\|_{W^{2,1}_p(Q_T)} + \|\hat{v}^0 - \hat{v}^{\text{eq}}\|_{W^{2,1}_p(Q_T)} + \|\hat{\varrho}^0-\hat{\varrho}^{\text{eq}}\|_{W^{1,1}_{p,\infty}(Q_T)} \leq C \, R_1 \, .
\end{align*}
Here $C$ is allowed to depend on $T$ and all data in their respective norm. Now recalling the Lemma \ref{IMAGESPACE} we can verify that
\begin{align*}
\widetilde{ \mathscr{A}}(\hat{u}^0) =  & \widetilde{ \mathscr{A}}(\hat{u}^{\text{eq}} + \hat{u}^1) = \widetilde{ \mathscr{A}}(\hat{u}^{\text{eq}} + \hat{u}^1) - \widetilde{ \mathscr{A}}(\hat{u}^{\text{eq}})= \int_{0}^1  \widetilde{ \mathscr{A}}^{\prime}(\hat{u}^{\text{eq}} + \theta \, \hat{u}^1) \, d\theta \, \hat{u}^1 \, .
\end{align*}
Thus $ \|\widetilde{ \mathscr{A}}(\hat{u}^0)\|_{\mathcal{Z}_T} \leq C \, R_1$. The definitions of $\hat{g}^0$ and $\hat{f}^0$ in \eqref{Arightlinear2} show that
\begin{align*}
 \|\hat{g}^0\|_{L^p(Q_T)} + \|\hat{f}^0\|_{L^p(Q_T)} =  \|\widetilde{ \mathscr{A}}^1(\hat{u}^0)\|_{L^p(Q_T)} + \|\widetilde{ \mathscr{A}}^3(\hat{u}^0)\|_{L^p(Q_T)} \leq C \, R_1 \, . 
\end{align*}
The claim follows from the Lemma \ref{estimateselfsecond}.
\end{proof}

\appendix

\section{Examples of free energies}\label{Legendre}

1. We consider first $h(\rho) := \sum_{i=1}^N n_i \, \ln \frac{n_i}{n^{\text{ref}}}$ where for $i = 1,\ldots,N$, the mass and number densities are related via $m_i \, n_i =  \rho_i$ with a positive constant $m_i > 0$. We want to show that $h$ is a \emph{Legendre function} on $\mathbb{R}^N_+$. It is at first clear that $h$ is continuously differentiable on $\mathbb{R}^N_+$, and we even have $h \in C^{\infty}(\mathbb{R}^N_+)$. The strict convexity of $h$ is obviously inherited from the strict convexity of $t \mapsto t \, \ln t$ on $\mathbb{R}_+$. The gradient of $h$ is given by 
\begin{align*}
 \partial_{\rho_i} h (\rho) = \frac{1}{m_i} \, (1+ \ln \frac{n_i}{n^{\text{ref}}}) \, .
\end{align*}
Thus $\lim_{k \rightarrow \infty} |\nabla_{\rho} h(\rho^k)| = + \infty$ whenever $\{\rho^k\}_{k \in \mathbb{N}}$ is a sequence of points approaching the boundary of $\mathbb{R}^N_+$. Overall we have shown that $h$ is a continuously differentiable, strictly convex, essentially smooth function on $\mathbb{R}^N_+$ (where \emph{essentially smooth} precisely means the blow-up of the gradient on the boundary). Functions satisfying these properties are called of \emph{Legendre type} (cf. \cite{rockafellar}, page 258).
Moreover, we can directly show that the gradient of $h$ is surjective onto $\mathbb{R}^N$, since the equations $\partial_{\rho_i} h = \mu_i$ have the unique solution $\rho_i = m_i \, n^{\text{ref}} \, e^{m_i \, \mu_i -1}$ for arbitrary $\mu \in \mathbb{R}^N$.

2. The second example is $h(\rho) = F\left(\sum_{i=1}^N n_i \, \bar{v}_i^{\text{ref}}\right) + \sum_{i=1}^N n_i \, \ln \frac{n_i}{n}$, with the total number density $n = \sum_{j=1}^N n_j$. Here $F$ is a given convex function of class $C^2(\mathbb{R}_+)$. We assume that
\begin{itemize}
\item $F^{\prime\prime}(t) > 0$ for all $t > 0$;
\item $F^{\prime}(t) \rightarrow -\infty$ for $t \rightarrow 0$;
\item $\frac{1}{t} \, F(t) \rightarrow + \infty$ for $t \rightarrow +\infty$. 
\end{itemize}
In other words, $F$ is a co-finite function of Legendre type on $\mathbb{R}_+$. The numbers $\bar{v}^{\text{ref}}_i$ are positive constants. Choosing $\bar{v}_1^{\text{ref}} = \ldots = \bar{v}_N^{\text{ref}} = 1$ and $F(t) = t \, \ln t$ we recover the preceding example. 

The function $h$ is clearly of class $C^2(\mathbb{R}^N_+)$. We compute the derivatives
\begin{align*}
 \partial_{\rho_i}h(\rho) = & F^{\prime}(v \cdot \rho) \, v_i + \frac{1}{m_i} \, \ln \frac{n_i}{n}\, ,\\
 \partial_{\rho_i,\rho_j}h(\rho) = & F^{\prime\prime}(v \cdot \rho) \, v_i \, v_j+ \frac{1}{m_i\, m_j} \, (\frac{\delta_{i,j}}{n_j} - \frac{1}{n}) \, ,
\end{align*}
in which we have for simplicity set $v_i := \bar{v}^{\text{ref}}_i/m_i$. For $\xi \in \mathbb{R}^N$, we verify that 
\begin{align*}
 D^2h(\rho) \xi \cdot \xi = F^{\prime\prime}(v \cdot \rho) \, (v \cdot \xi)^2 + \sum_{i=1}^N \left(\frac{\xi_i}{\sqrt{n_i} \, m_i}\right)^2 - \frac{1}{n} \, \left(\sum_{i=1}^N \frac{\xi_i}{m_i}\right)^2 \, .
\end{align*}
With the Cauchy-Schwarz inequality, we see that $\sum_{i=1}^N \left(\frac{\xi_i}{\sqrt{n_i} \, m_i}\right)^2 - \frac{1}{n} \, \left(\sum_{i=1}^N \frac{\xi_i}{m_i}\right)^2 \geq 0$, with equality only if $\xi_i = \lambda \, n_i \, m_i$ for some $\lambda \in \mathbb{R}$. In this case however, we have $\xi \cdot v = \lambda \, \sum_{i=1}^N \rho_i \, v_i$, so that $ D^2h(\rho) \xi \cdot \xi = \lambda^2 \, F^{\prime\prime}(v \cdot \rho) \, (v \cdot \rho)^2 \geq 0$, with equality only if $\lambda = 0$. This proves that $  D^2h(\rho) \xi \cdot \xi > 0$ for all $\xi \in \mathbb{R}^N \setminus \{0\}$, which implies the strict convexity. 

In order to show that $h$ is essentially smooth, we consider a sequence $\{\rho^k\}_{k \in \mathbb{N}}$ approaching the boundary of $\mathbb{R}^N_+$. We first consider the case that $\rho^k$ does not converge to zero. In this case, we clearly have $\inf_{k \in \mathbb{N}}\{n^k, \, v \cdot \rho^k\} \geq c_0$ for some positive constant $c_0$. Thus
\begin{align*}
|\nabla_{\rho} h(\rho^k)| \geq & \frac{1}{\max m} \, \sup_{i=1,\ldots,N} |\ln n_i^k| - |v| \, \sup_{k} |F^{\prime}(v \cdot \rho^k)| - \frac{1}{\min m} \sup_{k} |\ln n^k|\\
\geq & \frac{1}{\max m} \, \sup_{i=1,\ldots,N} |\ln n_i^k| - C \rightarrow + \infty \, .
\end{align*}
The second case is that $\rho^k$ converges to zero. In this case the fractions $\frac{n^k_i}{n^k}$ might remain all bounded. But our assumptions on $F$ guarantee that $F^{\prime}(v \cdot \rho^k) \rightarrow - \infty$ so that $|\nabla_{\rho} h(\rho^k)| \rightarrow + \infty$. Thus, the function $h$ is essentially smooth, and a function of Legendre type on $\mathbb{R}^N_+$.

It remains to prove that $\nabla_{\rho} h$ is a surjective mapping. We first verify that $h$ is co-finite. In the present context, it is sufficient to show that $\lim_{\lambda \rightarrow +\infty} \frac{h(\lambda \, y)}{\lambda} = + \infty$ for all $y \in \mathbb{R}^N_+$. This follows directly from the fact that $\lim_{t \rightarrow +\infty} \frac{F(t)}{t} = + \infty$. We then infer the surjectivity of $\nabla_{\rho} h$ form Corollary 13.3.1 in \cite{rockafellar}. 

3. Similar arguments allow to deal with the case $h(\rho) =  \sum_{i=1}^N K_i \, n_i\, \bar{v}_i^{\text{ref}} \, ((n_i\, \bar{v}_i^{\text{ref}})^{\alpha_i-1} + \ln (n_i\, \bar{v}_i^{\text{ref}})) +  k_B \, \theta \, \sum_{i=1}^N n_i \, \ln \frac{n_i}{n}$.

\section{Proof of the Lemma \ref{A1linUMFORM}}\label{technique}

The argument is based on covering $Q_t$, $t >0$, with sufficiently small sets and localising the problem therein. This is essentially carried over by standard techniques of meshing, so we will spare these rather technical considerations. For every $r > 0$, we can find $m = m(r) \in \mathbb{N}$ and, for each $j= 1,\ldots,m$, a point $(x^j, \, t_j) \in Q_t$ and sets $Q^j$ that possess the following properties:
\begin{itemize}
 \item $\overline{Q_t} \subset \bigcup_{j=1}^m Q^j$;
  \item $\sup_{(x, \, t) \in Q^j} |t-t_j| \leq c \, r$ and $\sup_{(x, \, t) \in Q^j} |x-x^j| \leq c \, \sqrt{r}$;
  \item $Q^j$ intersects a finite number, not larger than some $m_0 \in \mathbb{N}$, of elements of the collection $Q^1, \ldots, Q^m$. Here $m_0$ is independent on $r$ and $t$.
 \end{itemize}
 For $j = 1,\ldots,m$, we can moreover choose a non-negative function $\eta^j \in C^{2,1}(Q^j)$ with support in $Q^j$. The family $\eta^1,\ldots,\eta^m$ is assumed to nearly provide a partition of unity, that is, to possess the following properties:
\begin{align*}\begin{split}
& c_0 \leq \sum_{j=1}^m \eta^j(x, \, t) \leq C_0 \text{ for all } (x, \, t) \in \overline{Q}_t\\
& \|\eta_x\|_{L^{\infty}(Q^j)} \leq C_1 \, r^{-\frac{1}{2}}, \quad \|\eta_t\|_{L^{\infty}(Q^j)} + \|\eta_{x,x}\|_{L^{\infty}(Q^j)} \leq C_2 \, r^{-1} \, .
\end{split}
\end{align*}
Here $c_0 > 0$ and $C_i$ ($i=0,1,2$) are constants independent on $r$ and $t$. Moreover we can also enforce that $\nu \cdot \nabla \eta^j = 0$ on $S^j = : Q^j \cap (\partial \Omega \times [0, \, + \infty[)$. 
We let $\Omega^j := Q^j \cap (\Omega \times\{0\})$. 

After inversion of $R_q^*$, the vector field $q$ satisfies the equations \eqref{petrovskisyst}, that is
\begin{align}\label{Petrovski2}
 q_t - \underbrace{[R_q^*]^{-1} \, \widetilde{M}^*}_{=:A^*} \, \triangle q = [R_q^*]^{-1} \, g + [R_q^*]^{-1} \, \nabla \widetilde{M}^* \cdot \nabla q \, =: \tilde{g} \, .
\end{align}
Multiplying \eqref{Petrovski2} with $\eta^j$, we next derive the identities
\begin{align}\label{transformee}
 \eta^j \, q_t - \eta^j \, A^j \, \triangle q  = & \eta^j \, \tilde{g} + \eta^j \, (A^* - A^j  )\, \triangle q, \qquad \quad  A^j :=  [R_q^*]^{-1} \, \widetilde{M}^*(x^j, \, t_j) \, . 
\end{align}
Making use of the Lemma \ref{ALGEBRA}, the eigenvalues $p_1^j, \ldots, p_{N-1}^j$ of $A^j$ are real and strictly positive. Recalling \eqref{EVPROD}, we have on $[0, \, t]$ the bound
\begin{align*}
\frac{\lambda_{0}(t, \, \widetilde{M}^*)}{\lambda_{1}(t, \, R_q^*)}  \leq p_i^j \leq   \frac{\lambda_{1}(t, \, \widetilde{M}^*)}{\lambda_{0}(t, \, R_q^*)} \, .
\end{align*}
Further, there exists a basis $\xi^{1}, \, \ldots, \xi^{N-1} \in \mathbb{R}^{N-1}$ of eigenvectors of $A^j$.

For $i = 1,\ldots, N-1$, we multiply the equation \eqref{transformee} with $\xi^i$. For $u_{i} := \xi^{i} \cdot q$ ($i = 1,\ldots, N-1$) we therefore obtain that
$\eta^j \, (u_{i,t} - p_i^j \, \triangle u_i) = \eta^j \, \xi^i \cdot (\tilde{g} + (A^* - A^j  )\, \triangle q )$. We define $\tilde{u}^j_i := u_i \, \eta^j$, and for this function we obtain that
\begin{align*}
\tilde{u}^j_{i,t} - p_i^j \, \triangle \tilde{u}^j_i = & h^j_i := \eta^j \, \xi^i \cdot (\tilde{g} + (A^* - A^j  )\, \triangle q ) + \eta^j_t \, u_i  - p_i^j \, (2 \,  u_{i,x}  \cdot \eta^j_x  + \triangle \eta^j \, u_i) \, .
\end{align*}
Recall that $\nu \cdot \nabla q = 0$ on $S_T$ and $q(\cdot, \, 0) = q_0$ in $\Omega$. Due to our restrictions on the choice of $\eta^j$, we then readily compute that $\nu \cdot \nabla \tilde{u}^j_i = \nu \cdot \nabla \eta^j \, \xi^i \cdot q =  0$ on $S_t$. Moreover, $\tilde{u}^i_j(0) = \eta^j(0) \, \xi^i \cdot q_0 =: \tilde{u}^{0,j}_i$ in $\Omega$. Since $q^0$ satisfies the initial compatibility condition, also $\tilde{u}^{0,j}_i$ is a compatible data. Standard results for the heat equation now yield for arbitrary $t \leq T$
\begin{align}\label{estimatetildeuij}
 \|\tilde{u}^j_i\|_{W^{2,1}_p(Q_t)} \leq C_1 \, \frac{\max\{1, \, p^j_i\}}{\min\{1, \, p^j_i\}} \, (p^j_i \, \|\tilde{u}^{0,j}_i\|_{W^{2-\frac{2}{p}}_p(\Omega)} + \|h^j_i\|_{L^p(Q_t)}) \, ,
\end{align}
where $C_1$ depends only on $\Omega$ (see the Remark \ref{Tindependence}). In order to estimate $\|h^j_i\|_{L^p(Q_t)}$, we introduce $Q^j_t := Q^j \cap Q_t$ and observe that
\begin{align}\label{estimatehij1}
 \|\eta^j \, \xi^i \cdot \tilde{g}\|_{ L^p(Q_t)} \leq & C_0 \, \| \tilde{g}\|_{ L^p(Q^j_t)} \nonumber \, , \\
\|(\eta^j_t -  p_i^j \,\triangle \eta^j) \, u_i\|_{ L^p(Q_t)} \leq & C \, (1+\lambda_{\max}(A_j)) \, r^{-1} \, \|q\|_{L^p(Q^j_t)} \, ,\nonumber\\
 2 \, p_i^j \, \|u_{i,x}  \cdot \eta^j_x\|_{ L^p(Q_t)} \leq & C \,  \, \lambda_{\max}(A_j) \,  r^{-1/2} \, \|q_x\|_{L^p(Q^j_t)} \leq C \, \lambda_{\max}(A_j) \, (1+r^{-1}) \,\|q_x\|_{L^p(Q^j_t)} \, , \nonumber\\
\|\eta^j \, \xi^i \, (A^* - A^j  )\, \triangle q \|_{L^p(Q_t)} \leq & C \, \|A^* - A^j\|_{L^{\infty}(Q^j_t)} \, \|D^2q\|_{L^p(Q^j_t)} \, .
\end{align}
Since, by definition, $A^j = [R_q^*]^{-1} \, \widetilde{M}^*(x^j, \, t_j) = A^*(x^j, \, t_j)$, we have
\begin{align}\label{estimatehoelderprod}
\|A^* - A^j\|_{L^{\infty}(Q^j_t)} \leq \Big[[R_q^*]^{-1} \, \widetilde{M}^*\Big]_{C^{\beta,\frac{\beta}{2}}(Q_t)} \, r^{\frac{\beta}{2}} \, .
\end{align}
We call $F: \mathbb{R}_+ \times \mathbb{R}^{N-1} \rightarrow  \mathbb{R}^{N-1}\times \mathbb{R}^{N-1}$ the map $(s, \, \xi) \mapsto [R_q(s, \, \xi)]^{-1} \, \widetilde{M}(s, \, \xi)$. The derivatives of $F$ satisfy the estimates
\begin{align*}
&  |\partial_s F(\varrho^*, \, q^*)| \leq \frac{\lambda_{\max}( \widetilde{M}^*)}{\lambda_{\min}^2(R_q^*)} \, |R_{q,\varrho}(\varrho^*, \, q^*)| + \frac{1}{\lambda_{\min}(R_q^*)} \, |\widetilde{M}_{\varrho}(\varrho^*, \, q^*)| \, ,\\
& |\partial_{\xi}F(\varrho^*, \, q^*)| \leq \frac{\lambda_{\max}( \widetilde{M}^*)}{\lambda_{\min}^2(R_q^*)} \, |R_{q,q}(\varrho^*, \, q^*)| + \frac{1}{\lambda_{\min}(R_q^*)} \, |\widetilde{M}_{q}(\varrho^*, \, q^*)| \, ,\\
 & |\partial_s F(\varrho^*, \, q^*)| + |\partial_{\xi}F(\varrho^*, \, q^*)| \leq \left(\frac{\lambda_{1}(t,\, \widetilde{M}^*)}{\lambda_{0}^2(t, \, R_q^*)} + \frac{1}{\lambda_{0}(t, \, R_q^*)}\right) \, (L^*(t, \, R_q) + L^*(t, \, \widetilde{M})) =: \ell^*_t \,  .
\end{align*}
By standard arguments, we have
\begin{align}\label{estimatehoelderprod2}
\Big[[R_q^*]^{-1} \, \widetilde{M}^*\Big]_{C^{\beta,\frac{\beta}{2}}(Q_t)} \leq \ell^*_t \, ([\varrho^*]_{C^{\beta,\frac{\beta}{2}}(Q_t)} + [q^*]_{C^{\beta,\frac{\beta}{2}}(Q_t)}) \, \, .
\end{align}
Combining \eqref{estimatetildeuij}, \eqref{estimatehij1}, \eqref{estimatehoelderprod} and \eqref{estimatehoelderprod2}, we get
\begin{align*} 
  & \|\tilde{u}^j_i\|_{W^{2,1}_p(Q_t)} \leq  \tilde{C}_1\,  \frac{\max\{1, \, \lambda_{\max}(A^j)\}}{\min\{1, \, \lambda_{\min}(A^j)\}}\,  \times \\
& \Big( \big[(1+\lambda_{\max}(A^j)) \, (1+r^{-1}) \, \{\|q^0\|_{W^{2-\frac{2}{p}}_p(\Omega^j; \, \mathbb{R}^{N-1})} +\|q\|_{W^{1,0}_p(\tilde{Q}^j)} \} +  \| \tilde{g}\|_{ L^p(\tilde{Q}^j)}\big] \\
 & \quad + \ell^*_t \, ([\varrho^*]_{C^{\beta,\frac{\beta}{2}}(Q_t)} + [q^*]_{C^{\beta,\frac{\beta}{2}}(Q_t)}) \, r^{\frac{\beta}{2}} \, \|D^2q\|_{L^p(\tilde{Q}^j)} \Big)\, .
\end{align*}
Recall now that $\tilde{u}^j_i = \eta^j \, \xi^i \cdot q = \xi^i \cdot w^j$ with $w^j = \eta^j \, q$. Here $\{\xi^i\}$ are eigenvectors of $A^j$ and form a basis of $\mathbb{R}^{N-1}$. It is moreover shown in the Lemma \ref{ALGEBRA} that there are orthonormal vectors $v^1,\ldots,v^{N-1}$ such that $\xi^i = B^j \, v^i$ for $i =1,\ldots,N-1$, where $B^j := [R_q(\varrho^*(x^j,t_j), \, q^*(x^j,\, t_j))]^{\frac{1}{2}}$. Thus, $w^j \cdot \xi^i = B^j w^j \cdot v^i$. For any norm $\|\cdot \| $ on vector fields of length $N-1$ defined on $Q_t$ we then have
\begin{align*}
& \|q\, \eta^j\| = \|w^j\| = \|[B^j]^{-1} \,  B^j w^j \| \leq  |[B^j]^{-1}|_{\infty} \, \|B^j w^j\| = |[B^j]^{-1}|_{\infty} \, \|\sum_{i=1}^{N-1} (v^i \cdot B^j w^j) \, v^i\| \\
& \quad \leq  c \, |[B^j]^{-1}|_{\infty}  \, \sum_{i=1}^{N-1} \|B^j \, w^j \cdot v^i\| = c \, |[B^j]^{-1}|_{\infty}  \, \sum_{i=1}^{N-1} \|w^j \cdot \xi^i\|\\
& \quad \leq  \frac{c_1}{[\lambda_{\min}(R_q(\varrho^*(x^j,t_j), \, q^*(x^j,\, t_j)))]^{\frac{1}{2}}}  \, \sum_{i=1}^N   \|\tilde{u}^j_i\|\leq 
\frac{c_1}{[\lambda_{0}(t, \, R_q^*)]^{\frac{1}{2}}}  \, \sum_{i=1}^N   \|\tilde{u}^j_i\| \, .
\end{align*}
Choosing $\|\cdot \| = \|\cdot \|_{W^{2,1}_p(Q_t; \, \mathbb{R}^{N-1})}$, it follows that
\begin{align*}
  & \|q \, \eta^j\|_{W^{2,1}_p(Q_t; \, \mathbb{R}^{N-1})} \leq \bar{C}_1 \, 
  \frac{\max\{1, \, \lambda_{\max}(A^j)\}}{[\lambda_{0}(t, \, R_q^*)]^{\frac{1}{2}}\, \min\{1, \, \lambda_{\min}(A^j)\}} \times\\
  & \Big(\, \big[(1+\lambda_{\max}(A^j)) \, (1+r^{-1}) \, \{\|q^0\|_{W^{2-\frac{2}{p}}_p(\Omega^j; \, \mathbb{R}^{N-1})} +\|q\|_{W^{1,0}_p(Q^j_t)} \} +  \| \tilde{g}\|_{ L^p(Q^j_t)}\big]\\
 & \quad +\ell^*_t \, ([\varrho^*]_{C^{\beta,\frac{\beta}{2}}(Q_t)} + [q^*]_{C^{\beta,\frac{\beta}{2}}(Q_t)}) \, r^{\frac{\beta}{2}} \, \|D^2q\|_{L^p(Q^j_t)}\Big) \, .
\end{align*}
We estimate $\lambda_{\min}(A^j) \geq \lambda_0([R_q^*]^{-1}\widetilde{M}^*)$ etc. We recall that for each $j$ there are at most $m_0$ indices $i_1,\ldots,i_{m_0} \neq j$ such that $Q^j \cap Q^{i_k} \neq \emptyset $. Thus $\sum_{j=1}^m  \|f\|_{L^p(Q^j_t)} \leq (m_0 + 1) \, \|f\|_{L^p(Q_t)}$. We easily verify that
\begin{align*}
\|q \, \eta^j\|_{W^{2,1}_p(Q_t)} \geq & \|q_t \, \eta^j\|_{L^p(Q_t)} +\sum_{0\leq \alpha\leq 2} \|D^{\alpha}_xq \, \eta^j\|_{L^p(Q_t)} - c \, r^{-\frac{1}{2}} \, \|q_x\|_{L^p(Q^j_t)} - c \, r^{-1} \, \|q\|_{L^p(Q^j_t)} \, . 
\end{align*}
After summing up for $j = 1,\ldots,m$ and using the properties of our covering again, we obtain
\begin{align*}
& \|q\|_{W^{2,1}_p(Q_t; \, \mathbb{R}^{N-1}))} \leq\bar{C}_1 \, 
   \frac{ \max\{1, \, \lambda_{1}(t, \, [R_q^*]^{-1}\widetilde{M}^*)\}}{\lambda_{0}^{\frac{1}{2}}(t, \, R_q^*)\, \min\{1, \, \lambda_{0}(t, \, [R_q^*]^{-1}\widetilde{M}^*)\}}\, \times\\
 & \Big( \big[(1+\lambda_{1}(t, \, [R_q^*]^{-1}\widetilde{M}^*)) \, (1+r^{-1}) \, \{\|q^0\|_{W^{2-\frac{2}{p}}_p(\Omega; \, \mathbb{R}^{N-1})} +\|q\|_{W^{1,0}_p(Q_t)} \} +  \| \tilde{g}\|_{ L^p(Q_t)}\big]\\
 & \quad + \, \ell^*_t \, ([\varrho^*]_{C^{\beta,\frac{\beta}{2}}(Q_t)} + [q^*]_{C^{\beta,\frac{\beta}{2}}(Q_t)}) \, r^{\frac{\beta}{2}} \, \|D^2q\|_{L^p(Q_t)} \Big)\, .
  \end{align*}
Since $r$ is a free parameter and $t$ fixed, we can choose 
\begin{align*}
 r^{\frac{\beta}{2}} := \frac{1}{2} \, \min\{1, \frac{\lambda_{0}^{\frac{1}{2}}(t, \, R_q^*)\, \min\{1, \, \lambda_{0}(t, \, [R_q^*]^{-1}\widetilde{M}^*)\}}{\bar{C}_1 \, \max\{1, \, \lambda_{1}(t, \, [R_q^*]^{-1}\widetilde{M}^*)\}  \, \ell_t^* \, ([\varrho^*]_{C^{\beta}(Q_t)} + [q^*]_{C^{\beta}(Q_t)})}\}
\end{align*}
to obtain an estimate
\begin{align}\label{qcontrol}
& \frac{1}{2} \,  \|q\|_{W^{2,1}_p(Q_t; \, \mathbb{R}^{N-1})} \leq \bar{C}_1\, \frac{\max\{1, \, \lambda_{1}(t, \, [R_q^*]^{-1}\widetilde{M}^*)\}}{\lambda_{0}^{\frac{1}{2}}(t, \, R_q^*)\, \min\{1, \, \lambda_{0}(t, \, [R_q^*]^{-1}\widetilde{M}^*)\}}\, \times \nonumber\\
&  [(1+\lambda_{1}(t, \, [R_q^*]^{-1}\widetilde{M}^*)) \, (1+r^{-1}) \, \{\|q^0\|_{W^{2-\frac{2}{p}}_p(\Omega; \, \mathbb{R}^{N-1})} +\|q\|_{W^{1,0}_p(Q_t)} \} +  \| \tilde{g}\|_{ L^p(Q_t)} ] \, .
\end{align}
Due to the definition of $\tilde{g}$ in \eqref{Petrovski2}, and since $\nabla \widetilde{M}^* = \widetilde{M}_{\varrho}(\varrho^*, \, q^*) \, \nabla \varrho^* + \widetilde{M}_{q}(\varrho^*, \, q^*) \, \nabla q^*$ it follows that
\begin{align*}
 \|\tilde{g}\|_{L^p(Q_T)}  \leq & \frac{1}{\lambda_0(t, \, R_q^*)} \, (\|g\|_{L^p(Q_t)} + \|\nabla \widetilde{M}^* \cdot \nabla q\|_{L^p(Q_t)})\\
 \leq & \frac{1}{\lambda_0(t, \, R_q^*)} \, (\|g\|_{L^p(Q_t)} + L(t, \, \widetilde{M}^*)\,  [\|\nabla \varrho^* \cdot \nabla q\|_{L^p(Q_t)} + \|\nabla q^* \cdot \nabla q\|_{L^p(Q_t)}]) \, .
\end{align*}
We define $\phi^*_{0,t}$ and $\phi^*_{1,t}$ via \eqref{LinftyFACTORS}.  Due to Lemma \ref{rhonewlemma} and Lemma \ref{Mnewlemma} on the coefficients $R$ and $\widetilde{M}$, we see that $\phi^*_{0,t}$ and $\phi^*_{1,t}$ are bounded by a continuous function of $\|\varrho^*\|_{L^{\infty}(Q_t)}$, of $\|q^*\|_{L^{\infty}(Q_t)}$ and of $[m^*(t)]^{-1}$. Moreover $\phi^*_{0,t}$ and $\phi^*_{1,t}$ is determined only by the eigenvalues of $R_q^*$ and $\widetilde{M}^*$ and their Lipschitz constants over the range of $\varrho^*$, $q^*$. In order to obtain a control on $\sup_{s\leq t} \|q(s)\|_{W^{2-\frac{2}{p}}_p(\Omega; \, \mathbb{R}^{N-1})}$, we apply the inequality (3) of the paper \cite{solocompress} which yields
\begin{align*}
 \sup_{s\leq t} \|q(s)\|_{W^{2-\frac{2}{p}}_p(\Omega; \, \mathbb{R}^{N-1})} \leq \|q^0\|_{W^{2-\frac{2}{p}}_p(\Omega; \, \mathbb{R}^{N-1})} + C \, \|q\|_{W^{2,1}_p(Q_t; \, \mathbb{R}^{N-1})}
\end{align*}
for some constant $C$ independent on $t$, and combine it with \eqref{qcontrol}.
\begin{rem}\label{Tindependence}
 Consider the problem $\lambda \, \partial_t u - \triangle u = f$ in $Q_t$ with $u(0, \, x) = u_0(x)$ in $\Omega$ and $\nu(x)\cdot \nabla u = 0$ on $S_t$. Then $\|u\|_{W^{2,1}_p(Q_t)} \leq C_{1} \, \frac{\max\{1, \, \lambda\}}{\min\{1,\lambda\}}  \, (\|f\|_{L^p(Q_t)} + \|u_0\|_{W^{2-2/p}_p(\Omega)})$ with $C_{1}$ depending only on $\Omega$.
\end{rem}
\begin{proof}
We find an extension $\hat{u}_0 \in W^{2,1}_p(Q_{\infty})$ for $u_0$ such that $\|\hat{u}_0\|_{ W^{2,1}_p(Q_{\infty})} \leq c \, \|u_0\|_{W^{2-\frac{2}{p}}_p(\Omega)}$. 

We then look for the solution $v$ to $\lambda \, \partial_t v - \triangle v = (f - \lambda \, \partial_t \hat{u}_0 + \triangle \hat{u}_0) \, \chi_{[0,t]} =: g$ in $\Omega \times \mathbb{R}_+$ with $v(x, \, 0) = 0$ in $\Omega$ and $\nu(x)\cdot \nabla v = 0$ on $\partial \Omega \times \mathbb{R}_+$. In order to solve this problem, we scale time defining $\tilde{v}(s, \, x) := v(\lambda \, s, \, x)$. Clearly $\partial_s \tilde{v} - \triangle \tilde{v} = \tilde{g}$ in $\Omega \times \mathbb{R}_+$ with $\tilde{v}(x, \, 0) = 0$ in $\Omega$ and $\nu(x)\cdot \nabla \tilde{v} = 0$ on $\partial \Omega \times \mathbb{R}_+$. Thus $\|\tilde{v}\|_{W^{2,1}_p(\Omega \times \mathbb{R}_+)} \leq C_1 \, \|\tilde{g}\|_{L^p(\Omega \times \mathbb{R}_+)}$ with $C_1$ depending only on $\Omega$. We rescale time, to obtain
 \begin{align*}
  \lambda^{1+1/p} \, \|\partial_tv\|_{L^p} + \lambda^{1/p} \, \sum_{0 \leq |\alpha|\leq 2} \|D^{\alpha}_x v\|_{L^p} \leq C_1 \, \lambda^{1/p} \, \|g\|_{L^p(\Omega \times \mathbb{R}_+)}\, .
 \end{align*}
By the uniqueness theorem for the heat equation, we must have $u - \hat{u}_0 = v$ in $Q_t$. Since $g = 0$ on $]t, \, + \infty[$, it follows that
 \begin{align*}
\|u\|_{W^{2,1}_p(Q_t)}  \leq\|\hat{u}_0\|_{ W^{2,1}_p(Q_{t})} +  C_1 \, \frac{1}{\min\{\lambda, \, 1\}} \, ( \|f\|_{L^p(Q_t)} + \lambda \, \|\partial_t \hat{u}_0\|_{L^p(Q_t)} + \|\triangle \hat{u}_0\|_{L^p(Q_t)}) \, .
 \end{align*}
 The claim follows.
\end{proof}

\section{Auxiliary statements}

\begin{lemma}\label{ALGEBRA}
Suppose that $A, \, B \in \mathbb{R}^{N\times N}$ are two positive definite symmetric matrices. Then $A \, B$ possesses only real, strictly positive eigenvalues and
\begin{align*}
 \lambda_{\min}(A) \, \lambda_{\min}(B) \leq \lambda_{\min}(A \, B) \leq \lambda_{\max}(A \, B) \leq \lambda_{\max}(A) \, \lambda_{\max}(B) \
\end{align*}
Moreover, there are orthonormal vectors $\eta^1, \ldots, \eta^N \in \mathbb{R}^N$ such that the vectors $\xi^i := A^{\frac{1}{2}} \, \eta^i$ ($i = 1,\ldots,N$) define a basis of eigenvectors of $A \, B$ for $\mathbb{R}^N$.
\end{lemma}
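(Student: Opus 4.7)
The plan is to reduce the non-symmetric product $AB$ to a symmetric conjugate through the unique positive definite square root $A^{1/2}$ of $A$, and then read off all three claims from the spectral theorem applied to the symmetrised matrix $M := A^{1/2}BA^{1/2}$.

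First I would observe that, since $A$ is symmetric positive definite, there exists a unique symmetric positive definite $A^{1/2}$ with $(A^{1/2})^2 = A$, and $A^{1/2}$ is invertible with $A^{-1/2} := (A^{1/2})^{-1}$. A direct computation then gives the similarity
\[
A^{-1/2}(AB)A^{1/2} = A^{-1/2}A B A^{1/2} = A^{1/2}B A^{1/2} = M.
\]
The matrix $M$ is symmetric (as a sandwich of symmetric matrices with a symmetric one in the middle) and positive definite: for $x\ne 0$, setting $y = A^{1/2}x \ne 0$, one has $x^T M x = y^T B y > 0$. Since similar matrices share their spectrum, the eigenvalues of $AB$ equal those of $M$, hence are real and strictly positive.

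Next I would establish the two-sided eigenvalue bound by the Rayleigh quotient. For any unit vector $x$, set $y := A^{1/2}x$; then
\[
x^T M x = y^T B y \le \lambda_{\max}(B)\,\|y\|^2 = \lambda_{\max}(B)\, x^T A x \le \lambda_{\max}(A)\,\lambda_{\max}(B),
\]
and analogously $x^T M x \ge \lambda_{\min}(A)\,\lambda_{\min}(B)$. Taking the supremum, respectively the infimum, over unit $x$ yields
\[
\lambda_{\min}(A)\,\lambda_{\min}(B) \le \lambda_{\min}(M) \le \lambda_{\max}(M) \le \lambda_{\max}(A)\,\lambda_{\max}(B),
\]
and the bounds transfer to $AB$ via the similarity.

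Finally, for the claim on eigenvectors I would invoke the spectral theorem for $M$: there exists an orthonormal basis $\eta^1,\dots,\eta^N$ of $\mathbb{R}^N$ consisting of eigenvectors of $M$, say $M\eta^i = \lambda_i\eta^i$. Setting $\xi^i := A^{1/2}\eta^i$, the invertibility of $A^{1/2}$ ensures that $\{\xi^i\}$ is again a basis of $\mathbb{R}^N$, and
\[
AB\,\xi^i \;=\; AB A^{1/2}\eta^i \;=\; A^{1/2}\bigl(A^{1/2}B A^{1/2}\bigr)\eta^i \;=\; A^{1/2}\lambda_i\eta^i \;=\; \lambda_i\,\xi^i,
\]
so $\xi^i$ is an eigenvector of $AB$ with eigenvalue $\lambda_i$. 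There is no real obstacle here; the whole argument is an elementary application of the positive square root and the spectral theorem, and the only point deserving attention is ensuring that one substitutes $y = A^{1/2}x$ (rather than $y = A^{-1/2}x$) when extracting the Rayleigh-quotient bounds, so that the inequalities point in the right direction.
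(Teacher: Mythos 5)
Your proof is correct and follows the same route as the paper: symmetrise the product via the sandwich $A^{1/2}BA^{1/2}$, transfer the spectrum through the similarity, and obtain the eigenvectors by pushing the orthonormal eigenbasis of the symmetric matrix through $A^{1/2}$. If anything, you are more complete than the paper's own proof, which states the eigenvalue bound in the lemma but does not carry out the Rayleigh-quotient estimate that you supply.
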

\begin{proof}
 Define $C := A^{\frac{1}{2}} \, B \, A^{\frac{1}{2}}$. Since $A^{\frac{1}{2}}$ is symmetric, and moreover the matrix $B$ is positive, it follows that $C$ is symmetric and positive. Thus, since $A \, B = A^{\frac{1}{2}} \, C \, A^{-\frac{1}{2}}$, the eigenvalues of $A \, B$ are the ones of $C$. Choose $\eta^1, \ldots, \eta^N \in \mathbb{R}^N$ an orthonormal basis of eigenvectors for $C$. Then $\xi^i := A^{\frac{1}{2}} \, \eta^i$ is an eigenvector of $A \, B$.
\end{proof}
\begin{lemma}\label{HOELDERlemma}
For $0 \leq \beta < \min\{1, 2-\frac{5}{p}\}$ we define
\begin{align*}
\gamma := \begin{cases}
            \frac{1}{2} \, (2 - \frac{5}{p} -\beta) & \text{ for } 3<p<5\\
            (1-\beta) \,  \frac{p-1}{3+p} & \text{ for } 5 \leq p 
            \end{cases}
            \end{align*}
Then, there is $C = C(t)$ bounded on finite time intervals such that $C(0) = C_0$ depends only on $\Omega$ and for all $q^* \in W^{2,1}_{p}(Q_t)$
 \begin{align*}
 \|q^*\|_{C^{\beta,\frac{\beta}{2}}(Q_t)} \leq \|q^*(0)\|_{C^{\beta}(\Omega)} +  C(t) \,  t^{\gamma} \, [\|q^*\|_{W^{2,1}_{p}(Q_t)} + \|q^*\|_{C([0,t]; \, W^{2-\frac{2}{p}}_p(\Omega)})]  \, .
\end{align*}
\end{lemma}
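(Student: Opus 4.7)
The plan is to isolate the initial datum and exploit the vanishing of the remainder at $t=0$ via interpolation. Write $q^*(x,t) = q^*(x,0) + r(x,t)$ with $r(x,0) = 0$. Then
\[
[q^*(\cdot,s)]_{C^\beta(\Omega)} \leq [q^*(\cdot,0)]_{C^\beta(\Omega)} + [r(\cdot,s)]_{C^\beta(\Omega)}\,,\qquad
[q^*(x,\cdot)]_{C^{\beta/2}(0,t)} = [r(x,\cdot)]_{C^{\beta/2}(0,t)}\,,
\]
and similarly for the $L^\infty$ norm, so the task reduces to bounding $\|r\|_{C^{\beta,\beta/2}(Q_t)}$ by $C(t)\, t^\gamma$ times the stated norm.

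The two workhorses are the spatial Sobolev embedding $W^{2-2/p}_p(\Omega) \hookrightarrow C^{2-5/p}(\overline\Omega)$ (valid since $p > 5/2$), and the anisotropic parabolic embedding $W^{2,1}_p(Q_t) \hookrightarrow C^{\alpha,\alpha/2}(\overline{Q_t})$ for every $\alpha \in [0, \min(1, 2-5/p))$, with a constant independent of $t$ on bounded intervals (cf.\ Remark~\ref{parabolicspace}). Combined with $r(\cdot,0) = 0$ they yield the crucial smallness estimate
\[
\|r\|_{L^\infty(Q_t)} \leq t^{\alpha/2}\, [r]_{C^{0,\alpha/2}(Q_t)} \leq C\, t^{\alpha/2}\, \|q^*\|_{W^{2,1}_p(Q_t)}\,.
\]
A Kolmogorov-type Hölder interpolation $[u]_{C^\beta} \leq C\, \|u\|_\infty^{1-\theta}\, [u]_{C^{\alpha'}}^{\theta}$ with $\theta = \beta/\alpha'$ (applied separately in the space and in the time variable, for any $\alpha' > \beta$) then converts this $L^\infty$ smallness into the desired Hölder bound.

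In the regime $3 < p < 5$ one may take $\alpha$ arbitrarily close to $2-5/p < 1$, use it directly as $\alpha'$, and a short calculation yields the composition $[r]_{C^{\beta,\beta/2}(Q_t)} \leq C t^{(\alpha-\beta)/2} \|q^*\|_{W^{2,1}_p(Q_t)}$, giving $\gamma = (2-5/p-\beta)/2$. For $p \geq 5$ the parabolic embedding instead provides $W^{2,1}_p \hookrightarrow C^{1,1/2}(\overline{Q_t})$, which allows $\alpha' = 1$, but the naive choice $\|r\|_\infty \leq C t^{1/2} \|q^*\|_{W^{2,1}_p}$ only yields $\gamma = (1-\beta)/2$. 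To reach the stated sharper $\gamma = (1-\beta)(p-1)/(p+3)$, one refines the $L^\infty$ bound on $r$ through a three-dimensional Gagliardo--Nirenberg inequality, feeding in Minkowski's estimate $\|r(t)\|_{L^p(\Omega)} \leq t^{1-1/p} \|r_t\|_{L^p(Q_t)}$ (a direct consequence of $r(t) = \int_0^t r_s\, ds$ together with $r(x,0) = 0$) and the uniform spatial bound $\|r(t)\|_{W^{2-2/p}_p(\Omega)} \leq 2 \|q^*\|_{C([0,t]; W^{2-2/p}_p)}$. Tracking exponents carefully produces $\|r\|_{L^\infty(Q_t)} \leq C\, t^{(p-1)/(p+3)}(\cdots)$, and a final Hölder interpolation against $C^{1,1/2}$ delivers the claimed $\gamma$.

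The main obstacle is precisely this exponent bookkeeping in the $p \geq 5$ regime; in the $3 < p < 5$ regime everything follows from a single interpolation against the parabolic Hölder embedding. Finally, the continuity of $C(t)$ is built into the chain of embeddings, and the value $C(0) = C_0(\Omega)$ is immediate because at $t = 0$ only the time-independent spatial Sobolev embedding contributes.
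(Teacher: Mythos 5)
Your decomposition $q^*=q^*(\cdot,0)+r$, the extraction of $L^\infty$-smallness of $r$ from its vanishing at $t=0$, and the H\"older interpolation are exactly the paper's strategy; where you diverge is in how you supply the $t$-power and the $C^\alpha$-seminorm, and there the $3<p<5$ regime has a genuine gap. You invoke the anisotropic embedding $W^{2,1}_p(Q_t)\hookrightarrow C^{\alpha,\alpha/2}(\overline{Q_t})$ only for $\alpha$ in the open interval $[0,\min\{1,2-5/p\})$, and then declare ``$\gamma=(2-5/p-\beta)/2$'' after ``taking $\alpha$ arbitrarily close to $2-5/p$.'' Your computation actually yields $(\alpha-\beta)/2$ for each admissible $\alpha$, strictly less than the target, and you never justify passing to the endpoint $\alpha=2-5/p$: this would require an endpoint parabolic H\"older embedding --- which is not contained in Remark \ref{parabolicspace}, whose statement gives only the spatial embedding $W^{2,1}_p\subset C^{\alpha,0}$ --- or uniform control of the embedding constant as $\alpha\uparrow 2-5/p$. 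As written your argument proves a weaker lemma with a smaller exponent.

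The paper avoids this by never calling on a parabolic H\"older embedding. It applies the Gagliardo--Nirenberg inequality to time differences $u=q^*(\cdot,t_2)-q^*(\cdot,t_1)$ and feeds in $\|u\|_{L^p(\Omega)}\leq|t_2-t_1|^{1-1/p}\|q^*_t\|_{L^p(Q_t)}$, which yields a time-H\"older seminorm of order $\delta=(1-\theta)(1-1/p)$ (with $\theta=3/(3+p-(5-p)^+)$) and hence $t^\delta$-smallness of $\|r(s)\|_{L^\infty(\Omega)}$. For the $C^\alpha$ seminorm at fixed time it uses the \emph{static} Sobolev--Morrey embedding $W^{2-2/p}_p(\Omega)\hookrightarrow C^\alpha(\overline\Omega)$, $\alpha=\min\{1,2-5/p\}$, applied to $q^*(\cdot,s)$; this is precisely why $\|q^*\|_{C([0,t];W^{2-2/p}_p(\Omega))}$ appears on the right-hand side of the lemma. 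The resulting exponents, $(1-\beta/\alpha)\delta$ in space and $\delta-\beta/2$ in time, coincide for $3<p<5$ and equal the stated $\gamma$; for $p\geq5$ the former is the smaller of the two.

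That G--N plus Minkowski-in-time mechanism is exactly what you employ in your $p\geq5$ ``refinement.'' If you apply it uniformly across $3<p<\infty$ rather than only for $p\geq5$, you recover the paper's proof and close the gap. A secondary point to make explicit: since $q^*(\cdot,0)$ has only $W^{2-2/p}_p$ spatial regularity, in general $r=q^*-q^*(\cdot,0)\notin W^{2,1}_p(Q_t)$, so the chain $\|r\|_\infty\leq t^{\alpha/2}[r]_{C^{0,\alpha/2}}\leq Ct^{\alpha/2}\|q^*\|_{W^{2,1}_p}$ is legitimate only because $[r]_{C^{0,\alpha/2}}=[q^*]_{C^{0,\alpha/2}}$ (the time-constant term cancels in differences), not because $r$ itself lives in $W^{2,1}_p$.
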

\begin{proof}
For $r = \frac{3p}{(5-p)^+}$ and $\theta := \frac{3}{3+p-(5-p)^+}$ the Gagliardo-Nirenberg inequality yields
\begin{align*}
 \|u\|_{L^{\infty}(\Omega)} \leq C_1 \, \|\nabla u\|_{L^r(\Omega)}^{\theta} \, \|u\|_{L^p(\Omega)}^{1-\theta} + C_2 \, \|u\|_{L^p(\Omega)} \, .
\end{align*}
We apply this inequality to a difference $u = a(t_2) - a(t_1)$ for $ 0 < t_1 \leq t_2 \leq t$. By elementary means $a(t_2) - a(t_1) = \int_{t_1}^{t_2} a_t(s) \, ds$ and $\|a(t_2) - a(t_1)\|_{L^p(\Omega)} \leq (t_2-t_1)^{1-\frac{1}{p}} \, \|\partial_ta\|_{L^{p}(Q_t)}$. This yields
\begin{align*}
  \|a(t_2) - a(t_1)\|_{L^{\infty}(\Omega)} \leq & C_1 \, [2\|\nabla a\|_{L^{r,\infty}(Q_t)}]^{\theta} \, \|\partial_t a\|_{L^{p}(Q_t)}^{1-\theta} \, (t_2-t_1)^{(1-\theta) \, (1-\frac{1}{p})} \\
  & + C_2 \, (t_2-t_1)^{1-\frac{1}{p}} \, \|\partial_t a\|_{L^{p}(Q_t)} \, .
\end{align*}
We define $\delta := (1-\theta) \, (1-\frac{1}{p})$, and we make use of the continuity of $W^{2-\frac{2}{p}}(\Omega) \subset W^{1,r}(\Omega)$ and we see that
\begin{align}\label{differencetime}
\sup_{t_2\neq t_1}  \frac{\|a(t_2) - a(t_1)\|_{L^{\infty}(\Omega)}}{|t_2-t_1|^{\delta}} & \leq  \|\partial_t a\|_{L^{p}(Q_t)}^{1-\theta} \, (2C_1 \, C \, \|a\|_{C([0,t]; \, W^{2-\frac{2}{p}}(\Omega))}^{\theta} + C_2 \, t^{\theta \, (1-\frac{1}{p})} \, \|\partial_t a\|_{L^{p}(Q_t)}^{\theta}) \nonumber\\
& \leq C(t) \, (\|a\|_{W^{2,1}_{p}(Q_t)} +  \|a\|_{C([0,t]; \, W^{2-\frac{2}{p}}(\Omega))}) \, .
\end{align}
Now we consider a function $u = u(x, \, s)$ such that $u(x, \, 0) = 0$. Using \eqref{differencetime} and the embedding $W^{2-\frac{2}{p}}_p(\Omega) \subset W^{1,r}(\Omega) \subset C^{\alpha}(\Omega)$ valid for $\alpha := \min\{1, \, 2-\frac{5}{p}\}$
\begin{align*}
 \|u(s)\|_{C^0(\Omega)} = \|u(s)\|_{L^{\infty}(\Omega)} \leq C(t) \, (\|u\|_{W^{2,1}_{p}(Q_t)}+  \|u\|_{C([0,t]; \, W^{2-\frac{2}{p}}(\Omega))}) \, s^{\delta}\\
 \|u(s)\|_{C^{\alpha}(\Omega)} \leq C \, \|u(s)\|_{W^{1,r}(\Omega)} \leq C \,  \|u\|_{C([0,s]; \, W^{2-\frac{2}{p}}_{p}(\Omega))} \, .
\end{align*}
Introduce $\alpha := \min\{1, \, 2-\frac{5}{p}\}$. First making use of interpolation inequalities (\cite{lunardi}, Example 1.25 with Corollary 1.24) and find for all $0 \leq \beta \leq \alpha \leq 1$ and $u \in C^1(\Omega)$
\begin{align}\label{interpo}
 \|u\|_{C^{\beta}(\Omega)} \leq c \,  \|u\|_{C^{\alpha}(\Omega)}^{\frac{\beta}{\alpha}} \, \|u\|_{C^0(\Omega)}^{1-\frac{\beta}{\alpha}} \, .
\end{align}
Thus, for $b := (1-\frac{\beta}{\alpha}) \, \delta$, it follows from \eqref{differencetime} and \eqref{interpo} that for all $s \leq t$
\begin{align*}
 \|u(s)\|_{C^{\beta}(\Omega)} \leq C \, C(t)^{1-\frac{\beta}{\alpha}} \, (\|u\|_{W^{2,1}_{p}(Q_t)}+  \|u\|_{C([0,t]; \, W^{2-\frac{2}{p}}(\Omega))}) \, s^{b} \, .
\end{align*}
For a function $q^* \in W^{2,1}_{p}(Q_t)$, this induces for all $\beta < \alpha$ a bound
\begin{align*}
\sup_{s\leq t} \|q^*(s)\|_{C^{\beta}(\Omega)} \leq & \|q^*(0)\|_{C^{\beta}(\Omega)} + \sup_{s\leq t} \|q^*(s)-q^*(0)\|_{C^{\beta}(\Omega)}\\
\leq & \|q^*(0)\|_{C^{\beta}(\Omega)} + C(t) \, t^{b} \, (\|q^*\|_{W^{2,1}_{p}(Q_t)} +  \|q^*\|_{C([0,t]; \, W^{2-\frac{2}{p}}(\Omega))}) \, .
\end{align*}
Moreover, we observe that $\beta < \alpha =  \min\{1, \, 2-\frac{5}{p}\}$ always implies that $\beta/2 < \delta$. Thus, invoking \eqref{differencetime} again
\begin{align*}
 \sup_{x\in \Omega} [q^*(x)]_{C^{\beta/2}([0,t])} = & \sup_{x\in \Omega} \sup_{t_2\neq t_1} \frac{|q^*(x, \, t_2) - q^*(x, \, t_1)|}{|t_2-t_1|^{\beta/2}}\\
 \leq & C(t) \, (\|q^*\|_{W^{2,1}_{p}(Q_t)} +\|q^*\|_{C([0,t]; \, W^{2-\frac{2}{p}}(\Omega))})   \, t^{\delta-\beta/2}  \, .
\end{align*}
Thus for all $\beta < \alpha$ we get
\begin{align*}
 \|q^*\|_{C^{\beta, \, \frac{\beta}{2}}(Q_t)} \leq \|q^*(0)\|_{C^{\beta}(\Omega)} + C(t) \, t^{\gamma} \, (\|q^*\|_{W^{2,1}_{p}(Q_t)} +\|q^*\|_{C([0,t]; \, W^{2-\frac{2}{p}}(\Omega))}) \, . 
\end{align*}
with $\gamma$ being the minimum of $\delta - \beta/2$ and $b = (1-\frac{\beta}{\alpha}) \, \delta$. The computation of the exponent is straightforward. 
\end{proof}
We now prove some properties of the lower--order operators defined in \eqref{A1right}, \eqref{A3right}. Consider first $g = g(x, \, t, \, q, \,\varrho, \, v, \, \nabla q,  \, \nabla v,\, \nabla \varrho)$ with $g$ defined by \eqref{A1right}. The vector field $g$ is defined on the compound $\Gamma := Q \times \mathbb{R}^{N-1} \times \mathbb{R}_+ \times \mathbb{R}^3 \times \mathbb{R}^{N-1\times 3} \times \mathbb{R}^{3\times 3} \times \mathbb{R}^{3}$ and assumes values in $\mathbb{R}^{N-1}$. As a vector field, $g$ belongs to $C^1(\Gamma; \, \mathbb{R}^{N-1})$, because the maps $R$ and $\widetilde{M}$ are of class $C^2$ (Lemma \ref{rhonewlemma} and Lemma \ref{Mnewlemma}). The derivatives possess the following expressions
 \begin{align*}
 g_{\varrho} = & R_{\varrho,\varrho} \, \varrho \, \divv v - R_{\varrho,q} \, v \cdot \nabla q - \widetilde{M}_{\varrho,\varrho} \, \nabla \varrho \, \tilde{b}- \widetilde{M}_{\varrho,q}  \, \nabla q \, \tilde{b} - \widetilde{M}_{\varrho} \, \divv \tilde{b} + \tilde{r}_{\varrho} \, , \\
 g_q = & (R_{\varrho,q} \, \varrho - R_q) \, \divv v - R_{q,q} \, v \cdot \nabla q - \widetilde{M}_{\varrho,q} \, \nabla \varrho \, \tilde{b}
 - \widetilde{M}_{q,q}  \, \nabla q \, \tilde{b} - \widetilde{M}_{q} \, \divv \tilde{b} + \tilde{r}_{q}\, , \\
 g_v = & - R_q \, \nabla q\, , \quad  g_{\varrho_x} = - \widetilde{M}_{\varrho} \, \tilde{b}\, , \quad g_{q_x} = - R_q \, v  - \widetilde{M}_{q} \, \tilde{b}\, , \quad g_{v_x} =  (R_{\varrho} \, \varrho - R) \, \text{I}_{3\times 3}\, ,  
\end{align*} 
in which all non-linear functions $R, \, \widetilde{M}$, $\tilde{r}$ and their derivatives are evaluated at $\varrho, \, q$.
 
We next want to study the Nemicki operator $(q, \,\varrho, \, v) \mapsto g(x, \, t, \, q, \,\varrho, \, v, \, \nabla q, \, \nabla \varrho, \, \nabla v)$ on $\mathcal{X}_{T,+}$. A boundedness estimate can be obtained for this operator from $\mathcal{X}_{T,+}$ into $L^p(Q_T; \, \mathbb{R}^{N-1})$ via the Lemma \ref{RightHandControl} (this was applied for instance in the proof of Proposition \ref{estimateself}). We can apply the same tool to the derivatives. Choosing $G = g_{\varrho}$ in Lemma \ref{RightHandControl} with  $r_1 = 1$, $\bar{G}(x, \,t) = |\tilde{b}_x(x, \, t)|$ and $\bar{H}(x, \,t) := |\tilde{b}(x, \, t)|$, we obtain a boundedness estimate for $g_{\varrho}$ as operator between $\mathcal{X}_{T,+}$ and $ L^p(Q_T; \, \mathbb{R}^{N-1})$. With obvious choices, we treat the derivatives $g_q$ and $g_v$ in the same way. Due to the simpler expressions, we obtain for the other derivatives continuity estimates:
\begin{align*}
\|g_{q_x}\|_{L^{\infty,p}(Q_T)}\leq & c_1((m(T))^{-1}, \, M(T), \, \|q\|_{L^{\infty}(Q_T)}) \, (\|v\|_{L^{\infty,p}(Q_T)} + \|\tilde{b}\|_{L^{\infty,p}(Q_T)}) \, ,\\  
\|g_{\varrho_x}\|_{L^{\infty,p}(Q_T)}\leq & c_1((m(T))^{-1}, \, M(T), \, \|q\|_{L^{\infty}(Q_T)}) \,   \|\tilde{b}\|_{L^{\infty,p}(Q_T)} \, ,\\
\|g_{v_x}\|_{L^{\infty}(Q_T)} \leq & c_1((m(T))^{-1}, \, M(T), \, \|q\|_{L^{\infty}(Q_T)}) \, .
\end{align*}
We also remark that if $u, \, u^*$ are two points in $\mathcal{X}_{T,+}$ and we expand $g(x, \, t, \, u, \, D_xu) = g(x, \, t, \, u^*, \, D_xu^*) + g^{\prime}(u, \, u^*) \, (u-u^*)$ (cf. \eqref{Arightlinear}), then the operators $g^{\prime}(u, \, u^*) = \int_{0}^1 g^{\prime}(x, \, t, \theta \, u + (1-\theta) \, u^*, \, \theta \, D_xu + (1-\theta) \, D_xu^*) \, d\theta$ satisfy similar estimates. 

We consider next $f = f(x, \, t, \, q, \,\varrho, \, v, \, \nabla q,  \, \nabla v,\, \nabla \varrho)$ with $f$ defined by \eqref{A3right}; $f$ belongs to $C^1(\Gamma; \, \mathbb{R}^{3})$. The derivatives possess the following expressions
 \begin{align*}
 f_{\varrho} = & - P_{\varrho,\varrho} \, \nabla \varrho - P_{\varrho,q} \, \nabla q - (v\cdot \nabla) v + R_{\varrho} \, \tilde{b} + \bar{b} \, , \\
 f_q = & -P_{\varrho,q} \, \nabla \varrho - P_{q,q} \, \nabla q + R_q \, \tilde{b}\, , \\
 f_v = & - \varrho \, \nabla v\, , \quad f_{\varrho_x} = - P_{\varrho}\, , \quad f_{q_x} =  - P_q \, , \quad  f_{v_x} = - \varrho \, v \, . 
\end{align*} 
We discuss these derivatives as Nemicki operators on $\mathcal{X}_{T,+}$ with similar arguments as in the case of $g$. We resume our conclusions in the following Lemma.
\begin{lemma}\label{gedifferential}
Adopt the assumptions of Theorem \ref{MAIN}. The maps $g$ and $f$ are defined on $\mathcal{X}_{T,+}$ by the expressions \eqref{A1right} and \eqref{A3right}. Then, $g$ and $f$ are continuously differentiable at every $u^* = (q^*, \, \varrho^*, \, v^*) \in \mathcal{X}_{T,+}$. For each $u =(q, \, \varrho, \, v) \in \mathcal{X}_{T,+}$ the derivatives satisfy
\begin{align*}
 & \|g_{q}(u, \, u^*)\|_{L^p(Q_T)} + \|g_{\varrho}(u, \, u^*)\|_{L^p(Q_T)} + \|g_v(u, \, u^*) \|_{L^p(Q_T)}\\
 & + \| g_{q_x}(u, \, u^*)\|_{L^{\infty,p}(Q_T)} + \|g_{v_x}(u, \, u^*)\|_{L^{\infty}(Q_T)} + \|g_{\varrho_x}(u, \, u^*)\|_{L^{\infty,p}(Q_T)} \\
 & \qquad \leq c_1(\|u\|_{\mathcal{X}_T} + \|u^*\|_{\mathcal{X}_T} + [\inf_{Q_T} \inf\{ \varrho, \, \varrho^*\}]^{-1} + \|\tilde{b}\|_{W^{1,0}_p(Q_T)}) \, ,\\
 & \|f_{q}(u, \, u^*)\|_{L^p(Q_T)} + \|f_{\varrho}(u, \, u^*)\|_{L^p(Q_T)} + \|f_v(u, \, u^*) \|_{L^p(Q_T)}\\
 & + \| f_{q_x}(u, \, u^*)\|_{L^{\infty}(Q_T)} + \|f_{v_x}(u, \, u^*)\|_{L^{\infty}(Q_T)} + \|f_{\varrho_x}(u, \, u^*)\|_{L^{\infty}(Q_T)} \\
 & \qquad \leq c_1(\|u\|_{\mathcal{X}_T} + \|u^*\|_{\mathcal{X}_T} + [\inf_{Q_T} \inf\{ \varrho, \, \varrho^*\}]^{-1} + \|\tilde{b}\|_{L^p(Q_T)} + \|\bar{b}\|_{L^p(Q_T)}) \, ,
 \end{align*}
 with a continuous function $c_1$ increasing of its argument.
 \end{lemma}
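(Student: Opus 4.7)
The plan is to exploit the fact that both $g$ and $f$ are built as superpositions of: (i) smooth Nemicki operators generated by the $C^2$ maps $R,\widetilde{M},P$ from Lemmas \ref{pressurelemma}, \ref{rhonewlemma}, \ref{Mnewlemma} (and the $C^1$ reaction $\tilde r$) acting on the continuous variable $(\varrho,q)$; (ii) bilinear products of these coefficients with components of $u$ and the spatial gradient $D_xu$; and (iii) affine combinations with the data $\tilde b,\bar b$. Since Remarks \ref{parabolicspace}, \ref{contispace} give continuous embeddings $W^{2,1}_p(Q_T)\hookrightarrow C(\overline{Q_T})$ and $W^{1,1}_{p,\infty}(Q_T)\hookrightarrow C(\overline{Q_T})$, the range of any $(\varrho,q)\in\mathcal{X}_{T,+}$ remains in a compact subset $K\subset\mathbb R_+\times\mathbb R^{N-1}$ determined by $\|u\|_{\mathcal X_T}$ and $(\inf_{Q_T}\varrho)^{-1}$; on $K$ the coefficient maps $R,R_\varrho,R_q,R_{\varrho\varrho},R_{\varrho q},R_{qq},\widetilde{M}$ etc., as well as their second derivatives, are uniformly bounded by a continuous increasing function of those two parameters.

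Continuous Fr\'echet-differentiability at $u^*=(q^*,\varrho^*,v^*)\in\mathcal X_{T,+}$ follows from the standard Nemicki calculus applied in this setting: the positivity of $\varrho^*$ is stable under small perturbations in $W^{1,1}_{p,\infty}$ (by the $L^\infty$ embedding), so the composition map is defined and $C^1$ on a neighbourhood, with derivatives given pointwise by the chain and product rules, producing exactly the formulas
\begin{align*}
g_\varrho,\ g_q,\ g_v,\ g_{\varrho_x},\ g_{q_x},\ g_{v_x},\qquad
f_\varrho,\ f_q,\ f_v,\ f_{\varrho_x},\ f_{q_x},\ f_{v_x}
\end{align*}
displayed after \eqref{A3right}. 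Strong continuity of each derivative in $\mathcal L(\mathcal X_T;L^p(Q_T))$ is inherited from the uniform continuity of the $C^2$ coefficients on $K$ together with the continuity of multiplication by fixed elements of $L^p/L^{\infty,p}$.

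To obtain the norm bounds one simply distributes H\"older's inequality factor by factor, using the following pattern: all coefficients of the form $R_{\sharp}(\varrho,q)$, $\widetilde{M}_{\sharp}(\varrho,q)$, $P_\sharp(\varrho,q)$ are controlled in $L^\infty(Q_T)$ by $c_1(\|u\|_{\mathcal X_T}+\|u^*\|_{\mathcal X_T}+[\inf_{Q_T}\inf\{\varrho,\varrho^*\}]^{-1})$; components of $v$ and $q$ are in $L^\infty(Q_T)$ by the parabolic embedding; $\nabla q,\nabla v$ lie in $L^p(Q_T)$; $\nabla\varrho$ lies in $L^{p,\infty}(Q_T)$ by definition of $W^{1,1}_{p,\infty}$; and $\tilde b\in W^{1,0}_p(Q_T)\hookrightarrow L^{\infty,p}(Q_T)$ for $p>3$ since $W^{1,p}(\Omega)\hookrightarrow L^\infty(\Omega)$, while $\bar b\in L^p(Q_T)$. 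With these, each term in $g_\varrho,g_q,g_v$ is estimated in $L^p(Q_T)$, each term in $g_{q_x},g_{\varrho_x}$ in $L^{\infty,p}(Q_T)$, and $g_{v_x}$ in $L^\infty(Q_T)$; the analogue for $f$ is even easier as it contains no $\divv\tilde b$ contribution.

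The principal subtlety lies in correctly matching integrability classes of the gradient variables with those demanded in the target norm: the component $\nabla\varrho$ belongs only to $L^{p,\infty}$, so it must be paired with $L^{\infty,p}$-factors (which is precisely what forces the mixed-norm $L^{\infty,p}$ estimate on $g_{\varrho_x}$ and on the $\tilde b$-parts of $g_{q_x}$), and the forcing $\tilde b$ must be absorbed via the embedding $W^{1,p}(\Omega)\hookrightarrow L^\infty(\Omega)$ rather than by a spatial $L^p$-estimate. This bookkeeping is the only nontrivial point; once the correct H\"older pairings are chosen, the uniform $L^\infty$-bound on the coefficients supplies a common prefactor of the desired form $c_1(\cdot)$, concluding the proof.
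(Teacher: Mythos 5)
Your proposal is correct and takes essentially the same route as the paper: compute the partial derivatives explicitly from the chain and product rules on the $C^2$ coefficients $R,\widetilde M,P$ (and $C^1$ reaction $\tilde r$), then use the embeddings $W^{2,1}_p\hookrightarrow L^\infty$, $W^{1,1}_{p,\infty}\hookrightarrow L^\infty$, $\nabla\varrho\in L^{p,\infty}$, $\tilde b\in W^{1,0}_p\hookrightarrow L^{\infty,p}$ to place each factor in the right mixed norm and distribute H\"older's inequality. The only presentational difference is that the paper writes out the derivative formulas explicitly and then funnels the $L^p$-estimates for $g_\varrho,g_q,g_v,f_\varrho,\ldots$ through the packaged growth estimate Lemma~\ref{RightHandControl} (with appropriate choices of $\bar G,\bar H,r_1$), whereas you do the H\"older bookkeeping directly; since Lemma~\ref{RightHandControl} is itself precisely this bookkeeping, the arguments coincide in substance.
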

 We can extend this statement to the maps $g^1$ and $f^1$ introduced in the system \eqref{equationdiff1},  \eqref{equationdiff3}. Recall first that
 \begin{align}\label{g1}
 g^1 = g -  R_q \, \partial_t \hat{q}^0 + \widetilde{M} \, \triangle \hat{q}^0 - \widetilde{M}_{\varrho} \, \nabla \varrho \cdot \nabla \hat{q}^0 \, ,
 \end{align}
in which $\hat{q}^0 \in W^{2,1}_p(Q_T; \, \mathbb{R}^{N-1})$ is a given vector field. The boundedness of $g^1$ on $\mathcal{X}_{T,+}$ into $L^p(Q_T)$ is then readily verified (Lemma \ref{RightHandControl}). The derivatives satisfy
 \begin{align*}
 g^1_{\varrho} = & g_{\varrho} - R_{\varrho,q} \, \partial_t \hat{q}^0+ \widetilde{M}_{\varrho} \, \triangle \hat{q}^0 - \widetilde{M}_{\varrho,\varrho} \nabla \varrho \nabla \hat{q}^0 - \widetilde{M}_{\varrho,q} \, \nabla q \,  \nabla \hat{q}^0 \, , \\
 g_q^1 = & g_q - R_{q,q} \, \partial_t \hat{q}^0+ \widetilde{M}_{q} \, \triangle \hat{q}^0 - \widetilde{M}_{\varrho,q} \nabla \varrho \nabla \hat{q}^0 - \widetilde{M}_{q,q} \, \nabla q \,  \nabla \hat{q}^0  \, , \\
  g_{\varrho_x}^1 = &  g_{\varrho_x} - \widetilde{M}_{\varrho} \, \nabla \hat{q}^0\, , \quad g_{q_x}^1 =  g_{q_x}  - \widetilde{M}_{q} \, \nabla \hat{q}^0 \, ,  
\end{align*} 
and $g_v^1 = g_v$ and $g^1_{v_x} = g_{v_x}$. These expressions can be estimated as in the case of $g$ (replace $\tilde{b}$ in these estimates by $\nabla \hat{q}^0$). Since $f^1 = f- \varrho \partial_t \hat{v}^0 + \divv \mathbb{S}(\nabla \hat{v}^0)$, only the derivative $f^1_{\varrho} = f_{\varrho} - \partial_t \hat{v}^0$ gets a new contribution that is easily estimated.
\begin{lemma}\label{gedifferentialun}
Adopt the assumptions of Theorem \ref{MAIN}. The maps $g$ and $f$ are defined on $\mathcal{X}_{T,+}$ by the expressions \eqref{A1right} and \eqref{A3right} and $g^1$ is defined via \eqref{g1} and $f^1 = f -  \varrho \partial_t \hat{v}^0 + \divv \mathbb{S}(\nabla \hat{v}^0)$, in which $(\hat{q}^0, \, \hat{v}^0)$ is a given pair in $\mathcal{Y}_T$. Then, $g^1$ and $f^1$ are continuously differentiable at every $u^* = (q^*, \, \varrho^*, \, v^*) \in \mathcal{X}_{T,+}$. For each $u =(q, \, \varrho, \, v) \in \mathcal{X}_{T,+}$ the derivatives satisfy
\begin{align*}
 & \|g^1_{q}(u, \, u^*)\|_{L^p(Q_T)} + \|g^1_{\varrho}(u, \, u^*)\|_{L^p(Q_T)} + \|g^1_v(u, \, u^*) \|_{L^p(Q_T)}\\
 & + \| g^1_{q_x}(u, \, u^*)\|_{L^{\infty,p}(Q_T)} + \|g^1_{v_x}(u, \, u^*)\|_{L^{\infty}(Q_T)} + \|g^1_{\varrho_x}(u, \, u^*)\|_{L^{\infty,p}(Q_T)} \\
 & \qquad \leq c_1(\|u\|_{\mathcal{X}_T} + \|u^*\|_{\mathcal{X}_T} + [\inf_{Q_T} \inf\{ \varrho, \, \varrho^*\}]^{-1} + \|\tilde{b}\|_{W^{1,0}_p(Q_T)}+\|\hat{q}^0\|_{W^{2,1}_p(Q_T)}) \, ,\\
 & \|f_{q}^1(u, \, u^*)\|_{L^p(Q_T)} + \|f_{\varrho}^1(u, \, u^*)\|_{L^p(Q_T)} + \|f^1_v(u, \, u^*) \|_{L^p(Q_T)}\\
 & + \| f^1_{q_x}(u, \, u^*)\|_{L^{\infty}(Q_T)} + \|f^1_{v_x}(u, \, u^*)\|_{L^{\infty}(Q_T)} + \|f^1_{\varrho_x}(u, \, u^*)\|_{L^{\infty}(Q_T)} \\
 & \qquad \leq c_1(\|u\|_{\mathcal{X}_T} + \|u^*\|_{\mathcal{X}_T} + [\inf_{Q_T} \inf\{ \varrho, \, \varrho^*\}]^{-1} + \|\tilde{b}\|_{L^p(Q_T)} + \|\bar{b}\|_{L^p(Q_T)} + \|\hat{v}^0\|_{W^{2,1}_p(Q_T)}) \, ,
 \end{align*}
 with a continuous function $c_1$ increasing of its argument.
 \end{lemma}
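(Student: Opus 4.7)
\textbf{Proof plan for Lemma \ref{gedifferentialun}.}

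My starting point is Lemma \ref{gedifferential}, which already delivers every one of the required bounds for the derivatives $g_q, g_\varrho, g_v, g_{q_x}, g_{v_x}, g_{\varrho_x}$ and $f_q, f_\varrho, f_v, f_{q_x}, f_{v_x}, f_{\varrho_x}$. Since $g^1$ and $f^1$ differ from $g$ and $f$ only by the additional drift/extension terms
\begin{align*}
\Delta g(u) & := - R_q(\varrho,q)\,\partial_t \hat{q}^0 + \widetilde{M}(\varrho,q)\,\triangle \hat{q}^0 - \widetilde{M}_\varrho(\varrho,q)\,\nabla\varrho\cdot\nabla\hat{q}^0 \, ,\\
\Delta f(u) & := -\varrho\,\partial_t\hat{v}^0 + \divv\mathbb{S}(\nabla \hat{v}^0) \, ,
\end{align*}
the whole job reduces to estimating the derivatives of $\Delta g$ and $\Delta f$ and adding them to the bounds of Lemma \ref{gedifferential}. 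This also shows that $g^1, f^1$ are continuously differentiable on $\mathcal{X}_{T,+}$ since the extra terms are sums of products of $C^1$ Nemicki operators with fixed $L^p$ functions.

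The first step is to compute all partial derivatives of $\Delta g$ and $\Delta f$ explicitly. For $\Delta g$ they read
\begin{align*}
(\Delta g)_\varrho & = -R_{q,\varrho}\partial_t\hat{q}^0 + \widetilde{M}_\varrho\,\triangle\hat{q}^0 - \widetilde{M}_{\varrho,\varrho}\,\nabla\varrho\cdot\nabla\hat{q}^0 \, ,\\
(\Delta g)_q & = -R_{q,q}\partial_t\hat{q}^0 + \widetilde{M}_q\,\triangle\hat{q}^0 - \widetilde{M}_{\varrho,q}\,\nabla\varrho\cdot\nabla\hat{q}^0 \, ,\\
(\Delta g)_v & = 0,\quad (\Delta g)_{q_x} = 0,\quad (\Delta g)_{v_x} = 0,\quad (\Delta g)_{\varrho_x} = -\widetilde{M}_\varrho\,\nabla\hat{q}^0 \, ,
\end{align*}
and for $\Delta f$, $(\Delta f)_\varrho = -\partial_t\hat{v}^0$ while all other derivatives vanish identically. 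The crucial structural observation is that these expressions have exactly the same functional form as the derivatives of $g$ and $f$ themselves, but with the external fields $\tilde{b}, \nabla\tilde{b}, \bar{b}$ formally replaced by $\nabla\hat{q}^0, \triangle\hat{q}^0, \partial_t\hat{q}^0$ for $\Delta g$ and by $\partial_t\hat{v}^0$ for $\Delta f$. Since $\hat{q}^0 \in W^{2,1}_p(Q_T)$ we have $\partial_t\hat{q}^0, \triangle\hat{q}^0 \in L^p(Q_T)$ and $\nabla\hat{q}^0 \in L^{\infty,p}(Q_T)$ (use Remark \ref{parabolicspace}); similarly $\partial_t\hat{v}^0 \in L^p(Q_T)$.

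The second step is to plug each of these expressions into Lemma \ref{RightHandControl}, exactly as was done in the proof of Lemma \ref{gedifferential}. For instance, to bound $(\Delta g)_\varrho$ in $L^p(Q_T)$ I choose $r_1=1$, $\bar{G}(x,t) := |\partial_t\hat{q}^0(x,t)| + |\triangle\hat{q}^0(x,t)|$ and $\bar{H}(x,t) := |\nabla\hat{q}^0(x,t)|$ in that Lemma, giving a bound of the required form with a factor of $\|\hat{q}^0\|_{W^{2,1}_p(Q_T)}$. The estimates for $(\Delta g)_q$ are identical; $(\Delta g)_{\varrho_x}$ is a product of a bounded function of $(\varrho,q)$ with $\nabla\hat{q}^0$ and yields the $L^{\infty,p}$ bound directly. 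For $(\Delta f)_\varrho = -\partial_t\hat{v}^0$ the $L^p$ bound is immediate. Summing with the bounds of Lemma \ref{gedifferential} produces exactly the claimed estimates, with the extra additive contributions $\|\hat{q}^0\|_{W^{2,1}_p(Q_T)}$ and $\|\hat{v}^0\|_{W^{2,1}_p(Q_T)}$ absorbed into the argument of the increasing function $c_1$.

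There is no genuine obstacle here; the lemma is an entirely routine extension of Lemma \ref{gedifferential}. The only point worth a moment of care is making sure that the mixed-order product $\widetilde{M}_{\varrho,\varrho}\,\nabla\varrho\cdot\nabla\hat{q}^0$ is handled in the correct functional framework: $\nabla\varrho \in L^{p,\infty}$ (from $\varrho \in W^{1,1}_{p,\infty}$) and $\nabla\hat{q}^0 \in L^{\infty,p}$, so their pointwise product only sits in $L^p(Q_T)$ after using Hölder in a way consistent with the $\bar{H}$--slot in Lemma \ref{RightHandControl}. The coefficient factors $\widetilde{M}_\varrho, R_{q,q}$, etc., are uniformly bounded on the range of $(\varrho,q)$ by a continuous function of $[\inf_{Q_T}\varrho]^{-1}$ and $\|u\|_{\mathcal{X}_T}+\|u^*\|_{\mathcal{X}_T}$ via Lemmas \ref{rhonewlemma}--\ref{Mnewlemma}, which is exactly the dependence absorbed in $c_1$.
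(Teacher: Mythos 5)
Your approach is essentially the paper's own: compute the derivatives of the additional extension terms $g^1-g$ and $f^1-f$, show they have the same structure as the derivatives of $g$ and $f$ with $\tilde b$ replaced by the appropriate derivatives of $\hat q^0, \hat v^0$, and then apply Lemma~\ref{RightHandControl} with the same choices as in Lemma~\ref{gedifferential}. The functional-analytic bookkeeping ($\nabla\varrho\in L^{p,\infty}$, $\nabla\hat q^0\in L^{\infty,p}$, hence $\nabla\varrho\cdot\nabla\hat q^0\in L^p$) is handled the same way the paper does.

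One small but real discrepancy: you took $g^1$ literally from \eqref{g1}, but \eqref{g1} is an incomplete restatement of the operator actually introduced in \eqref{equationdiff1}, which also contains the term $-\widetilde M_q\,\nabla q\cdot\nabla\hat q^0$. The paper's own derivative formulas (immediately preceding the lemma) reflect the fuller operator: they contain the additional pieces $-\widetilde M_{\varrho,q}\,\nabla q\,\nabla\hat q^0$ in $g^1_\varrho$, $-\widetilde M_{q,q}\,\nabla q\,\nabla\hat q^0$ in $g^1_q$, and in particular $g^1_{q_x}=g_{q_x}-\widetilde M_q\,\nabla\hat q^0$, whereas your $(\Delta g)_{q_x}=0$. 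So your derivative list is missing three contributions. This does not add any genuine difficulty: $-\widetilde M_q\,\nabla\hat q^0\in L^{\infty,p}$ fits directly into the $g^1_{q_x}$ slot, and the two extra products of $\nabla q$ with $\nabla\hat q^0$ are bounded exactly like the $\widetilde M_{\varrho,\varrho}\,\nabla\varrho\cdot\nabla\hat q^0$ term you already treated (with $q_x$ playing the role of $\varrho_x$ in the $\bar H$-slot of Lemma~\ref{RightHandControl}). You should add these terms for the argument to match the lemma's intent; once you do, the proof is complete and coincides with the paper's.
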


\end{document}